\newtheorem{thm}{Th\'eor\`eme}[section]
\newtheorem{prop}[thm]{Proposition}
\newtheorem{lemme}[thm]{Lemme}
\newtheorem{souslemme}[thm]{Sous lemme}
\newtheorem{props}[thm]{Propri\'et\'es}
\newtheorem{define}[thm]{D\'efinition}
\newtheorem{def-prop}[thm]{D\'efinition-proposition}
\newtheorem{cor}[thm]{Corollaire}
\newtheorem{ex}[thm]{Exemple}
\newtheorem{contr-ex}[thm]{Contre-exemple}
\newtheorem{rem}[thm]{Remarque}
\newtheorem{conj}[thm]{Conjecture}
\newtheorem{quest}{Question}
\newcommand\Part{\mathcal P}
\newcommand\N{\mathbb N}
\newcommand\Z{\mathbb Z}
\newcommand\Q{\mathbb Q}
\newcommand\R{\mathbb R}
\newcommand\C{\mathbb C}
\newcommand\D{\mathbb D}
\newcommand\bord[1]{\partial #1}
\newcommand\bX{\bord{X}}
\newcommand\cX{\overline{X}}
\newcommand\hyp{{\mathbb H}^2_\R} %espace hyperbolique
\newcommand\hypt{{\mathbb H}^3_\R} %espace hyperbolique H3(R)
\newcommand\hypn{{\mathbb H}^n_\R} %espace hyperbolique H2(R)
\newcommand\abs[1]{\left|#1\right|}
\newcommand\norm[1]{\|#1\|}
\newcommand\eq{\Leftrightarrow}
\newcommand\imp{\Rightarrow}
\newcommand\longeq{\Longleftrightarrow}
\newcommand\floor[1]{\left\lfloor #1 \right\rfloor}
\newcommand\ceil[1]{\left\lceil #1 \right\rceil}
\newcommand\diam{\operatorname{diam}}
\newcommand\dv{\dim_{\operatorname{vis}}}
\newcommand\vol{\operatorname{vol}}
\newcommand\supp{\operatorname{supp}}
\newcommand\Isom{\operatorname{Isom}}
\newcommand\Aff{\operatorname{Aff}}
\newcommand\defi[1]{\textbf{#1}}
\newcommand\Adh[1]{\operatorname{Adh\left(#1\right)}}
\newcommand\interieur[1]{\overset{\circ}{#1}}
\def\restriction#1#2{\mathchoice
              {\setbox1\hbox{${\displaystyle #1}_{\scriptstyle #2}$}
              \restrictionaux{#1}{#2}}
              {\setbox1\hbox{${\textstyle #1}_{\scriptstyle #2}$}
              \restrictionaux{#1}{#2}}
              {\setbox1\hbox{${\scriptstyle #1}_{\scriptscriptstyle #2}$}
              \restrictionaux{#1}{#2}}
              {\setbox1\hbox{${\scriptscriptstyle #1}_{\scriptscriptstyle #2}$}
              \restrictionaux{#1}{#2}}}
\def\restrictionaux#1#2{{#1\,\smash{\vrule height .8\ht1 depth .85\dp1}}_{\,#2}}
\newcommand{\convfaible}[1] %convergence faible
{
	\ensuremath{\displaystyle{\smash{\,\mathop{\longrightarrow}\limits_{n\to\infty}^{#1}\,}}}
}
\newcommand*{\EnsembleQuotient}[2]
{
	\ensuremath
	{
		#1/\!\raisebox{-.65ex}{\ensuremath{\mathcal{#2}}}
	}
}
\newwrite\SORTIE
\newwrite\SORTIE
\newcommand{\dpo}[1]
{
	\begingroup
	\@bsphack
	\immediate\openout\SORTIE in.txt %ouvre le fichier in.txt
	\let\do\@makeother\dospecials
	\catcode`\^^M\active
	\immediate\write\SORTIE{\pgfactualjobname} %écrit le nom de l'image
	\immediate\write\SORTIE{#1} %écrit l'argument de la commande dans le fichier
	\immediate\closeout\SORTIE %ferme le fichier in.txt
	 \@esphack
	\endgroup
	
	\immediate\write18{./dp} %execute la commande ./dp
	\input{out} %affiche le résultat
}
\newenvironment{proof}[1][Preuve]
{ %begin
	\textbf{#1} ---
}{ %end 
	\hfill $\square$
}
\begin{document}	
	\immediate\write18{./dp -init}
	
	\ETDS{12}{}{XX}{2013}

	\runningheads{P.\ Mercat}{Semi-groupes d'isométries d'un espace Gromov-hyperbolique} %Semigroup of isometries of a Gromov-hyperbolic space}
	
	\title{Entropie des semi-groupes d'isométries d'un espace Gromov-hyperbolique}
	
	\author{Paul Mercat}
	
	\address{Aix-Marseille Université, C.M.I., 39 rue F. Joliot Curie, 13453 Marseille, FRANCE \\
	\email{paul.mercat@univ-amu.fr}}
	
	\recd{Décembre $2013$}
	
	\begin{abstract}
		Nous généralisons aux semi-groupes convexes co-compacts un très joli théorème de Patterson-Sullivan, donnant l'égalité entre exposant critique (c'est-à-dire la vitesse exponentielle de croissance) et dimension de Hausdorff de l'ensemble limite (c'est-à-dire la taille du plus petit fermé invariant non vide), d'un groupe discret d'isométrie d'un espace hyperbolique.
		Nous démontrons ce résultat dans le cadre général des semi-groupes d'isométries d'un espace Gromov-hyperbolique propre à bord compact.
		Pour cela, nous introduisons une notion d'entropie, qui généralise la notion d'exposant critique des groupes discrets, et nous montrons que celle-ci est égale à la borne supérieure des exposants critiques des sous-semi-groupes de Schottky (c'est-à-dire les semi-groupes ayant la dynamique la plus simple). Nous obtenons ainsi plusieurs autres corollaires tels que la semi-continuité inférieure de l'entropie, le fait que l'exposant critique d'un semi-groupe séparé, qui est définit comme une limite supérieure, soit en fait une vraie limite, et enfin l'existence de \og gros \fg\ sous-groupes de Schottky dans les groupes discrets d'isométries.
	\end{abstract}

	\tableofcontents
%	
%	\mainmatter
	
	%\def\baselinestretch{2}
	
%%%%%%%%%%%%%%%%%%%%%%%%%%%%%%%%%%%%%%%%%%%%%%%%%%%%%%%%%%%%%%%%%%%%%%%%%%%%%%%%%%%%%%%%
%%%%%%%%%%%%%%%%%%%%%%%%%%%%%%%%%%%%%%%%%%%%%%%%%%%%%%%%%%%%%%%%%%%%%%%%%%%%%%%%%%%%%%%%
	\section{Introduction}
	
	Le premier cas intéressant de la théorie de Patterson-Sullivan, est l'étude des sous-groupes du groupe $SL(2, \R)$.
	\'Etant donné un sous-groupe $\Gamma$ de $SL(2,\R)$, on s'intéresse à une donnée dynamique, l'exposant critique, et une donnée géométrique, l'ensemble limite, qui sont définis de la façons suivante :
	\begin{define}
		L'\defi{exposant critique} est le réel noté $\delta_\Gamma$, définit par
		\[
			\delta_\Gamma := \limsup_{n \to \infty} \frac{1}{2n} \log(\#\{ \gamma \in \Gamma | \log(\norm{\gamma}) \leq n \}).
		\]
		Cela correspond à la vitesse exponentielle à laquelle croît le groupe.
		
		L'\defi{ensemble limite} est la partie de $\hat{\R} := \R \cup \{ \infty \}$ notée $\Lambda_\Gamma$, qui est le plus petit fermé invariant non vide pour l'action de $\Gamma$ sur $\hat{\R}$ par homographie.
	\end{define}
	
	Les ensembles limites de sous-groupes discrets de $SL(2, \R)$ ressemblent en général à des ensembles de Cantor, et leur exposant critique est un nombre entre $0$ et $1$.
	
	On a alors le très joli résultat suivant, qui relie cette donnée dynamique et cette donnée géométrique, qui n'avaient à priori rien à voir.
	
	\begin{thm}[Patterson-Sullivan]
		Si $\Gamma$ est un sous-semi-groupe de type fini et non élémentaire de $SL(2,\R)$, alors on a l'égalité
		\[
			\dim_H(\Lambda_\Gamma) = \delta_\Gamma.
		\]
	\end{thm}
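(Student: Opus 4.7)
Le plan est de procéder classiquement en deux inégalités, après avoir traduit l'énoncé du côté géométrique. Le groupe $SL(2,\R)$ agit par isométries sur $\hyp$, et l'identité $2\ch(d(o, \gamma \cdot o)) = \norm{\gamma}^2 + \norm{\gamma^{-1}}^2$ pour $o$ bien choisi (point fixe de $SO(2)$) montre que $\log \norm{\gamma}$ et $\tfrac12 d(o, \gamma \cdot o)$ coïncident à constante additive près. Ainsi $\delta_\Gamma$ coïncide avec l'exposant critique usuel de la série de Poincaré $\sum_{\gamma \in \Gamma} e^{-s d(o, \gamma \cdot o)}$, et $\Lambda_\Gamma \subset \bhyp \simeq \hat{\R}$ s'identifie aux points d'accumulation dans le bord de l'orbite $\Gamma \cdot o$.

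Pour la majoration $\dim_H(\Lambda_\Gamma) \leq \delta_\Gamma$, je fixe $R > 0$ assez grand et je recouvre $\Lambda_\Gamma$ par les ombres vues depuis $o$ des boules $B(\gamma \cdot o, R)$, $\gamma \in \Gamma$, dont le diamètre dans $\bhyp$ est de l'ordre de $e^{-d(o, \gamma \cdot o)}$ grâce au lemme de l'ombre. La convergence de $\sum_\gamma e^{-s d(o, \gamma \cdot o)}$ pour tout $s > \delta_\Gamma$ borne alors uniformément la pré-mesure $s$-dimensionnelle de Hausdorff des recouvrements obtenus en ne gardant que les $\gamma$ tels que $d(o, \gamma \cdot o) \geq N$, et en faisant $N \to \infty$ on conclut que $\dim_H(\Lambda_\Gamma) \leq s$.

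Pour la minoration $\dim_H(\Lambda_\Gamma) \geq \delta_\Gamma$, je construis une mesure de Patterson--Sullivan en prenant, quand $s \searrow \delta_\Gamma$, une valeur d'adhérence faible-$*$ des mesures normalisées $\mu_s \propto \sum_\gamma e^{-s d(o, \gamma \cdot o)} \delta_{\gamma \cdot o}$, quitte à corriger les coefficients par l'astuce de Patterson dans le cas divergent pour forcer la masse à s'échapper vers $\bhyp$. On obtient ainsi une mesure $\mu$ portée par $\Lambda_\Gamma$ et $\delta_\Gamma$-conforme ; le lemme de l'ombre donne alors l'estimation $\mu(B(\xi, r)) \lesssim r^{\delta_\Gamma}$ pour $\xi \in \Lambda_\Gamma$, et le principe de distribution de masse fournit la minoration voulue.

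Le principal obstacle est double : il faut gérer proprement le cas où la série de Poincaré diverge à $\delta_\Gamma$ via l'astuce de Patterson, mais surtout il faut adapter la construction au cadre des semi-groupes, où l'absence d'inverse empêche d'invoquer directement l'équivariance usuelle et la propriété de conformalité standard. Pour contourner ce point, je m'appuierais sur la stratégie annoncée dans l'introduction : approcher $\Gamma$ par des sous-semi-groupes de Schottky, pour lesquels l'ensemble limite est l'attracteur d'un système de fonctions itérées hyperbolique auto-similaire (si bien que l'exposant critique et la dimension de Hausdorff s'obtiennent simultanément via l'équation du type Bowen), puis passer à la borne supérieure sur ces sous-semi-groupes grâce à la semi-continuité inférieure de l'entropie mentionnée dans le résumé.
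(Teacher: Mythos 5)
Le papier énonce ce théorème dans l'introduction comme un résultat classique attribué à Patterson et Sullivan et n'en donne aucune preuve ; il n'y a donc pas d'argument interne auquel comparer le vôtre. Lu comme l'énoncé classique pour un \emph{sous-groupe discret} non élémentaire de type fini, vos trois premiers paragraphes reproduisent exactement la preuve standard (traduction $\log\norm{\gamma} \asymp \frac{1}{2} d(o,\gamma o)$, recouvrement de l'ensemble limite par des ombres pour la majoration, mesure de Patterson, lemme de l'ombre de Sullivan et principe de distribution de masse pour la minoration) et sont essentiellement corrects — modulo la précaution habituelle que l'estimation $\mu(\beta(\xi,r)) \lesssim r^{\delta}$ n'est obtenue qu'aux points radiaux, de sorte qu'il faut invoquer la finitude géométrique des groupes fuchsiens de type fini pour contrôler les (au plus dénombrables) points paraboliques, ainsi que la discrétude pour disposer du lemme de l'ombre et de $\delta_\Gamma < \infty$.

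La lacune réelle est dans votre dernier paragraphe, où vous tentez de couvrir le cas des semi-groupes tel que l'exige littéralement l'énoncé. La stratégie d'approximation par des sous-semi-groupes de Schottky produit à la limite l'\emph{entropie} $h_\Gamma = \sup_{\Gamma' \text{ Schottky}} \delta_{\Gamma'}$, donc l'égalité $\dv(\Lambda^c_\Gamma) = h_\Gamma$ (corollaire \ref{cpaulin}), et non $\dim_H(\Lambda_\Gamma) = \delta_\Gamma$. Or pour les semi-groupes ces quantités diffèrent réellement : l'exemple \ref{exee} du papier lui-même exhibe un sous-semi-groupe libre, de type fini et non élémentaire de $SL(2,\R)$ avec $\delta_\Gamma = \log 2/\log(\pi/2) > 1$ alors que $\Lambda_\Gamma$ est un segment, d'où $\dim_H(\Lambda_\Gamma) = 1 < \delta_\Gamma$. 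Les chevauchements qui rendent l'orbite non séparée gonflent $\delta_\Gamma$ (qui compte les éléments) au-dessus de $h_\Gamma$ (qui mesure l'espace occupé), et aucun argument de recouvrement ou de mesure ne peut combler cet écart puisque l'égalité affirmée est fausse pour les semi-groupes ; elle ne survit que sous une hypothèse de séparation supplémentaire (semi-groupe séparé, ou $\beta$ de Pisot/Salem comme dans la proposition \ref{psalem}). Donc soit l'énoncé doit se lire avec \og sous-groupe \fg\ — auquel cas supprimez le dernier paragraphe, l'argument classique suffit — soit il n'admet pas de preuve.
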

	
	Bien que l'énoncé ne fasse pas intervenir de géométrie hyperbolique, la preuve utilise de façon cruciale le fait que $SL(2,\R)$ agisse par isométrie sur le plan hyperbolique $\hyp$.
	
	\begin{figure}[H]
		\centering
		\caption{Action du groupe $SL(2,\Z)$ sur le disque de Poincaré}
		\includegraphics{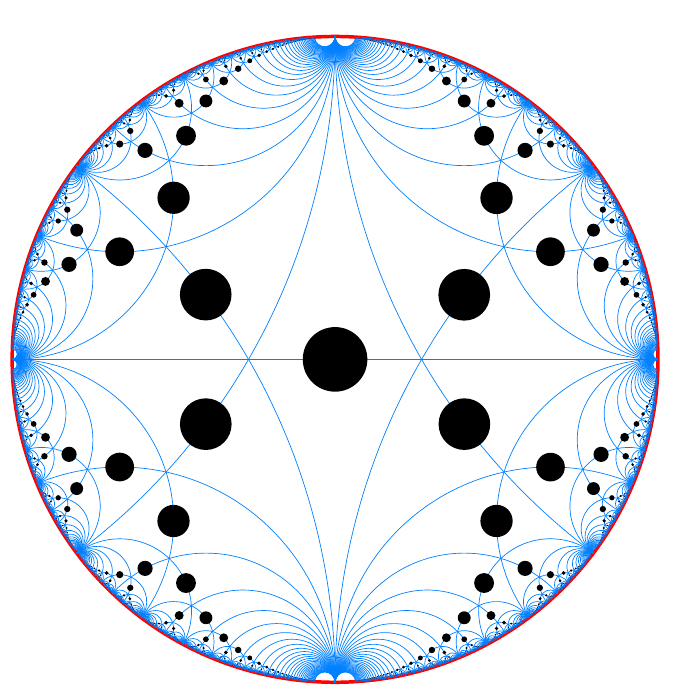}
	\end{figure}
	
	\subsection{Présentation de mes résultats}
	
	Pour un sous-semi-groupe $\Gamma$ du groupe d'isométries $\Isom(X)$ d'un espace Gromov-hyperbolique $X$ propre, nous définissons une notion d'entropie (par analogie avec l'entropie volumique), qui généralise la notion d'exposant critique des groupes discrets.
	L'entropie est une façon de mesurer \og l'espace occupé \fg\ par l'orbite d'un semi-groupe dans l'espace $X$, tandis que l'exposant critique mesure la quantité d'éléments.
	L'énoncé suivant donne une caractérisation de l'entropie qui justifie son intérêt :
	
	\begin{thm} \label{thm0}
		Soit $X$ un espace Gromov-hyperbolique d'adhérence compacte, et soit $\Gamma$ un semi-groupe d'isométries de $X$ dont l'ensemble limite contient au moins deux points, alors on a
		\[ \sup_{\Gamma' < \Gamma \atop \Gamma' \text{ Schottky}} \delta_{\Gamma'} = h_\Gamma. \]
	\end{thm}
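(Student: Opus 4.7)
L'inégalité $\sup_{\Gamma'\text{ Schottky}} \delta_{\Gamma'} \le h_\Gamma$ est immédiate : pour tout sous-semi-groupe $\Gamma' < \Gamma$, la vitesse de croissance de l'orbite $\Gamma' x$ est majorée par $h_\Gamma$, et lorsque $\Gamma'$ est de Schottky, il est libre, de sorte que son exposant critique coïncide exactement avec cette vitesse d'orbite.

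L'inégalité réciproque est le cœur du résultat. Fixons $\alpha < h_\Gamma$ ; il s'agit de construire un sous-semi-groupe de Schottky $\Gamma' < \Gamma$ vérifiant $\delta_{\Gamma'} \ge \alpha$. Choisissons un point base $x \in X$. Par définition de l'entropie, pour $R$ fixé assez grand et $n$ assez grand, un $\epsilon_0$-réseau séparé $A_n$ de $\Gamma x \cap (B(x, n+R) \setminus B(x, n))$ a pour cardinal au moins $e^{\alpha' n}$, où $\alpha' \in (\alpha, h_\Gamma)$. À chaque $g \in A_n$ on associe son \emph{ombre avant} $\xi_g^+ \in \bord X$ (direction de $gx$ vue depuis $x$) et son \emph{ombre arrière} $\xi_g^- \in \bord X$ (direction de $g^{-1}x$ vue depuis $x$, où $g^{-1}$ est considéré comme isométrie de $X$). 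Le plan est d'extraire une sous-famille $S \subset A_n$ telle que (i) les $\xi_g^+$ pour $g \in S$ soient deux à deux à distance visuelle $\ge \delta$, et (ii) tout $\xi_h^-$ pour $h \in S$ soit à distance $\ge \delta$ de tout $\xi_g^+$ pour $g \in S$. L'hyperbolicité de Gromov entraîne alors qu'en posant $U_g := B_{\bord X}(\xi_g^+, \delta/3)$, on a $g(U_h) \subset U_g$ pour tous $g, h \in S$ dès que $n$ est assez grand : car $g$ contracte $\bord X$ d'un facteur $\asymp e^{-n}$ hors d'un petit voisinage de $\xi_g^-$. Le lemme classique du ping-pong donne alors que $\langle S \rangle$ est un semi-groupe de Schottky libre sur $S$, d'exposant critique minoré par $\log |S| / (n + R)$.

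L'étape technique délicate, et le principal obstacle, est d'établir $|S| \ge e^{(\alpha'-\eta) n}$ après extraction, pour un $\eta > 0$ arbitrairement petit. La stratégie, de type Bishop--Jones, repose sur deux ingrédients. D'une part, les ombres avant des éléments de $A_n$ ont un diamètre visuel $\asymp e^{-n}$ et, grâce à la séparation de $A_n$ et à la géométrie des tubes autour des rayons géodésiques issus de $x$ dans un espace Gromov-hyperbolique, recouvrent $\bord X$ avec une multiplicité uniformément bornée. D'autre part, la non-élémentarité de $\Lambda_\Gamma$ (assurée par $\#\Lambda_\Gamma \ge 2$) permet, quitte à translater par un élément convenable de $\Gamma$, de trouver un ouvert $V \subset \bord X$ contenant une proportion positive des $\xi_g^+$ tout en restant à distance $\ge \delta$ d'une région où se concentrent la plupart des $\xi_h^-$. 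Une extraction gloutonne dans $V$ relative à une partition fine par boules visuelles de rayon $\delta$ ne perd alors qu'un facteur sous-exponentiel en $n$, et produit la sous-famille $S$ désirée. Un choix approprié de $\alpha'$ et $\eta$ tels que $\alpha' - \eta > \alpha$ donne finalement $\delta_{\langle S \rangle} \ge \alpha$, ce qui conclut.
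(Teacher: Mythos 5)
Votre schéma général --- extraire d'une couronne $A_n$ de l'orbite une famille $S$ d'éléments jouant au ping-pong, puis minorer l'exposant critique du semi-groupe libre engendré par $\log\#S/(n+1)$ --- correspond bien à la \emph{seconde moitié} de la preuve du papier (théorème \ref{thm2}, lemmes \ref{sc} et \ref{mnec}). Mais l'étape que vous qualifiez de \og technique \fg\ est en réalité le c\oe ur du théorème et elle n'est pas démontrée dans votre esquisse : il s'agit de produire une sous-famille $S \subseteq A_n$ de cardinal $\geq e^{(\alpha'-\eta)n}$ dont \emph{toutes} les ombres arrière $\xi_h^-$ restent à distance uniforme de toutes les ombres avant $\xi_g^+$. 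C'est exactement le contenu du théorème \ref{thm1} (existence d'un sous-semi-groupe contractant de même entropie), qui occupe toute la section \ref{pthm1}. Rien ne garantit a priori que les ombres avant et arrière de la majorité des éléments ne s'accumulent pas au même endroit du bord ; le papier traite cela par une analyse du support de $\Gamma$ dans $\bX \times \bX$ (propositions \ref{nonvide} et \ref{ginv}, qui utilisent la compacité du bord), avec une disjonction de cas : support non diagonal (proposition \ref{diag}), support à au moins trois points (ping-pong $hHh$ avec une isométrie contractante, proposition \ref{hyp_cont}), support réduit à un doublet, et surtout le cas où $\Gamma$ fixe un point au bord. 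Dans ce dernier cas, votre idée de \og translater par un élément convenable de $\Gamma$ \fg\ ne fonctionne pas telle quelle (la translation à gauche ne déplace les ombres arrière que d'une quantité bornée), et l'argument du papier est tout autre : un lemme des tiroirs sur une partition de $B(o,R)$ en copies d'un domaine fondamental d'une isométrie contractante $h$ (lemme \ref{minXh}), qui ramène une proportion $\geq 1/(CR)$ des éléments dans $X_h$ --- c'est bien la perte sous-exponentielle que vous espérez, mais elle repose sur l'existence préalable d'une isométrie contractante évitant le point fixe (proposition \ref{econt}), qui n'est pas gratuite. Votre esquisse suppose donc essentiellement acquis ce qu'il fallait démontrer.

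Deux points secondaires. D'une part, il y a une incohérence d'échelles dans votre condition (i) : si les $\xi_g^+$ doivent être deux à deux $\delta$-séparés pour un $\delta>0$ \emph{fixe}, alors $\#S$ est majoré par le nombre de recouvrement de $\bX$ à l'échelle $\delta$, donc par une constante indépendante de $n$, et la minoration $\log\#S/(n+1)$ ne donne rien. La séparation des ombres avant doit se faire à l'échelle de leur diamètre, soit $\asymp e^{-n}$ ; c'est ce que réalise le lemme \ref{sc} en travaillant directement dans $X$ avec une partie $r$-séparée de la couronne pour un $r$ fixe, ce qui évite en outre tout argument de recouvrement à multiplicité bornée sur le bord. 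Seule la condition (ii), séparant ombres arrière et ombres avant, doit être à échelle fixe. D'autre part, pour l'inégalité facile, l'argument pertinent n'est pas la liberté du semi-groupe de Schottky mais sa séparation : l'exemple \ref{exee} du papier montre qu'un semi-groupe libre peut vérifier $\delta > h$ ; la conclusion reste correcte puisqu'un semi-groupe de Schottky est séparé, d'où $\delta_{\Gamma'} = h_{\Gamma'} \leq h_\Gamma$.
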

	
	Autrement dit, l'entropie $h_\Gamma$ est la borne supérieure des exposants critiques des sous-semi-groupes de Schottky du semi-groupe $\Gamma$ (c'est-à-dire des sous-semi-groupes ayant la dynamique la plus simple (voir \ref{schottky} pour une définition précise)).
	
	Nous supposons ici que l'espace est propre et de bord compact), mais nous verrons que cette hypothèse n'est pas beaucoup plus forte que de demander seulement la propreté de l'espace $X$ (voir le paragraphe \ref{scompacite}).

	%l'adhérence de l'espace est compacte (ou de façon équivalente que l'espace est propre et de bord compact), mais nous verrons que cette hypothèse n'est pas beaucoup plus forte que de demander seulement la propreté de l'espace $X$ (voir le paragraphe \ref{scompacite}).
	
	\begin{rem}
		Lorsque $\Gamma$ est un groupe, si l'on remplace les semi-groupes de Schottky par des \emph{groupes} de Schottky au sens classique, le résultat devient faux d'après un théorème de P.Doyle (voir \cite{doyle}).
	\end{rem}
	
	Le théorème \ref{thm0} permet d'étudier la \og dimension à l'infini \fg\ du semi-groupe, puisque l'on obtient en corollaire une généralisation d'un résultat de F.Paulin (voir \cite{paulin}) :
	
	\begin{cor} \label{cpaulin}
		Soit $X$ un espace Gromov-hyperbolique propre à bord compact, et soit $\Gamma$ un semi-groupe d'isométries de $X$ dont l'ensemble limite contient au moins deux points. Alors la dimension visuelle de l'ensemble limite radial du semi-groupe $\Gamma$ est égale à l'entropie du semi-groupe :
		\[ \dv {\Lambda^c_\Gamma} = h_\Gamma. \]
	\end{cor}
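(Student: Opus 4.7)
Le plan est de démontrer l'égalité par double inégalité, en s'appuyant sur le théorème \ref{thm0} pour la minoration.

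\emph{Minoration $\dv \Lambda^c_\Gamma \geq h_\Gamma$.} Soit $\Gamma'$ un sous-semi-groupe de Schottky de $\Gamma$. La dynamique libre et contractante d'un semi-groupe de Schottky permet d'abord de montrer que $\Lambda_{\Gamma'} \subset \Lambda^c_\Gamma$ : tout point de $\Lambda_{\Gamma'}$ s'obtient comme limite d'images successives $\gamma_n \cdots \gamma_1 x_0$ alignées le long d'un rayon géodésique, donc est approché radialement par l'orbite de $\Gamma$. Ensuite, la structure auto-similaire (de type Markov) d'un Schottky permet de construire une mesure conforme de type Patterson-Sullivan de dimension $\delta_{\Gamma'}$ sur $\Lambda_{\Gamma'}$, d'où l'égalité $\dv \Lambda_{\Gamma'} = \delta_{\Gamma'}$. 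Par monotonie de la dimension visuelle, on obtient $\dv \Lambda^c_\Gamma \geq \delta_{\Gamma'}$, puis en passant à la borne supérieure sur les sous-semi-groupes de Schottky et en invoquant le théorème \ref{thm0}, il vient $\dv \Lambda^c_\Gamma \geq h_\Gamma$.

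\emph{Majoration $\dv \Lambda^c_\Gamma \leq h_\Gamma$.} Soit $s > h_\Gamma$. Fixons un point base $x_0 \in X$ et $R > 0$. Par définition de l'ensemble limite radial, $\Lambda^c_\Gamma$ est contenu dans la limite supérieure des ombres depuis $x_0$ des boules $B(\gamma x_0, R)$ lorsque $\gamma$ parcourt $\Gamma$. Dans la métrique visuelle, le diamètre d'une telle ombre est majoré à constante multiplicative près par $e^{-d(x_0, \gamma x_0)}$. La définition même de l'entropie comme taux de croissance exponentiel des orbites assure la convergence de la série analogue à la série de Poincaré $\sum_{\gamma \in \Gamma} e^{-s\, d(x_0, \gamma x_0)}$, d'où la nullité de la mesure de Hausdorff visuelle $s$-dimensionnelle de $\Lambda^c_\Gamma$ par un argument classique de Borel-Cantelli. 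On en déduit $\dv \Lambda^c_\Gamma \leq s$, puis $\dv \Lambda^c_\Gamma \leq h_\Gamma$ en faisant tendre $s$ vers $h_\Gamma$.

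L'obstacle principal est la minoration, et plus précisément la vérification que pour un semi-groupe de Schottky $\Gamma'$ la dimension visuelle de son ensemble limite coïncide avec son exposant critique. Cela requiert la construction effective d'une mesure conforme sur $\Lambda_{\Gamma'}$ et des estimations fines (encadrement du type lemme de l'ombre) des tailles visuelles des ombres le long d'une orbite, en exploitant la combinatoire libre du semi-groupe de Schottky. La majoration, elle, est un calcul relativement standard dès lors que l'entropie se lit comme taux de croissance exponentiel des orbites.
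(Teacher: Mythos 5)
Votre stratégie globale est celle du texte : la majoration s'obtient par un recouvrement de l'ensemble limite radial par des ombres et la convergence d'une série de Poincaré (c'est la proposition \ref{if}), et la minoration en se ramenant, via le théorème \ref{thm0}, aux sous-semi-groupes de Schottky, pour lesquels on établit $\dv(\Lambda_{\Gamma'}) = \delta_{\Gamma'}$ au moyen d'une mesure de type Patterson-Sullivan et du lemme de Frostman, l'inclusion $\Lambda_{\Gamma'} \subseteq \Lambda^c_\Gamma$ venant de la radialité de l'ensemble limite d'un semi-groupe contractant de type fini (propositions \ref{pschottky} et \ref{pradial}).

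Il y a cependant un point qui, tel quel, ne passe pas dans la majoration. Vous affirmez que \og la définition même de l'entropie \fg\ assure la convergence de $\sum_{\gamma\in\Gamma} e^{-s\,d(x_0,\gamma x_0)}$ pour $s>h_\Gamma$. C'est faux en général : cette série ne converge que pour $s>\delta_\Gamma$, et l'on peut avoir $h_\Gamma<\delta_\Gamma$ strictement dès que le semi-groupe n'est pas séparé — c'est précisément le phénomène central que ce texte étudie (voir l'exemple \ref{exee}, où $\delta_\Gamma>1=h_\Gamma$) ; la série peut même diverger pour tout $s$ si l'orbite s'accumule. Il faut, comme dans la proposition \ref{if}, remplacer $\Gamma$ par une partie $S$ séparée et couvrante : on a alors $\delta_S=h_\Gamma$, la série de Poincaré de $S$ converge pour $s>h_\Gamma$, et les ombres des points de $So$ (avec un rayon augmenté de la constante de couverture) recouvrent encore $\Lambda^c_S=\Lambda^c_\Gamma$. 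Avec cette correction, votre argument de majoration redevient celui du texte ; pour la minoration, votre plan coïncide avec la preuve donnée, le cœur technique étant bien, comme vous l'identifiez, l'égalité $\dv(\Lambda_{\Gamma'})=\delta_{\Gamma'}$ pour un semi-groupe de Schottky, établie ici par les lemmes \ref{mmpn}, \ref{mmcg} et \ref{mnd}.
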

	
	La dimension visuelle $\dv$ est une généralisation naturelle de la dimension de Hausdorff au bord d'un espace hyperbolique. Et l'ensemble limite radial $\Lambda^c_\Gamma$ (appelé aussi ensemble limite conique) du semi-groupe $\Gamma$ est l'ensemble des points $\xi$ du bord qui sont limite d'une quasi-géodésique de l'orbite $\Gamma o$. %pour lesquels il existe une suite de points de l'orbite $\Gamma o$ qui converge vers $\xi$ tout en restant à distance bornée d'un rayon géodésique aboutissant en $\xi$.
	
	Ceci nous permet de calculer la dimension de Hausdorff de certains ensemble auto-similaires pour lesquels on ne savait pas encore faire à ma connaissance :
	
	\begin{cor}
		Si $\beta$ est un nombre de Salem et si $\Gamma$ est le sous-semi-groupe du groupe affine de $\C$ engendré par les applications
		\[ x \mapsto \frac{x}{\beta} + t \]
		où $t \in A$ pour $A$ une partie finie de $\Q(\beta)$,
		alors on a l'égalité
		\[ \dim_H (\Lambda_\Gamma) = \delta_\Gamma. \]
	\end{cor}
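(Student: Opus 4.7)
The plan is to realize $\Gamma$ as a semigroup of isometries of $\hypt$ and invoke Corollaire \ref{cpaulin}. Each generator $x \mapsto x/\beta + t$ of $\Gamma$, being a similitude of $\C$, extends via la Poincaré extension to the isometry $(z, h) \mapsto (z/\beta + t, h/\beta)$ of the upper half-space model $\hypt = \C \times \R_{>0}$, so $\Gamma$ embeds in $\Isom(\hypt)$. Since $\hypt$ is a proper Gromov-hyperbolic space with compact boundary $\bord{\hypt} = \C \cup \{\infty\}$ and $\Lambda_\Gamma$ contains at least two points as soon as $\#A \geq 2$, the hypotheses of the corollaire are satisfied.

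Next I would identify $\Lambda_\Gamma$ with the self-similar attractor $K = \{\sum_{i \geq 0} a_i \beta^{-i} : (a_i) \in A^{\N}\} \subset \C$ and verify that $\Lambda^c_\Gamma = \Lambda_\Gamma$: for $\xi = \sum a_i \beta^{-i}$ and basepoint $o = (0, 1) \in \hypt$, the orbit sequence $(\gamma_{a_0} \circ \cdots \circ \gamma_{a_{n-1}})(o) = (\sum_{i < n} a_i \beta^{-i}, \beta^{-n})$ stays within bounded hyperbolic distance of the vertical geodesic converging to $\xi$, so $\xi$ is a radial limit point. The visual metric on $\bord{\hypt}$ based at $o$ is bi-Lipschitz equivalent to the Euclidean metric on the bounded set $K$, hence $\dv \Lambda_\Gamma = \dim_H \Lambda_\Gamma$. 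Corollaire \ref{cpaulin} then yields $\dim_H \Lambda_\Gamma = h_\Gamma$.

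To conclude, one must show $h_\Gamma = \delta_\Gamma$, and here the Salem hypothesis plays its role. The composition $\gamma_{t_1} \circ \cdots \circ \gamma_{t_n}$ acts by $(z, h) \mapsto (z/\beta^n + \tau, h/\beta^n)$ with $\tau = \sum_{i=1}^n t_i \beta^{-(i-1)} \in \Q(\beta)$. Since $\beta$ is Salem, its non-principal Galois conjugates are $1/\beta$ together with complex pairs on the unit circle, and $N(\beta) = 1$. After clearing denominators, $N \beta^{n-1}(\tau - \tau')$ is an algebraic integer; the product formula combined with the bounds $|\sigma_2(\tau - \tau')| = O(\beta^n)$ and $|\sigma_j(\tau - \tau')| = O(n)$ for the unit-circle embeddings forces $|\tau - \tau'| \geq c\, n^{-(\deg\beta - 2)} \beta^{-(n-1)}$ whenever $\tau \neq \tau'$. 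This polynomial separation of distinct orbit points at each generation makes $\Gamma$ séparé in the sense of the paper, so $\delta_\Gamma$ is a true limit equal to $h_\Gamma$. The main obstacle will be this final step: translating the polynomial Salem separation into the equality $h_\Gamma = \delta_\Gamma$, possibly via Théorème \ref{thm0} by constructing Schottky sub-semi-groupes of $\Gamma$ whose critical exponents approach $\delta_\Gamma$, the Schottky condition being verified thanks to the very same separation estimate.
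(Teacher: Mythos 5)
Your overall architecture is the same as the paper's: embed $\Gamma$ in $\Isom(\hypt)$ via the Poincaré extension, check that the limit set is radial and that visual and Hausdorff dimensions agree on the bounded part of $\C$, apply Corollaire \ref{cpaulin} to get $\dim_H(\Lambda_\Gamma)=h_\Gamma$, and then reduce everything to the equality $h_\Gamma=\delta_\Gamma$, which is where the Salem hypothesis and the product formula enter. Up to that last step your argument is sound (the paper gets radiality from Proposition \ref{pradial} rather than by your direct vertical-geodesic computation, but that is a cosmetic difference).

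The last step, however, contains a genuine gap. Your product-formula estimate gives $\abs{\tau-\tau'}\geq c\, n^{-(\deg\beta-2)}\abs{\beta}^{-(n-1)}$ for distinct generation-$n$ translation parts; at height $\abs{\beta}^{-n}$ this is a hyperbolic separation of order $n^{-(\deg\beta-2)}$, which tends to $0$. So this does \emph{not} make $\Gamma$ séparé in the sense of the paper (a uniform $\epsilon>0$ is required); indeed the paper points out (remarque \ref{non-sep}) that separation holds only in the Pisot case, where there are no unit-modulus conjugates, and that the Salem case is precisely the new, non-separated one. Your fallback — building Schottky sub-semigroups whose critical exponents approach $\delta_\Gamma$ — cannot work either, because by Théorème \ref{thm0} the supremum of those exponents equals $h_\Gamma$, which a priori may be strictly smaller than $\delta_\Gamma$ (exemple \ref{exee}); invoking it here is circular. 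The correct use of your estimate is weaker but sufficient: it bounds the \emph{multiplicity} of the orbit, namely $\#\{\gamma\in\Gamma \mid d(\gamma j,x)\leq 1\}=O(d(j,x)^r)$ for some integer $r$ (the paper's lemme \ref{poly}, proved by showing that $\beta^{n_x}\Gamma_x 0$ lies in $\Z[\beta]$ intersected with a region of $\prod_v k_v$ whose volume grows polynomially in $n_x$ because the unit-circle embeddings only expand linearly). From this, for a separated covering net $S$ one gets $\#\{\gamma\in\Gamma\mid d(o,\gamma o)\leq R\}\leq C R^{r}\,\#\{\gamma\in S\mid d(o,\gamma o)\leq R\}$, and the polynomial factor disappears in the limit $\frac{1}{R}\log(\cdot)$, giving $\delta_\Gamma\leq\delta_S=h_\Gamma$ and hence the desired equality. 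You have all the arithmetic ingredients for this; what is missing is the passage from polynomial separation to a polynomial overlap count rather than to uniform separation.
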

	
	%Un nombre de Salem est un réel algébrique strictement supérieur à $1$ dont tous les conjugués sont de modules inférieurs ou égaux à $1$.
	Le résultat était connu pour un nombre de Pisot, %(c'est-à-dire un réel algébrique strictement supérieur à $1$ dont tous les conjugués sont de modules strictement inférieurs à $1$) (voir \cite{Lal}),
	mais semble nouveau pour un nombre de Salem. % (c'est-à-dire un réel algébrique strictement supérieur à $1$ dont tous les conjugués sont de modules inférieurs ou égaux à $1$).
	%Quand $\beta$ est un nombre de Salem et que l'ensemble de chiffres $A$ est une partie de $\Q(\beta)$, on montre que l'on a toujours l'égalité $h_\Gamma = \delta_\Gamma$.
	Une telle égalité est toujours à l'état de conjecture pour les semi-groupes de Kenyon (voir sections \ref{dev-beta} et \ref{semi-ken}).
	%D'autre part, on dispose d'un moyen pour calculer l'exposant critique $\delta_\Gamma$ pour les nombre de Pisot ainsi que pour quelques nombres de Salem (voir \cite{mercat}).
	
	Voici d'autres corollaires du théorème \ref{thm0} :
	
	\begin{cor} \label{lim}
		Soit $X$ un espace Gromov-hyperbolique propre à bord compact et soit $\Gamma$ un semi-groupe discret d'isométries de $X$ dont l'ensemble limite contient au moins deux points.
		La limite supérieure dans la définition de l'exposant critique du semi-groupe $\Gamma$ est une vraie limite :
		\[ \delta_\Gamma = \lim_{n \to \infty} \frac{1}{n} \log( \#\{ \gamma \in \Gamma | d(o, \gamma o) \leq n \}). \]
	\end{cor}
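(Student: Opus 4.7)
Posons $N_\Gamma(n) := \#\{\gamma \in \Gamma \mid d(o, \gamma o) \leq n\}$. L'inégalité $\liminf_n \frac{1}{n} \log N_\Gamma(n) \leq \delta_\Gamma$ étant tautologique, il s'agit d'établir la minoration opposée. La stratégie est de se ramener via le théorème \ref{thm0} au cas des sous-semi-groupes de Schottky, pour lesquels on pourra montrer que $\frac{1}{n} \log N_{\Gamma'}(n)$ admet une vraie limite valant $\delta_{\Gamma'}$.

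Pour un sous-semi-groupe de Schottky $\Gamma'$ engendré par $g_1, \ldots, g_k$, on dispose de deux propriétés géométriques cruciales, conséquences classiques du lemme du ping-pong : d'une part la liberté de $\Gamma'$ comme semi-groupe (écriture unique $\gamma = g_{i_1} \cdots g_{i_n}$), d'autre part la quasi-additivité des déplacements (il existe $C \geq 0$ tel que $d(o, \gamma_1 \gamma_2 \cdot o) \geq d(o, \gamma_1 o) + d(o, \gamma_2 o) - C$ pour tous $\gamma_1, \gamma_2 \in \Gamma'$). Ces deux propriétés combinées confèrent à la fonction de comptage $N_{\Gamma'}$ une structure supermultiplicative (modulo une correction contrôlée provenant des décompositions multiples d'un même élément) ; une version perturbée du lemme de Fekete --- ou, de manière équivalente, une analyse Tauberienne de la série de Poincaré $P(s) = \sum_{\gamma \in \Gamma'} e^{-s\, d(o, \gamma o)}$, laquelle se factorise presque géométriquement sous la forme $\sum_n \bigl(\sum_i e^{-s\, d(o, g_i o)}\bigr)^n$ --- permettent alors de conclure que $\lim_n \frac{1}{n} \log N_{\Gamma'}(n)$ existe et vaut $\delta_{\Gamma'}$.

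Étant donné $\epsilon > 0$, le théorème \ref{thm0} fournit un sous-semi-groupe de Schottky $\Gamma' < \Gamma$ avec $\delta_{\Gamma'} > h_\Gamma - \epsilon$. L'inclusion $\Gamma' \subset \Gamma$ entraînant $N_\Gamma(n) \geq N_{\Gamma'}(n)$, il en résulte
\[ \liminf_n \frac{1}{n} \log N_\Gamma(n) \geq \lim_n \frac{1}{n} \log N_{\Gamma'}(n) = \delta_{\Gamma'} > h_\Gamma - \epsilon \geq \delta_\Gamma - \epsilon, \]
la dernière inégalité étant la majoration $h_\Gamma \geq \delta_\Gamma$ pour un semi-groupe discret, conséquence directe de la définition de l'entropie (qui majore toujours le taux exponentiel de croissance du comptage de l'orbite). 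En faisant tendre $\epsilon$ vers zéro, on obtient $\liminf \geq \delta_\Gamma$, d'où l'égalité cherchée.

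L'obstacle principal sera d'établir proprement la convergence $\frac{1}{n} \log N_{\Gamma'}(n) \to \delta_{\Gamma'}$ pour les semi-groupes de Schottky : la liberté et la quasi-additivité fournissent la structure multiplicative attendue, mais le passage de cette structure à une vraie limite requiert soit une majoration (polynomiale) du nombre de décompositions d'un élément donné, pour invoquer une forme perturbée du lemme de Fekete, soit un argument Tauberien fin appliqué à la série de Poincaré. L'application du théorème \ref{thm0} et la comparaison $h_\Gamma \geq \delta_\Gamma$ sont ensuite quasi-immédiates.
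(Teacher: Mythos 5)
Votre architecture générale est correcte, mais elle diffère sensiblement de celle du papier sur le point que vous identifiez vous-même comme l'obstacle principal. Le papier ne démontre jamais que $\frac{1}{n}\log N_{\Gamma'}(n)$ converge pour un semi-groupe de Schottky arbitraire : il contourne entièrement la question. Sa preuve (voir le corollaire \ref{lim_}) passe par le théorème \ref{thm1} (sous-semi-groupe contractant $\Gamma^c$ de même entropie) puis par le lemme \ref{sc}, qui fournit, pour un entier $k$ bien choisi, une partie libre $S\cap A_k$ dont \emph{tous} les générateurs ont un déplacement dans la fenêtre $[k,k+1[$ et dont le cardinal est au moins $e^{k(h_\Gamma - \epsilon/2)}$. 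La liberté et l'inégalité triangulaire donnent alors immédiatement, pour \emph{tout} $n$, la minoration $N_\Gamma(n) \geq \left(\#(S\cap A_k)\right)^{\floor{n/(k+1)}}$, d'où la minoration du $\liminf$ sans aucun lemme de Fekete. Votre route --- établir la vraie limite pour un sous-semi-groupe de Schottky quelconque issu du théorème \ref{thm0} --- est praticable, mais demande précisément le travail que vous laissez en suspens, alors que le choix de générateurs de déplacements presque égaux rend ce travail inutile.

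Sur ce point laissé en suspens : la supermultiplicativité exacte $N_{\Gamma'}(n+m)\geq N_{\Gamma'}(n)N_{\Gamma'}(m)$ est fausse en général, car l'application $(\gamma_1,\gamma_2)\mapsto\gamma_1\gamma_2$ n'est pas injective (la liberté garantit l'unicité du \emph{mot}, pas celle de sa coupure en deux facteurs). La correction que vous invoquez est toutefois accessible : un élément de longueur $\ell$ en les générateurs admet exactement $\ell-1$ décompositions en deux facteurs non vides, et $\ell = O(d(o,\gamma o))$ par les lemmes \ref{cet} et \ref{cdv} (quasi-additivité des déplacements dans un semi-groupe contractant), d'où $N_{\Gamma'}(n)\,N_{\Gamma'}(m)\leq C(n+m)\,N_{\Gamma'}(n+m)$, et une version perturbée du lemme de Fekete conclut. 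En revanche, l'alternative taubérienne telle que vous la décrivez ne suffit pas : savoir que la série de Poincaré diverge pour $s<\delta_{\Gamma'}$ ne contrôle que la limite supérieure de $\frac{1}{n}\log N_{\Gamma'}(n)$, pas sa limite inférieure. Enfin, deux réserves mineures : la justification entre parenthèses de $h_\Gamma\geq\delta_\Gamma$ est inversée (en général l'entropie \emph{minore} l'exposant critique, $h_A\leq\delta_A$ ; l'inégalité dont vous avez besoin vient de ce que, pour un semi-groupe d'orbite séparée, on peut prendre $S=\Gamma o$ dans la borne supérieure définissant l'entropie), et cette étape utilise réellement que l'orbite est séparée, ce qui est la lecture qu'il faut donner à l'hypothèse \og discret \fg.
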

	
	\begin{cor} \label{cont-inf}
		L'entropie est semi-continue inférieurement en les semi-groupes dont l'ensemble limite contient au moins deux points.
	\end{cor}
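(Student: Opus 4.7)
La stratégie consiste à déduire la semi-continuité inférieure directement du théorème \ref{thm0} en observant qu'une borne supérieure de fonctions semi-continues inférieurement est semi-continue inférieurement. Puisque $h_\Gamma = \sup \{\delta_{\Gamma'} \mid \Gamma' < \Gamma, \Gamma' \text{ Schottky}\}$, il suffit de montrer que, pour tout sous-semi-groupe de Schottky finiment engendré $\Gamma'$ fixé, l'exposant critique $\delta_{\Gamma'}$ dépend continûment des générateurs, puis de relever les générateurs de $\Gamma'$ dans une suite de semi-groupes $\Gamma_n \to \Gamma$.

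Précisément, soit $(\Gamma_n)_n$ une suite convergeant vers $\Gamma$ (au sens de la topologie de Chabauty, ou, de manière équivalente pour les semi-groupes engendrés par un nombre fini d'éléments, au sens de la convergence des générateurs dans $\Isom(X)$). Fixons $\varepsilon > 0$. Par le théorème \ref{thm0}, il existe un sous-semi-groupe de Schottky $\Gamma' = \langle g_1, \ldots, g_k \rangle$ de $\Gamma$ vérifiant $\delta_{\Gamma'} > h_\Gamma - \varepsilon$. La condition de Schottky s'exprime par des inclusions \emph{strictes} d'ouverts disjoints du bord $\bX$ : c'est donc une condition ouverte sur les $k$-uplets de générateurs. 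Pour $n$ assez grand, il existe $g_i^{(n)} \in \Gamma_n$ avec $g_i^{(n)} \to g_i$ dans $\Isom(X)$, et alors $\Gamma'_n := \langle g_1^{(n)}, \ldots, g_k^{(n)} \rangle$ est un sous-semi-groupe de Schottky de $\Gamma_n$.

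Il reste à vérifier que $\delta_{\Gamma'_n} \to \delta_{\Gamma'}$. Cela provient du fait qu'un semi-groupe de Schottky est libre et que son exposant critique est caractérisé par une équation de pression du type $P(s) = 0$, où $s$ paramètre une somme finie portant sur les taux de contraction au bord des générateurs. Les générateurs variant continûment et les contractions restant uniformément contrôlées sur les ouverts de Schottky, le paramètre critique varie lui aussi continûment. On obtient alors $h_{\Gamma_n} \geq \delta_{\Gamma'_n} > h_\Gamma - 2\varepsilon$ pour $n$ grand, d'où $\liminf_n h_{\Gamma_n} \geq h_\Gamma$ en faisant tendre $\varepsilon$ vers $0$.

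La principale difficulté attendue est d'établir rigoureusement la continuité de l'exposant critique d'un semi-groupe de Schottky en ses générateurs : il faut contrôler uniformément la série de Poincaré au voisinage de $s = \delta_{\Gamma'}$, ce qui requiert d'exploiter la structure de semi-groupe libre et la contraction uniforme des générateurs sur les ouverts de Schottky. Une fois cette continuité acquise, ainsi que la stabilité de la condition de Schottky sous petite perturbation des générateurs, le passage à la limite et le caractère stable de borne supérieure rendent l'argument formel, et l'hypothèse que l'ensemble limite de $\Gamma$ contient au moins deux points est précisément celle qui garantit l'existence effective d'un tel sous-semi-groupe de Schottky $\Gamma'$.
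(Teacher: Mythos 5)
Votre stratégie générale (relever les générateurs d'un sous-semi-groupe de Schottky presque optimal dans les $\Gamma_n$, en utilisant le caractère ouvert de la condition de Schottky) est bien celle du papier, mais elle bute sur une étape que vous signalez vous-même sans la résoudre : la convergence $\delta_{\Gamma'_n} \to \delta_{\Gamma'}$. C'est là un vrai trou. L'argument que vous esquissez (équation de pression $P(s)=0$ portant sur les \og taux de contraction au bord \fg\ des générateurs) n'est pas disponible dans le cadre du papier : pour un espace Gromov-hyperbolique propre général, il n'y a ni structure conforme ni dérivée au bord permettant de définir ces taux de contraction, et aucune formule de type Bowen n'est établie (ni vraisemblablement vraie sous cette forme) dans cette généralité. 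Même dans $\hyp$, la continuité de l'exposant critique d'un groupe de Schottky en ses générateurs est un théorème non trivial (formalisme thermodynamique), qu'on ne peut pas invoquer comme une évidence ici.

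La preuve du papier contourne précisément cette difficulté en ne prenant pas un sous-semi-groupe de Schottky quelconque : via le lemme \ref{sc}, elle choisit des générateurs $S \cap A_n$ dont les normes sont toutes dans $[n, n+1[$. Pour un semi-groupe libre engendré par $N$ éléments de norme $\leq n+1$, la minoration élémentaire du lemme \ref{mnec} donne $\delta \geq \frac{\log N}{n+1}$, borne qui ne dépend que du \emph{cardinal} et de la \emph{norme maximale} des générateurs — deux quantités manifestement stables par petite perturbation. Comme $\limsup_n \frac{1}{n+1}\log \#(S \cap A_n) = \delta_S = h_\Gamma$ (remarque \ref{delta-ann}), cette borne grossière suffit déjà à atteindre $h_\Gamma$ à la limite, sans jamais avoir à comparer $\delta_{\Gamma'_n}$ à $\delta_{\Gamma'}$. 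Pour réparer votre preuve, il faudrait soit démontrer la continuité de l'exposant critique en les générateurs (ce qui est un résultat substantiel et non démontré dans le papier), soit adopter ce choix particulier de générateurs à normes quasi égales, ce qui revient à la preuve du papier.
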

	
	Voir corollaire \ref{semi-cont} pour un énoncé plus précis.
	Ce résultat généralise celui que donne F. Paulin à la fin de son article \cite{paulin}, puisque l'on ne fait ni l'hypothèse que le semi-groupe soit un groupe, ni qu'il soit discret, ni qu'il soit de type fini, ni que l'espace soit géodésique ou quasi-géodésique, et cela fonctionne aussi bien pour la convergence algébrique que pour la convergence géométrique. %pour la notion de convergence qu'il propose que pour d'autres notions de convergence.
	
	Nos résultats sur les semi-groupes permettent d'obtenir un résultat sur les groupes :
	
	\begin{cor} \label{sous-groupes}
		Soit $X$ un espace Gromov-hyperbolique propre à bord compact, et soit $\Gamma$ un groupe discret et sans torsion d'isométries de $X$ ne fixant pas de point au bord, alors on a
		\[ \sup_{\Gamma' < \Gamma \atop \Gamma' \text{ Schottky}} \delta_{\Gamma'} \geq \frac{1}{2} \delta_\Gamma, \]
		où la borne supérieure est prise sur l'ensemble des sous-groupes de Schottky du groupe $\Gamma$.
	\end{cor}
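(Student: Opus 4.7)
\textbf{Plan de preuve} --- L'approche consiste à appliquer le théorème \ref{thm0} pour obtenir de bons sous-semi-groupes de Schottky, puis à les convertir en sous-groupes de Schottky au prix d'un facteur $1/2$ dû au doublement de la longueur des générateurs.

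On observe d'abord que pour un groupe $\Gamma$ discret et sans torsion, l'action sur l'orbite $\Gamma o$ est libre, donc l'application $\gamma \mapsto \gamma o$ est injective et l'entropie $h_\Gamma$ coïncide avec l'exposant critique $\delta_\Gamma$. Puisque $\Gamma$ ne fixe aucun point au bord, son ensemble limite contient au moins deux points, et le théorème \ref{thm0} fournit, pour tout $\epsilon > 0$, un sous-semi-groupe de Schottky $\Gamma^+ = \langle g_1, \ldots, g_k \rangle^+ < \Gamma$ tel que $\delta_{\Gamma^+} \geq \delta_\Gamma - \epsilon$.

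La construction principale monte au \og niveau $N$\fg\ de $\Gamma^+$ en considérant les $k^N$ mots $G_I := g_{i_1} \cdots g_{i_N}$ indexés par $I = (i_1, \ldots, i_N) \in \{1, \ldots, k\}^N$. On fixe une permutation sans point fixe $\sigma$ des multi-indices telle que la première lettre de $\sigma(I)$ diffère de celle de $I$, et on pose $h_I := G_I G_{\sigma(I)}^{-1} \in \Gamma$. Grâce aux ouverts de Schottky disjoints $U_1, \ldots, U_k \subset \bord{X}$ de $\Gamma^+$, le point fixe attractif de $h_I$ se trouve dans $U_{i_1}$ et son point fixe répulsif dans $U_{\sigma(I)_1}$. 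Quitte à remplacer les $g_i$ par des puissances $g_i^M$ suffisamment grandes pour renforcer la dynamique contractante, les $h_I$ vérifient alors les conditions de ping-pong et engendrent librement un sous-groupe de Schottky $\Gamma' < \Gamma$.

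Pour minorer $\delta_{\Gamma'}$, on utilise la concentration des déplacements $\ell(G_I)$ autour de leur valeur typique $N\bar\ell$ quand $N$ est grand (loi des grands nombres ou formalisme de Gibbs associé à $\delta_{\Gamma^+}$). Chaque $h_I$ a un déplacement majoré par $\ell(G_I) + \ell(G_{\sigma(I)}) \leq 2N\bar\ell (1+o(1))$, et comme $\Gamma'$ est libre à $k^N$ générateurs on obtient
\[ \delta_{\Gamma'} \geq \frac{\log(2k^N - 1)}{2N\bar\ell (1+o(1))} \xrightarrow[N \to \infty]{} \frac{\log k}{2\bar\ell} \geq \frac{\delta_{\Gamma^+}}{2} - o(1). \]
En faisant $\epsilon \to 0$ puis $N \to \infty$, on en déduit $\sup_{\Gamma' \text{ Schottky}} \delta_{\Gamma'} \geq \delta_\Gamma / 2$. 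Le principal obstacle sera la vérification rigoureuse des conditions de ping-pong pour tous les $h_I$ simultanément (disjonction des tables d'attraction et de répulsion), ainsi que le contrôle précis du rapport $\delta_{\Gamma'}/\delta_{\Gamma^+}$ lorsque les $\ell_i$ ne sont pas uniformes, ce qui pourra nécessiter d'utiliser le formalisme thermodynamique via la formule de Bowen $\sum_i e^{-\delta_{\Gamma^+} \ell_i} = 1$ et des inégalités de Cauchy-Schwarz pour contrôler la concentration.
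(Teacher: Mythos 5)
Votre stratégie générale (doubler la longueur des mots pour passer d'un semi-groupe de Schottky à un groupe de Schottky, d'où le facteur $\frac{1}{2}$) est bien celle du papier, mais la construction concrète comporte deux lacunes sérieuses. La première concerne le ping-pong pour les $h_I = G_I G_{\sigma(I)}^{-1}$ : le théorème \ref{thm0} ne donne aucun contrôle sur les \emph{inverses} des éléments du semi-groupe de Schottky $\Gamma^+$. Pour un semi-groupe de Schottky pour $X_+$, on a $g_i^{-1}(X\setminus X_+)\subseteq X\setminus X_+$, donc $G_J(X\setminus X_+)\supseteq X\setminus X_+$ : le \og domaine répulsif \fg\ naturel de $h_I$, c'est-à-dire un voisinage de $h_I^{-1}o = G_{\sigma(I)}G_I^{-1}o$, n'est localisé dans le petit ensemble $G_{\sigma(I)}X_+$ que si $G_I^{-1}o\in X_+$, ce qui n'a aucune raison d'être vrai. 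Remplacer les $g_i$ par des puissances $g_i^M$ ne résout rien, car le problème n'est pas la force de la contraction mais l'absence totale d'information sur la direction de contraction des inverses. C'est précisément pour cela que la preuve du papier (section \ref{groupes}) retourne au semi-groupe \emph{contractant} $\Gamma^c$ donné par le théorème \ref{thm1} et en extrait (lemme \ref{scic}, via le lemme \ref{sc} appliqué à $\Gamma^c$ \emph{et} à $(\Gamma^c)^{-1}$, plus un argument de tiroirs) une partie $S_n$ telle que $S_n$ soit Schottky pour $X_+$ et $S_n^{-1}$ soit Schottky pour un second domaine $X_-$ ; le groupe de Schottky est alors engendré par les mots $\gamma\gamma_0\gamma$, $\gamma\in S_n$, dont les inverses $\gamma^{-1}\gamma_0^{-1}\gamma^{-1}$ contractent dans les ensembles deux à deux Gromov-disjoints $\gamma^{-1}X_-$ (lemme \ref{scgc}).

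La seconde lacune est quantitative. Si les déplacements $\ell_i$ des générateurs ne sont pas tous égaux, la formule de Bowen $\sum_i e^{-\delta\ell_i}=1$ et la convexité de l'exponentielle donnent $\frac{\log k}{\bar\ell}\leq\delta_{\Gamma^+}$ (avec $\bar\ell$ la moyenne uniforme des $\ell_i$), c'est-à-dire l'inégalité \emph{opposée} à celle dont vous avez besoin ; de plus la majoration $\ell(G_I)+\ell(G_{\sigma(I)})\leq 2N\bar\ell(1+o(1))$ est fausse pour les mots non typiques. Se restreindre aux mots typiques pour la mesure de Gibbs changerait à la fois le cardinal et la longueur, et ce point, que vous signalez vous-même comme restant à traiter, est en réalité le c\oe ur de l'estimation. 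Le papier contourne entièrement cette difficulté : les générateurs $S_n$ sont choisis dans un seul anneau $A_n=\{\gamma\ |\ d(o,\gamma o)\in[n,n+1[\}$ avec $\#S_n\geq e^{n(\delta_\Gamma-\epsilon)}$, de sorte que tous les mots $\gamma\gamma_0\gamma$ ont une longueur majorée par $2(n+1)+d(o,\gamma_0 o)$ et le lemme élémentaire \ref{mnec} donne directement $\delta_{\Gamma_n}\geq\frac{n(\delta_\Gamma-\epsilon)}{2(n+1)+d(o,\gamma_0 o)}$, qui tend vers $\frac{\delta_\Gamma-\epsilon}{2}$ sans aucun formalisme thermodynamique.
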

	
	%Ici, on considère les groupes de Schottky généraux, et non pas seulement au sens classique. Voir la section \ref{groupes} pour une définition précise.
	
	\subsection{Organisation de l'article}
	Dans la section \ref{cadre}, nous donnons les définitions et outils qui serviront dans la suite.
	La section \ref{contract} est consacrée à définir et donner des propriétés sur les semi-groupes contractants.
	On y démontre par exemple que l'ensemble limite d'un semi-groupe contractant de type fini est toujours radial.
	On construit dans la section \ref{pthm1} une grosse partie contractante d'un semi-groupe d'isométries.
	C'est l'étape principale pour démontrer le théorème \ref{thm0}.
	Dans la section \ref{schottky}, on définit ce qu'est un semi-groupe de Schottky, et l'on démontre l'existence de gros sous-semi-groupes de Schottky.
	Cela permettra d'avoir une preuve du théorème \ref{thm0}.
	Les sections \ref{dim-vis} à \ref{semi-continuite} sont consacrées à des corollaires du théorème.
	Dans la première, nous obtenons une généralisation d'un résultat de F. Paulin (corollaire \ref{cpaulin}, voir section \ref{paulin}), que nous appliquons à l'étude des semi-groupes de développement en base $\beta$ (voir section \ref{dev-beta}). Dans la deuxième, nous voyons un résultat sur les sous-groupes de Schottky d'un groupe discret (corollaire \ref{sous-groupes}, voir section \ref{groupes}). Nous montrons ensuite dans la section \ref{car-ent} que l'exposant critique est une vraie limite. Et pour finir nous montrons la semi-continuité inférieure de l'entropie des semi-groupes (corollaire \ref{cont-inf}, voir section \ref{semi-continuite}).
	
%%%%%%%%%%%%%%%%%%%%%%%%%%%%%%%%%%%%%%%%%%%%%%%%%%%%%%%%%%%%%%%%%%%%%%%%%%%%%%%%%%%%%%%%
%%%%%%%%%%%%%%%%%%%%%%%%%%%%%%%%%%%%%%%%%%%%%%%%%%%%%%%%%%%%%%%%%%%%%%%%%%%%%%%%%%%%%%%%
	\section{Le cadre} \label{cadre}
	
	Nous définissons ici les objets que nous manipulerons dans tout l'article, en commençant par les espaces Gromov-hyperboliques et leur bord.
	Nous verrons en particulier la définition et des propriétés de l'entropie.
	
	On notera $d(x,y)$ la distance entre deux point $x,y \in X$ d'un espace métrique $X$.
	
	\subsection{Espaces hyperboliques}
	
	\'Etant donné un espace métrique, on peut définir le produit de Gromov, qui permet de mesurer le défaut d'égalité triangulaire de trois points $x$, $y$ et $o$ :
	
	\begin{define}
		Soit $X$ un espace métrique de point base $o$.
		On appelle \defi{produit de Gromov} de deux points $x,y \in X$ le réel
		\[ (x|y) := \frac{1}{2} \left( d(x, o) + d(y,o) - d(x,y) \right). \]
	\end{define}
	
	On peut alors définir la Gromov-hyperbolicité :
	
	\begin{define}
		On dit qu'un espace $X$ est \defi{$\delta$-hyperbolique} pour un réel $\delta \geq 0$, si c'est un espace métrique vérifiant l'inégalité
		\[ (x|z) \geq \min\{(x|y), (y|z)\} - \delta \]
		pour tous $x, y$ et $z$ $\in X$.
		
		On dit qu'un espace $X$ est \defi{Gromov-hyperbolique} s'il existe un réel $\delta$ tel que l'espace $X$ est $\delta$-hyperbolique.
	\end{define}
	
	Un espace Gromov-hyperbolique est un espace qui ressemble, vu de loin, à un arbre. Les arbres sont d'ailleurs des espaces $0$-hyperboliques.
	Dans un espace hyperbolique, un grand produit de Gromov caractérise des points qui sont \og proches vus de $o$ \fg.
	
	Définissons l'exposant critique. Il s'agit de la vitesse exponentielle de croissance d'une partie de $X$.
	
	\begin{define}
		On appelle \defi{exposant critique} d'une partie $P \subset X$ d'un espace métrique $X$ de point base $o$, le réel (éventuellement infini)
		\[ \delta_P := \limsup_{n \rightarrow \infty} \frac{1}{n}\log(\#(B(o, n) \cap P)). \]
	\end{define}
	
	Par inégalité triangulaire, l'exposant critique ne dépend pas du point $o$ choisi.
	
	\begin{rem} \label{delta-ann}
		On peut aussi considérer seulement les éléments d'un anneau.
		On a
		\[ \delta_P = \limsup_{n \rightarrow \infty} \frac{1}{n}\log(\#((B(o, n+1) \backslash B(o, n)) \cap P)) \]
		si $P$ est une partie non bornée.
	\end{rem}
	
	On va maintenant définir l'entropie de n'importe quelle partie $P$ de $X$.
	Cela correspond à la vitesse exponentielle de croissance du point de vue de l'espace occupé, et non plus du point de vu du comptage comme pour l'exposant critique.
	
	\begin{define}
		Soit $X$ un espace métrique.
		On dit qu'une partie $P \subseteq X$ est \defi{séparée} s'il existe un réel $\epsilon > 0$ tel que la partie $P$ soit \defi{$\epsilon$-séparée}, c'est-à-dire tel que
		\[ d(x,y) > \epsilon \]
		pour tous $x \neq y$ $\in P$. \\
		On dit qu'une partie $P \subseteq X$ est une partie \defi{couvrante} de $Y \subseteq X$ s'il existe un réel $\epsilon > 0$ telle que la partie soit \defi{$\epsilon$-couvrante} de $Y$, c'est-à-dire telle que
		pour tout $y \in Y$, il existe $x \in P$ tel que
		\[ d(x,y) \leq \epsilon. \]
	\end{define}
	
	\begin{define}
		On appelle \defi{entropie} d'une partie $P \subseteq X$ d'un espace métrique $X$, le réel (éventuellement infini)
		\[ h_P := \sup_S \delta_{P \cap S}, \]
		où la borne supérieure est prise sur les parties $S$ séparées de $X$. \\
		On pose $h_{\emptyset} = 0$ par convention.
	\end{define}
	
	La remarque suivante justifie le nom d'\og entropie \fg\ :
	\begin{rem}
		Quand $X = \hypn$, l'entropie d'une partie $P \subseteq X$ est égale à l'entropie volumique de l'ensemble $\{ x \in X | d(x, P) \leq 1 \}$.
	\end{rem}
	
	\begin{proof}[Idée de preuve]
		On peut trouver une partie $S$ $\epsilon$-séparée et $r$-couvrante de la partie $P$ pour un réel $1 > \epsilon > 0$ assez petit et un réel $r$ assez grand (voir \ref{epsc}), et on peut montrer que son exposant critique est alors égale à l'entropie de $P$. Soit $o$ un point de $X = \hypn$.
		On a les inégalités
		\[
			\vol (B(o, \epsilon)) \cdot \# B \cap S = \vol( \bigcup_{x \in S \cap B} B(x, \epsilon) ) \leq \vol (B(o, n + \epsilon) \cap \{ x \in X | d(x, P) \leq 1 \}),
		\]
		\begin{align*}
			\vol (B(o, n) \cap \{ x \in X | d(x, P) \leq 1 \})	&\leq \vol( \bigcup_{x \in S \cap B(o, n+1+r)} B(x, r+1) ) \\
												&\leq \vol(o, r+1) \cdot \# B(o, n+1+r) \cap S.
		\end{align*}
		En passant à la limite après avoir pris le $\log$ et divisé par $n$, on obtient le résultat annoncé.
	\end{proof}
	
	\begin{ex}
		Dans l'espace $X = \hyp$ muni de la métrique usuelle, l'entropie de $\hyp$ vaut $1$ et l'entropie d'une horoboule vaut $\frac{1}{2}$.
	\end{ex}
	
	Voici quelques propriétés de l'entropie.
	\begin{props} \textbf{Propriétés de l'entropie} \label{pptex} \\
		Soit une partie $A \subseteq X$ d'un espace métrique $X$ de point base $o$.
		\begin{enumerate}
			\item L'entropie ne dépend pas du point base $o \in X$ choisi.
			\item On a l'inégalité
				$ h_A \leq \delta_A$.
			\item Si la partie $A$ est séparée, alors on a l'égalité $h_A = \delta_A$.
			\item L'entropie est croissante : si $A \subseteq B \subseteq X$, alors on a
				\[ h_A \leq h_B \leq h_X. \]
		\end{enumerate}
	\end{props}
	
	\begin{proof}
		\begin{enumerate}
			\item Cela découle du fait que l'exposant critique ne dépende pas du point base $o$ choisi, ce qui découle de l'inégalité triangulaire.
			\item Pour toute partie séparée $S$, on a $\delta_{A \cap S} \leq \delta_{A}$. D'où le résultat en passant à la borne supérieure.
			\item Si la partie $A$ est séparée, on a $h_A = \sup_S \delta_{A \cap S} \geq \delta_{A \cap A} = \delta_A$.
			\item Pour toute partie séparée $S$ on a $A \cap S \subseteq B \cap S$, donc $\delta_{A \cap S} \leq \delta_{B \cap S}$. D'où le résultat en passant à la borne supérieure.
%				On a $h_{\hypn} = n-1$ pour la métrique usuelle.
		\end{enumerate}
	\end{proof}
	
	\begin{define} \label{propre}
		On dit qu'un espace métrique $X$ est \defi{propre} si ses boules fermées sont compactes.
	\end{define}
	
%	\begin{rem}
%		Dans tout l'article, on pourrait remplacer l'hypothèse que l'espace $X$ est propre par l'hypothèse que l'espace $X$ est doublant.
%		C'est-à-dire que toute boule soit recouverte par un nombre fini de boules de rayon moitié.
%	\end{rem}
	
	%	
%	%%%%%%%%%%%%%%%%%%%%%%%%%%%%%%%%%%%%%%%%%%%%%%%%%%%%%%%%%%%%%%%%%%%%%%%%%%%%%%%%%%%%%%
	\subsection{Bord d'un espace hyperbolique} \label{sbord}
	
	Nous allons voir qu'à chaque espace hyperbolique on peut ajouter un bord sur lequel le produit de Gromov s'étend naturellement.
	L'espace hyperbolique muni de son bord est en quelque sorte une \og compactification \fg, puisque nous verrons que cette union est compacte si l'espace est propre et vérifie une propriété supplémentaire.  %qui correspondra la plupart du temps à une compactification de l'espace.
	
	Soit $X$ un espace Gromov-hyperbolique.
	
	\begin{define} \label{def_bord}
		On dit qu'une suite $(x_i) \in X^\N$ est \defi{convergente} si l'on a
		\[ \lim_{i,j \rightarrow \infty} (x_i | x_j) = \infty. \]
		On définit le \defi{bord} $\bord A$ d'une partie $A$ l'espace hyperbolique $X$ comme quotient de l'ensemble des suites convergentes
		\[ \bord A := \EnsembleQuotient{\{(x_i) \in A^\N | \lim_{i,j \rightarrow \infty}(x_i | x_j) = \infty \}}{\sim} \]
		par la relation d'équivalence $\sim$ définie par
		%où $\sim$ est la relation d'équivalence définie par
		\[ (x_i)_{i \in \N} \sim (y_j)_{j \in \N} \quad \text{ si } \quad \lim_{i,j \rightarrow \infty} (x_i | y_j) = \infty. \]
	\end{define}
	
	\begin{rem}
		Si l'on n'avait pas supposé l'espace $X$ Gromov-hyperbolique, la relation $\sim$ définie ci-dessus ne serait pas nécessairement transitive. Par exemple elle ne l'est pas pour $X = \R^2$.
	\end{rem}
	
	\begin{define}
		On appelle \defi{adhérence de Gromov} d'une partie $A \subseteq X \cup \bX$ l'ensemble
		\[ \overline{A} := \Adh{A} \cup \bord A, \]
		où $\Adh{A}$ est l'adhérence de $A$ dans $X$.
	\end{define}
	
	\begin{rem}
		\begin{enumerate}
			\item Le groupe $\Isom(X)$ agit naturellement sur le bord $\bX$.
			\item Le produit de Gromov s'étend naturellement à l'adhérence de Gromov :%$\overline{X} \times \overline{X}$ :
				\[ (\xi | \eta) := \sup_{x_i \to \xi \atop y_j \to \eta} \liminf_{i,j \to \infty} (x_i | y_j), \]
				pour $\xi, \eta \in \overline{X}$.
			\item Un espace métrique $X$ est propre à bord compact si et seulement si l'adhérence $\overline{X}$ est compacte.
		\end{enumerate}
	\end{rem}
	
	Le premier point ci-dessus permet de généraliser la notion de suite convergentes à toute suite de $\overline{X}$.
	%Ainsi la notion de compacité a bien un sens pour le bord et pour $\overline{X}$.
	Cela définit la topologie que l'on considère sur l'espace $\overline{X}$.
	Voir aussi dans la partie \og Compacité de l'adhérence \fg ci-dessous pour une caractérisation de la topologie de l'adhérence de Gromov de $X$.
	%, et en particulier le bord de l'espace $\overline{X}$ (définit comme en~\ref{def_bord}) est le même que celui de $X$.

	\begin{define}
		Soit $X$ un espace métrique.
		\'Etant donné un réel $C > 0$, on dit que deux parties $A$ et $B$ de $X $ sont \defi{$C$-disjointes} si elles sont d'adhérence dans $X$ disjointes et que l'on a
		\[ \sup_{(a,b) \in A \times B} (a | b) < C. \]
		On dit que les parties $A$ et $B$ sont \defi{Gromov-disjointes} s'il existe une constante $C > 0$ telle que les parties $A$ et $B$ sont $C$-disjointes.
		Ces notions se généralisent de façon claire à des partie de l'espace $\cX$ quand l'espace $X$ est Gromov-hyperbolique.
	\end{define}
%	\begin{define}
%		Soit $X$ un espace Gromov-hyperbolique.
%		\'Etant donné un réel $C > 0$, on dit que deux parties $A$ et $B$ de $\cX $ sont \defi{$C$-disjointes} si elles sont d'adhérence dans $X$ disjointes et que l'on a
%		\[ \sup_{(a,b) \in A \times B} (a | b) < C. \]
%		On dit que les parties $A$ et $B$ sont \defi{Gromov-disjointes} s'il existe une constante $C > 0$ telle que les parties $A$ et $B$ sont $C$-disjointes.
%	\end{define}
	
	\begin{rem}
		Si l'espace $X$ est propre et à bord compact, alors des parties $A$ et $B$ de $\cX $ sont Gromov-disjointes si et seulement si leur adhérences de Gromov $\overline{A}$ et $\overline{B}$ sont disjointes, mais ceci est faux en général.
		%Et ceci fait sens même si l'espace n'est pas supposé Gromov-hyperbolique.
	\end{rem}
	
	Dire que deux partie $A$ et $B$ sont Gromov-disjointes revient à dire qu'elles sont disjointes en tant qu'ensembles, et \og disjointes en l'infini vues du point base $o$ \fg.
	Cette notion permettra de définir ce qu'est un semi-groupe contractant (voir \ref{contract}) et ce qu'est un semi-groupe de Schottky (voir \ref{schottky}).
	
	%%%%%%%%%%%%%%%%%%%%%%%%%%%%%%%%%%
	\subsubsection*{Compacité de l'adhérence} \label{scompacite}
	
	Nous aurons besoin de la compacité de l'adhérence $\overline{X}$ pour contrôler la façon dont l'entropie part à l'infini (voir la sous-section \ref{supp}). %construire une mesure sur le bord $\bX$.
	
	On muni l'adhérence de Gromov $\overline{X}$ d'un espace Gromov-hyperbolique $X$, de la base de voisinages formée des boules ouvertes $B(x, r)$ de $X$ et des boules
	\[ \beta(\xi, r) := \{ x \in \overline{X} | (x | \xi) > -\log(r) \}, \]
	pour les points $\xi \in \bX$. Cela définit bien la même topologie que celle donnée par les suites convergentes.
	
	Voici une condition sur l'espace $X$ qui sert à avoir la compacité de l'adhérence :
	\begin{define}
		Soit $X$ un espace métrique de point base $o$.
		On dit que l'espace $X$ est $C$-étoilé si pour tout point $x \in X$, il existe une $C$-géodésique de $o$ à $x$, c'est-à-dire une suite finie $(x_k)_{k = 1}^n$ d'éléments de $X$ telle que
		$x_0 = o$, $x_n = x$,
		\[ d(x_k, x_{k+1}) \leq C, \]
		\[ d(x_i, x_j) \leq \abs{d(o, x_i) - d(o, x_j)} + C, \]
		pour tous $0 \leq k \leq n-1$ et $0 \leq i, j \leq n$.
	\end{define}
	
	\begin{rem}
		Un espace géodésique est $C$-étoilé pour tout $C > 0$.
		%La condition $C$-strictement étoilée est vérifiée par exemple dès que l'espace $X$ est géodésique.
	\end{rem}
	
	\begin{ex}
		$\Z^n$ muni de la métrique euclidienne est $\sqrt{n}$-étoilé.
	\end{ex}
	
	Dans cet exemple, il suffit de considérer les points $x_i$ de $\Z^n$ qui sont à distance inférieure ou égale à $\sqrt{n}/2$ du segment $[o, x]$ dans $\R^n$.
	
	\begin{prop} \label{compacite}
		Soit $X$ un espace Gromov-hyperbolique, propre, et $C$-étoilé, alors son adhérence de Gromov $\overline{X}$ est compacte.
	\end{prop}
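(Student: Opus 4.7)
Mon plan est de démontrer la compacité séquentielle de $\cX$ ; elle entraîne la compacité via la métrisabilité standard de $\cX$ par une distance visuelle. Soit $(\xi_n)_{n \in \N}$ une suite dans $\cX$. Un premier argument permet de se ramener au cas où $\xi_n \in X$ pour tout $n$ : si $\xi_n \in \bX$, on choisit $z_n \in X$ tel que $(z_n | \xi_n) \geq n$, et la $\delta$-hyperbolicité étendue à $\cX$ assure que $\xi_n$ et $z_n$ partagent la même limite éventuelle au bord via $(\xi_n | \xi) \geq \min((\xi_n | z_n), (z_n | \xi)) - 2\delta$. Ensuite, si $d(o, \xi_n)$ reste borné le long d'une sous-suite, la propreté des boules fermées fournit directement une sous-suite convergente dans $X$ ; il reste donc à traiter le cas intéressant $D_n := d(o, \xi_n) \to \infty$.

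Pour chaque $n$, l'hypothèse $C$-étoilée fournit une $C$-géodésique $(x_k^{(n)})_{k=0}^{N_n}$ reliant $o$ à $\xi_n$. Puisque $k \mapsto d(o, x_k^{(n)})$ varie par pas majorés par $C$ entre $0$ et $D_n$, pour chaque entier $k$ tel que $k + C \leq D_n$ il existe un indice $j$ avec $d(o, x_j^{(n)}) \in [k, k+C]$ ; je pose alors $y_k^{(n)} := x_j^{(n)}$. Par propreté (chaque $y_k^{(n)}$ vit dans la boule fermée compacte $\overline{B}(o, k+C)$), une extraction diagonale fournit une sous-suite telle que $y_k^{(n)} \to y_k$ dans $X$ pour tout $k$. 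L'inégalité étoilée $d(y_i^{(n)}, y_j^{(n)}) \leq |d(o, y_i^{(n)}) - d(o, y_j^{(n)})| + C$ passe à la limite, donnant $d(y_i, y_j) \leq |d(o, y_i) - d(o, y_j)| + C$, d'où
\[
 (y_i | y_j) \geq \min(d(o, y_i), d(o, y_j)) - C/2 \geq \min(i, j) - C/2 \to \infty.
\]
La suite $(y_k)$ définit donc un point $\xi \in \bX$, et un calcul analogue donne $(y_k | \xi) \geq k - C/2$.

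L'étape principale, et selon moi l'obstacle principal, consiste à vérifier la convergence $\xi_n \to \xi$ (et non vers un autre point au bord). Par une extraction diagonale supplémentaire, on peut supposer que $d(y_k^{(n)}, y_k) \to 0$ suffisamment vite, puis sélectionner une suite $j_n \to \infty$ vérifiant $j_n + C \leq D_n$ et $d(y_{j_n}^{(n)}, y_{j_n}) \to 0$. L'inégalité étoilée appliquée à la géodésique pour $\xi_n$ donne $d(\xi_n, y_{j_n}^{(n)}) \leq D_n - d(o, y_{j_n}^{(n)}) + C$, d'où, par inégalité triangulaire,
\[
 (\xi_n | y_{j_n}) \geq j_n - C/2 - \tfrac{1}{2} d(y_{j_n}^{(n)}, y_{j_n}) \to \infty.
\]
Combiné à $(y_{j_n} | \xi) \geq j_n - C/2 \to \infty$ et à l'hyperbolicité étendue $(\xi_n | \xi) \geq \min((\xi_n | y_{j_n}), (y_{j_n} | \xi)) - 2\delta$, on obtient $(\xi_n | \xi) \to \infty$, c'est-à-dire $\xi_n \to \xi$ dans $\cX$, ce qui conclut.
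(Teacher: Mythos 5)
Votre argument est correct dans sa structure et dans ses estimations. Il faut toutefois noter que la « preuve » du papier se réduit à une citation (« il suffit de suivre la preuve du lemme 7.3 dans [Hol] ») : vous fournissez donc un argument complet là où l'auteur n'en donne aucun, et cet argument est essentiellement la démonstration standard à laquelle renvoie la référence — extraction diagonale de points $y_k^{(n)}$ à distance contrôlée de $o$ le long des $C$-géodésiques, passage à la limite de l'inégalité étoilée pour produire un point $\xi \in \bX$, puis contrôle de $(\xi_n \,|\, \xi)$ via un point intermédiaire $y_{j_n}$ et l'inégalité d'hyperbolicité étendue. Les seuls points où un rapporteur demanderait un mot de plus sont mineurs et ne constituent pas des lacunes : (i) le passage de la compacité séquentielle à la compacité repose sur la métrisabilité de $\cX$, qui est standard pour un espace hyperbolique propre (via une métrique visuelle pour $\epsilon\delta$ assez petit) mais mérite d'être citée explicitement, d'autant que la topologie du papier est définie par la base de voisinages $\beta(\xi,r)$ ; (ii) la perte dans $(\xi_n \,|\, y_{j_n}) \geq (\xi_n \,|\, y_{j_n}^{(n)}) - d(y_{j_n}^{(n)}, y_{j_n})$ est la distance entière et non sa moitié, ce qui est sans conséquence puisque cette quantité tend vers $0$ ; (iii) la constante $2\delta$ dans l'inégalité d'hyperbolicité étendue à un point du bord dépend de la convention (sup--liminf) adoptée par le papier, mais toute constante additive uniforme convient ici. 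En somme, votre preuve est valable et a le mérite de rendre l'énoncé auto-contenu.
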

	
	\begin{proof}
		Il suffit de suivre la preuve du lemme 7.3 dans \cite{hol}.
	\end{proof}
		
%	Voici un exemple d'espace Gromov-hyperbolique, propre, et dont l'adhérence n'est pas compacte (ni séquentiellement compacte) :
%	\begin{ex}
%		L'espace $\N$ muni de la métrique : $d(0, i) = i$ pour tout $i \in \N$ et $d(i, j) = \infty$ pour tous $i \neq j \in \N \backslash \{ 0 \}$.
%	\end{ex}
	
	\begin{ex}
		L'ensemble $\N \times [0, \infty[$ muni de la métrique
		\[
			\left\{
			\begin{array}{ccl}
				d((i,x), (j,y)) &=& i+j+x+y \quad \text{ si } i \neq j \\
				d((i,x), (i, y)) &=& \abs{x - y}
			\end{array}
			\right.
		\]
		est Gromov-hyperbolique, propre, et d'adhérence non compacte (voir figure \ref{fig_ex-noncompact}).
	\end{ex}
	
	\begin{figure}[H]
		\centering
		\caption{Exemple d'espace Gromov-hyperbolique propre dont l'adhérence n'est pas compacte.} \label{fig_ex-noncompact}
		%\tikzset{external/force remake=true}
		\includegraphics{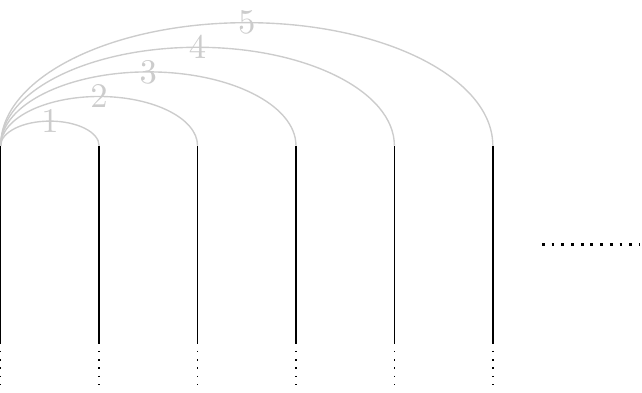}
	\end{figure}
	
	Sur la figure ci-dessus, les arêtes grises ne font pas parties de l'espace et indiquent des distances. %, et toutes les autres arêtes sont de longueur $1$.
		
	%%%%%%%%%%%%%%%%%%%%%%%%%%%%%%%%%%%%%%%%%%%%%%%%%%%%%%%%%%%%%%%%%%%%%%%%%%%%%%
	\subsection{Action d'un semi-groupe d'isométries sur l'espace hyperbolique}
	
	\'Etant donné un ensemble d'isométries d'un espace métrique $X$, on définit son exposant critique, respectivement son entropie, qui correspondent aux vitesses auxquelles croît l'orbite d'un point sous l'action de l'ensemble d'isométries, d'un point de vue de comptage, respectivement d'un point de vue de l'espace occupé. \\
	
	\begin{figure}[H]
		\centering
		\caption{Action du groupe $SL(2,\Z)$ sur le demi-plan de Poincaré $\hyp$}
		\includegraphics{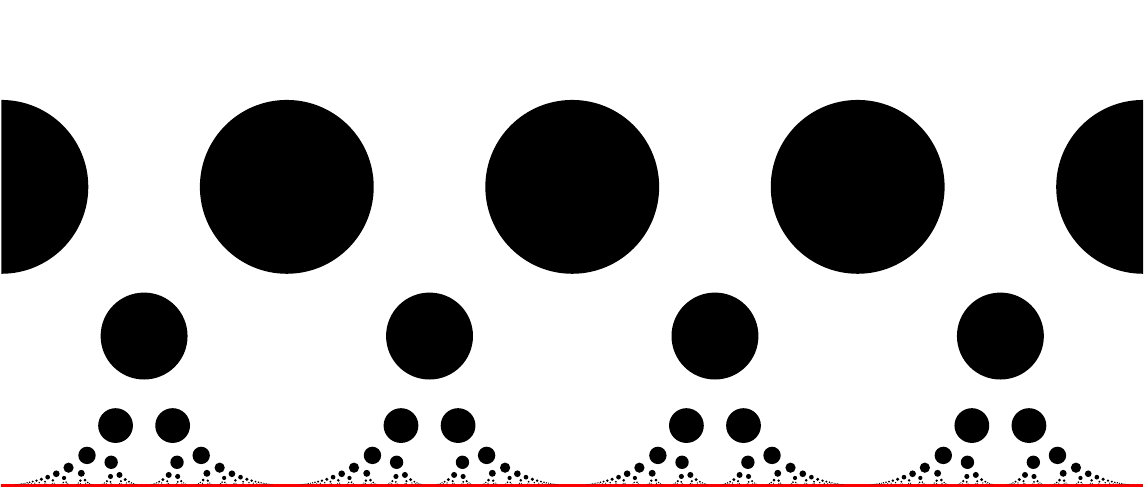}
%		\begin{dpoin}H(-2,2,1.7)[scale=2.9]
%			a = [[0, 1],[-1, 0]];
%			b = [[1, 1],[0, 1]];
%			c = [[1, -1],[0, 1]];
%			\orb[niter=10000, maxsep = .001, sep=.002, size=.3,maxnorm=6] \fill o a b c;
%		\end{dpoin}
	\end{figure}
	
	Soit $X$ un espace métrique de point base $o$.
	
	\begin{define}
		On appelle \defi{exposant critique} d'une partie $A \subset \Isom(X)$, l'exposant critique de l'orbite $Ao \subseteq X$ :
		\[ \delta_A := \delta_{Ao}. \]
	\end{define}
	
	\begin{define}
		On dit qu'une partie $A \subseteq \Isom(X)$ est \defi{séparée} (respectivement \defi{$\epsilon$-séparée}) si l'orbite $Ao \subseteq X$ est séparée (respectivement $\epsilon$-séparée), et si pour toutes les isométries $\gamma \neq \gamma' \in A$, on a $\gamma o \neq \gamma' o$.
	\end{define}
	
	\begin{rem}
		En général, l'exposant critique dépend du point $o$ choisi (par exemple pour un groupe elliptique dense), mais l'exposant critique d'une partie séparée de $\Isom(X)$ n'en dépend pas. %, par inégalité triangulaire.
	\end{rem}
	
	\begin{rem}
		%Soit $X$ un espace Gromov-hyperbolique et $o \in X$ un point.
		Si une partie $A \subseteq \Isom(X)$ est séparée et que l'orbite $A.o$ est infinie, alors %on a les propriétés :
		l'exposant critique de la partie $A$ est aussi l'exposant critique de la série de Poincaré $P_s$ de $A$ :
			\[ P_s := \sum_{\gamma \in A} e^{-s d(o, \gamma o)}. \]
			C'est-à-dire que la série diverge pour $s < \delta_A$ et converge pour $s > \delta_A$.
%		\begin{itemize}
%			\item L'exposant critique ne dépend pas du point $o$ choisis.
%			\item L'exposant critique de la partie $A$ est aussi l'exposant critique de la série de Poincaré $P_s$ de $A$ si l'orbite $A.o$ est infinie :
%			\[ P_s := \sum_{\gamma \in A} e^{-s d(o, \gamma o)}. \]
%			C'est-à-dire que la série diverge pour $s < \delta_A$ et converge pour $s > \delta_A$.
%		\end{itemize}
		Voir \cite{mercat} pour plus de détails.
	\end{rem}
	
	\begin{define}
		On dit qu'une partie $A \subseteq \Isom(X)$ est une partie \defi{couvrante} (respectivement \defi{$\epsilon$-couvrante}) de $B \subseteq \Isom(X)$ si l'orbite $Ao \subseteq X$ est une partie couvrante (respectivement $\epsilon$-couvrante) de $Bo$.
	\end{define}
	
	\begin{define}
		On appelle \defi{entropie} d'une partie $A \subseteq \Isom(X)$ l'entropie de l'orbite $Ao \subseteq X$ :
		\[ h_A := h_{Ao}. \]
	\end{define}
	
	\begin{rem}
		Quand $X = \hypn$, l'entropie d'une partie $A \subseteq \Isom(X)$ est égale à l'entropie volumique de l'orbite $A B(o,1)$ d'une boule $B(o,1)$.
	\end{rem}
	
	Voici quelques propriétés de l'entropie.
	\begin{props} \textbf{Propriétés de l'entropie} \label{ppteg} \\ %\text{ }
		Soit $X$ un espace métrique de point base $o$,
		et soit une partie $A \subseteq \Isom(X)$.
		On a les propriétés :
		\begin{enumerate}
			\item L'entropie ne dépend pas du point base $o$ choisi.
			\item On a l'inégalité \label{hmaj}
				$h_A \leq \delta_A$.
			\item Si la partie $A o$ est séparée, alors on a l'égalité $h_A = \delta_A$. \\ \label{sep}
				\noindent Ceci est en particulier le cas si $A$ est un groupe discret. %agissant de façon discrète sur $X$.
			\item L'entropie est croissante : si $A \subseteq B \subseteq \Isom(X)$, alors on a \label{hc}
				\[ h_A \leq h_B \leq h_{\Isom(X)}. \]
			\item Si l'espace $X$ est propre, alors pour toute partie $S$ couvrante de $A$, on a l'inégalité \label{couv}
				$h_S \geq h_A$.
			\item Si $A$, $B$ et $C$ sont des parties de $\Isom(X)$ telles que l'on ait $A \subseteq B \cup C$, \label{hu}
				alors on a $h_A \leq \max\{ h_B, h_C \}$.
%			\item Si $A_0$, $A_1$, ..., $A_n$ sont des parties de $X$, on a \label{huf}
%				$h_{A_0 \cup A_} = h_A = h_{A \gamma}$.
			\item Pour toute isométrie $\gamma \in \Isom(X)$, on a \label{hm}
				$h_{\gamma A} = h_A = h_{A \gamma}$.
			\item Si l'espace $X$ est propre, alors en posant $A_{> n} := \{ \gamma \in A | d(o, \gamma o) > n \}$, on a \label{hf}
				\[ h_{A_{> n}} = h_A. \]
		\end{enumerate}
	\end{props}
	
	\begin{rem} \label{epsc}
		%Si l'espace $X$ est propre, alors
		Pour toute partie $Y \subseteq X$ d'un espace métrique $X$, il existe une partie $S$ $\epsilon$-séparée et $2\epsilon$-couvrante de $Y$.
		Ainsi, quand l'espace $X$ est propre, pour calculer l'entropie d'une partie $A$ de $\Isom(X)$, on est ramené à calculer l'exposant critique d'une partie séparée de $A$.
	\end{rem}
	
	\begin{proof}
		Les points 2, 3 et 4 sont évidents à partir des propriétés \ref{pptex}.
		Le point 1 découle du fait que l'exposant critique d'une partie séparée ne dépende pas du point base $o$ choisi.
		
		Montrons le point 5.
		Soit $S$ une partie couvrante de $A$
		et soit $S'$ une partie séparée de $A$.
		Montrons que l'on a $\delta_{S'} \leq \delta_{S}$.

		Cela va découler du lemme suivant.
		\begin{lemme} \label{isc}
			Soit $X$ un espace métrique propre de point base $o$, et $Y \subseteq \Isom(X)$ une partie du groupe d'isométries.
			Alors, pour tous réels $r > 0$ et $r' > 0$, il existe une constante $C_{r, r'}$ telle que pour toute partie $r$-couvrante $S$ de $Y$, pour toute partie $r'$-séparée $S'$ de $Y$, et pour toute partie $Z$ de $Y$, on ait l'inégalité % réel $R$, on ait l'inégalité
			\[ \# S'o \cap Z \leq C_{r, r'} \# So \cap Z^r, \]
			où $Z^r := \{ x \in X | \exists y \in Z : d(x, y) \leq r \}$ est le $r$-voisinage fermé de $Z$.
			
%			Soit $X$ un espace métrique propre, et $Y \subseteq X$ une partie.
%			Alors pour tous réels $r_s, r_c > 0$, il existe une constante $C_{r_s, r_c} < \infty$ telle que pour toute partie $r_c$-couvrante de $Y$, pour toute partie $r_s$-séparée $S'$ de $Y$ et tout réel $R > 0$ on ait
%			\[ \#(S' \cap B(o, R)) \leq C_{r_s, r_c} \#(S \cap B(o, R+2r)). \]
		\end{lemme}
		
		\begin{proof}
			%Il suffit de choisir $C_r := \# (S''o \cap B(o,2r))$ pour une partie $S''$ séparée de $S$ qui est $2r$-couvrante de $Y$.
			
			Soit $S''$ une partie $r'$-couvrante et séparée de la boule $B(o, r+r')$. Posons
			$C_{r, r'} := \# S''$ son cardinal, qui est fini par propreté.
			%\[ C_{r, r'} := \# B(o, r) \cap S''. \]
			
			%Il existe une constante $C_{r_0, r}$ telle que pour toute partie $S'$ $r_0$-séparée, on ait
			Pour toute partie $r'$-séparée $S'$ de $X$ on a alors
			\[ \# B(o, r) \cap S' \leq \# S'' = C_{r, r'}. \]
			Et comme pour toute isométrie $\gamma \in Y$, la partie $\gamma^{-1} S'$ est encore $r'$-séparée, on a aussi
			\[ \# B(\gamma o, r) \cap S' =  \# B(o, r) \cap \gamma^{-1} S' \leq C_{r, r'}. \]
			On a donc, pour toute partie $S$ $r$-couvrante de $Y$, et $S'$ partie $r'$-séparée de $Y$,
			\[ \# Z \cap S' o = \# Z \cap \bigcup_{\gamma \in S} B(\gamma o, r) \cap S' o \leq C_{r, r'} \# Z^r \cap S o. \]
		\end{proof}
		
		%Soit $r > 0$ un réel tel que la partie $S'$ soit $r$-séparée. D'après le lemme, il existe une constante $C_r < \infty$ telle que pour tout $n \in \N$, on ait
		D'après le lemme ci-dessus, il existe une constante $C$ telle que
		\[ \#(B(o, n) \cap S' o) \leq C \cdot \#(B(o,n+C) \cap S o). \]
		Ainsi, on a $\delta_{S'} \leq \delta_{S}$ en passant à la limite.
		On obtient alors l'inégalité souhaitée $h_A \leq \delta_S$ en passant à la borne supérieure sur les parties séparées $S'$ de $A$.

		Montrons maintenant le point \ref{hu}.
		Si $S$ est une partie séparée de $A$, alors on a
		\[ \sum_{\gamma \in A \cap S} e^{-s d(o, \gamma o)} \leq \sum_{\gamma \in B \cap S} e^{-s d(o, \gamma o)} + \sum_{\gamma \in C \cap S} e^{-s d(o, \gamma o)}, \]
		pour tout réel $s$.
		Donc
		\[ \max\{ \sum_{\gamma \in B \cap S} e^{-s d(o, \gamma o)}, \sum_{\gamma \in C \cap S} e^{-s d(o, \gamma o)} \} = \infty,\]
		dès que $\sum_{\gamma \in A \cap S} e^{-s d(o, \gamma o)} = \infty$,
		et donc $\max\{ h_B, h_C \} \geq h_A$.
		
		Montrons le point \ref{hm}.
		Soit $\gamma_0 \in \Isom(X)$, et soit $S$ une partie $r$-séparée et couvrante de $A$, pour un réel $r > d(o, \gamma o)$.
		Pour tout $\gamma \in A$, l'inégalité triangulaire donne $d(o, \gamma \gamma_0 o) \leq d(o, \gamma o) + d(o, \gamma_0 o)$.
		%Ainsi, pour toute partie séparée $S$ de $\Gamma$, et $\gamma_0 \in \Isom(X)$, on a
		Ainsi la partie $S \gamma_0$ est encore une partie séparée et couvrante de $A \gamma_0$, et on a
		\[ \# \{ \gamma \in A \cap S | d(o, \gamma o) \leq n \} \leq \# \{ \gamma' \in (A \cap S) \gamma_0 |\ d(o, \gamma' o) \leq n + d(o, \gamma_0 o) \}, \]
		d'où
		\begin{align*}
			h_A			&= \delta_{A \cap S} \\
						&= \limsup_{n \to \infty} \frac{1}{n} \log\left( \# \{ \gamma \in A \cap S |\ d(o, \gamma o) \leq n \} \right) \\
						&\leq \limsup_{n \to \infty} \frac{1}{n} \log\left( \# \{ \gamma \in (A \cap S) \gamma_0 |\ d(o, \gamma o) \leq n + d(o, \gamma_0 o) \} \right) \\
						&= \limsup_{n \to \infty} \frac{1}{n + d(o, \gamma_0 o)} \log\left( \# \{ \gamma \in (A \cap S) \gamma_0 |\ d(o, \gamma o) \leq n + d(o, \gamma_0 o) \} \right) \\
						&= \delta_{A \gamma_0 \cap S \gamma_0} \\
						&= h_{A \gamma_0}.
		\end{align*}
		%puis
		%\[ h_A = \sup_{S \text{ séparée}} \delta_{A \cap S} \leq \sup_{S \text{ séparée}} \delta_{ A \gamma_0 \cap S \gamma_0} = \sup_{S \text{ séparée}} \delta_{ A \cap S \gamma_0} = h_{\gamma_0 A}. \]
		L'autre inégalité $h_{A \gamma_0} \leq h_{A}$ s'obtient par symétrie, en remplaçant l'élément $\gamma_0$ par $\gamma_0^{-1}$ et la partie $A$ par $A \gamma_0$. L'égalité $h_{\gamma A} = h_A$ s'obtient de la même façon.
		
		Le point \ref{hf} s'obtient en remarquant que la partie $\Gamma \backslash \Gamma_{> n} o$ est bornée et que donc son intersection avec toute partie séparée est finie par propreté.
	\end{proof}
	
	Voici un exemple de semi-groupe d'exposant critique strictement supérieur à son entropie à cause d'un phénomène de chevauchements qui n'existe pas pour les groupes.
	
	\begin{figure}[H]
		\centering
		\caption{Orbite d'un point sous l'action du semi-groupe de l'exemple \ref{exee} dans le disque de Poincaré.}
		\includegraphics{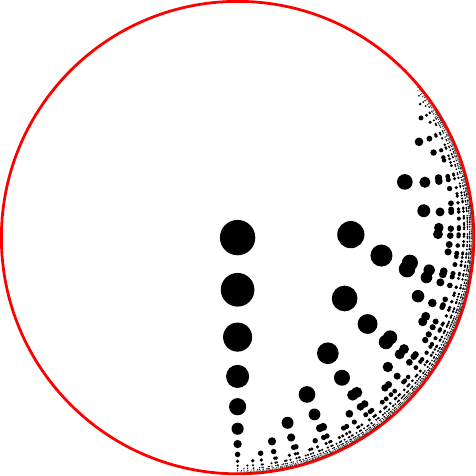}
%		\begin{dpoin}[scale=2.4]
%%			a = [[4/5, 0],[0, 5/4]];
%%			b = [[4/5, 1],[0, 5/4]];
%			a = [[sqrt(2/Pi), 0],[0, sqrt(Pi/2)]];
%			b = [[sqrt(2/Pi), 1],[0, sqrt(Pi/2)]];
%			\orb[niter=-1, sep=.0005,size=.3,maxnorm=12] \fill (0,0) a b;
%		\end{dpoin}
	\end{figure}
	
	\begin{ex} \label{exee} %Exemple de semi-groupe d'entropie strictement inférieure à son exposant critique \\
		Le sous-semi-groupe de $SL(2,\R)$ engendré par les matrices
		$
			\begin{pmatrix}
				
				\sqrt{\frac{2}{\pi}} & 0 \\
				0 & \sqrt{\frac{\pi}{2}}
			\end{pmatrix}
		$ et
		$
			\begin{pmatrix}
				\sqrt{\frac{2}{\pi}} & 1 \\
				0 & \sqrt{\frac{\pi}{2}}
			\end{pmatrix}
		$
		et agissant sur le disque de Poincaré $\D$, a pour exposant critique $\delta = \frac{\log(2)}{\log(\pi/2)} > 1$ et a pour entropie $1$.
		En particulier il n'est pas séparé.
	\end{ex}
	
	L'exposant critique du semi-groupe l'exemple ci-dessus s'obtient facilement puisque le semi-groupe est libre, et parce-que c'est un semi-groupe de développement $\beta$-adique, ce qui permet d'avoir que la norme d'un élément est comparable à sa longueur en les générateurs :
	\begin{align*}
		\delta_\Gamma &= \limsup_{n \to \infty} \frac{1}{n} \log(\# \{ \gamma \in \Gamma | d(o, \gamma o) \leq n \}) \\
					&= \limsup_{n \to \infty} \frac{1}{n \log(\pi/2)} \log(\# \{ \gamma \in \Gamma \text{ de longueur } n \} ) \\
					&= \frac{\log(2)}{\log(\pi/2)}.
	\end{align*}
	Le fait que l'entropie vaille $1$ (le maximum pour un semi-groupe d'isométries de $\D$) découle du théorème~\ref{cpaulin}, parce-qu'il est facile de montrer que l'ensemble limite de ce semi-groupe est un segment de longueur non nulle (vu dans $\R$), et parce-que l'ensemble limite radial est égal à l'ensemble limite tout entier par la proposition~\ref{pradial}, puisque le semi-groupe est contractant et de type fini.
	
	%\tikzsetnextfilename{orb_ex1}
	
	\subsection{Action sur le bord}
	Les isométries d'un espace $X$ Gromov-hyperbolique agissent naturellement sur le bord $\bX$. Pour étudier cette action sur le bord, commençons par définir l'ensemble limite du semi-groupe.
	
	\begin{define}
		Soit $X$ un espace Gromov-hyperbolique de point base $o$, et soit $\Gamma$ un semi-groupe d'isométries de $X$.
		On appelle \defi{ensemble limite} du semi-groupe $\Gamma$, et on note $\Lambda_\Gamma$ le bord de l'orbite $\Gamma o$ : $\Lambda_\Gamma := \bord (\Gamma o)$.
	\end{define}
	
	\begin{rem}
		L'ensemble limite ne dépend pas du point base $o$ choisi.
	\end{rem}
	
	On va maintenant définir une partie de l'ensemble limite appelée ensemble limite radial, dont on saura mieux contrôler la dimension visuelle.
	
	\begin{define}
		On dit qu'une partie $A \subseteq X$ d'un espace métrique $X$ est une \defi{sous-quasi-géodésique} s'il existe une constante $C$ telle que l'on ait
		\[ d(x, y) + d(y, z) \leq d(x, z) + C \]
		pour tous $x$, $y$ et $z$ dans $A$ tels que $\max\{d(x, y), d(y, z)\} \leq d(x, z)$.
	\end{define}
	
	\begin{rem}
		Si l'espace $X$ est géodésique, alors dire qu'une partie $A$ est une sous-quasi-géodésique revient à dire qu'il existe une géodésique dont tout point de $A$ est à distance bornée.
	\end{rem}
	
%	\begin{prop}[Critère de quasi-géodésie]
%		S'il existe un point $x_0 \in \overline{X}$ et un réel $r$ tels que pour tout $x \in A$ on ait
%		\[ (x_0 | o)_{x} \leq r, \]
%		alors l'ensemble $A$ est une quasi-géodésique.
%	\end{prop}
%	
%	\begin{proof}
%		Soient $x$, $y$ et $z$ trois points de $A$ tels que $\max\{d(x, y), d(y, z)\} \leq d(x, z)$.
%		On veut montrer que le produit de Gromov $(x | z)_y$ est borné par une constante indépendante des points $x$, $y$ et $z$.
%		
%	\end{proof}
		
	\begin{define}
		Soit $X$ un espace Gromov-hyperbolique de point base $o$ et $\Gamma$ un semi-groupe d'isométries de $X$.
		On appelle \defi{ensemble limite radial} (ou \defi{ensemble limite conique}) du semi-groupe $\Gamma$, et on note $\Lambda_\Gamma^c$ l'ensemble :
		\[  \Lambda^c_\Gamma := \EnsembleQuotient{\{(x_i) \in (\Gamma o)^\N | \lim_{i,j \rightarrow \infty}(x_i | x_j) = \infty \text{ et } \{x_i\}_{i \in \N} \text{ est une sous-quasi-géodésique} \}}{\sim} \subseteq \Lambda_\Gamma, \]
		où $\sim$ est la relation d'équivalence vue dans la définition du bord d'un espace Gromov-hyperbolique.
	\end{define}
	
	\begin{rem}
		L'ensemble limite radial est une partie de l'ensemble limite qui ne dépend pas non plus du point base $o$ choisi.
	\end{rem}
	
	\begin{rem} \label{auto-sim}
		Si $\Gamma$ est un semi-groupe de type fini d'isométries d'un espace Gromov-hyperbolique, alors son ensemble limite est auto-similaire :
		\[ \Lambda_\Gamma = \bigcup_{g \text{ générateur}} g \Lambda_\Gamma. \]
	\end{rem}
	
	\begin{proof}
		On a clairement l'inclusion $\bigcup_{g \text{ générateur}} g \Lambda_\Gamma \subseteq \Lambda_\Gamma$. Montrons l'autre inclusion.
		Soit $\xi \in \Lambda_\Gamma$ et soit $(\gamma_n)_{n \in \N} \in \Gamma^\N$ une suite d'éléments de $\Gamma$ telle que l'on ait
		\[ \lim_{n \to \infty} \gamma_n o = \xi. \]
		(Ceci est une notation qui signifie $\lim_{n \to \infty} (\gamma_n o | \xi ) = \infty.$) \\
		Comme le semi-groupe $\Gamma$ est de type fini, il existe un générateur $g$ tel que l'on ait une sous-suite $(\gamma_{\phi(n)}) \in (g \Gamma)^\N$.
		On a alors $\xi = \lim_{n \rightarrow \infty} \gamma_{\phi(n)} o \in \Lambda_{g \Gamma} = g \Lambda_{\Gamma}.$
	\end{proof}
	
	%%%%%%%%%%%%%%%%%%%%%%%%%%%%%%%%%%%%%%%%%%%%%%%%%%%%%%%%%%%%%%%%%%%%%%%%%%%%%%%%%%%%%%%
	\subsection{Les ensembles $X_\gamma$}
	
	On va associer à chaque isométrie $\gamma$ d'un espace métrique $X$, une partie $X_\gamma$ de l'espace $X$ qui correspond à un domaine dans lequel l'élément $\gamma$ contracte.
	Ceci nous servira dans toute la suite.
	
	\begin{define}
		Soit $X$ un espace métrique.
		Pour $\gamma \in \Isom(X)$, on définit un domaine $X_\gamma \subseteq X$ par
		\[ X_\gamma := \{ x \in X | (x | \gamma o) \geq \frac{1}{2} d(o, \gamma o) \}. \]
	\end{define}
	
	\begin{figure}[H]
		\centering
		\caption{$X_\gamma$}
		\includegraphics{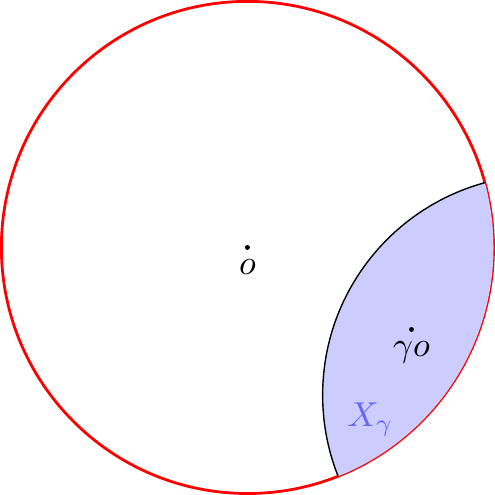}
%		\begin{dpoin}[scale=2.5]
%			g = [[1,1],[1,2]];
%			\med \filldraw[fill=blue!20] (0,0) g.(0,0);
%			\fill (0,0) circle' (.01) node[below] {$o$};
%			\fill g.(0,0) circle' (.01) node[below] {$\gamma o$};
%			\draw[blue!60] (.5, -.7) node {$X_\gamma$};
%		\end{dpoin}
	\end{figure}
	
	\begin{rem} \label{remXg}
		L'inégalité $(x | \gamma o) \geq \frac{1}{2} d(o, \gamma o)$ est équivalente à
		\[ d(x, \gamma o) \leq d(x, o).\]
		Les éléments de $X_\gamma$ sont donc les points de $X$ qui sont plus près de $\gamma o$ que de $o$.
	\end{rem}
	
	Le lemme suivant indique que l'ensemble $X_\gamma$ est un domaine dans lequel $\gamma$ contracte, et qu'il est de taille petite vue de $o$ quand l'élément $\gamma$ est de grande norme.
	
	\begin{lemme} \label{lXg}
		Soit $X$ un espace métrique.
		Pour tout $\gamma \in \Isom(X)$, on a
		\begin{enumerate}
			\item $\gamma(X \backslash X_{\gamma^{-1}}) \subseteq X_\gamma$,
			\item Si l'espace $X$ est $\delta$-hyperbolique de point base $o$, alors pour tout $(x,y) \in (\overline{X_\gamma})^2$, on a
			\[ (x|y) \geq \frac{1}{2} d(o, \gamma o) - \delta. \]
		\end{enumerate}
	\end{lemme}
	
	\begin{proof}
		\begin{enumerate}
			\item On a
				\begin{align*}
					x \in X \backslash X_{\gamma^{-1}} &\eq d(x, \gamma^{-1} o) > d(x, o) \quad  \text{ par la remarque \ref{remXg},}  \\
												&\eq d(\gamma x, o) > d(\gamma x, \gamma o) \quad \text{ parce-que $\gamma$ est une isométrie}  \\
												&\imp \gamma x \in X_\gamma.												
				\end{align*}
			\item Soient $x$ et $y$ dans $\overline{X_\gamma}$.
				Par $\delta$-hyperbolicité, on a
				\[ (x|y) \geq \min\{ (x | \gamma o) , (y | \gamma o) \} - \delta. \]
				Or, par définition de $X_\gamma$ on a pour tout $x$ dans $\overline{X_\gamma}$
				\[ (x | \gamma o) \geq \frac{1}{2} d(o, \gamma o), \]
				d'où le résultat.
		\end{enumerate}
	\end{proof}
	
	\begin{define}
		On dit qu'une isométrie $\gamma \in \Isom(X)$ d'un espace métrique $X$ est \defi{contractante} si les domaines $X_\gamma$ et $X_{\gamma}^{-1}$ sont Gromov-disjoints.
	\end{define}
	
	\begin{figure}[H]
		\centering
		\caption{Une isométrie contractante.}
		\includegraphics{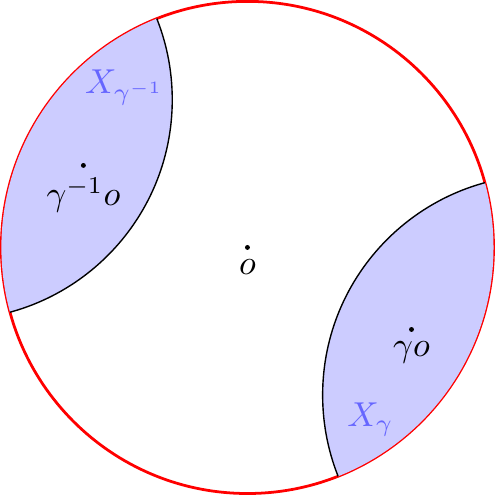}
%		\begin{dpoin}[scale=2.5]
%			g = [[1,1],[1,2]];
%			h = [[2, -1],[-1,1]];
%			\med \filldraw[fill=blue!20] (0,0) g.(0,0);
%			\med \filldraw[fill=blue!20] (0,0) h.(0,0);
%			\fill (0,0) circle' (.01) node[below] {$o$};
%			\fill g.(0,0) circle' (.01) node[below] {$\gamma o$};
%			\fill h.(0,0) circle' (.01) node[below] {$\gamma^{-1} o$};
%			\draw[blue!60] (.5, -.7) node {$X_\gamma$};
%			\draw[blue!60] (-.5, .65) node {$X_{\gamma^{-1}}$};
%		\end{dpoin}
	\end{figure}
	
	\begin{rem}
		Si une isométrie $\gamma$ est contractante, alors la partie
		%$X \backslash (X_\gamma \cup X_{\gamma^{-1}})$ est \og presque \fg\ un domaine fondamental pour l'élément $\gamma$.
		$\gamma^{-1}X_\gamma \backslash X_{\gamma}$ est un domaine fondamental pour l'élément $\gamma$.
	\end{rem}

	Voici un critère de contraction.
	\begin{lemme}[Critère de contraction] \label{non_cont} \label{seph}
		Soit $X$ un espace $\delta$-hyperbolique de point base $o$, et $\gamma$ une isométrie de $X$.
		S'il existe deux points $x$ et $x'$ dans $\overline{X_\gamma} \cup \overline{X_{\gamma^{-1}}}$, tels que
		\[ (x | x') < \frac{1}{2} d(o, \gamma o) - 3\delta, \]
		alors l'isométrie $\gamma$ est contractante.
%		Si une isométrie $\gamma \in \Isom(X)$ d'un espace $X$ $\delta$-hyperbolique de point base $o$ n'est pas contractante, alors on a pour tous $x$ et $x'$ dans $X_\gamma \cup X_{\gamma^{-1}}$,
%		\[ (x | x') \geq \frac{1}{2} d(o, \gamma o) - 2\delta. \]
	\end{lemme}
	
	\begin{proof}
		Par contraposée, supposons que l'isométrie $\gamma$ n'est pas contractante. Les ensembles $X_\gamma$ et $X_{\gamma^{-1}}$ ne sont alors pas Gromov-disjoints.
		Soit alors $y \in X_{\gamma^{-1}}$ et $y' \in X_\gamma$ tels que $(y | y') \geq \frac{1}{2} d(o, \gamma o) - \delta$.
		
		Soient aussi $x$ et $x'$ deux points de l'union $\overline{X_\gamma} \cup \overline{X_{\gamma^{-1}}}$.
		Si les points $x$ et $x'$ étaient tous les deux dans $\overline{X_\gamma}$ ou tous les deux dans $\overline{X_{\gamma^{-1}}}$, on aurait l'inégalité $(x | x') \geq \frac{1}{2} d(o, \gamma o) - \delta $ par le lemme \ref{lXg}.
		On peut donc supposer que l'on a par exemple $x \in \overline{X_{\gamma^{-1}}}$ et $x' \in \overline{X_\gamma}$.
		Par $\delta$-hyperbolicité et par le lemme \ref{lXg},
		on a alors
		\[ (x | x') \geq \min\{ (x | y), (y | y') , (y' | x') \} - 2\delta \geq \frac{1}{2} d(o, \gamma o) - 3\delta. \]
	\end{proof}
	
	Voici une propriété des isométries contractantes.
	\begin{lemme} \label{hn}
		Soit $X$ un espace métrique propre, et soit $h \in \Isom(X)$ une isométrie contractante.
		Alors on a
		\[ \lim_{n \to \infty} d(o, h^n o) = \infty. \]
		De plus, si l'espace $X$ est $\delta$-hyperbolique, alors il existe un entier $n_0$ tel que pour tout $n \geq n_0$, les parties $X_{h^n}$ et $X_{h^{-n}}$ soient $(M + 2\delta)$-disjointes,
		où $M := sup_{(x, x') \in X_h \times X_{h^{-1}}} (x | x')$.
	\end{lemme}
	
	\begin{proof}
		Montrons que l'on a $\lim_{n \to \infty} d(o, h^n o) = \infty$.
		Pour cela, choisissons un réel $\epsilon > 0$ tel que l'on ait
		\[ B(o, \epsilon) \subseteq X \backslash (X_h \cup X_{h^{-1}}). \]
		Cela est possible puisque la partie $X \backslash (X_h \cup X_{h^{-1}})$ est ouverte et contient $o$.
		
		Pour tous entiers $n \neq m \in \Z$, les boules $h^n B(o, \epsilon)$ et $h^m B(o, \epsilon)$ sont disjointes.
		En effet, quitte à tout composer à gauche par $h^{-m}$, on se ramène à $m = 0$. On a alors $B(h^n o, \epsilon) = h^n B(o, \epsilon) \subseteq X_h \cup X_{h^{-1}}$.
		Ainsi, l'orbite $(h^n o)_{n \in \Z}$ est une partie $\epsilon$-séparée de $X$.
		Par propreté, on a donc bien $\lim_{n \to \infty} d(o, h^n o) = \infty$.
		
		Montrons que les parties $X_{h^n}$ et $X_{h^{-n}}$ sont $(M + 2\delta)$-disjointes.
		Par $\delta$-hyperbolicité, pour $x \in X_{h^n}$ et $x' \in X_{h^{-n}}$, on a
		\[ (h^n o | h^{-n} o) \geq \min\{ (h^n o | x), (x | x'), (x', h^{-n} o) \} - 2\delta. \]
		Or, on a $h^n o \in X_h$ et $h^{-n} o \in X_{h^{-1}}$, donc on a $(h^n o | h^{-n} o) \leq M$. % := \sup_{(x, x') \in X_h \times X_{h^{-1}}} (x | x')$.
		D'autre part, par définition de $X_{h^n}$, on a $(x | h^n o) \geq \frac{1}{2} d(o, h^n o)$ et de même $(x' | h^{-n} o) \geq \frac{1}{2} d(o, h^n o)$.
		Comme on a $\lim_{n \to \infty} d(o, h^n o) = \infty$, quitte à choisir $n$ assez grand on a
		\[ d(o, h^n o) > 2M + 6\delta. \]
		On obtient donc l'inégalité
		$(x | x' ) \leq M + 2\delta$.
		Les ensembles $X_{h^n}$ et $X_{h^{-n}}$ sont alors disjoints, puisque sinon on aurait pour $y \in X_{h^n} \cap X_{h^{-n}}$, par le lemme \ref{lXg}, l'absurdité
		\[ M + 2\delta < \frac{1}{2} d(o, h^n o) - \delta \leq (y | y) \leq M + 2\delta. \]
		
		%Soit $\xi \in \bX$ une valeur d'adhérence de la suite $h^n o$.
		%Montrons que $\xi$ est un point fixe de $h$.
		
%		Deux ensembles $X_{h^n}$ et $X_{h^m}$ pour des entiers $n$ et $m \in \N_{\geq 1}$ ont toujours une intersection non vide.
%		En effet, si $m \geq 1$, on a $h^{nm} o \in X_{h^n}$.
%		
%		%Montrons maintenant que la suite $(X_{h^n})_{n \in \N}$ est décroissante à partir d'un certain rang.
%		%Il suffit pour cela de montrer que la suite $(d(o, h^n o))_{n \in \N}$ est croissante à partir d'un certain rang.
%		Soit $C < \infty$ telle que les partie $X_h$ et $X_{h^{-1}}$ soient $C$-disjointes.
%		Pour tout entier $n \geq 1$,
%		\[ d(o, h^{n+1} o) = d(o, h^n) -  \]
		
%		Soit un entier $n \geq 1$. On a l'égalité
%%			\begin{align*}
%		\[ d(h^{n+1} o, o)  = d(h^n, o) + d(h,o) - 2(h^{-1} o | h^n o). \]
%		Or, on a $h^n o \in X_h$, puisque l'on a $X_h \cap X_{h^{-1}} = \emptyset$, et que l'on a l'inclusion $h(X_{h^{-1}}^c) \subseteq X_h$. \\
%		Soit maintenant $\epsilon > 0$ tel que
%		\[ d(h^{-1} o, h o) > d(h o, o) + 4\delta + \epsilon. \]
%		Le lemme \ref{seph} nous donne alors
%		\[ (h^{-1} o | h^n o) < \frac{1}{2} d(o, h o) - \frac{\epsilon}{2}. \]
%		On obtient donc
%		\[ d(o, h^{n+1} o) > d(o, h^n o) + \epsilon, \]
%		donc par récurrence
%		\[ d(o, h^{n} o) > n \epsilon \xrightarrow[n \to \infty]{} \infty. \]
%		
%		L'inégalité annoncée dans ce lemme découle simplement des inclusions
%		\[ X_{h^n} \subseteq X_h \quad \text{et} \quad X_{h^{-n}} \subseteq X_h, \]
%		qui découlent de la disjonction $X_h \cap X_{h^{-1}} = \emptyset$.
%			\end{align*}
	\end{proof}
	
%%%%%%%%%%%%%%%%%%%%%%%%%%%%%%%%%%%%%%%%%%%%%%%%%%%%%%%%%%%%%%%%%%%%%%%%%%%%%%%%%%%%%%%%%
%%%%%%%%%%%%%%%%%%%%%%%%%%%%%%%%%%%%%%%%%%%%%%%%%%%%%%%%%%%%%%%%%%%%%%%%%%%%%%%%%%%%%%%%%	
	\section{Parties contractantes de Isom($X$)} \label{contract}
	%\section{Parties contractantes de $\Isom(X)$} \label{contract}
	
	Nous voyons ici la définition et des propriétés des parties contractantes.
	Il s'agit d'ensemble d'isométries qui contractent toutes dans la même direction, de façon contrôlée.
	Cette propriété est très pratique, puisqu'elle est stable par produit, et permet de contrôler la vitesse à laquelle les produits d'isométries tendent vers l'infini.
	Cela permet par exemple de garantir que l'ensemble limite d'un semi-groupe de type fini qui a cette propriété est radial (proposition~\ref{pradial}).
	
	\begin{define}
		Soit $X$ un espace métrique de point base $o$.
		On dit qu'une partie $A \subseteq \Isom(X)$ est \defi{contractante} s'il existe deux domaines Gromov-disjoints $X_-$ et $X_+$ de $X$ tels que l'on ait $ A (X \backslash X_-) \subseteq X_+$, et que l'on ait $o \in X \backslash \left( \overline{X_+} \cup \overline{X_-} \right)$.
		%que l'on ait $X \backslash \left( X_+ \cup X_- \right)$ d'intérieur non vide.
	\end{define}
	
	Cette définition dépend à priori du point base $o$ choisi.
	
	\begin{figure}[H]
		\centering
		\caption{Un semi-groupe contractant.}
		\includegraphics{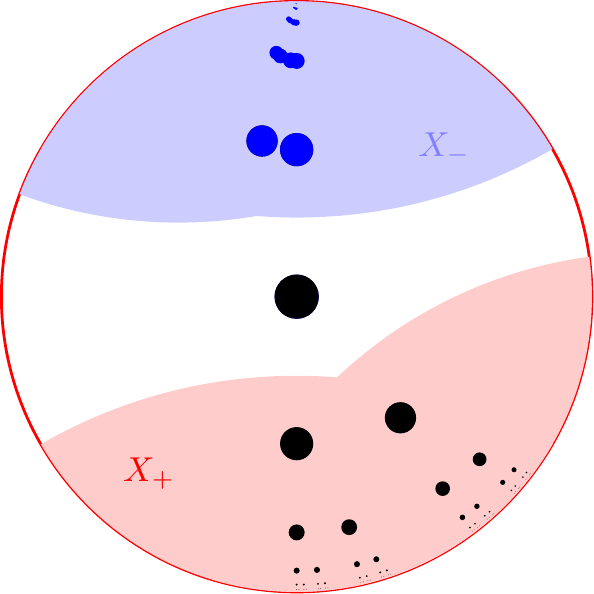}
%		\begin{dpoin}
%			a = [[1, 0], [0, 3]];
%			b = [[1, 1], [0, 3]];
%			c = [[3, 0], [0, 1]];
%			d = [[3, -1], [0, 1]];
%			\med \fill[fill=red!20] (0,0) a.(0,0);
%			\med \fill[fill=red!20] (0,0) b.(0,0);
%			\med \fill[fill=blue!20] (0,0) c.(0,0);
%			\med \fill[fill=blue!20] (0,0) d.(0,0);
%			\fill c.(0,0) circle (.3);
%			\fill d.(0,0) circle (.3);
%			\orb[niter=-1, sep=0.001,size=.3,maxnorm=12] \fill[blue] (0,0) c d;
%			\orb[niter=-1, sep=0.001,size=.3,maxnorm=12] \fill (0,0) a b;
%			\draw[blue!50] (.5,.5) node {$X_-$};
%			\draw[red] (-.5,-.6) node {$X_+$};
%		\end{dpoin}
	\end{figure}
	
	Dans cette partie, nous donnons quelques résultats sur les parties contractantes qui nous seront utiles par la suite.
	
	Voici quelques propriétés simples des parties contractantes.
	\begin{props} \textbf{Propriétés des parties contractantes} \label{pptc} \\
		Soit $X$ un espace métrique.
		\begin{enumerate}
			\item Si $\gamma$ est une isométrie contractante, alors le singleton $\{\gamma\}$ est contractant. \label{cont_imp_cont} %une partie contractante.
			\item Si une partie $A \subseteq \Isom(X)$ est contractante, alors le semi-groupe engendré (c'est-à-dire l'ensemble des produits \emph{non vides} d'éléments de $A$) l'est aussi.
			
			\item Si une partie $A \subseteq \Isom(X)$ est contractante, alors l'inverse $A^{-1} := \{\gamma^{-1} | \gamma \in A\}$ l'est aussi.
			%\item Le seul groupe contractant est le groupe trivial.
		\end{enumerate}
	\end{props}
	
	\begin{proof}
		\begin{enumerate}
			\item Il suffit de prendre $X_- := X_{\gamma^{-1}}$ et $X_+ := X_\gamma$. %L'ensemble $X \backslash (X_+ \cup X_-)$ est non vide puisqu'il contient le point base $o$.
			\item Si $\gamma$ et $\gamma'$ sont deux éléments d'une partie contractante $A$, alors on a
			\[ \gamma \gamma' (X \backslash X_-) \subseteq \gamma (X_+) \subseteq \gamma(X \backslash X_-) \subseteq X_+. \]
			
			\item Si $A$ est une partie contractante pour des domaines $X_+$ et $X_-$, alors la partie $A^{-1}$ est contractante pour les domaines $X_-$ et $X_+$ respectivement.
			En effet, on a pour tout $\gamma \in A$, l'inclusion $\gamma^{-1}(X \backslash X_+) \subseteq X_-$, qui se déduit de l'inclusion $\gamma(X \backslash X_-) \subseteq X_+$ par contraposée.
		\end{enumerate}
	\end{proof}
	
	\begin{rem}
		La réciproque du point~\ref{cont_imp_cont} est fausse : une isométrie $\gamma$ telle que l'ensemble $\{ \gamma \}$ est contractant n'est pas nécessairement contractante.
	\end{rem}
	
	Voici un critère de contraction.
	\begin{prop}[Critère de contraction] \label{crit_cont}
		Soit $X$ un espace $\delta$-hyperbolique de point base $o$, et soit une partie $A \subseteq \Isom(X)$ telle que l'on ait
		\[ \sup_{\gamma, \gamma' \in A} (\gamma^{-1} o | \gamma' o) < \frac{1}{2} \inf_{\gamma \in A} d(o, \gamma o) - 3\delta. \]
		Alors la partie $A$ est contractante.
	\end{prop}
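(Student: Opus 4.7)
The plan is to take the natural candidates $X_+ := \bigcup_{\gamma \in A} X_\gamma$ and $X_- := \bigcup_{\gamma \in A} X_{\gamma^{-1}}$ as the Gromov-disjoint domains witnessing contraction. The inclusion $A(X \setminus X_-) \subseteq X_+$ is essentially automatic from Lemma~\ref{lXg}~(1): for each $\gamma \in A$, $\gamma(X \setminus X_{\gamma^{-1}}) \subseteq X_\gamma$, and $X \setminus X_- \subseteq X \setminus X_{\gamma^{-1}}$ by construction. To rule out $o$ from $\overline{X_+} \cup \overline{X_-}$, I would note that any $x \in X_\gamma$ satisfies $d(x,o) \geq (x|\gamma o) \geq \tfrac{1}{2} d(o, \gamma o) \geq \tfrac{D}{2}$, where $D := \inf_{\gamma \in A} d(o, \gamma o)$; since $D > 0$ under the hypothesis, the open ball $B(o, D/2)$ is disjoint from both $X_+$ and $X_-$.

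The core step is to show that $X_+$ and $X_-$ are Gromov-disjoint. Writing $S := \sup_{\gamma, \gamma' \in A}(\gamma^{-1} o \mid \gamma' o)$, the hypothesis reads $S < D/2 - 3\delta$. Given $x \in X_\gamma$ and $y \in X_{\gamma'^{-1}}$, I would apply $\delta$-hyperbolicity twice, chaining through the anchors $\gamma o$ and $\gamma'^{-1} o$, to obtain
\[ (\gamma o \mid \gamma'^{-1} o) \geq \min\{(\gamma o \mid x),\ (x \mid y),\ (y \mid \gamma'^{-1} o)\} - 2\delta. \]
The left-hand side is bounded above by $S$, while $(\gamma o \mid x) \geq \tfrac{D}{2}$ and $(\gamma'^{-1} o \mid y) \geq \tfrac{D}{2}$ by definition of $X_\gamma$ and $X_{\gamma'^{-1}}$. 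Since $S + 2\delta < D/2$, the minimum must be realised by $(x \mid y)$, yielding
\[ (x \mid y) \leq S + 2\delta < \tfrac{D}{2} - \delta, \]
uniformly in $x \in X_+$, $y \in X_-$.

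To conclude Gromov-disjointness I still need disjoint closures in $X$. If $z \in \overline{X_+} \cap \overline{X_-}$, then $z = \lim x_n = \lim y_n$ with $x_n \in X_+$ and $y_n \in X_-$; continuity of the Gromov product on $X \times X$ forces $(x_n \mid y_n) \to (z \mid z) = d(o, z)$, so $d(o, z) \leq S + 2\delta < D/2$, contradicting the lower bound $d(\cdot, o) \geq D/2$ already established on $\overline{X_+}$. This completes the contraction property with the constant $C := S + 2\delta + 1$, say.

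The only subtlety I expect is verifying that the bound on $(x|y)$ derived from hyperbolicity genuinely forces $(x|y)$ (and not one of the anchor terms) to be the realiser of the minimum; this is exactly where the strictness of the hypothesis $S < D/2 - 3\delta$ is used, and it is the reason one loses the $2\delta$ in the chaining and still has room to spare. Everything else is bookkeeping with the definition of $X_\gamma$ and the basic inequality $(x|\gamma o) \leq d(x, o)$.
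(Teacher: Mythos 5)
Your proposal is correct and follows essentially the same route as the paper: the same domains $X_+ = \bigcup_{\gamma \in A} X_\gamma$ and $X_- = \bigcup_{\gamma \in A} X_{\gamma^{-1}}$, the same two-fold application of $\delta$-hyperbolicity anchored at $\gamma o$ and ${\gamma'}^{-1}o$ to bound $(x\mid y)$ by $S+2\delta$, and the same exclusion of $o$ via the lower bound $d(\cdot,o)\geq D/2$ on $X_\pm$. The only cosmetic difference is that you establish disjointness of the closures by a limiting/continuity argument on the Gromov product, whereas the paper bounds $\inf d(x,x') = \inf\bigl(d(o,x)+d(o,x')-2(x\mid x')\bigr)$ away from zero directly; both are valid.
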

	
	\begin{proof}
		Montrons que les domaines
		\[ X_+ := \bigcup_{\gamma \in A} X_\gamma \quad \text{et} \quad X_- := \bigcup_{\gamma \in A} X_{\gamma^{-1}} \]
		conviennent.
		
		On a déjà bien pour toute isométrie $\gamma \in A$, l'inclusion
		\[ \gamma(X \backslash X_-) \subseteq \gamma(X \backslash X_{\gamma^{-1}}) \subseteq X_\gamma \subseteq X_+ .\]
		
		Montrons maintenant que les domaines $X_+$ et $X_-$ sont Gromov-disjoints.
		Soient $x \in X_-$ et $x' \in X_+$. Il existe alors deux isométries $\gamma$ et $\gamma'$ de $X$ tels que l'on ait $x \in X_{\gamma^{-1}}$ et $x' \in X_{\gamma'}$.
		On a ensuite, par $\delta$-hyperbolicité,
		\[ M \geq (\gamma^{-1} o | \gamma' o) \geq \min\{ (\gamma^{-1} o | x), (x | x') , (x' | \gamma' o) \} - 2\delta, \]
		où $M := \sup_{\gamma, \gamma' \in A} (\gamma^{-1} o | \gamma' o)$.
		
		Or, on a $(\gamma^{-1} o | x) \geq \frac{1}{2} d(o, \gamma o)$ et $(\gamma' o | x') \geq \frac{1}{2} d(o, \gamma' o)$ par définition de $X_{\gamma^{-1}}$ et $X_{\gamma'}$.
		De plus, on a $M < \frac{1}{2} d(o, \gamma o) - 3\delta$ et $M < \frac{1}{2} d(o, \gamma' o) - 3\delta$ par hypothèse.
		On en déduit que l'on a
		\[ M \geq (x | x') - 2\delta. \]
		Pour vérifier que les domaines $X_+$ et $X_-$ sont bien Gromov-disjoints, il ne reste donc plus qu'à montrer qu'ils sont d'adhérences dans $X$ disjointes.
		On a
		
		%Pour $x \in X_-$ et $x' \in X_+$, on a
		\begin{align*}
			\inf_{(x, x') \in X_- \times X_+} d(x, x') &= \inf_{(x, x') \in X_- \times X_+} d(o, x) + d(o, x') - 2(x | x') \\
										&\geq \left( \frac{1}{2} \inf_{\gamma \in A} d(o, \gamma o) - \delta \right) + \left( \frac{1}{2} \inf_{\gamma \in A} d(o, \gamma o) - \delta \right) - 2(M + 2\delta) \\
										& > \left( M + 2\delta \right) + \left( M + 2\delta \right) - 2(M + 2 \delta) \\
										& = 0,
		\end{align*}
		puisque l'on a $d(o, x) = (x | x) \geq \frac{1}{2} d(o, \gamma o) - \delta$ par le lemme \ref{lXg} pour $x \in X_{\gamma^{-1}}$, et de même $d(o, x') \geq \frac{1}{2} d(o, \gamma' o) - \delta$ pour $x' \in X_{\gamma'}$.
		
		Les domaines $X_-$ et $X_+$ sont donc bien Gromov-disjoints.
%		Soit $x \in X_+ \cap X_-$.
%		Il existe alors deux isométries $\gamma$ et $\gamma'$ de $X$ tels que $x \in X_{\gamma^{-1}} \cap X_{\gamma'}$.
%		Par ce que l'on a déjà démontré et par le lemme \ref{lXg}, on a
%		\[ M + 2\delta \geq (x|x) \geq \frac{1}{2} d(o, \gamma o) - \delta > M + 2\delta, \]
%		ce qui est absurde.
%		Donc on a $X_+ \cap X_- = \emptyset$ et les domaines $X_+$ et $X_-$ sont bien Gromov-disjoints.

		Pour finir, l'ensemble $X \backslash \left( \overline{X_+} \cup \overline{X_-} \right)$ contient bien le point base $o$, puisqu'il contient la boule ouverte $B(o, M + 2\delta + \epsilon)$ pour un réel $\epsilon > 0$ assez petit.
		%les ensembles $X \backslash ( X_\gamma \cup X_{\gamma^{-1}} )$ le contiennent pour $\gamma \in A$.
		%est non vide puisqu'il contient le point base $o$. %d'intérieur non vide puisqu'il contient la boule $B(o, M + 3\delta)$.
	\end{proof}
	
	Voici un lemme qui permet de contrôler la taille de l'image du domaine $X_+$ par les éléments d'un semi-groupe contractant.
	Cela sera utilisé dans la preuve du théorème \ref{thm2}.
	
	\begin{lemme} \label{cont1}
		Soit $X$ un espace métrique de point base $o$, soit une partie $X_+ \subseteq \overline{X}$ et soit une isométrie $\gamma \in \Isom(X)$ telles que l'on ait
		\[ C := \sup_{x \in X_+} (\gamma^{-1}.o | x) < \infty. \]
		Alors pour tous $x$ et $x' \in X_+$, on a l'inégalité
		\[ (\gamma x | \gamma x') \geq d(o, \gamma o) - 2C. \]
	\end{lemme}
	
	\begin{proof}
		Soient $x$ et $x'$ des éléments de $X_+$. On a
		\begin{align*}
			(\gamma x | \gamma x')	&= \frac{1}{2} \left[ d(\gamma x, o) + d(o, \gamma x') - d(\gamma x, \gamma x') \right] \\
								&= \frac{1}{2} [ d(x, o) + d(\gamma^{-1} o, o) - 2(\gamma^{-1} o | x) \\
								& \qquad + d(\gamma^{-1} o , o) + d(o, x') - 2(\gamma^{-1} o | x') - d(x, x') ] \\
								& = (x | x') - (\gamma^{-1} o | x) - (\gamma^{-1} o | x') + d(o, \gamma o) \\
								&\geq (x | x') + d(o, \gamma o) - 2C \\
								&\geq d(o, \gamma o) - 2C.
		\end{align*}
	\end{proof}
	
	Le lemme qui suit dit que l'on a l'égalité triangulaire à une constante près dans un semi-groupe contractant.
	
	\begin{lemme} \label{cet}
		Soit $X$ un espace métrique de point base $o$.
		Si $A$ est une partie contractante de $\Isom(X)$, alors pour tous éléments $\gamma$ et $\gamma'$ de $A$ on a
		\[ d(o, \gamma \gamma' o) \geq d(o, \gamma o) + d(o, \gamma' o) - 2M, \]
		où $M = \sup_{\gamma, \gamma' \in A} (\gamma^{-1} o | \gamma' o)$.
	\end{lemme}
	
	\begin{proof}
		On a l'égalité $d(o, \gamma \gamma' o) = d(o, \gamma o) + d(o, \gamma' o) - 2(\gamma^{-1} o | \gamma' o)$.
	\end{proof}
	
	Voici une propriété intéressante des parties contractantes qui permettra de montrer que l'ensemble limite d'un semi-groupe contractant de type fini est radial.
	
	\begin{lemme} \label{cqg}
		Si $A$ est une partie contractante du groupe d'isométries $\Isom(X)$ d'un espace métrique $X$ de point base $o$ et si $(\gamma_n)_{n \in \N}$ est une suite de $A^\N$, alors
		l'ensemble $\{ \gamma_0 \gamma_1 ... \gamma_n o \}_{n \in \N}$ est une sous-quasi-géodésique.
	\end{lemme}
	
	\begin{proof}[Preuve du lemme \ref{cqg}]
		Montrons que pour $a \leq b \leq c \in \N$, on a
		\[ d(\gamma_0 \gamma_1... \gamma_a o, \gamma_0 \gamma_1... \gamma_b o)
			+ d( \gamma_0 \gamma_1... \gamma_b o, \gamma_0 \gamma_1... \gamma_c o)
			\leq d(\gamma_0 \gamma_1... \gamma_a o, \gamma_0 \gamma_1... \gamma_c o) + M, \]
		où $M = \sup_{\gamma, \gamma' \in A} (\gamma^{-1} o | \gamma' o)$.
		On se ramène à $a = 0$, et l'on conclut avec le lemme \ref{cet} avec $\gamma = \gamma_0 \gamma_1 ... \gamma_b$ et $\gamma' = \gamma_{b+1} \gamma_{b+2} ... \gamma_c$.
	\end{proof}
	
	Le lemme qui suit dit que les isométries d'un semi-groupe contractant tendent vers l'infini avec leur longueur quand l'espace est propre.
	
	\begin{lemme} \label{cdv}
		Si $A$ est une partie contractante du groupe d'isométries $\Isom(X)$ d'un espace métrique propre $X$ de point base $o$ et si $(\gamma_n)_{n \in \N}$ est une suite de $A^\N$, alors on a
		\[ \lim_{n \to \infty} d(o, \gamma_0 \gamma_1 ... \gamma_n o) = \infty. \]
	\end{lemme}
	
	\begin{proof}
		L'ensemble $\{ \gamma_0 ... \gamma_n o \}_{n \in \N}$ est une partie séparée de $X$.
		En effet, il existe un réel $\epsilon > 0$ tel que la boule $B(o, \epsilon)$ soit incluse dans $X \backslash (X_+ \cup X_-)$, et ses images par les éléments $\gamma_0 ... \gamma_n$ sont disjointes, puisque pour toute isométrie $\gamma$ de $A$, on a l'inclusion $\gamma B \subseteq X_+$.
		Le résultat découle alors de la propreté de l'espace $X$.
	\end{proof}
	
	\begin{prop} \label{pradial}
		%Un semi-groupe contractant de type fini a un ensemble limite radial : $\Lambda_\Gamma = \Lambda^c_\Gamma$.
		Soit $X$ un espace métrique propre et $\Gamma$ un semi-groupe d'isométries de $X$, contractant et de type fini.
		%Si le semi-groupe $\Gamma$ est contractant et de type fini, alors son ensemble limite est radial :
		Alors l'ensemble limite de $\Gamma$ est radial :
		\[ \Lambda_\Gamma = \Lambda^c_\Gamma. \]
	\end{prop}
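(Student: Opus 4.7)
L'inclusion $\Lambda^c_\Gamma \subseteq \Lambda_\Gamma$ est immédiate par définition ; il reste à démontrer la réciproque. Fixons $\xi \in \Lambda_\Gamma$ et une suite $(\gamma_n) \in \Gamma^\N$ telle que $\gamma_n o \to \xi$. Notons $G$ une partie génératrice finie de $\Gamma$ et
\[ M := \sup_{\gamma, \gamma' \in \Gamma} (\gamma^{-1} o | \gamma' o), \]
qui est fini car $\Gamma$ est contractant (les points $\gamma^{-1} o$ et $\gamma' o$ vivent respectivement dans $X_-$ et $X_+$, Gromov-disjoints). Chaque $\gamma_n$ s'écrit comme un mot $g_{n,1} \cdots g_{n, \ell_n}$ sur $G$ ; comme $d(o, \gamma_n o) \leq \ell_n \cdot \max_{g \in G} d(o, g o)$ et que $d(o, \gamma_n o) \to \infty$, la longueur $\ell_n$ tend vers l'infini.

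L'idée centrale est d'extraire, par un argument diagonal de type lemme de König, une suite $(g_k)_{k \geq 1} \in G^\N$ et une sous-suite de $(\gamma_n)$, que l'on renotera $(\gamma_n)$, telles que pour tout $k$ et tout $n$ assez grand on ait $\gamma_n = g_1 \cdots g_k \gamma'_{n,k}$ avec $\gamma'_{n,k} \in \Gamma$. On pose alors $y_k := g_1 \cdots g_k o$ ; par le lemme \ref{cqg} appliqué à la suite $(g_k)$, l'ensemble $\{y_k\}_{k \in \N}$ est une sous-quasi-géodésique, et par le lemme \ref{cdv}, $d(o, y_k) \to \infty$.

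Il reste à montrer que $(y_k)$ converge au sens de la définition \ref{def_bord} vers un point de $\Lambda^c_\Gamma$ égal à $\xi$. Pour $k \leq \ell$, le lemme \ref{cet} appliqué à $g_1 \cdots g_k$ et $g_{k+1} \cdots g_\ell$ fournit $d(o, y_\ell) \geq d(o, y_k) + d(o, g_{k+1} \cdots g_\ell o) - 2M$, et comme $d(y_k, y_\ell) = d(o, g_{k+1} \cdots g_\ell o)$, on obtient $(y_k | y_\ell) \geq d(o, y_k) - M$, qui tend vers l'infini. La suite $(y_k)$ est donc convergente dans $\overline{X}$ vers un point $\eta \in \Lambda^c_\Gamma$. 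Le même calcul appliqué à $g_1 \cdots g_k$ et $\gamma'_{n,k}$ fournit $(y_k | \gamma_n o) \geq d(o, y_k) - M$ ; en passant à la limite en $n$ puis en $k$, on obtient $(y_k | \xi) \to \infty$, d'où $\eta = \xi$.

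Le point le plus délicat est l'extraction diagonale de la première étape : il faut itérer soigneusement la finitude de $G$ pour dégager un mot infini $(g_k)$ commun à une même sous-suite (on extrait d'abord selon la première lettre, puis à l'intérieur selon la deuxième, etc., puis on diagonalise). Tout le reste n'est qu'un contrôle précis des produits de Gromov grâce à la quasi-égalité triangulaire du lemme \ref{cet}, valide précisément parce que le semi-groupe $\Gamma$ est contractant.
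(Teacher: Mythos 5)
Votre preuve est correcte et suit essentiellement la même démarche que celle de l'article : construire un mot infini $(g_k)$ de générateurs dont les produits partiels $g_1\cdots g_k o$ forment une sous-quasi-géodésique (lemme \ref{cqg}) tendant vers l'infini (lemme \ref{cdv}) et convergeant vers $\xi$. La seule différence est de présentation : l'article obtient la suite $(g_k)$ en itérant l'auto-similarité $\Lambda_\Gamma = \bigcup_g g\Lambda_\Gamma$ de la remarque \ref{auto-sim} --- dont la preuve est précisément l'extraction à la König que vous effectuez à la main --- et conclut la convergence vers $\xi$ via le lemme \ref{cont1} appliqué à $\Lambda_\Gamma \cup \{o\}$ plutôt que par votre estimation directe des produits de Gromov via le lemme \ref{cet}.
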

	
	\begin{proof}
		Soit $\xi$ un point de l'ensemble limite $\Lambda_\Gamma$.
		D'après la remarque \ref{auto-sim}, il existe un générateur $g_0$ du semi-groupe $\Gamma$ tel que $\xi \in g_0 \Lambda_\Gamma$.
		Par récurrence, il existe une suite $(g_i)_{i \in \N}$ telle que pour tout $n \in \N$, on ait
		\[ \xi \in g_0 ... g_n \Lambda_\Gamma. \]
		Puis par le lemme \ref{cdv}, on a $\lim_{n \to \infty} d(o, g_0 ... g_n o) = \infty$.
		On a donc $\xi = \lim_{n \to \infty} g_0 ... g_n o$ par le lemme \ref{cont1} appliqué à l'isométrie $g_0 ... g_n$ et à l'ensemble $X_+ = \Lambda_\Gamma \cup \{ o \}$.
		Or, le lemme \ref{cqg} affirme que l'ensemble $\{ g_0 ... g_n o \}_{n \in \N}$ est une sous-quasi-géodésique, donc le point $\xi$ est dans dans l'ensemble limite radial : $\xi \in \Lambda^c_{\Gamma}$. On a montré l'inclusion $\Lambda_\Gamma \subseteq \Lambda_\Gamma^c$. L'autre inclusion est claire.
	\end{proof}
	
	Le fait que le semi-groupe $\Gamma$ soit de type fini est une hypothèse nécessaire, comme le montre le contre-exemple suivant :
\begin{contr-ex}
	Soit $\Gamma$ le sous-semi-groupe de $SL(2,\Z)$ engendré par les matrices $\begin{pmatrix} 1 & n \\ n & n^2+1 \end{pmatrix}, n \geq 1$.
	Alors le semi-groupe $\Gamma$ est contractant, mais les ensembles limite et limite radial diffèrent : $\Lambda_\Gamma \neq \Lambda_\Gamma^c$.
\end{contr-ex}
%En effet, on peut choisir l'ouvert $I = ]-\frac{1}{2}, 1[$ pour voir que le semi-groupe est uniformément contractant, et le point $0$ est dans l'ensemble limite, mais n'est pas radial.
En effet, l'enveloppe convexe $X_+ \subset \hyp$ de l'intervalle $[-\frac{1}{2}, 1]$ dans $\hyp$ (en identifiant le bord de $\hyp$ à $\R$ de façon usuelle) est envoyée dans celle de l'intervalle $[0, \frac{2}{3}]$,
et donc le semi-groupe est contractant.
Mais le point $0$ est dans l'ensemble limite, et n'est pas radial.
		
%%%%%%%%%%%%%%%%%%%%%%%%%%%%%%%%%%%%%%%%%%%%%%%%%%%%%%%%%%%%%%%%%%%%%%%%%%%%%%%%%%%%%%%%
%%%%%%%%%%%%%%%%%%%%%%%%%%%%%%%%%%%%%%%%%%%%%%%%%%%%%%%%%%%%%%%%%%%%%%%%%%%%%%%%%%%%%%%%	
	\section{Construction d'une grosse partie contractante} \label{pthm1} %Preuve du théorème \ref{thm1}}
	
	Le but de cette section est de démontrer le théorème :
	
	\begin{thm} \label{thm1}
		Soit $X$ un espace Gromov-hyperbolique propre à bord compact, et soit $\Gamma$ un semi-groupe d'isométries de $X$ dont l'ensemble limite $\Lambda_\Gamma$ contient au moins deux points.
		%d'adhérence compacte, et soit $\Gamma$ un semi-groupe d'isométries de $X$ dont l'ensemble limite $\Lambda_\Gamma$ contient au moins deux points.
		Alors il existe un sous-semi-groupe $\Gamma'$ de $\Gamma$, contractant et de même entropie :
		\[ h_{\Gamma'} = h_\Gamma. \]
	\end{thm}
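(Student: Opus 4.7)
L'idée est de construire $\Gamma'$ par un double argument de tiroirs sur un recouvrement fini de l'espace compact $\overline{X}$, puis d'appliquer le critère de contraction (Proposition~\ref{crit_cont}) à la classe d'entropie maximale. Autrement dit, on cherche à isoler une partie de $\Gamma$ dont tous les éléments ont leurs directions avant ($\gamma o$) et arrière ($\gamma^{-1}o$) confinées dans deux régions Gromov-disjointes du bord.

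Par la propriété~\ref{hf} de l'entropie, $h_{\Gamma_{>n}} = h_\Gamma$ pour tout $n$, donc on peut se restreindre à $\Gamma_{>n}$ avec $n$ à ajuster. Par compacité de $\overline{X}$ (Proposition~\ref{compacite}), on fixe un réel $M$ grand et l'on recouvre $\overline{X}$ par un nombre fini d'ouverts $W_1, \dots, W_N$ de la forme $\beta(x_k, e^{-M})$. Pour $n$ suffisamment grand devant $M$, tout $\gamma \in \Gamma_{>n}$ a $\gamma o$ et $\gamma^{-1}o$ éloignés de $o$, donc situés dans des pièces de type Gromov-ball où deux points quelconques vérifient $(x|y) \geq M - O(\delta)$. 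On partitionne alors $\Gamma_{>n} = \bigcup_{i,j} \Gamma^{(i,j)}$ selon que $\gamma^{-1}o \in W_i$ et $\gamma o \in W_j$. Par deux applications de la propriété~\ref{hu}, il existe un couple $(i^*,j^*)$ tel que $h_{\Gamma^{(i^*,j^*)}} = h_\Gamma$.

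Si ce couple peut être choisi de sorte que $W_{i^*}$ et $W_{j^*}$ soient Gromov-disjoints, alors
\[
\sup_{\gamma, \gamma' \in \Gamma^{(i^*, j^*)}} (\gamma^{-1} o \mid \gamma' o) \leq \sup_{x \in W_{i^*},\, y \in W_{j^*}} (x \mid y) =: C < \infty,
\]
et $\inf_\gamma d(o, \gamma o) > n$. Pour $n > 2C + 6\delta$, le critère de contraction s'applique à $\Gamma^{(i^*,j^*)}$, et par la propriété~\ref{pptc} le sous-semi-groupe $\Gamma'$ engendré dans $\Gamma$ est contractant, d'entropie $\geq h_{\Gamma^{(i^*,j^*)}} = h_\Gamma$, donc exactement $h_\Gamma$ par monotonie.

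L'obstacle principal, et c'est là que l'hypothèse $|\Lambda_\Gamma| \geq 2$ intervient de façon cruciale, est d'assurer la Gromov-disjonction de $W_{i^*}$ et $W_{j^*}$. Mon plan est de fixer au préalable deux voisinages ouverts Gromov-disjoints $V_+, V_- \subset \overline{X}$ de deux points distincts de $\Lambda_\Gamma$, et de raffiner le recouvrement pour qu'aucun $W_k$ ne rencontre à la fois $V_+$ et $V_-$ : si la classe d'entropie maximale avait ses deux directions concentrées autour d'un même point du bord, on la composerait à gauche (ou à droite) par un élément auxiliaire $\gamma_0 \in \Gamma$ avec $\gamma_0 o$ proche du bon point, en utilisant l'invariance $h_{\gamma_0 A} = h_A$ de la propriété~\ref{hm} pour préserver l'entropie. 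Ce jeu simultané entre recouvrement fin, tiroirs et translation compensatoire est le point technique le plus délicat de la preuve.
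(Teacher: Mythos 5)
Votre première étape (restriction à $\Gamma_{>n}$, recouvrement fini de $\overline{X}$ par des boules de Gromov, tiroirs via la propriété~\ref{hu} pour isoler une classe $\Gamma^{(i^*,j^*)}$ d'entropie maximale, puis critère de contraction~\ref{crit_cont} lorsque $W_{i^*}$ et $W_{j^*}$ sont Gromov-disjoints) est essentiellement la construction du \emph{support} de la section~\ref{supp} du texte (proposition~\ref{nonvide}) combinée à la proposition~\ref{diag} : c'est correct et c'est bien le cas générique. Vous identifiez aussi correctement que toute la difficulté est concentrée dans le cas où la classe d'entropie maximale a ses deux directions $\gamma o$ et $\gamma^{-1}o$ accumulées au voisinage d'un même point du bord.

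C'est précisément là que votre résolution échoue. Si le semi-groupe $\Gamma$ fixe un point $\xi \in \bX$ (ce qui est compatible avec $\#\Lambda_\Gamma \geq 2$), alors tout élément auxiliaire $\gamma_0 \in \Gamma$ fixe aussi $\xi$, et la \og translation compensatoire \fg\ ne déplace pas la concentration : pour $\gamma$ grand, $d(\gamma^{-1}\gamma_0^{-1}o, \gamma^{-1}o) = d(\gamma_0^{-1}o, o)$ est borné, donc $(\gamma_0\gamma)^{-1}o$ reste près de $\xi$ si $\gamma^{-1}o$ l'était, et $\gamma_0$ fixant $\xi$, l'image $\gamma_0\gamma o$ peut très bien rester elle aussi près de $\xi$. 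Une seule multiplication à gauche ou à droite par un élément fixe ne peut donc pas rendre les deux domaines Gromov-disjoints. Le texte traite ce cas (sous-section~\ref{ptfixe}) par un argument nettement plus lourd : on construit d'abord une isométrie contractante $h \in \Gamma$ avec $\xi \notin \overline{X_h}$ (proposition~\ref{econt} --- c'est là, et non dans votre raffinement du recouvrement, que l'hypothèse $\#\Lambda_\Gamma \geq 2$ intervient de façon essentielle), puis on partitionne la boule $B(o,R)$ en $O(R)$ copies d'un domaine fondamental pour $h$ et on montre par le lemme des tiroirs qu'une proportion au moins $\frac{1}{CR}$ des points d'orbite peut être ramenée dans $X_h$ en appliquant une puissance $h^k$ \emph{dépendant du morceau} ; la perte étant seulement polynomiale, l'entropie est préservée (proposition~\ref{ptf} et lemme~\ref{minXh}). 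Sans un substitut à cet argument, votre preuve est incomplète ; le cas du doublet fixé $\{\xi_+,\xi_-\}$ demande lui aussi un traitement séparé (lemmes~\ref{disj}, \ref{cont+-} et \ref{h+-}), quoique plus proche de votre idée de composition par un élément qui échange les deux points.
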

	
	Ce théorème est l'étape principale de la preuve du théorème \ref{thm0}.
	
	La preuve du théorème \ref{thm1} repose sur l'étude du support du semi-groupe $\Gamma$, qui est une partie fermée et $\Gamma \times \Gamma^{-1}$-invariante de $\bX \times \bX$ décrivant les directions dans lesquelles il y a beaucoup d'isométries qui contractent et dilatent.
	On montre que ce support, s'il n'est pas réduit à deux points, fournit un sous-semi-groupe contractant d'entropie $h_\Gamma$.
	Et pour cela, on commence par montrer qu'il existe une isométrie contractante dans le semi-groupe $\Gamma$,
	et on effectue un ping-pong entre cet élément et une \og grosse \fg\ partie du semi-groupe $\Gamma$.
	Puis l'on montre que dans le cas où le support est réduit à deux points, le semi-groupe fixe un doublet de points au bord, ce qui permet de conclure rapidement si le semi-groupe ne fixe pas de point au bord.
	Enfin, on termine par le cas où le semi-groupe fixe un point au bord. Dans ce cas, on ne peut pas effectuer de ping-pong, mais l'existence d'une isométrie contractante nous permet de démontrer qu'il existe une proportion suffisante des éléments du semi-groupe qui contractent loin du point fixe. On fera cela en utilisant le lemme des tiroirs sur une partition en copies d'un domaine fondamental pour l'isométrie contractante. \\
	
%	On souhaite montrer la propriété (P) suivante :
%	{\center (P) Il existe un sous-semi-groupe $\Gamma' < \Gamma$ contractant et de même entropie que $\Gamma$.}
	
	%%%%%%%%%%%%%%%%%%%%%%%%%%%%%%%%%%%%%%%%%%%%%%%%%%%%%%%%%%%%%%%%%%%%%%%%	
	\subsection{Construction d'une isométrie contractante}
	Nous allons montrer qu'un semi-groupe d'isométries d'un espace Gromov-hyperbolique dont l'ensemble limite contient au moins deux points possède toujours une isométrie contractante.
	On a la proposition :
	
	\begin{prop} \label{econt}
		Soit $X$ un espace Gromov-hyperbolique, et $\Gamma$ un semi-groupe d'isométries de $X$ dont l'ensemble limite n'est pas réduit à un point.
		%Si le semi-groupe $\Gamma$ a un ensemble limite non réduit à un point, 
		Alors $\Gamma$ contient une isométrie $h$ contractante.
		De plus, si $\eta$ est un point du bord de $X$, %de l'ensemble limite,
		alors on peut choisir $h$ tel que
		$\eta \not\in X_h$.
	\end{prop}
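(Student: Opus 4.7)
Le plan est d'appliquer le critère de contraction (Lemme~\ref{seph}) à un élément $h \in \Gamma$ bien choisi, en prenant pour points témoins $x = h o \in \overline{X_h}$ et $x' = h^{-1} o \in \overline{X_{h^{-1}}}$~: il suffira de construire $h \in \Gamma$ tel que $d(o, h o)$ soit grand et $(h o | h^{-1} o)$ reste borné par une constante indépendante. Autrement dit, je chercherais un élément de type loxodromique dans $\Gamma$, c'est-à-dire tel que la suite bi-infinie $(h^n o)_{n \in \Z}$ soit approximativement quasi-géodésique au voisinage de $o$.

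Premièrement, j'extrairais deux éléments de $\Gamma$ dont les orbites partent dans des directions indépendantes. L'hypothèse $\#\Lambda_\Gamma \geq 2$ fournit deux points $\xi_+ \neq \xi_-$ du bord, et la Gromov-hyperbolicité assure $(\xi_+ | \xi_-) < \infty$. En choisissant des suites $(\alpha_n), (\beta_n) \in \Gamma^\N$ tendant respectivement vers $\xi_+$ et $\xi_-$, on dispose d'une constante $M$ telle que $(\alpha_n o | \beta_m o) \leq M$ pour $n, m$ assez grands, tandis que $d(o, \alpha_n o)$ et $d(o, \beta_n o)$ tendent vers l'infini.

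Deuxièmement, je formerais $h$ comme un produit alterné, par exemple $h = \alpha_n \beta_m$, ou bien $h = (\alpha_n \beta_m)^k$ pour $k$ grand. Pour appliquer le critère, il faut borner $(h o | h^{-1} o)$~: l'inégalité à quatre points de la $\delta$-hyperbolicité permet de ramener ce produit à des produits de Gromov du type $(\alpha^{\pm 1} o | \beta^{\pm 1} o)$, en combinaison avec le Lemme~\ref{lXg} sur les ensembles $X_\gamma$. La difficulté principale vient de ce que les inverses $\alpha_n^{-1}, \beta_m^{-1}$ n'appartiennent pas a priori à $\Gamma$~: il faudra exploiter la symétrie $d(o, \alpha^{-1} o) = d(o, \alpha o)$ et, au besoin, remplacer $\alpha, \beta$ par de grandes puissances ou par de longs produits dans $\Gamma$ pour forcer la bonne configuration géométrique. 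C'est clairement l'étape la plus délicate.

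Enfin, pour la condition supplémentaire $\eta \notin X_h$, équivalente à $(\eta | h o) < \frac{1}{2} d(o, h o)$, il suffira de choisir dès la première étape $\xi_+$ distinct de $\eta$ (ce qui est possible car $\#\Lambda_\Gamma \geq 2$). On aura alors $(\eta | \xi_+) < \infty$, et en prenant $h$ avec $h o$ suffisamment proche de $\xi_+$ et $d(o, h o)$ assez grand, l'inégalité requise sera automatiquement satisfaite.
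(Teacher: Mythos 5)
Votre stratégie générale --- appliquer le critère de contraction (lemme~\ref{seph}) aux points témoins $x = ho \in \overline{X_h}$ et $x' = h^{-1}o \in \overline{X_{h^{-1}}}$ pour un produit $h$ de deux éléments tirés vers deux points limites distincts --- est bien celle du cas générique de la preuve du texte. Mais l'étape que vous signalez vous-même comme \og la plus délicate \fg\ est précisément celle qui manque, et elle ne se résout pas par \og de grandes puissances ou de longs produits \fg\ : dans un semi-groupe, on n'a aucun contrôle a priori sur $\alpha_n^{-1}o$ et $\beta_m^{-1}o$, et la symétrie $d(o,\alpha^{-1}o)=d(o,\alpha o)$ ne donne que la norme, pas la direction. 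Sans cela, ni $(h o | h^{-1} o)$ ni même la position de $ho = \alpha_n(\beta_m o)$ (qui exige $\beta_m o \notin X_{\alpha_n^{-1}}$ via le lemme~\ref{lXg}) ne sont contrôlées. L'idée clef du texte, absente de votre plan, est une dichotomie tirée de la contraposée du lemme~\ref{seph} : ou bien $\gamma$ est déjà contractante (et on a fini), ou bien \emph{tout} couple de points de $\overline{X_\gamma}\cup\overline{X_{\gamma^{-1}}}$ a un produit de Gromov au moins $\frac{1}{2} d(o,\gamma o)-3\delta$, ce qui force $X_{\gamma^{-1}}$ à s'accumuler près du même point limite $\xi$ que $X_\gamma$. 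C'est cette localisation forcée de $\gamma^{-1}o$ qui rend le produit $\gamma'\gamma$ contractant lorsque $\gamma$ et $\gamma'$ visent deux points distincts $\xi\neq\xi'$.

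Deux autres lacunes. D'une part, le cas où $\Lambda_\Gamma$ est exactement un doublet $\{\xi,\eta\}$ demande un traitement séparé (votre construction fait jouer des rôles distincts à $\xi_+$, $\xi_-$ et $\eta$, donc suppose implicitement trois points), notamment le sous-cas où une isométrie de $\Gamma$ échange les deux points. D'autre part, dans ce même cas du doublet, votre argument final pour $\eta\notin X_h$ (\og choisir $\xi_+\neq\eta$ \fg) ne laisse qu'un seul choix possible et ne garantit pas que l'isométrie contractante obtenue contracte vers $\xi$ plutôt que vers $\eta$ ; le texte doit recourir à une conjugaison $\gamma_0 h\gamma_0$ par un élément qui échange les deux points pour y remédier.
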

	
	L'idée de la preuve est de considérer deux grands éléments de $\Gamma$ qui contractent en deux endroits distincts (près de points distincts de l'ensemble limite).
	Le produit de ces deux élément est alors contractant parce-que ces deux éléments n'étant pas contractants (sinon il n'y a rien à démontrer) et ayant une grande norme,
	leur inverses contractent à nouveau près des mêmes points, et ainsi, le produit contracte depuis l'un des endroits vers l'autre.
	(voir figure \ref{fig_contract}.)
	
	\begin{figure}[H]
		\centering
		\caption{Construction d'une isométrie contractante à partir de deux isométries $\gamma$ et $\gamma'$ non contractantes.} \label{fig_contract}
		\includegraphics{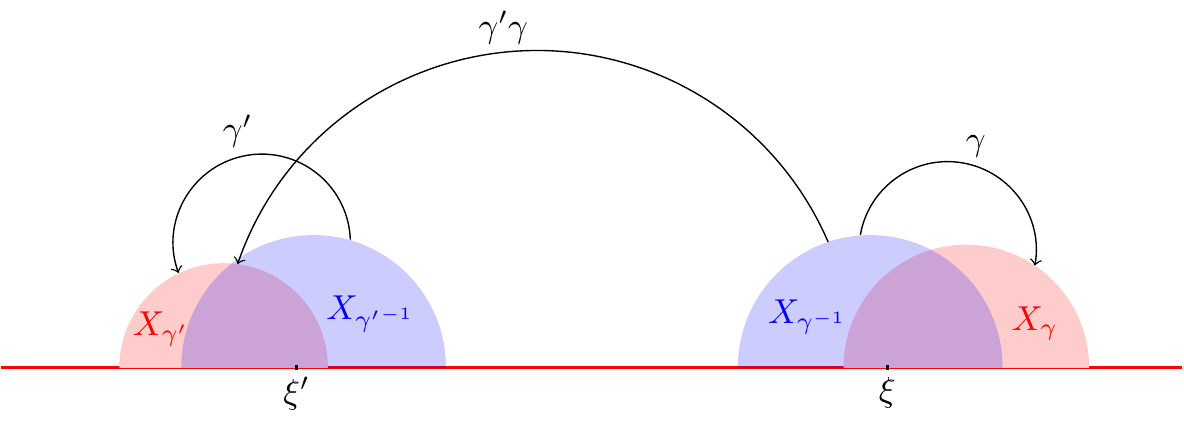}
%		\begin{dpoin}H(-2,2,1.1)
%			a = fix( 180, 10, 6);
%			a1 = fix(180, -10, 4); %inv(a);
%			b = fix(0, 170, 7);
%			b1 = fix(0, 190, 4); %inv(b);
%%			\fill o circle' (.01) node[below] {$o$};
%			\med \fill[red!20] o a.o;
%			\med \fill[blue, opacity = .2] o a1.o;
%			\med \fill[red!20] o b.o;
%			\med \fill[blue, opacity = .2] o b1.o;
%%			\fill a.o circle'(.01) node[right, red] {$\gamma o$};
%%			\fill a1.o circle'(.01) node[left, blue] {$\gamma^{-1} o$};
%%			\fill b.o circle'(.01) node[left, red] {$\gamma' o$};
%%			\fill b1.o circle'(.01) node[right, blue] {${\gamma'}^{-1} o$};
%			
%			\draw (-1.46, .13) node[red] {$X_{\gamma'}$};
%			\draw (-.75, .18) node[blue] {$X_{{\gamma'}^{-1}}$};
%			
%			\draw (1.5, .15) node[red] {$X_{\gamma}$};
%			\draw (.73, .17) node[blue] {$X_{{\gamma}^{-1}}$};
%			
%			\draw[<-] (-1.2, .35) -- (.8, .425); % node[above, pos=.5] {$\gamma' \gamma$};
%			
%			\\draw[<-] (-1.4, .32) arc (200:2:.3); % node[above, pos=.5] {$\gamma'$};
%			
%			
%			\\draw[<-] (1.5, .345) arc (-10:170:.3); % node[above, pos=.5] {$\gamma$};
%			
%			\end{scope}
%				\draw (-.3,1.15) node {$\gamma' \gamma$};
%				\draw (-1.2,.8) node {$\gamma'$};
%				\draw (1.3,.75) node {$\gamma$};
%				\draw[thick] (1,-.01) -- (1,.01) node[below] {$\xi$};
%				\draw[thick] (-1,-.01) -- (-1,.01) node[below] {$\xi'$};
%			\begin{scope}
%		\end{dpoin}
	\end{figure}
	
	\begin{proof} %[Preuve de la proposition \ref{econt}]

		Supposons pour commencer que l'ensemble limite contienne au moins trois points.
		Soit $\eta \in \Lambda_\Gamma$, et soient $\xi \neq \xi'$ deux points distincts de l'ensemble limite $\Lambda_\Gamma$, et qui sont distincts du point $\eta$, et soient $\gamma$ et $\gamma'$ deux isométries de $\Gamma$ telles que l'on ait
		\[ (\xi | \gamma o) > 2 (\xi | \xi') + 10\delta + (\xi | \eta) \quad \text{et} \quad (\xi' | \gamma' o) > 2 (\xi | \xi') + 10\delta + (\xi' | \eta). \]
		On a alors également $d(o, \gamma o) \geq (\xi | \gamma o) > 2 (\xi | \xi') + 10 \delta + (\xi | \eta)$ et $d(o, \gamma' o) > 2 (\xi | \xi') + 10\delta + (\xi' | \eta)$. \\
		
		Si une des isométries $\gamma$ ou $\gamma'$ était contractante, alors elle conviendrait, et la preuve serait finie.
		En effet, on a $\eta \not\in X_\gamma$ et $\eta \not \in X_{\gamma'}$, puisque par $\delta$-hyperbolicité on a
		\[
			(\eta | \xi) \geq \min\{(\eta | \gamma o), (\gamma o | \xi)\} - \delta,
		\]
		et donc $(\gamma o | \eta) \leq (\eta | \xi) + \delta < \frac{1}{2} d(o, \gamma o)$, et de même avec $\gamma'$.
		On peut donc supposer que les éléments $\gamma$ et $\gamma'$ ne sont pas contractants.
		
		On a alors le lemme suivant.
		\begin{lemme}
			Les ensembles $X_{\gamma^{-1}}$ et $X_{\gamma'}$ sont $((\xi | \xi') + 2\delta)$-disjoints.
		\end{lemme}
		
		\begin{proof}
			Soient $x \in X_{\gamma^{-1}}$ et $x' \in X_{\gamma'}$.
			Par $\delta$-hyperbolicité, on a
			\[ (\xi | \xi') \geq \min\{ (\xi | x), (x | x'), (x' | \xi') \} - 2\delta. \]
			Or, par le lemme \ref{non_cont}, on a $(\xi | x) \geq \frac{1}{2} d(o, \gamma o) - 3 \delta > (\xi | \xi') + 2\delta$, et de même pour $(\xi' | x')$.
			On conclut donc que l'on a
			\[ (x | x') \leq (\xi | \xi') + 2\delta. \]
			Pour finir la preuve du lemme, il reste à montrer que les parties $X_{\gamma^{-1}}$ et $X_{\gamma'}$ sont d'adhérences dans $X$ disjointes.
			Par l'absurde, supposons qu'il existe $y \in \Adh{X_{\gamma^{-1}}} \cap \Adh{X_{\gamma'}} = X_{\gamma^{-1}} \cap X_{\gamma'}$.
			On a alors d'une part $(y | y) \geq \frac{1}{2} d(o, \gamma o) - \delta > (\xi | \xi') + 4\delta$ par le lemme \ref{lXg}, et d'autre part $(y | y) \leq (\xi | \xi') + 2\delta$ par l'inégalité ci-dessus.
			Contradiction.
		\end{proof}
		
		Montrons alors que l'isométrie $\gamma' \gamma$ est contractante.
		
 %% autre preuve possible
%		On a
%		\[ \gamma' \gamma (X \backslash X_{\gamma^{-1}}) \subseteq \gamma'(X_{\gamma}) \subseteq \gamma'(X \backslash X_{{\gamma'}^{-1}}) \subseteq X_{\gamma'}, \]
%		et de même ${\gamma}^{-1} {\gamma'}^{-1}( X \backslash X_{\gamma'} ) \subseteq X_{\gamma^{-1}}$.
%		Montrons que l'on a aussi les inclusions
%		\[ X_{\gamma' \gamma} \subseteq X_{\gamma'} \quad \text{et} \quad X_{{\gamma' \gamma}^{-1}} \subseteq X_{{\gamma}^{-1}}, \]
%		le lemme ci-dessus permettra alors de conclure.
%		
%		Montrons l'inclusion $X_{\gamma' \gamma} \subseteq X_{\gamma'}$.
%		Il suffit pour cela de montrer que pour tout $x \in X$, on a $d(x, \gamma o) \leq d(x, \gamma' \gamma o)$.
%		On a
%		\[ d(x, \gamma' \gamma o) = d(x, \gamma' o)  + d(o, \gamma o) - 2({\gamma'}^{-1} x | \gamma o). \] % \leq d(o, \gamma' \gamma o) - \frac{1}{2} $
%		Or, en utilisant le lemme ci-dessus avec $\gamma$ et $\gamma'$ permutés, on obtient $({\gamma'}^{-1} x | \gamma o) \leq (\xi | \xi') + 2\delta$,
%		et d'autre part, on a $d(o, \gamma' o) \geq 2(\xi | \xi') + 8\delta$.
%		On obtient donc
%		\[ d(o, \gamma o) \leq d(o, \gamma' \gamma o) - 2(\xi | \xi') + 8\delta + 2(\xi | \xi') + 4\delta \leq d(o, \gamma' \gamma o), \]
%		et donc
%		\[ (\gamma' \gamma o | (\gamma' \gamma)^{-1} o) \leq \frac{1}{2} d(o, \gamma' \gamma o) - 2\delta. \]
 %%		

		On a
		\[ \gamma' \gamma o \in \gamma'(X_{\gamma}) \subseteq \gamma'(X \backslash X_{{\gamma'}^{-1}}) \subseteq X_{\gamma'}, \]
		et de même ${\gamma}^{-1} {\gamma'}^{-1} o \in X_{\gamma^{-1}}$, donc par le lemme ci-dessus
		\[ (\gamma' \gamma o | (\gamma' \gamma)^{-1} o) \leq (\xi | \xi') + 2\delta < \frac{1}{2} d(o, \gamma o) - 3\delta. \]
		
		Or, on a $d(o, \gamma o) \leq d(o, \gamma' \gamma o)$. En effet, on a
		\[ d(o, \gamma o ) = d(o, \gamma' \gamma o) - d(o, \gamma' o) + 2({\gamma'}^{-1} o | \gamma o). \] % \leq d(o, \gamma' \gamma o) - \frac{1}{2} $
		Or, en utilisant le lemme ci-dessus avec $\gamma$ et $\gamma'$ permutés, on obtient $({\gamma'}^{-1} o | \gamma o) \leq (\xi | \xi') + 2\delta$,
		et d'autre part, on a $d(o, \gamma' o) \geq 2(\xi | \xi') + 10\delta$.
		On obtient donc bien
		\[ d(o, \gamma o) \leq d(o, \gamma' \gamma o) - 2(\xi | \xi') - 10\delta + 2(\xi | \xi') + 4\delta \leq d(o, \gamma' \gamma o), \]
		et donc
		\[ (\gamma' \gamma o | (\gamma' \gamma)^{-1} o) < \frac{1}{2} d(o, \gamma' \gamma o) - 3\delta. \]
		Donc l'élément $\gamma \gamma'$ est contractant par le critère de contraction (lemme \ref{seph}).
%		Et l'on a $\xi' \not\in X_\gamma \supseteq X_{\gamma \gamma'}$, puisque par $\delta$-hyperbolicité on a
%		\[ (\xi' | \gamma o) = \min\{(\xi' | \gamma o), (\gamma o | \xi)\} \leq (\xi | \xi') + \delta < \frac{1}{2} d(o, \gamma o). \]		
		Pour finir, on a $\eta \not\in X_{\gamma'} \supseteq X_{\gamma' \gamma}$ d'après ce que l'on a fait ci-avant.%, puisque par $\delta$-hyperbolicité on a
%		\[ (\xi | \gamma' o) = \min\{(\xi | \gamma' o), (\gamma' o | \xi')\} \leq (\xi | \xi') + \delta < \frac{1}{2} d(o, \gamma' o). \]
		
%		Les ensembles $X_\gamma \cup X_{\gamma^{-1}}$ et $X_{\gamma'} \cup X_{\gamma'^{-1}}$ sont alors disjoints.
%		En effet, si $x \in (X_\gamma \cup X_{\gamma^{-1}}) \cap (X_{\gamma'} \cup X_{\gamma'^{-1}})$, alors on a
%		$d(o, x) = (x | x) < (\xi | \xi') + 4\delta$ d'après ci-dessus,
%		et $(x | x) $
		
		Supposons maintenant que l'ensemble limite $\Lambda_\Gamma$ soit réduit à deux points :
		\[
			\Lambda_\Gamma = \{ \xi, \eta \}.
		\]
		Par $\Gamma$-invariance de l'ensemble limite, les isométries de $\Gamma$ fixent le doublet $\{\xi, \eta\}$.
		Distinguons alors deux cas :
		\begin{enumerate}
			\item	Si toutes les isométries de $\Gamma$ fixent chacun des points $\xi$ et $\eta$, considérons une isométrie $\gamma$ telle que
				\[
					(\gamma o | \xi) > 2(\xi | \eta) + 6\delta.
				\]
				Celle-ci existe bien puisque l'on a $\xi \in \Lambda_\Gamma$.
				On a alors le lemme :
				
				\begin{lemme}
%					Si $\gamma$ est une isométrie d'un espace Gromov-hyperbolique $X$ qui fixe deux point $\xi \in \bX$ et $\eta \in \bX$, et avec
%					\[
%						(\gamma o | \xi) > 
%					\]
					On a $\eta \not \in X_\gamma$ et $\eta \in X_{\gamma^{-1}}$.
				\end{lemme}
				
				\begin{proof}	
					%On vérifie alors facilement que les ensembles $X_\gamma$ et $X_{\gamma^{-1}}$ sont Gromov-disjoints et que $\eta \not\in X_\gamma$
					%On a alors $\eta \not\in X_\gamma$ puisque par Gromov-hyperbolicité on a
					Commençons par montrer que l'on a $\eta \not\in X_{\gamma}$.
					Par $\delta$-hyperbolicité, on a
					\[
						(\xi | \eta) \geq \min\{ (\xi | \gamma o), (\gamma o | \eta) \} - \delta,
					\]
					et donc $(\gamma o | \eta) \leq (\xi | \eta) + \delta < \frac{1}{2} d(o, \gamma o)$, ce qui prouve la première partie du lemme.
					
					On a ensuite $\eta = \gamma^{-1} \eta \in X_{\gamma^{-1}}$, puisque l'isométrie $\gamma$ fixe le point $\eta$.
%					Montrons alors que l'on a $\xi \not\in X_{\gamma^{-1}}$. On a
%					\[
%						\frac{1}{2} d(o, \gamma o) - \delta > (\xi | \eta) \geq \min\{ (\xi | \gamma^{-1} o), (\gamma^{-1} o | \eta) \} - \delta = (\xi | \gamma^{-1} o) - \delta.
%					\]
%					Ainsi, on a $\xi = \gamma \xi \in X_\gamma$, et le lemme est démontré.
				\end{proof}
				
				Pour terminer la preuve de la proposition dans le cas où toutes les isométries fixent chacun des points $\xi$ et $\eta$, il ne reste plus qu'à démontrer le lemme :
				
				\begin{lemme}
					L'isométrie $\gamma$ est contractante.
				\end{lemme}
				
				\begin{proof}
					%Montrons que l'on a $\eta \in X_\gamma$.
					%Pour cela, on 
%					Le point fixe $\eta$ n'est pas dans $X_{\gamma^{-1}}$.
%					En effet, si c'était le cas, le lemme précédent et le lemme~\ref{lXg} donnerait l'inégalité
%					\[
%						(\xi | \eta) \geq \frac{1}{2} d(o, \gamma o) - \delta,
%					\]
%					ce qui est impossible puisque l'on a $d(o, \gamma o) \geq (\gamma o | \xi) > 2(\xi | \eta) + 6\delta$.
%					
%					Ainsi on a $\eta = \gamma \eta \in X_\gamma$.
					Le point fixe $\xi$ est dans l'union $\overline{X_\gamma} \cup \overline{X_{\gamma^{-1}}}$,
					puisque si l'on avait $\xi \not \in \overline{X_\gamma} \cup \overline{X_{\gamma^{-1}}}$, alors on aurait $\xi = \gamma \xi \in \overline{X_\gamma}$, ce qui est absurde.
					
					Le fait que l'isométrie $\gamma$ soit contractante découle alors du critère de contraction \ref{seph}, puisque l'on a
					\[
						(\xi | \eta) < \frac{1}{2}d(o, \gamma o) - 3 \delta.
					\]
					
					%Ceci permet de montrer que l'isométrie $\gamma$ est contractante : pour tous $x \in X_\gamma$ et $y \in X_{\gamma^{-1}}$, on a
%					Soient $x \in X_\gamma$ et $y \in X_{\gamma^{-1}}$. Par $\delta$-hyperbolicité, on a
%					\[
%						(\xi | \eta) \geq \min\{ (\xi | \gamma o), (\gamma o | x), (x | y), (y | \gamma^{-1} o), (\gamma^{-1} o | \eta) \} - 4\delta.
%					\]
%					Or, on a les inégalités
%					\begin{align*}
%						(\xi | \gamma o) &> (\xi | \eta) + 4\delta, \\
%						(\gamma o | x) &\geq \frac{1}{2} d(o, \gamma o) > (\xi | \eta) + 4 \delta, \\
%						(y | \gamma^{-1} o) &\geq \frac{1}{2} d(o, \gamma o) > (\xi | \eta) + 4 \delta, \text{et} \\
%						(\gamma^{-1} o | \eta) &\geq \frac{1}{2} d(o, \gamma o) > (\xi | \eta) + 4 \delta,
%					\end{align*}
%					%qui prouvent que
%					ce qui donne l'inégalité
%					$(x | y) \leq (\xi | \eta) + 4 \delta$ qui prouve que les ensembles $ $.
				\end{proof}
				
			\item	S'il existe une isométrie $\gamma_0 \in \Gamma$ qui échange $\xi$ et $\eta$, c'est-à-dire $\gamma_0 \xi = \eta$ et $\gamma_0 \eta = \xi$.
			
				La première partie de la preuve permet d'obtenir une isométrie contractante $h \in \Gamma$, mais avec $\eta \in X_h$.
				En outre, on peut choisir l'isométrie $h$ aussi grande que l'on veut, et donc demander à avoir l'inégalité
				\[
					(h o | \eta) > 2 (\xi | \eta) + 4d(o, \gamma_0 o) + 6 \delta.
				\]
				
				Montrons qu'alors l'isométrie $\gamma := \gamma_0 h \gamma_0$ convient.
				Pour cela, nous allons utiliser le critère de contraction \ref{seph} avec les points $\xi$ et $\eta$.
				On a le lemme suivant.
				
				\begin{lemme}
					On a $\eta \not \in X_{\gamma}$ et $\xi \not \in X_{\gamma^{-1}}$.
				\end{lemme}
				
				\begin{proof}
					Montrons que $\eta \not \in X_{\gamma}$.
					On a
					\begin{align*}
						(\eta | \gamma o)	&= (\gamma_0 \xi | \gamma_0 h \gamma_0 o) \\
										&= (\xi | h \gamma_0 o) - (\xi | \gamma_0^{-1} o) - (h \gamma_0 o | \gamma_0^{-1} o) + d(o, \gamma_0 o) \\
										&\leq (\xi | h \gamma_0 o) + d(o, \gamma_0 o).
					\end{align*}
					Et par $\delta$-hyperbolicité, on a
					\[
						(\xi | \eta) \geq \min\{ (\xi | h \gamma_0 o), (h \gamma_0 o | \eta) \} - \delta.
					\]
					Or, le point $\gamma_0 o$ n'est pas dans l'ensemble $X_{h^{-1}}$ d'après le lemme~\ref{lXg}, puisque l'on a
					$(\gamma_0 o | \gamma_0 o) = d(o, \gamma_0 o) < \frac{1}{2} d(o, ho) - \delta$.
					On a donc $h \gamma_0 o \in X_h$.
					Comme on a aussi $\eta \in X_h$, le lemme~\ref{lXg} donne
					\[
						(\eta | h \gamma_0 o) \geq \frac{1}{2} d(o, ho) - \delta > (\xi | \eta) + \delta.
					\]
					Ainsi, on obtient
					\begin{align*}
						(\eta | \gamma o)	&\leq (\xi | h \gamma_0 o) + d(o, \gamma_0 o) \\
										&\leq (\xi | \eta) + d(o, \gamma_0 o) + \delta \\
										&< \frac{1}{2} d(o, ho) - d(o, \gamma_0 o) \\
										&\leq \frac{1}{2} d(o, \gamma_0 h \gamma_0 o),
					\end{align*}
					ce qui prouve que $\eta$ n'est pas dans $X_\gamma$.
					
					On montre de manière semblable que $\xi$ n'est pas dans $X_{\gamma^{-1}}$.
				\end{proof}
				
				Ainsi, on a $\xi = \gamma_0 \eta = \gamma_0 h \eta = \gamma \xi \in X_\gamma$ et de même $\eta \in X_{\gamma^{-1}}$.
				Le critère de contraction \ref{seph} s'applique donc, puisque l'on a bien l'inégalité
				\[
					(\xi | \eta) < \frac{1}{2} d(o, h o) - d(o, \gamma_0 o) - 3\delta \leq \frac{1}{2}d(o, \gamma o) - 3 \delta.
				\]
				
		\end{enumerate}
		
	\end{proof}
	
	%%%%%%%%%%%%%%%%%%%%%%%%%%%%%%%%%%%%%%%%%%%%%%%%%%%%%%%%%%%%%%%%%%%%%%%%%%%%%%%%%%%%%%
	\subsection{Support d'un semi-groupe} \label{supp}
	
	Définissons le support d'un ensemble d'isométries, qui correspond aux couples de points du bord pour lesquels il y a beaucoup d'isométries qui contractent dans un voisinage du premier point du couple et dilatent depuis un voisinage du deuxième point du couple.
	Le support d'un semi-groupe est à relier au support de la mesure de Patterson-Sullivan, mais il est plus simple à définir et à manipuler.
	
	\begin{define}
		Soit $X$ un espace Gromov-hyperbolique de point base $o$.
		On appelle \defi{support} d'une partie $A \subseteq \Isom(X)$ l'ensemble
		\[ \supp(A) := \{ (\xi, \mu) \in \bX \times \bX | \text{ pour tout } \epsilon > 0,\ h_{A^{\beta(\xi, \epsilon) \times \beta(\mu, \epsilon)}} = h_A \}, \]
		où l'on a posé $\beta(\xi, \epsilon) := \{ x \in \overline{X} |\ (\xi | x) > - \log(\epsilon) \}$ et
		\[ A^{\beta(\xi, \epsilon) \times \beta(\mu, \epsilon)} := \{ \gamma \in A |\ \gamma o \in \beta(\xi, \epsilon) \text{ et } \gamma^{-1} o \in \beta(\mu, \epsilon) \}. \]
	\end{define}
	
	On peut montrer que le support ne dépend pas du point base $o$ choisi.
	
	\begin{prop} \label{nonvide}
		Soit $X$ un espace Gromov-hyperbolique propre à bord compact. %d'adhérence compacte.
		Alors le support de toute partie $\Gamma \subseteq \Isom(X)$ est non vide.
	\end{prop}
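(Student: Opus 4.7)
Le plan est de combiner la compacité du bord $\bX$ (qui découle de celle de $\overline{X}$) avec les propriétés~\ref{hf} et~\ref{hu} de l'entropie pour construire, par extraction diagonale, un couple $(\xi, \mu) \in \bX \times \bX$ appartenant au support.

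Premièrement, pour chaque $\epsilon > 0$, je recouvrirais $\bX$ par un nombre fini d'ouverts $\beta(\xi_i, \epsilon)$, $1 \leq i \leq N$. Le complémentaire de leur union dans $\overline{X}$ étant un compact de $X$, il est inclus dans une boule $B(o, R)$ ; et pour tout $\gamma \in \Gamma$ avec $d(o, \gamma o) > R$, on a aussi $d(o, \gamma^{-1} o) = d(o, \gamma o) > R$, donc les deux points $\gamma o$ et $\gamma^{-1} o$ appartiennent à cette union finie. D'où l'inclusion
\[ \Gamma_{>R} \subseteq \bigcup_{i,j=1}^N \Gamma^{\beta(\xi_i, \epsilon) \times \beta(\xi_j, \epsilon)}. \]
La propriété~\ref{hf} donne $h_{\Gamma_{>R}} = h_\Gamma$, et l'itération de la sous-additivité~\ref{hu} fournit alors un couple d'indices $(i, j)$ tel que $h_{\Gamma^{\beta(\xi_i, \epsilon) \times \beta(\xi_j, \epsilon)}} = h_\Gamma$ (l'inégalité inverse résultant de la croissance~\ref{hc}). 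En prenant ensuite une suite $\epsilon_n \to 0$, j'obtiendrais une suite de couples $(\xi^n, \mu^n) \in \bX \times \bX$ vérifiant $h_{\Gamma^{\beta(\xi^n, \epsilon_n) \times \beta(\mu^n, \epsilon_n)}} = h_\Gamma$, dont on peut extraire, par compacité de $\bX \times \bX$, une sous-suite convergente vers un couple $(\xi, \mu)$.

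Il resterait à vérifier que $(\xi, \mu) \in \supp(\Gamma)$ : pour un $\epsilon > 0$ fixé, si $n$ est assez grand le long de la sous-suite pour que $-\log(\epsilon_n)$ et $(\xi^n | \xi)$ excèdent tous deux $-\log(\epsilon) + \delta$, la $\delta$-hyperbolicité donne, pour tout $x \in \beta(\xi^n, \epsilon_n)$,
\[ (x | \xi) \geq \min\{(x | \xi^n), (\xi^n | \xi)\} - \delta > -\log(\epsilon), \]
d'où $\beta(\xi^n, \epsilon_n) \subseteq \beta(\xi, \epsilon)$, et de même pour $\mu$ ; la croissance de l'entropie donne alors $h_{\Gamma^{\beta(\xi, \epsilon) \times \beta(\mu, \epsilon)}} \geq h_\Gamma$, d'où l'égalité. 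Le point délicat est précisément cet argument topologique : il faut s'assurer que, malgré la variation simultanée du centre $\xi^n$ et du rayon $\epsilon_n$, les voisinages correspondants finissent par se loger dans n'importe quel voisinage préfixé du point limite $\xi$, ce qui repose sur la $\delta$-hyperbolicité de manière analogue à la preuve de compacité de $\overline{X}$. Le reste consiste en un argument de compacité standard combiné aux propriétés déjà établies de l'entropie.
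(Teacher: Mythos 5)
Votre preuve est correcte et suit essentiellement la même stratégie que celle de l'article : compacité de $\overline{X}$ pour extraire un recouvrement fini du bord, caractère négligeable de la partie bornée restante (point \ref{hf}), lemme des tiroirs via le point \ref{hu}, puis passage à la limite par compacité de $\bX \times \bX$. La seule différence est d'organisation : l'article construit une suite \emph{emboîtée} de pavés d'entropie totale (ce qui rend la convergence et la vérification finale immédiates), tandis que vous extrayez une sous-suite convergente de centres et vérifiez ensuite l'inclusion $\beta(\xi^n, \epsilon_n) \subseteq \beta(\xi, \epsilon)$ par $\delta$-hyperbolicité --- c'est exactement le même calcul que celui qui donne la décroissance des pavés grossis de $e^{2\delta}$ dans l'article.
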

	
	\begin{proof}
		Soit $\delta > 0$ tel que l'espace $X$ soit $\delta$-hyperbolique.
		Construisons par récurrence une suite $(\xi_n, \eta_n)_{n \in \N}$ de couples de points du bord telle que l'on ait l'égalité $h_{\Gamma^{\beta(\xi_n, e^{-2\delta n}) \times \beta(\eta_n, e^{-2\delta n})} } = h_\Gamma$, et que l'on ait
		\[ \beta(\xi_n, e^{-2\delta n}) \times \beta(\eta_n, e^{-2\delta n}) \cap \beta(\xi_{n+1}, e^{-2\delta (n+1)}) \times \beta(\eta_{n+1}, e^{-2\delta (n+1)}) \neq \emptyset. \]
		
		Pour $n=0$, tous les points $\xi_0$ et $\eta_0 \in \bX$ conviennent puisque l'on a $\beta(\xi_0, 1) = \overline{X} = \beta(\eta_0, 1)$.
		
		Supposons $\xi_n$ et $\eta_n$ construits tels que $h_{\Gamma^{\beta(\xi_n, e^{-2\delta n}) \times \beta(\eta_n, e^{-2\delta n})} } = h_\Gamma$.
		Le carré $\bX \times \bX$ du bord étant compact, il en va de même du pavé fermé $(\overline{ \beta(\xi_n, e^{-2\delta n})} \cap \bX) \times (\overline{ \beta(\eta_n, e^{-2\delta n})} \cap \bX)$, et l'on peut extraire un recouvrement fini du recouvrement par les pavés ouverts $\beta(\xi, e^{-2\delta (n+1)}) \times \beta(\eta, e^{-2\delta (n+1)})$ quand $(\xi, \eta)$ décrit $\beta(\xi_n, e^{-2\delta n}) \times \beta(\eta_n, e^{-2\delta n})$.
		Le complémentaire dans $\beta(\xi_n, e^{-2\delta n}) \times \beta(\eta_n, e^{-2\delta n})$ de ce recouvrement est alors borné. Or, l'entropie d'une partie bornée de $X$ est nulle : on a 
		\[ h_{\Gamma^{B(o, R) \times B(o, R)}} = 0, \]
		pour tout $R > 0$, par propreté.
		Par le point \ref{hu} des propriétés \ref{ppteg} de l'entropie, il existe alors un couple $(\xi_{n+1}, \eta_{n+1})$ de $\beta(\xi_n, e^{-2\delta n}) \times \beta(\eta_n, e^{-2\delta n})$ vérifiant
		\[ h_{\Gamma^{\beta(\xi_{n+1}, e^{-2\delta (n+1)}) \times \beta(\eta_{n+1}, e^{-2\delta (n+1)})} } \geq h_{\Gamma^{\beta(\xi_n, e^{-2\delta n}) \times \beta(\eta_n, e^{-2\delta n})} } = h_\Gamma. \]		
		
		La suite $\left( \beta(\xi_{n}, e^{-2\delta n+2\delta}) \times \beta(\eta_{n}, e^{-2\delta n + 2\delta}) \right)_{n \in \N}$ ainsi obtenue des pavés grossis de $e^{2 \delta}$ est alors décroissante.
		En effet, soit $x \in \beta(\xi_{n+1}, e^{-2\delta(n+1)+2\delta})$ et soit $y \in \beta(\xi_{n+1}, e^{-2\delta(n+1)}) \cap \beta(\xi_{n}, e^{-2\delta n})$. Par $\delta$-hyperbolicité on a alors
		\[ (x | \xi_n) \geq \min\{ (x | \xi_{n+1}), (\xi_{n+1} | y), (y | \xi_n) \} - 2\delta > 2\delta n - 2\delta, \]
		ce qui donne bien l'inclusion $\beta(\xi_{n+1}, e^{-2\delta (n+1) + 2\delta}) \subseteq \beta(\xi_{n}, e^{-2\delta n + 2\delta})$.
		En faisant de même avec $\eta$, on obtient bien la décroissance souhaitée.
		
		La suite converge donc vers un point $(\xi, \eta)$. Ce point est bien dans le support $\supp(\Gamma)$, puisque pour tout $\epsilon > 0$, on peut trouver par Gromov-hyperbolicité un entier $n$ assez grand tel que l'on ait l'inclusion
		\[ \beta(\xi_{n}, e^{-2\delta n + 2\delta}) \times \beta(\eta_{n}, e^{-2\delta n + 2\delta}) \subseteq \beta(\xi, \epsilon) \times \beta(\eta, \epsilon), \]
		et donc tel que l'on ait l'inégalité
		$h_\Gamma = h_{\Gamma^{\beta(\xi_{n}, e^{-2\delta n + 2\delta}) \times \beta(\eta_{n}, e^{-2\delta n + 2\delta})}} \leq h_{\Gamma^{\beta(\xi, \epsilon) \times \beta(\eta, \epsilon)}} $.
		
	\end{proof}

	\begin{rem}
		C'est un des seuls endroits de la preuve où l'on utilise la compacité de l'adhérence de l'espace $X$ (l'autre endroit où l'on utilise cette compacité est pour définir la mesure de Patterson-Sullivan, voir~\ref{paulin} \ref{patt-sull}).
		Cette hypothèse n'est pas beaucoup plus forte que de demander seulement la propreté de l'espace $X$ (voir \ref{sbord}). %le paragraphe \ref{scompacite}).
		%Sans cette hypothèse on pourrait quand même obtenir une mesure $m$ comme ci-dessus, mais celle-ci ne serait pas nécessairement une probabilité et peut même être nulle comme le montre l'exemple ci-dessous. 
		%Mais le résultat est
	\end{rem}

	Voici une propriété de $\Gamma$-invariance du support :
	
	\begin{prop} \label{ginv}
		Soit $X$ un espace Gromov-hyperbolique et soit $\Gamma$ un semi-groupe d'isométries de $X$.
		Le support $\supp(\Gamma)$ est $\Gamma \times \Gamma^{-1}$-invariant pour l'action de $\Isom(X) \times \Isom(X)$ sur $\bX \times \bX$ donnée par %l'\og action \fg\ de $\Isom(X)$ sur $\bX \times \bX$ donnée par
		\[ (\gamma, {\gamma'}^{-1}) (\xi, \eta) := (\gamma \xi, {\gamma'}^{-1} \eta), \]
		pour toutes isométries $\gamma$ et $\gamma' \in \Isom(X)$ et $(\xi, \eta) \in \bX \times \bX$.
		
	C'est-à-dire que l'on a $(\gamma \xi, {\gamma'}^{-1} \eta) \in \supp(\Gamma)$ pour tout $(\xi, \eta) \in \supp(\Gamma)$ et tout $(\gamma, \gamma') \in \Gamma \times \Gamma$.
	\end{prop}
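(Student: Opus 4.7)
Soit $(\xi, \eta) \in \supp(\Gamma)$ et $(\gamma, \gamma') \in \Gamma \times \Gamma$. L'objectif est de montrer que pour tout $\epsilon > 0$,
\[ h_{\Gamma^{\beta(\gamma \xi, \epsilon) \times \beta({\gamma'}^{-1} \eta, \epsilon)}} = h_\Gamma. \]
L'idée est d'introduire l'application $\Phi : \tau \mapsto \gamma \tau \gamma'$, qui envoie $\Gamma$ dans lui-même puisque $\Gamma$ est un semi-groupe contenant $\gamma$ et $\gamma'$, et qui est injective (d'inverse $\sigma \mapsto \gamma^{-1} \sigma {\gamma'}^{-1}$ sur son image). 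Intuitivement, $\Phi$ envoie les isométries qui contractent près de $\xi$ et dilatent depuis $\eta$ vers des isométries qui contractent près de $\gamma \xi$ et dilatent depuis ${\gamma'}^{-1} \eta$. Il suffira d'en faire une estimée quantitative.

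L'étape clé est le contrôle du produit de Gromov. On utilise deux faits standards : premièrement, $(\gamma x \mid \gamma y)_o = (x \mid y)_{\gamma^{-1} o}$, qui diffère de $(x \mid y)_o$ d'au plus $d(o, \gamma^{-1} o)$ ; deuxièmement, si $d(x, y) \leq L$, alors $(x \mid z)_o \geq (y \mid z)_o - L$ pour tout $z \in \overline{X}$ (à une constante près dépendant de $\delta$ quand $z \in \bX$). En combinant ces deux observations avec $d(\gamma \tau o, \gamma \tau \gamma' o) = d(o, \gamma' o)$, on obtient une constante $K$ ne dépendant que de $\gamma$, $\gamma'$ et $\delta$ telle que, pour tout $\tau \in \Gamma$,
\[ (\gamma \tau \gamma' o \mid \gamma \xi) \geq (\tau o \mid \xi) - K \quad \text{et} \quad ((\gamma \tau \gamma')^{-1} o \mid {\gamma'}^{-1} \eta) \geq (\tau^{-1} o \mid \eta) - K. \]
En posant $C := e^K$, on a alors l'inclusion
\[ \Phi\bigl(\Gamma^{\beta(\xi, \epsilon'') \times \beta(\eta, \epsilon'')}\bigr) \subseteq \Gamma^{\beta(\gamma \xi, C\epsilon'') \times \beta({\gamma'}^{-1} \eta, C\epsilon'')}. \]

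Pour conclure, la propriété~\ref{hm} de l'entropie (invariance par multiplication à gauche et à droite par une isométrie) entraîne que l'entropie de $\Phi\bigl(\Gamma^{\beta(\xi, \epsilon'') \times \beta(\eta, \epsilon'')}\bigr)$ est égale à celle de $\Gamma^{\beta(\xi, \epsilon'') \times \beta(\eta, \epsilon'')}$, laquelle vaut $h_\Gamma$ pour tout $\epsilon'' > 0$ par hypothèse $(\xi, \eta) \in \supp(\Gamma)$. La croissance de l'entropie (propriété~\ref{hc}) appliquée à l'inclusion ci-dessus donne alors
\[ h_{\Gamma^{\beta(\gamma \xi, C\epsilon'') \times \beta({\gamma'}^{-1} \eta, C\epsilon'')}} \geq h_\Gamma, \]
et pour $\epsilon > 0$ fixé, prendre $\epsilon'' := \epsilon/C$ fournit l'inégalité souhaitée ; l'inégalité inverse étant triviale par croissance. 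L'obstacle principal est la vérification soigneuse de l'estimée quantitative sur les produits de Gromov : il faut s'assurer que la constante $K$ est bien uniforme en $\tau$ et indépendante de $\epsilon''$, ce qui impose de suivre avec attention les contributions des deux isométries $\gamma$ et $\gamma'$ et du défaut d'hyperbolicité $\delta$.
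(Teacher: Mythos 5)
Votre preuve est correcte et suit essentiellement la même démarche que celle de l'article : l'application $\Phi(\tau) = \gamma \tau \gamma'$ et l'inclusion $\gamma\, \Gamma^{\beta(\xi, \epsilon'') \times \beta(\eta, \epsilon'')} \gamma' \subseteq \Gamma^{\beta(\gamma \xi, C\epsilon'') \times \beta({\gamma'}^{-1} \eta, C\epsilon'')}$ sont exactement ce que fait l'article, qui explicite simplement la constante $K = d(o, \gamma o) + d(o, \gamma' o)$ par un calcul direct sur les produits de Gromov avant de conclure, comme vous, par l'invariance de l'entropie sous multiplication à gauche et à droite (propriété~\ref{hm}) et par croissance.
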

	
%	\begin{rem}
%		L'\og action \fg\ définie ci-dessus n'est pas une action de groupe : elle ne vérifie pas l'égalité
%		$ (\gamma \gamma') (\xi, \eta) = \gamma (\gamma' (\xi, \eta) ) $
%		pour $\gamma, \gamma' \in \Isom(X)$ et $\xi, \eta \in \bX$.
%	\end{rem}
	
	\begin{proof}
		Soient $(\xi, \eta) \in \supp(\Gamma)$ et $(\gamma, \gamma') \in \Gamma \times \Gamma$.
		Montrons que l'on a $(\gamma \xi, {\gamma'}^{-1} \eta) \in \supp(\Gamma)$.
		Soit $\epsilon > 0$.
		Par le point \ref{hm} des propriétés \ref{ppteg}, on a
		\[ h_\Gamma = h_{\Gamma^{\beta(\xi, \epsilon) \times \beta(\eta, \epsilon)}} = h_{\gamma \Gamma^{\beta(\xi, \epsilon) \times \beta(\eta, \epsilon)} \gamma'}. \]
		%Or, on a l'inclusion
		Montrons que l'on a l'inclusion
		\[ \gamma \Gamma^{\beta(\xi, \epsilon) \times \beta(\eta, \epsilon)} \gamma'
%			\subseteq \gamma \Gamma^{\beta(\xi, \epsilon e^{d(o, \gamma o)}) \times \beta(\eta, \epsilon e^{ d(o, \gamma o)})}
			\subseteq \Gamma^{\beta(\gamma \xi, \epsilon e^{d(o, \gamma o) + d(o, \gamma' o)}) \times \beta({\gamma'}^{-1} \eta, \epsilon e^{ d(o, \gamma o) + d(o, \gamma' o)} )}. \]
		Soit $\gamma'' \in \gamma \Gamma^{\beta(\xi, \epsilon) \times \beta(\eta, \epsilon)} \gamma'$.
		On a alors d'une part
		\[ (\gamma^{-1} \gamma'' o | \xi) \geq (\gamma^{-1} \gamma'' {\gamma'}^{-1} o | \xi) - d(o, \gamma' o) > -\log(\epsilon) - d(o, \gamma' o). \]
		%\[ ({\gamma'}^{-1} \gamma o | \gamma \xi) \geq (\gamma \gamma'^{-1} \gamma o | \gamma^{-1} \eta) - d(o, \gamma o) \geq -\log(\epsilon) - d(o, \gamma o) \]
		% \quad \text{et} \quad (\gamma'^{-1} o | \gamma^{-1} \eta) \geq -\log(\epsilon) + d(o, \gamma o) \]
		%puisque l'on a l'inégalité $\abs{(\gamma' \gamma^{-1} o | \gamma \xi) - (\gamma' o | \gamma \xi)} \leq d(o, \gamma o)$ pour tout $\gamma' \in \Isom(X)$, et de même avec $\eta$.
		et d'autre part %l'inclusion $\gamma \beta(\xi, \epsilon) \subseteq \beta(\gamma \xi, \epsilon e^{d(o, \gamma o)})$, puisque l'on a
		\begin{align*}
			(\gamma'' o | \gamma \xi)	&= (\gamma^{-1} \gamma'' o | \xi) + (\gamma o | \gamma'' o) + (\gamma o | \gamma \xi) - d(o, \gamma o) \\
								&\geq (\gamma^{-1} \gamma'' o | \xi) - d(o, \gamma o) \\
								&\geq -\log(\epsilon) - d(o, \gamma o) - d(o, \gamma' o).
		\end{align*}
		De la même façon on obtient les inégalités
		\[ ({\gamma''}^{-1} o | {\gamma'}^{-1} \eta) \geq (\gamma' {\gamma''}^{-1} o | \eta) - d(o, \gamma' o) \geq (\gamma' {\gamma''}^{-1} \gamma o | \eta) - d(o, \gamma o) - d(o, \gamma' o), \]
		et donc $\gamma'' \in \Gamma^{\beta(\gamma \xi, \epsilon e^{d(o, \gamma o) + d(o, \gamma' o)}) \times \beta({\gamma'}^{-1} \eta, \epsilon e^{ d(o, \gamma o) + d(o, \gamma' o)})}$.
		%\[ (\eta | \gamma \xi) = (\gamma^{-1} \eta | \xi) + (\gamma^{-1} o | \eta) + (\gamma o | \gamma \xi) - d(o, \gamma o) \geq (\gamma^{-1} \eta | \xi) - d(o, \gamma o). \]
		
		On a donc $h_{\Gamma^{\beta(\gamma \xi, \epsilon e^{d(o, \gamma o) + d(o, \gamma' o)}) \times \beta({\gamma'}^{-1} \eta, \epsilon e^{ d(o, \gamma o) + d(o, \gamma' o) })}} \geq h_\Gamma$, et on a l'autre inégalité
		\[
			h_{\Gamma^{\beta(\gamma \xi, \epsilon e^{d(o, \gamma o) + d(o, \gamma' o)}) \times \beta({\gamma'}^{-1} \eta, \epsilon e^{ d(o, \gamma o) + d(o, \gamma' o) })}} \leq h_\Gamma
		\]
		par l'inclusion $\Gamma^{\beta(\gamma \xi, \epsilon e^{d(o, \gamma o) + d(o, \gamma' o)}) \times \beta({\gamma'}^{-1} \eta, \epsilon e^{ d(o, \gamma o) + d(o, \gamma' o) })} \subseteq \Gamma$.
		Ceci prouve bien que le point $(\gamma, {\gamma'}^{-1}) (\xi, \eta) = (\gamma \xi, {\gamma'}^{-1} \eta)$ est dans le support $\supp(\Gamma)$ puisque le réel $\epsilon e^{d(o, \gamma o) + d(o, \gamma' o)}$ parcours $\R_+^*$ quand $\epsilon$ parcours $\R_+^*$.
	\end{proof}
	
	\begin{quest}
		On a l'inclusion $\supp(\Gamma) \subseteq \Lambda_\Gamma \times \Lambda_{\Gamma^{-1}}$, mais a-t'on l'égalité ?
	\end{quest}

	\subsection{Le cas générique}
	
	On va maintenant montrer que si le support $\supp(\Gamma)$ n'est pas réduit à certains sous-espaces de $\bX \times \bX$, alors on a la conclusion du théorème \ref{thm1}, c'est-à-dire l'existence d'un sous-semi-groupe contractant de $\Gamma$ qui est d'entropie totale $h_\Gamma$.
	
	\begin{prop} \label{diag}
		Soit $X$ un espace Gromov-hyperbolique, et soit $\Gamma$ un semi-groupe d'isométries de $X$.
		Si le support de $\Gamma$ n'est pas inclus dans la diagonale de $\bX \times \bX$, alors il existe un sous-semi-groupe contractant $\Gamma'$ de $\Gamma$ tel que $h_{\Gamma'} = h_\Gamma$.
	\end{prop}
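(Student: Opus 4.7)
Mon plan serait d'appliquer le critère de contraction (Proposition~\ref{crit_cont}) à un sous-ensemble bien choisi de $\Gamma$. Comme $\supp(\Gamma)$ n'est pas inclus dans la diagonale, je commencerais par choisir un couple $(\xi, \eta) \in \supp(\Gamma)$ avec $\xi \neq \eta$, et je poserais $C := (\xi | \eta)$, qui est fini puisque $\xi \neq \eta$. L'idée est ensuite de considérer l'ensemble
\[ A_\epsilon := \Gamma^{\beta(\xi, \epsilon) \times \beta(\eta, \epsilon)}, \]
qui vérifie $h_{A_\epsilon} = h_\Gamma$ par définition du support. Les éléments $\gamma$ de $A_\epsilon$ envoient $o$ près de $\xi$ et $\gamma^{-1} o$ près de $\eta$, donc contractent tous depuis un voisinage de $\eta$ vers un voisinage de $\xi$, ce qui suggère fortement la contraction.

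L'étape clé sera la majoration uniforme de $(\gamma^{-1} o | \gamma' o)$ pour $\gamma, \gamma' \in A_\epsilon$. En choisissant $\epsilon$ assez petit pour que $-\log \epsilon > C + 2\delta$, une double application de l'inégalité de $\delta$-hyperbolicité devrait fournir la majoration voulue. D'abord aux points $\xi, \eta, \gamma^{-1} o$:
\[ C = (\xi | \eta) \geq \min\{ (\xi | \gamma^{-1} o), (\gamma^{-1} o | \eta) \} - \delta, \]
et comme $(\gamma^{-1} o | \eta) > -\log \epsilon > C + \delta$, on en déduit $(\xi | \gamma^{-1} o) \leq C + \delta$. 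Puis aux points $\xi, \gamma' o, \gamma^{-1} o$:
\[ C + \delta \geq (\xi | \gamma^{-1} o) \geq \min\{ (\xi | \gamma' o), (\gamma' o | \gamma^{-1} o) \} - \delta, \]
et comme $(\xi | \gamma' o) > C + 2\delta$, on obtient $(\gamma' o | \gamma^{-1} o) \leq C + 2\delta$. Cette gymnastique hyperbolique est le principal obstacle technique, la subtilité étant de bien choisir l'ordre des triplets et la taille de $\epsilon$ en fonction du $C$ initial.

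Il suffira ensuite de fixer un réel $N > 2C + 10\delta$ et de poser $\Gamma'$ égal au sous-semi-groupe de $\Gamma$ engendré par $(A_\epsilon)_{>N}$. Par le point~\ref{hf} des propriétés~\ref{ppteg} (qui utilise la propreté de $X$), l'entropie est préservée: $h_{(A_\epsilon)_{>N}} = h_{A_\epsilon} = h_\Gamma$. Le critère de contraction~\ref{crit_cont} s'applique alors à $(A_\epsilon)_{>N}$, puisque
\[ \sup_{\gamma, \gamma' \in (A_\epsilon)_{>N}} (\gamma^{-1} o | \gamma' o) \leq C + 2\delta < \frac{N}{2} - 3\delta \leq \frac{1}{2} \inf_{\gamma \in (A_\epsilon)_{>N}} d(o, \gamma o) - 3\delta. \]
Le sous-semi-groupe $\Gamma'$ sera alors contractant par le point~2 des propriétés~\ref{pptc}, et la monotonie de l'entropie (point~\ref{hc}) donnera finalement $h_\Gamma = h_{(A_\epsilon)_{>N}} \leq h_{\Gamma'} \leq h_\Gamma$, soit $h_{\Gamma'} = h_\Gamma$.
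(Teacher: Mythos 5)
Votre preuve est correcte et suit essentiellement la même démarche que celle de l'article : choisir un point hors diagonale du support, rétrécir $\epsilon$ pour que les deux boules soient Gromov-disjointes, appliquer le critère de contraction \ref{crit_cont} aux éléments de grande norme, puis conclure par les points \ref{hf} et \ref{hc} des propriétés \ref{ppteg}. Vous explicitez de surcroît le calcul de $\delta$-hyperbolicité donnant la majoration uniforme de $(\gamma^{-1}o \mid \gamma' o)$, que l'article laisse implicite en invoquant simplement la Gromov-disjonction d'avec la diagonale.
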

	
	\begin{proof}
		Soit $(\xi, \mu) \in \supp(\Gamma)$ avec $\xi \neq \mu$.
		On peut alors trouver un réel $\epsilon > 0$ assez petit pour que le produit $\beta(\xi, \epsilon) \times \beta(\eta, \epsilon)$ soit Gromov-disjoint de la diagonale.
		D'après le critère de contraction (proposition \ref{crit_cont}), l'ensemble $\Gamma^{\beta(\xi, \epsilon) \times \beta(\eta, \epsilon)}_{\geq n}$ est alors contractant pour $n$ assez grand, où l'on a posé
		\[ \Gamma^{\beta(\xi, \epsilon) \times \beta(\eta, \epsilon)}_{\geq n} := \{ \gamma \in \Gamma |\ \gamma o \in \beta(\xi, \epsilon), \gamma^{-1} o \in \beta(\eta, \epsilon) \text{ et } d(o, \gamma o) \geq n \}. \]
		Or, par définition du support et par le point \ref{hf} des propriétés \ref{ppteg} de l'entropie, on a
		\[ h_{\Gamma^{\beta(\xi, \epsilon) \times \beta(\eta, \epsilon)}_{\geq n}} = h_\Gamma. \]
		Le semi-groupe engendré par $\Gamma^{\beta(\xi, \epsilon) \times \beta(\eta, \epsilon)}_{\geq n}$ convient donc.

	\end{proof}
	
	\begin{prop} \label{hyp_cont}
		Soit $X$ un espace Gromov-hyperbolique propre, et $\Gamma$ un semi-groupe d'isométries de $X$.
		Si le semi-groupe $\Gamma$ contient une isométrie contractante $h$, et que le support $\supp(\Gamma)$ contient au moins trois points, alors il existe un sous-semi-groupe contractant $\Gamma'$ de $\Gamma$ tel que $h_{\Gamma'} = h_\Gamma$. %la propriété (P) est vraie.
	\end{prop}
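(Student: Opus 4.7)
Le plan est de se ramener à la proposition~\ref{diag} en montrant que, sous nos hypothèses, le support $\supp(\Gamma)$ ne peut pas être entièrement contenu dans la diagonale $\Delta := \{(\xi,\xi) \mid \xi \in \bX\}$ de $\bX \times \bX$.

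Supposons par l'absurde que $\supp(\Gamma) \subseteq \Delta$ et soient $(\xi,\xi)$, $(\eta,\eta)$, $(\mu,\mu)$ trois points distincts de $\supp(\Gamma)$ (ce qui est possible par l'hypothèse que le support contient au moins trois points). En appliquant la $\Gamma \times \Gamma^{-1}$-invariance (proposition~\ref{ginv}) avec $\gamma = \gamma' = h$, on obtient $(h\xi, h^{-1}\xi) \in \supp(\Gamma) \subseteq \Delta$, d'où $h\xi = h^{-1}\xi$, c'est-à-dire que $\xi$ est un point fixe de $h^2$. Le même argument s'applique à $\eta$ et à $\mu$, produisant trois points fixes distincts de $h^2$ dans $\bX$.

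Il reste à montrer qu'une isométrie contractante $h$ (et donc $h^2$) admet au plus deux points fixes au bord, ce qui fournira la contradiction recherchée. Par le lemme~\ref{hn}, on a $d(o, h^n o) \to \infty$ et les ensembles $X_{h^n}$, $X_{h^{-n}}$ sont Gromov-disjoints pour $n$ assez grand ; par compacité de $\bX$, on peut extraire deux limites distinctes $\xi_h^+ \in \overline{X_h} \cap \bX$ et $\xi_h^- \in \overline{X_{h^{-1}}} \cap \bX$ des suites $(h^n o)$ et $(h^{-n} o)$. Appliquant le lemme~\ref{cont1} à l'isométrie $h^n$ avec $X_+$ un petit voisinage (dans $\overline{X}$) d'un point quelconque $\zeta \in \bX \setminus \{\xi_h^-\}$, la quantité $\sup_{x \in X_+} (h^{-n}o \mid x)$ reste bornée, d'où $(h^n \zeta \mid h^n o) \to \infty$, ce qui force $h^n \zeta \to \xi_h^+$ ; symétriquement $h^{-n} \zeta \to \xi_h^-$ pour tout $\zeta \neq \xi_h^+$. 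Ainsi, tout point fixe de $h^{2n}$ doit appartenir à $\{\xi_h^+, \xi_h^-\}$, ce qui contredit l'existence de trois points fixes distincts. Par conséquent $\supp(\Gamma) \not\subseteq \Delta$, et la proposition~\ref{diag} fournit directement un sous-semi-groupe contractant $\Gamma' < \Gamma$ avec $h_{\Gamma'} = h_\Gamma$. La principale difficulté est d'établir l'analyse des points fixes de $h^2$ en utilisant seulement la définition via la Gromov-disjonction des domaines $X_h$ et $X_{h^{-1}}$ (et non une dynamique loxodromique acquise gratuitement dans un cadre géodésique), mais les lemmes~\ref{hn} et~\ref{cont1} sont précisément taillés pour cela.
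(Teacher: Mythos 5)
Votre démonstration est correcte, mais elle emprunte un chemin réellement différent de celui de l'article. L'article procède par un ping-pong explicite avec l'isométrie contractante $h$ : il introduit des domaines élargis $I_{h^{\pm 1}} \supseteq X_{h^{\pm 1}}$, montre que l'ensemble $hHh$ (où $H$ est l'ensemble des isométries assez grandes évitant $I_{h^{-1}}$ et $I_h$) est contractant et de pleine entropie lorsque le support ne se concentre pas près de l'\og axe \fg\ de $h$, puis traite le cas restant en remplaçant $h$ par $h^n$ et en faisant tendre le diamètre de $I_{h^{\pm n}}$ vers $0$, pour aboutir, après application de la proposition~\ref{diag}, à un support réduit à un doublet --- en contradiction avec l'hypothèse de trois points. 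Vous court-circuitez tout cela : l'invariance du support sous $(h,h^{-1})$ (proposition~\ref{ginv}) force tout point diagonal $(\xi,\xi)$ du support à vérifier $h^2\xi=\xi$, et une isométrie contractante n'admet pas trois points fixes distincts dans $\overline{X}$ ; donc le support n'est pas inclus dans la diagonale et la proposition~\ref{diag} conclut directement. C'est plus court et cela évite la construction du semi-groupe engendré par $h(H\cap\Gamma)h$. Deux remarques mineures : (i) la compacité de $\bX$ que vous invoquez n'est pas une hypothèse de l'énoncé (seulement la propreté), mais elle est inutile, les suites $(h^{\pm n}o)$ étant convergentes au sens de Gromov par le lemme~\ref{cont1} (appliqué avec $X_+=X_{h^{\pm 1}}\cup\{o\}$) joint au lemme~\ref{hn} ; (ii) la dynamique nord-sud complète est plus que nécessaire et légèrement délicate à énoncer proprement (il faut inclure $o$ dans le voisinage $X_+$ pour comparer $h^n\zeta$ à $h^no$) : il suffit d'observer, via le lemme~\ref{fixe}, que tout point fixe de $h^2$ appartient à $X_{h^{2n}}\cup X_{h^{-2n}}$ pour tout $n$, que deux points d'un même $X_{h^{\pm 2n}}$ ont un produit de Gromov minoré par $\frac{1}{2}d(o,h^{2n}o)-\delta\to\infty$ (lemmes~\ref{lXg} et~\ref{hn}), et de conclure par le principe des tiroirs qu'il y a au plus deux tels points fixes.
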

	
	L'idée de la preuve est de montrer qu'il y a beaucoup d'éléments du semi-groupe avec lesquels l'élément $h$ joue un ping-pong.
	Cela permet alors de faire contracter ces éléments d'un endroit précis vers un endroit précis en les composant à gauche et à droite avec h.
	On obtient alors un semi-groupe contractant en ne gardant que les éléments assez grands.
	
	\begin{proof}
		Pour une isométrie contractante $h$ d'un espace $\delta$-hyperbolique $X$, définissons une partie $I_h$ de $X \cup \bX$ par
		\[ I_h := \{ \xi \in X \cup \bX | (\xi | h o) \geq \frac{1}{2} d(o, h o) - \delta \}. \]
		Montrons que si le support $\supp(\Gamma)$ n'est pas inclus dans $I_{h^{-1}} \times \bX \cup \bX \times I_h$, alors il existe un sous-semi-groupe contractant $\Gamma'$ de $\Gamma$, avec $h_{\Gamma'} = h_{\Gamma}$. %la propriété (P) est vraie.
		
		\begin{rem}
			L'ensemble $I_h$ contient $X_h$. Il est un peu plus gros que $X_h$ afin de garantir que les éléments $\gamma \in \Gamma$ assez grands et tels que l'on ait $\gamma^{-1} o \not \in I_h$ soient tels que les parties $X_{\gamma^{-1}}$ et $X_h$ sont disjointes (et idem dans l'autre sens).
			Ceci permettra alors de faire le ping-pong. % entre les éléments $h$ et $\gamma$.
		\end{rem}
		
		\begin{lemme} \label{ih}
			Soit $X$ un espace Gromov-hyperbolique, soit $h$ une isométrie contractante de $X$ et soient $I_h$ et $I_{h^{-1}} \subseteq X$ les parties définies ci-dessus. 
			Pour toute isométrie $\gamma \in \Isom(X)$ telle que
			$d(o, \gamma o) \geq d(o, h o)$ et
			$\gamma o \not \in I_{h^{-1}}$, on a
			\[ X_\gamma \cap X_{h^{-1}} = \emptyset, \]
			et pour toute isométrie $\gamma \in \Isom(X)$ telle que
			$d(o, \gamma o) \geq d(o, h o)$ et
			$\gamma^{-1} o \not \in I_{h}$, on a
			\[ X_{\gamma^{-1}} \cap X_{h} = \emptyset. \]
		\end{lemme}
		
		\begin{proof}
			Soit $\gamma \in \Isom(X)$ tel que $d(o, \gamma o) \geq d(o, h o)$ et $\gamma o \not \in I_{h^{-1}}$.
			Montrons l'inclusion $X_\gamma \subseteq X \backslash X_{h^{-1}}$.
			
			Soit $x \in X_\gamma$.
			Par $\delta$-hyperbolicité, on a
			\[ (\gamma o | h^{-1} o) \geq \min\{ (x | \gamma o), (x | h^{-1} o) \} - \delta. \]
			Or, par définition de $I_{h^{-1}}$ et de $X_\gamma$, on a les inégalités
			\[ \frac{1}{2} d(o, h o) - \delta > (\gamma o | h^{-1} o) \quad \text{ et } \quad (x | \gamma o) \geq \frac{1}{2} d(o, \gamma o) \geq \frac{1}{2} d(o, h o). \]
			D'où l'inégalité
			\[ \frac{1}{2} d(o, h o) - \delta > (x | h^{-1} o) - \delta \]
			qui prouve que le point $x$ n'est pas dans l'ensemble $X_{h^{-1}}$.
			
			La deuxième partie du lemme découle de la première en remplaçant $h$ par $h^{-1}$ et $\gamma$ par $\gamma^{-1}$.
		\end{proof}
		
		Posons $H$ l'ensemble des isométries satisfaisant les conditions du lemme ci-dessus :
		\[ H := \{ \gamma \in \Isom(X) | d(o, \gamma o) \geq d(o, h o), \gamma o \not \in I_{h^{-1}} \text{ et } \gamma^{-1} o \not \in I_h \} \] % \subseteq \Gamma. \]

		Faisons alors un ping-pong entre les isométries de $H$ et l'isométrie $h$ pour obtenir une partie contractante.
		
		\begin{lemme}
			Soit $X$ un espace Gromov-hyperbolique, soit $h$ une isométrie contractante de $X$ et soit $H$ la partie de $X$ définie ci-dessus.
			Alors l'ensemble %il existe un entier $n$ tel que la partie
			\[ hHh := \{ h \gamma h | \gamma \in H \} \]
			est une partie contractante de $\Isom(X)$.
		\end{lemme}
		
		\begin{proof}
			Si $\gamma$ est un élément de $H$, alors par le lemme \ref{ih} on a
			\[ h \gamma h (X \backslash X_{h^{-1}}) \subseteq h \gamma (X_h) \subseteq h \gamma (X \backslash X_{\gamma^{-1}}) \subseteq h (X_\gamma) \subseteq h (X \backslash X_{h^{-1}}) \subseteq X_h, \]
			et l'ensemble $X \backslash (\overline{X_h} \cup \overline{X_{h^{-1}}} ) = X \backslash (X_h \cup X_{h^{-1}} )$ contient bien le point base $o$.
			%est non vide puisqu'il contient le point base $o$. %d'intérieur non vide puisqu'il est ouvert et contient le point base $o$.
			
			La partie $hHh$ est donc contractante pour les domaines $X_- := X_{h^{-1}}$ et $X_+ := X_{h}$.
			
			%En effet, on a $h^n(X \backslash X_{h^{-1}}) \subseteq X_h$, et donc
			%\[ h^n o \in X_h \subseteq X_{\gamma^{-1}}^c \imp \gamma h o \in X_{\gamma} \subseteq X_{h^{-1}}^c \imp h \gamma h o \in X_h, \]
			%et idem pour l'inverse.
			%Montrons que l'on a l'inclusion %si $\gamma$ est un élément de $H$, alors on a $h \gamma h o \in X_h$ et $(h \gamma h)^{-1} o \in X_{h^{-1}}$.
			%\[ h H h o \subseteq  \]
%			L'ensemble est alors bien contractant par le critère \ref{crit_cont}, puisque l'on a
%			\[  \sup_{\gamma, \gamma' \in h H h} (\gamma^{-1} o | \gamma' o) \leq \sup_{(x', x) \in X_h \times X_{h^{-1}}}(x | x') = M < \infty, \]
%			et pour tout $\gamma \in H$, on a
%			\begin{align*}
%				d(o, h \gamma h o)	&= (h \gamma h o | h \gamma h o) \\
%								&\geq \frac{1}{2} d(o, h o) - \delta \quad \text{par le lemme \ref{lXg}} \\
%								&> 2M + 6\delta.
%								%&= d(o, h o) + d(o, \gamma h o) - 2 (h^{-1} o, \gamma h o) \\
%								%&= d(o, h o) + d(o, \gamma o) + d(o, h o) - 2 (h^{-1} o, \gamma h o) - 2 (\gamma^{-1} o, h o)
%			\end{align*}
		\end{proof}
		
		\begin{figure}[H]
			\centering
			\caption{Ping-pong avec l'isométrie contractante $h$.}
			\includegraphics{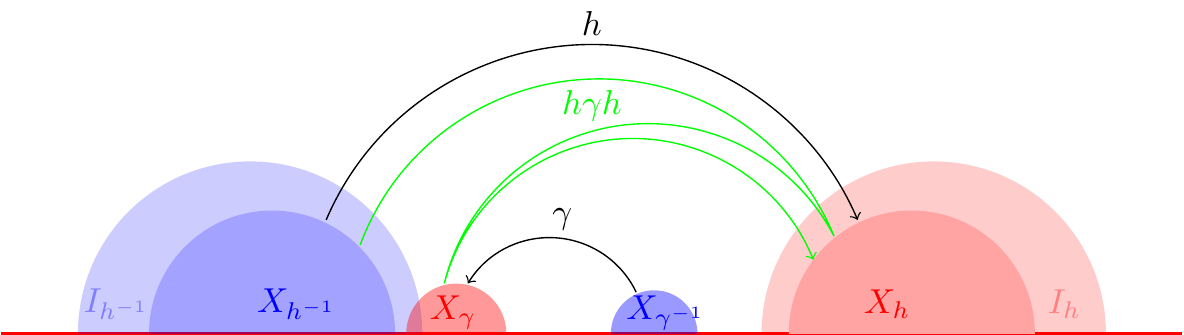}
%			\begin{dpoin}H(-2,2,1.1)
%				h = fix( 180, 0, 5);
%				h1 = inv(h);
%				h' = fix(180, 0, 3.7);
%				h1' = inv(h');
%				
%				g = fix(-70, -135, 4);
%				g1 = inv(g);
%				
%				%\fill o circle' (.01) node[below] {$o$};
%				\med \fill[red!20] o h.o;
%				\med \fill[blue, opacity = .2] o h1.o;
%				\med \fill[red, opacity=.2] o h'.o;
%				\med \fill[blue, opacity = .2] o h1'.o;
%				
%				\draw (-1, .1) node[blue] {$X_{h^{-1}}$};
%				\draw (1, .1) node[red] {$X_{h}$};
%				
%				\draw (-1.61, .1) node[blue!50] {$I_{h^{-1}}$};
%				\draw (1.6, .1) node[red!50] {$I_{h}$};
%				
%				\med \fill[red, opacity=.4] o g.o;
%				\med \fill[blue, opacity = .4] o g1.o;
%				
%				\draw (.25, .07) node[blue] {$X_{{\gamma}^{-1}}$};
%				\draw (-.47, .07) node[red] {$X_{\gamma}$};
%				
%				\draw[<-] (-.42, .17) -- (.15, .14); %fleche de g
%				\draw (-.1, .385) node {$\gamma$};
%				
%				\draw[->] (-.9, .385) -- (.9, .385); %fleche de h
%				
%				\draw[->, green] (-.785, .3) -- (.82, .33) -- (-.5, .17) -- (.75, .25); %fleche de h g h
%				
%				\end{scope}
%				\draw (0, 1.05) node {$h$};
%				\draw[green] (0, .77) node {$h \gamma h$};
%				\begin{scope}
%			\end{dpoin}
		\end{figure}
		
		Continuons la preuve de la proposition \ref{hyp_cont}.
		%Par la proposition \ref{econt}, il existe un élément contractant $h$ dans le semi-groupe $\Gamma$.
		Soit $h \in \Gamma$ un élément contractant.
		Le semi-groupe engendré par $h(H \cap \Gamma) h$ est alors un sous-semi-groupe contractant de $\Gamma$ d'après le lemme ci-dessus.
		%De plus, d'après le lemme \ref{pptm} et les points \ref{sep}, \ref{hc}, \ref{couv}, et \ref{hf} des propriétés \ref{ppteg} de l'entropie,
		%les points \ref{hmaj}, \ref{couv}, \ref{hf} des propriétés \ref{ppteg},
		
		Si le support $\supp(\Gamma)$ n'est pas inclus dans $I_{h^{-1}} \times \bX \cup \bX \times I_h$, alors l'entropie de $H \cap \Gamma$ est égale à $h_\Gamma$ par le point \ref{hf} des propriétés \ref{ppteg}.
		Or, par le point \ref{hm} des propriétés \ref{ppteg}, l'entropie de $h(H \cap \Gamma)h$ est égale à celle de $H \cap \Gamma$, et donc l'entropie du semi-groupe engendré est aussi égale à $h_\Gamma$.
		
%		De plus, par le point \ref{hm} des propriétés \ref{ppteg}, si le support $\supp(\Gamma)$ n'est pas inclus dans $I_{h^{-1}} \times \bX \cup \bX \times I_h$, %la mesure de $(\bX \backslash I_{h^{-1}}) \times (\bX \backslash I_{h})$ est non nulle,
%		alors l'entropie de $h(H \cap \Gamma)h$ est égale à $h_\Gamma$, et donc l'entropie du semi-groupe engendré est aussi égale à $h_\Gamma$.
%		%On a donc la propriété (P).
		
		On est donc ramené à ce que le support $\supp(\Gamma)$ soit inclus dans $I_{h^{-1}} \times \bX \cup \bX \times I_h$. %la mesure de $(\bX \backslash I_{h^{-1}}) \times (\bX \backslash I_{h})$ soit nulle.
		Par les lemmes \ref{hn} et \ref{seph}, comme l'espace $X$ est propre, il existe un entier $n_0$ tel que pour tout $n \geq n_0$, l'isométrie $h^n$ soit aussi une isométrie contractante du semi-groupe $\Gamma$. On est alors même ramené à ce que le support $\supp(\Gamma)$ soit inclus dans $I_{h^{-n}} \times \bX \cup \bX \times I_{h^n}$, pour tout $n \geq n_0$. %la mesure de l'ensemble $(\bX \backslash I_{h^{-n}}) \times (\bX \backslash I_{h^n})$ soit nulle pour tout entier $n \geq n_0$.
		
%		On a alors
%		\[ m\left( \bigcup_{n \geq n_0} (\bX \backslash I_{h^{-n}}) \times (\bX \backslash I_{h^n})\right) = 0, \]
%		et donc la mesure $m$ est portée par la partie
%		\[ \bigcap_{n \geq n_0} \left( \left( \bX \times I_{h^n} \right) \cup \left( I_{h^{-n}} \times \bX \right) \right). \]
%		Et puisque l'espace $X$ est propre, par le lemme \ref{hn} on a $\lim_{n \to \infty} d(o, h^n o) = \infty$.
		Or, le diamètre des ensembles $I_{h^n}$ et $I_{h^{-n}}$ tend vers $0$ : on a
		\[ \lim_{n \to \infty} \inf_{x, x' \in I_{h^n}} (x | x') = \infty. \]
		%et donc pour toute extractrice $\varphi$, l'ensemble $\bigcap_{n \geq n_0} I_{h^{-\varphi(n)}} \subseteq \bX$ est de cardinal au plus $1$, et de même avec $I_{h^{\varphi(n)}}$.
		On est donc ramené à ce que le support $\supp(\Gamma)$ soit inclus dans un ensemble %la mesure $m$ soit portée par une partie
		\[ \left( \{ \xi_- \} \times \bX \right) \cup \left(\bX \times \{ \xi_+ \} \right), \]
		où $\xi_-$ et $\xi_+$ sont des points du bord $\bX$.
		%En effet, l'ensemble $\bigcup_{ n_1 \geq n_0 } \bigcap_{n \geq n_1} I_{h^n}$ est de cardinal au plus $1$, et en posant $\xi_+ \in \bigcup_{ n_1 \geq n_0 } \bigcap_{n \geq n_1} I_{h^n}$ si l'ensemble est non vide et $\xi_+ \in \bX$ quelconque sinon, et de même avec $h^{-n}$ et $\xi_-$
		
		De plus, en utilisant la proposition \ref{diag}, on est ramené au cas où le support $\supp(\Gamma)$ est inclus dans la diagonale de $\bX \times \bX$, donc on est ramené à ce que le support soit inclus dans le doublet $\{ (\xi_-, \xi_-), (\xi_+, \xi_+) \}$.
		%la mesure est portée par l'intersection de la partie obtenue ci-dessus avec la diagonale de $\bX \times \bX$, soit la partie $\{(\xi_+, \xi_+), (\xi_-, \xi_-) \}$.
		Ainsi, on a bien démontré l'existence d'un sous-semi-groupe contractant d'entropie $h_\Gamma$ dès que le support $\supp(\Gamma)$ contient au moins 3 points. %la mesure $m$ n'est pas portée par deux points.
		Ceci termine la preuve de la proposition \ref{hyp_cont}.
	\end{proof}
	
	\begin{figure}[H]
		\centering
		\caption{$\supp(\Gamma)$}
		%\tikzset{external/force remake=true}
		\includegraphics{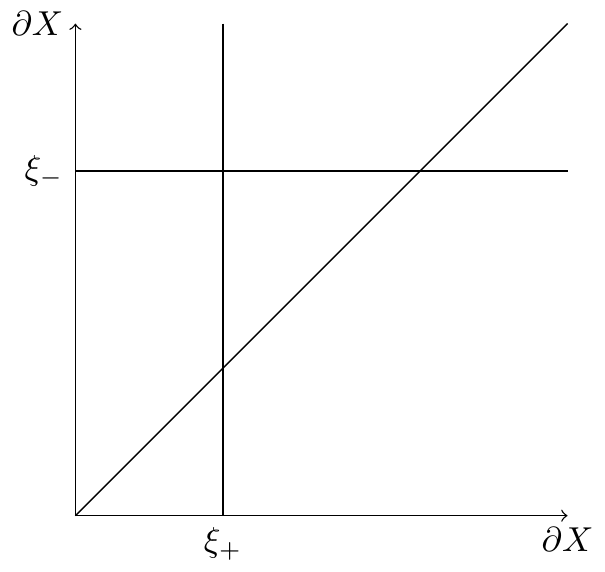}
%		\begin{tikzpicture}[scale=5]
%			\draw[->] (0,0) -- (1, 0) node[below] {$\bX$};
%			\draw[->] (0,0) -- (0, 1) node[left] {$\bX$};
%			\def \xp {.7}
%			\def \xm {.3}
%			\draw (\xm, 1) -- (\xm, 0) node[below] {$\xi_+$};
%			\draw (1, \xp) -- (0, \xp) node[left] {$\xi_-$};
%			\draw (0, 0) -- (1, 1);
%		\end{tikzpicture}
	\end{figure}
	
	%%%%%%%%%%%%%%%%%%%%%%%%%%%%%%%%%%
	\subsection{Le cas où le support est un singleton}
	
	On a le lemme :
	
	\begin{lemme}
		Soit $X$ un espace Gromov-hyperbolique et soit $\Gamma$ un semi-groupe d'isométries de $X$.
		Si le support $\supp(\Gamma)$ est un singleton $\{(\xi, \xi)\}$, alors le semi-groupe $\Gamma$ fixe le point $\xi$.
	\end{lemme}
	
	\begin{proof}
		%La projection $p_+(\supp(\Gamma))$ sur la première coordonnée est égale au singleton $\{ \xi \}$.
		Cela découle de la propriété de $\Gamma \times \Gamma^{-1}$-invariance du support (proposition~\ref{ginv}).
	\end{proof}
	
	Ainsi, on est ramené à ce que le semi-groupe fixe un point au bord\\(voir sous-section~\ref{ptfixe}).
	
	%%%%%%%%%%%%%%%%%%%%%%%%%%%%%%%%%%
	\subsection{Le cas où le support est un doublet de points} %inclus dans un doublet de points} %la mesure $m$ est portée par deux points}
	Supposons maintenant que le support $\supp(\Gamma)$ %mesure $m$ soit portée par
	soit un doublet de points du bord $\{ (\xi_+, \xi_+), (\xi_-, \xi_-) \}$. %, mais que le semi-groupe ne fixe aucun point du bord $\bX$. \\
	On a alors le lemme suivant.
	
	\begin{lemme}
		Soit $X$ un espace Gromov-hyperbolique et soit $\Gamma$ un semi-groupe d'isométries de $X$.
		Si le support $\supp(\Gamma)$ est un doublet de points $\{ (\xi_+, \xi_+), (\xi_-, \xi_-) \}$, alors le semi-groupe $\Gamma$ fixe le doublet $\{ \xi_+, \xi_- \}$.
	\end{lemme}
	
	\begin{proof}
		Cela découle de la propriété de $\Gamma \times \Gamma^{-1}$-invariance du support (proposition~\ref{ginv}).
	\end{proof}
	 
	%L'idée de la preuve dans ce cas est de commencer par montrer que la projection $p_+(\supp(\Gamma))$ du support sur la première coordonnée est $\Gamma$-invariante. %les éléments du semi-groupe transportent la mesure $m$.
	%Ainsi, l'hypothèse que le semi-groupe ne fixe pas de point au bord permettra de montrer que la mesure n'est pas réduite à un point.
	%On va montrer dans ce cas que le semi-groupe fixe le doublet de points $\{ \xi_+, \xi_- \}$.
	Montrons maintenant que quitte à multiplier par un élément qui échange les deux points du doublet, si l'on se restreint aux éléments de norme assez grande et qui contractent dans un des deux sens (d'un des points du doublet vers l'autre),
	alors on obtient un ensemble contractant. Et l'un de ces deux sous-semi-groupes engendrés sera forcément d'entropie totale (c'est-à-dire d'entropie $h_\Gamma$). \\

	Le lemme qui suit dit qu'un point fixe d'une isométrie est toujours dans son domaine de contraction ou dans son domaine de dilatation.
	
	\begin{lemme} \label{fixe}
		Soit $X$ un espace Gromov-hyperbolique et $\gamma$ une isométrie de $X$.
		Si $\xi \in \overline{X}$ est un point fixe pour l'isométrie $\gamma$ (i.e. $\gamma \xi = \xi$), alors on a
		\[ \xi \in X_\gamma \cup X_{\gamma^{-1}}. \]
	\end{lemme}
	
	\begin{proof}
		Supposons que l'on ait $\xi \not\in X_\gamma$. Alors on a $\xi = \gamma^{-1} \xi \in X_{\gamma^{-1}}$.
		En faisant de même avec l'inverse $\gamma^{-1}$, on conclut.
	\end{proof}
	
	Le lemme suivant dit qu'une isométrie assez grande qui fixe deux points contracte de l'un des points vers l'autre ou bien échange les deux points.
	
	\begin{lemme} \label{disj}
		Si une isométrie $\gamma$ d'un espace $\delta$-hyperbolique $X$ de point base $o$ fixe un doublet de points du bord $\{ \xi_+, \xi_- \} \subseteq X$ sans les échanger (i.e. $\gamma(\xi_-) \neq \xi_+$) et vérifie l'inégalité $d(o, \gamma o) > 2(\xi_+ | \xi_-) + 2\delta$, alors on a %la disjonction de cas
		\[ \left( \xi_+ \in X_\gamma \text{ et } \xi_- \in X_{\gamma^{-1}} \right) \quad \text{ou} \quad \left( \xi_- \in X_\gamma \text{ et } \xi_+ \in X_{\gamma^{-1}} \right) . \]
	\end{lemme}
	
	\begin{proof}
		Comme l'élément $\gamma$ fixe le doublet $\{ \xi_-, \xi_+ \}$, et n'échange pas $\xi_-$ et $\xi_+$, les points $\xi_-$ et $\xi_+$ sont fixes par $\gamma$. Le lemme \ref{fixe} nous donne donc l'inclusion
		\[ \{ \xi_+, \xi_- \} \subseteq X_\gamma \cup X_{\gamma^{-1}}. \]
		Or, on ne peut pas avoir $\{ \xi_+, \xi_- \} \subseteq X_{\gamma^{-1}}$, puisque par le lemme \ref{lXg} on a
		\[ \inf_{x, x' \in X_{\gamma^{-1}}} (x | x') \geq \frac{1}{2} d(o, \gamma o) - \delta > (\xi_+ | \xi_-). \]
		En faisant de même avec $X_\gamma$, on obtient donc bien ce qui était annoncé.
	\end{proof}
	
	Démontrons donc le théorème \ref{thm1} dans le cas où le semi-groupe $\Gamma$ fixe un doublet $\{ \xi_+, \xi_-\} \subseteq \bX$, mais ne fixe pas de point au bord.
	Considérons les parties suivantes du semi-groupe $\Gamma$ :
	\[ \Gamma^+ := \{ \gamma \in \Gamma | \xi_+ \in X_\gamma \text{ et } \xi_- \in X_{\gamma^{-1}} \} \]
	\[ \Gamma^- := \{ \gamma \in \Gamma | \xi_- \in X_\gamma \text{ et } \xi_+ \in X_{\gamma^{-1}}\} \]
	
	On a alors le lemme suivant.
	
	\begin{lemme} \label{cont+-}
		Les ensembles $\Gamma^+_{> n}$ et $\Gamma^-_{> n}$ sont contractants, \\
		pour tout entier $n > 2(\xi_+ | \xi_-) + 10\delta$, où l'on a posé
		\[ A_{> n} := \{ \gamma \in A | d(o, \gamma o) > n \}, \]
		pour une partie $A \subseteq \Gamma$.
	\end{lemme}
	
	\begin{proof}
		Montrons que $\Gamma^+_{> n}$ est contractant.
		Soient $\gamma$ et $\gamma'$ deux isométries de  $\Gamma^+_{> n}$.
		Par $\delta$-hyperbolicité, on a
		\[ (\xi_+ | \xi_-) \geq \min \{ (\xi_+ | \gamma' o), (\gamma' o | \gamma^{-1} o), (\gamma^{-1} o | \xi_-) \} - 2\delta. \]
		Or, par définition de $X_{\gamma'}$, on a $(\xi_+ | \gamma' o) \geq \frac{1}{2} d(o, \gamma' o) > (\xi_+| \xi_-) + 2\delta$
		et on a de même $(\xi_- | \gamma^{-1} o) > (\xi_+, \xi_-) + 2\delta$, puisque $\xi_+ \in X_{\gamma'}$ et $\xi_- \in X_{\gamma^{-1}}$.
		On en déduit l'inégalité
		\[ (\xi_+ | \xi_-) \geq (\gamma' o | \gamma^{-1} o) - 2\delta. \]
		Ainsi, on obtient
		\[ \sup_{\gamma, \gamma' \in \Gamma_+} (\gamma^{-1} o | \gamma' o) \leq (\xi_+ | \xi_-) + 2\delta < \infty, \]
		et on a bien pour tout $\gamma \in \Gamma_+$,
		\[ \frac{1}{2} d(o, \gamma o)  - 3 \delta > \frac{1}{2} n - 3 \delta > (\xi_+ | \xi_-) + 2\delta, \]
		donc par le critère \ref{crit_cont}, l'ensemble $\Gamma^+_{> n}$ est contractant. De la même façon, l'ensemble $\Gamma^-_{> n}$ est contractant.
	\end{proof}
	
	Pour terminer la preuve du théorème \ref{thm1} dans ce cas, il ne reste donc plus qu'à démontrer que l'entropie d'une de ces deux parties contractantes est $h_\Gamma$ :
	
	\begin{lemme} \label{h+-}
		On a $\max(h_{\Gamma^+_{> n}}, h_{\Gamma^-_{> n}}) = h_\Gamma$, pour $n \geq 2(\xi_+ | \xi_-) + 2\delta$.
	\end{lemme}
	
	\begin{proof}
		Il y a deux cas :
		\begin{enumerate}
			\item Il n'existe pas d'élément qui échange $\xi_-$ et $\xi+$.
				Dans ce cas, par le lemme \ref{disj}, on a
				\[ \Gamma_{> n} = {(\Gamma_+)}_{>n} \cup {(\Gamma_-)}_{>n}. \]
			\item Il existe un élément $\gamma_0 \in \Gamma$ qui échange $\xi_-$ et $\xi+$.
				Dans ce cas, on peut écrire
				\[ \left( \gamma_0 \left( \Gamma_{> n} \backslash \left( \Gamma_+ \cup \Gamma_- \right) \right) \right)_{> n}
					\subseteq (\Gamma_+)_{> n} \cup (\Gamma_-)_{> n}. \]
			
		\end{enumerate}
		Le résultat découle alors des point \ref{hu}, \ref{hm} et \ref{hf} des propriétés \ref{ppteg} de l'entropie.
	\end{proof}
	
	Les lemmes \ref{cont+-} et \ref{h+-} donnent un semi-groupe contractant d'entropie $h_\Gamma$ parmi l'un des deux semi-groupes suivants : l'un engendré par $\Gamma^+_{> n}$ et l'autre engendré par $\Gamma^-_{> n}$, pour $n$ assez grand.
		
	Ceci termine la preuve du théorème \ref{thm1} dans le cas où le semi-groupe $\Gamma$ ne fixe pas de point au bord.

	\subsection{Le cas où le semi-groupe $\Gamma$ fixe un point au bord} \label{ptfixe}
	
	Supposons que le semi-groupe $\Gamma$ fixe un point $\xi \in \bX$ mais ait un ensemble limite $\Lambda_\Gamma$ contenant au moins deux points.
	
	Par la proposition~\ref{econt}, il existe alors un élément contractant $h$ tel que le point fixe $\xi$ ne soit pas dans $\overline{X_h}$.
	
%	D'après le lemme \ref{hn}, quitte à remplacer $h$ par une puissance assez grande, on peut supposer que l'on a
%	\begin{align}
%		\sup_{(x, x') \in X_h \times X_{h^{-1}}} (x | x') < \frac{1}{2} d(o, h^n o), \label{Cppt}
%	\end{align}
%	pour tout entier $n \geq 1$.
	
	On a alors la proposition suivante.
	
%	Montrons la proposition suivante.
	\begin{prop} \label{ptf}
		Soit $X$ un espace métrique propre, soit $\Gamma$ un semi-groupe d'isométries de $X$, et soit $h$ une isométrie contractante.
		Si l'on pose
		\[ \Gamma' := \{ \gamma \in \Gamma | \gamma o \in X_h \}, \]
		alors on a l'égalité $h_{\Gamma'} = h_\Gamma$.
	\end{prop}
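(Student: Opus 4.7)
The inclusion $\Gamma' \subseteq \Gamma$ yields $h_{\Gamma'} \leq h_\Gamma$ by monotonicity of the entropy (Properties~\ref{ppteg}, item~\ref{hc}), so I focus on the reverse inequality. In the context where this proposition is invoked, $h$ is the contracting element of $\Gamma$ furnished by Proposition~\ref{econt}, hence $h \in \Gamma$, and consequently $h\xi = \xi$ since $\Gamma$ fixes $\xi$. Combined with $\xi \notin \overline{X_h}$ while the attracting fixed point of $h$ lies in $\overline{X_h}$, we deduce that $\xi$ must equal the repelling fixed point $\xi_h^-$ of $h$.

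The strategy is to push every $\gamma \in \Gamma$ into $\Gamma'$ by iterating $h$ at controlled cost. Since $\gamma o \neq \xi_h^-$ and $h$ contracts towards $\xi_h^+$, there is a smallest integer $k(\gamma) \geq 0$ with $h^{k(\gamma)} \gamma o \in X_h$; set $\phi(\gamma) := h^{k(\gamma)} \gamma \in \Gamma'$. Equivalently, with $D := h^{-1} X_h \setminus X_h$ a fundamental domain for $\langle h \rangle$ containing $o$, the translates $\{h^n D\}_{n \in \Z}$ partition $X$ away from the two fixed points of $h$ at infinity; letting $n(\gamma)$ be the unique integer with $\gamma o \in h^{n(\gamma)} D$, we have $k(\gamma) = \max(0, 1 - n(\gamma))$ and $\gamma \in \Gamma'$ iff $n(\gamma) \geq 1$. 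The proof is then reduced to two quantitative estimates. \emph{(i) A distance bound}: there is a constant $C = C(h, \delta)$ such that $d(o, \phi(\gamma) o) \leq d(o, \gamma o) + C$ for every $\gamma \in \Gamma$; this holds because $\phi(\gamma) o$ always lies in the ``outer shell'' $h D$ of $X_h$ (bounded in Busemann coordinate at $\xi$), so iterating $h$ cancels exactly the $\xi$-component of the displacement $o \to \gamma o$ while preserving the horospherical component. \emph{(ii) A preimage bound}: any $\gamma \in \phi^{-1}(\gamma')$ takes the form $\gamma = h^{-k}\gamma'$ for some $k \geq 0$, so the triangle inequality $d(o, h^{-k}\gamma' o) \geq k \cdot d(o, h o) - d(o, \gamma' o)$ yields $|\phi^{-1}(\gamma') \cap \Gamma_{\leq r}| = O(r)$.

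Combining (i) and (ii) gives $|\Gamma_{\leq r}| \leq O(r) \cdot |\Gamma'_{\leq r+C}|$, whence dividing $\log$ by $r$ yields $h_\Gamma \leq h_{\Gamma'}$. The main obstacle is estimate (i): in the Gromov hyperbolic setting, where the Busemann function at $\xi$ is only coarsely defined, this requires precisely the pigeonhole on the copies of the fundamental domain announced in the introduction. One shows via $\delta$-hyperbolicity that $n(\gamma)$ coarsely computes $\beta_\xi(\gamma o)/\ell$, where $\ell$ is the translation length of $h$, and then standard Gromov product manipulations convert this into the required upper bound on $d(o, h^{k(\gamma)} \gamma o)$, the ``$-n(\gamma)\ell$'' gain from iterating $h$ compensating the ``$+k(\gamma)\ell$'' loss from the triangle inequality up to an additive constant.
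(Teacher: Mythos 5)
Your strategy is essentially the paper's: partition the orbit along the translates of a fundamental domain for $h$, note that only $O(R)$ of these translates meet $B(o,R)$, and push back into $X_h$ by a power of $h$ at bounded additive cost --- your estimate (i) is the paper's Gromov-product computation showing $d(o,h^n x)\leq d(o,x)+C$ for $x\in h^{-n}X_{h^{-1}}$, and needs neither Busemann functions nor a pigeonhole. (The preliminary identification of $\xi$ with the repelling fixed point of $h$ is irrelevant: the proposition assumes nothing about fixed points.) Two steps are however genuinely flawed as written. Your fiber bound (ii) rests on the inequality $d(o,h^{-k}\gamma' o)\geq k\, d(o,h o)-d(o,\gamma' o)$, attributed to the triangle inequality; but the triangle inequality gives $d(o,h^{-k}o)\leq k\, d(o,ho)$, the wrong direction, and an arbitrary isometry can have $d(o,h^k o)=o(k)$. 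The linear lower bound $d(o,h^k o)\geq C_1k-C_2$ that you actually need is a consequence of $h$ being \emph{contracting} (Lemmas \ref{hn} and \ref{cet}); establishing it is exactly the content of the paper's first sous-lemme, and without it the fibers of $\phi$ inside a ball are not $O(r)$.

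More seriously, the concluding inequality $\#\Gamma_{\leq r}\leq O(r)\cdot\#\Gamma'_{\leq r+C}$ does not yield $h_\Gamma\leq h_{\Gamma'}$: these cardinalities may be infinite ($\Gamma$ need not be separated), and even when finite, a lower bound on $\#\Gamma'_{\leq r+C}$ only bounds the critical exponent $\delta_{\Gamma'}$ from below, which controls the entropy $h_{\Gamma'}$ from above, not from below (Properties \ref{ppteg}, item \ref{hmaj}). Since the entropy is computed on separated subsets, you must run the argument on a separated covering part $S$ of $\Gamma$ and exhibit, for each $R$, a \emph{separated} subset of $\Gamma'$ carrying at least $\#(So\cap B(o,R))/O(R)$ points of $B(o,R+C)$. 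Your image $\phi(S\cap B(o,R))$ is a union of $O(R)$ slices $h^{k_n}\bigl(S\cap h^{n}D\cap B(o,R)\bigr)$, each of which is separated, but their union need not be; the pigeonhole you announce is needed precisely here, to select a single slice --- this is the paper's Lemma \ref{minXh} --- and one must then invert the quantifiers, since the selected separated set depends on $R$, which is Lemma \ref{ich}. Both gaps are repairable with tools you already invoke, and the repaired argument coincides with the paper's.
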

	
	%Par le critère \ref{} et le point \ref{} des propriétés \ref{}, on aura alors une partie de $\Gamma'$ contractante et de même entropie.
	%Elle engendrera alors un semi-groupe contractant.
	Ceci permettra de conclure grâce au lemme suivant.
	
	\begin{lemme} \label{rcontf}
		Soit $X$ un espace Gromov-hyperbolique, soit $\Gamma$ un semi-groupe d'isométries fixant un point $\xi \in \bX$, et soit $h$ une isométrie contractante telle que $\xi \not \in \overline{X_h}$.
		Alors il existe un réel $r$ tel que l'ensemble
		\[ \Gamma'_{> r} := \{ \gamma \in \Gamma | \gamma o \in X_h \text{ et } d(o, \gamma o) > r \} \]
		soit contractant.
	\end{lemme}
	
	Le semi-groupe engendré sera alors encore contractant et aura encore pour entropie $h_\Gamma$ par les points \ref{hc} et \ref{hf} des propriétés \ref{ppteg} de l'entropie, donc on aura bien obtenu la conclusion du théorème \ref{thm1}. %et on aura alors bien la propriété (P).
	
	\begin{proof}[Preuve du lemme \ref{rcontf}]
		Soit
		\[ C := \sup_{x \in X_h} (x | \xi) < \infty. \]
		Montrons que les réels $r > 2C + 8 \delta $ conviennent.
		
		Pour toute isométrie $\gamma \in \Gamma'_{> r}$, le point $\xi$ est dans $X_{\gamma^{-1}}$.
		En effet, on a l'inégalité $(\xi | \gamma o) \leq C < \frac{1}{2} d(o, \gamma o)$ qui donne que $\xi$ n'est pas dans $X_{\gamma}$, et donc on a
		\[ \xi = \gamma^{-1} \xi \in X_{\gamma^{-1}}. \]
		
		Par $\delta$-hyperbolicité, pour toutes isométries $\gamma$ et $\gamma' \in \Gamma'_{> r}$ on a l'inégalité
		\[ C \geq (\xi | \gamma' o) \geq \min \{ (\xi | \gamma^{-1} o), (\gamma^{-1} o | \gamma' o) \} - \delta. \]
		Or, on a $(\xi | \gamma^{-1} o) \geq \frac{1}{2} d(o, \gamma o) > \frac{1}{2} r > C + \delta$, donc on obtient l'inégalité
		\[ (\gamma^{-1} o | \gamma' o) \leq C + \delta < \frac{1}{2} r - 3 \delta. \]
		Par le critère de contraction \ref{crit_cont}, on obtient donc que la partie $\Gamma'_{> r}$ est contractante.
	\end{proof}
	
	Pour démontrer que l'ensemble $\Gamma'$ est d'entropie $h_\Gamma$ (i.e. la proposition \ref{ptf}), nous allons découper les boules $B(o, R)$ en morceaux, selon les copies d'un domaine fondamental pour l'élément $h$, et montrer que quitte à appliquer à chaque morceau une puissance de l'élément $h$, on peut ramener chaque morceau dans une partie proche de $X_h$, tout en restant dans la boule.
	Le lemme suivant permettra de ramener chaque morceaux.
	
	\begin{lemme}
		Soit $X$ un espace métrique, et soit $h$ une isométrie contractante de $X$.
		Alors il existe une constante $C < \infty$ telle que pour tout entier $n \in \N$ et pour tout réel $R > 0$ on ait l'inclusion
		\[ h^n \left( h^{-n} X_{h^{-1}} \cap B(o, R) \right) \subseteq B(o, R+C). \]
	\end{lemme}
	
	\begin{proof}
		Comme l'élément $h$ est contractant, il existe une constante $C$ telle que l'on ait
		\[ \sup_{(x,x') \in X _{h^{-1}} \times X_h}(x | x') \leq \frac{1}{2} C < \infty. \]
		En particulier, pour tout entier $n \geq 1$ et tout point $x \in h^{-n} X_{h^{-1}}$, on a %, en utilisant (\ref{Cppt}),
		\[ \frac{1}{2} C \geq (h^n x | h^n o) = \frac{1}{2} \left( d(o, h^n x) + d(o, h^n o) - d(o, x) \right). \] % \geq d(o, h^n x) + 2C - d(o, x). \]
		On a alors,
		\[ d(o, h^n x) \leq d(o, x) + C, \]
		d'où l'inclusion souhaitée.
%		On a alors avec (\ref{Cppt}),
%		\[ d(o, h^n x) \leq d(o, x), \]
%		d'où l'inclusion souhaitée.
	\end{proof}
	
	\begin{lemme} \label{minXh}
		Soit $X$ un espace métrique propre, %Gromov-hyperbolique propre,
		soit $\Gamma$ un semi-groupe d'isométries de $X$, soit $h$ une isométrie contractante de $\Gamma$, 
		et soit $S$ est une partie $\epsilon$-séparée et couvrante de $\Gamma$.
		Alors il existe une constante $C > 0$, telle que pour tout rayon $R > 0$ il existe une partie $\epsilon$-séparée $S_R \subseteq \Gamma$ telle que l'on ait l'inégalité
		\[
			\#S_R o \cap B(o, R + C) \cap X_h \geq \frac{1}{CR} \# S o \cap B(o, R).
		\]
	\end{lemme}
	
	Ce lemme dit que l'on a une proportion non négligeable des éléments de la boule $B(o, R+C)$ qui sont dans le domaine $X_h$. %loin du point fixe $\xi$.
	Pour le démontrer, nous allons utiliser le lemme des tiroirs pour trouver un morceau de la boule qui contient beaucoup d'éléments, et ramener ce morceau par le lemme précédent dans le domaine $X_h$.
	
	\begin{proof}
		Posons
		\[ D_h := h^{-2} \left( X_h \backslash h X_h \right) . \]
		Alors $D_h$ est un domaine fondamental pour l'élément $h$ qui est inclus dans $X_{h^{-1}}$. %, et on a le lemme suivant.
		
		Majorons le nombre de morceaux du découpage de la boule $B(o,R)$ par ce domaine fondamental.
	
		\begin{souslemme}
			Il existe une constante $C_0 > 0$ telle que pour tout $R > 1$ et tout $n \geq C_0 R$, on ait
			\[ B(o, R) \cap h^{-n} D_h = \emptyset. \]
		\end{souslemme}
		\begin{proof}
			Si $x \in h^{-n} D_h \subseteq h^{-n} X_{h^{-1}}$, par le lemme \ref{cont1} (pour $X_+ = X_{h^{-1}}$ et $\gamma = h^{-n}$), on a
			\[ d(o, x) = (x | x) \geq d(o, h^n o) - 2\sup_{y \in X_{h^{-1}}}(h o | y). \]
			Or, par les lemmes \ref{hn} et \ref{cet}, il existe des constantes $C_1 > 0$ et $C_2 > 0$ telles que
			\[ d(o, h^n o) \geq C_1 n - C_2. \]
			%En posant $C := 1/C_1 + 1/(C_2 + 2(h^{-1} o | h o))$, on a donc
			Il existe donc une constante $C_0 > 0$ telle que pour tout $R \geq 1$ et $n \geq RC_0$ on ait
			\[ d(o, x) \geq C_1 n - C_2 - 2\sup_{y \in X_{h^{-1}}}(h o | y) > n/C_0 \geq R. \]
			D'où $x \not\in B(o, R)$ pour $x \in h^{-n} D_h$ avec $n \geq RC_0$.
		\end{proof}

	\begin{figure}[H]
		\centering
		\caption{Partition de la boule $B(o, R)$ à l'aide d'un domaine fondamental pour une isométrie contractante $h$.} %Découpage de la boule $B(o, R)$ selon les copies d'un domaine fondamental d'une isométrie contractante $h$.} %e l'isométrie contractante $h$}
		\includegraphics{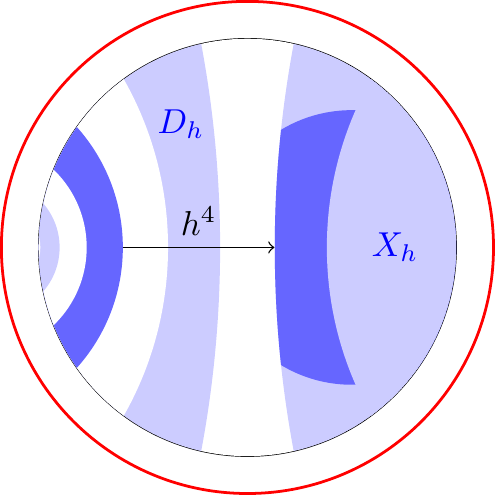}
%		\begin{dpoin}[scale=2.5]
%			h = fix( 0, 180, 1.25);
%			h1 = inv(h);
%			
%			
%			\draw o circle (5);
%			\begin{scope}
%				\clip o circle (5);
%				\fill[white] o circle (5);
%				\med \fill[blue, opacity = .2] o h.o;
%				\med \fill[white] h.o h.h.o;
%				
%				\med \fill[blue!60] h.h.o h.h.h.o;
%				\med \fill[white] h.h.h.o h.h.h.h.o;
%				
%				\med \fill[blue, opacity = .2] h.h.h.h.o h.h.h.h.h.o;
%				\med \fill[white] h.h.h.h.h.o h.h.h.h.h.h.o;
%				
%				\med \fill[blue, opacity = .2] o h1.o;
%				
%				\begin{scope}
%					\clip h1.h1.h1.h1.h1.o circle (5);
%					%\fill o circle' (1);
%					\med \fill[blue!60] o h1.o;
%					%\med \fill[white] h1.o h1.h1.o;
%				\end{scope}
%				
%				\med \fill[blue!20] h1.o h1.h1.o;
%								
%%				\med \fill[blue, opacity = .2] h1.h1.o h1.h1.h1.o;
%%				\med \fill[white] h1.h1.h1.o h1.h1.h1.h1.o;
%%				
%%				\med \fill[blue, opacity = .2] h1.h1.h1.h1.o h1.h1.h1.h1.h1.o;
%%				\med \fill[white] h1.h1.h1.h1.h1.o h1.h1.h1.h1.h1.h1.o;
%%				
%				\draw[->] h.h.h.(0.11, 0) -- (0.11, 0) node[above, midway] {$h^4$};
%				
%				\draw[blue] (.6, 0) node {$X_h$};
%				\draw[blue] (-.27, .5) node {$D_h$};
%				
%%				\draw o circle' (.02);
%%				\draw h.o circle' (.02);
%%				\draw h.h.o circle' (.02);
%%				\draw h.h.h.o circle' (.02);
%%				\draw h.h.h.h.o circle' (.02);
%%				\draw h1.o circle' (.02);
%				
%			\end{scope}
%		\end{dpoin}
	\end{figure}
	
		D'après ce sous-lemme, on peut donc partitionner la boule $B(o,R)$ en $n = \ceil{C_0R}+2$ morceaux :
		\[ B(o,R) = \left( B(o, R) \cap X_h \right) \sqcup \bigsqcup_{k = -1}^{n-3} \left( B(o, R) \cap h^{-k} D_h \right) . \]
		Le lemme des tiroirs nous donne alors que l'un des morceaux du découpage que l'on obtient pour $B(o, R) \cap S o$ est de cardinal au moins $\frac{1}{n} \# B(o,R) \cap S o$.
		Si le morceau en question est $B(o, R) \cap X_h$ ou $B(o, R) \cap h^{-(-1)} D_h$, alors le résultat est clair : $S_R := hS$ convient.
		Sinon, le morceau est $B(o, R) \cap h^{-k} D_h$ pour un entier $k \geq 0$. Par le lemme précédent, on a alors %il existe alors un entier $k \in \N$ tel que l'on ait
		\[ h^k \left( B(o,R) \cap S o \cap h^{-k} D_h \right) \subseteq B(o,R+C') \cap D_h \cap h^k S o, \]
		pour une constante $C'$ assez grande. Et par inégalité triangulaire, on a
		\[ h^2 \left( B(o,R+C') \cap D_h \cap h^k S o \right) \subseteq B(o,R+C' + d(o, h^2 o)) \cap h^2 D_h \cap h^{k+2} S o. \]
		Comme on a l'inclusion $h^2 D_h \subseteq X_h$, les inégalités précédentes donne l'inégalité annoncée
		\[ \# S_R o \cap B(o, R + C) \cap X_h
			\geq \frac{1}{C R} \# S o \cap B(o, R), \]
		avec $S_R = h^{k+2} S$, pour tout $R \geq 1$, pour une constante $C$ assez grande. %avec $C := (C_3 + d(o, h^m o)) (C_0+1)$. %\max(C3, C_0+1, C')$.
	\end{proof}
	
	Voici maintenant un lemme d'inversion des quantificateurs.
	
	\begin{lemme} \label{ich}
		Soit $X$ un espace métrique propre, soit $\Gamma' \subseteq \Isom(X)$, et soient $h \geq 0$, $C > 0$ et $\epsilon > 0$ des réels. Si pour tout réel $R \geq 1$, il existe une partie $\epsilon$-séparée $S_R \subseteq \Gamma'$ telle que l'on ait l'inégalité
		\[
			\# S_R \cap B(o,R) \geq C e^{h R},
		\]
		alors on a $h_{\Gamma'} \geq h$.
	\end{lemme}
	
	\begin{proof}
		Soit $S \subseteq \Gamma'$ une partie séparée et $\frac{\epsilon}{2}$-couvrante. On a les inégalités
		\begin{align*}
			h_{\Gamma'}	&= \limsup_{R \to \infty} \frac{1}{R} \log( \# S o \cap B(o, R)) ) \\
						&\geq \limsup_{R \to \infty} \frac{1}{R} \log( \#S_R o \cap B(o, R))) ) \\
						& \geq h.
		\end{align*}
	\end{proof}
	
	\begin{proof}[Preuve de la proposition \ref{ptf}]
		Soit $S$ une partie $\epsilon$-séparée et couvrante du semi-groupe $\Gamma$.
		En utilisant le lemme \ref{minXh}, on obtient l'inégalité
		\[ \# S_R o \cap B(o, R + C) \geq \frac{1}{C R} \# S o \cap B(o, R), \]
		pour une partie $S_R$ $\epsilon$-séparée de $\Gamma'$, pour tout $R > 1$ et pour une constante $C$.
		Comme $S$ est une partie couvrante de $\Gamma$, son exposant critique est minoré par $h_{\Gamma}$, ce qui donne
		\[
			\frac{1}{C R} \# S o \cap B(o, R) \geq e^{(h_\Gamma-\epsilon_R) R},
		\]
		avec $\lim_{R \to \infty} \epsilon_R = 0$.
		Le lemme~\ref{ich} nous donne alors l'inégalité $h_{\Gamma'} \geq h_\Gamma - \epsilon_R$, puis on obtient l'inégalité $h_{\Gamma'} \geq h_\Gamma$ en faisant tendre $R$ vers l'infini.
		
		L'autre inégalité $h_{\Gamma'} \leq h_\Gamma$ se déduit de l'inclusion $\Gamma' \subseteq \Gamma$.
	\end{proof}

	Ceci termine la preuve du théorème \ref{thm1} : tous les cas ont été traités, puisque le support est non vide par la proposition \ref{nonvide}.
	
	\subsection{Contre-exemple quand l'ensemble limite du semi-groupe $\Gamma$ est réduit à un point}
	
	Sans l'hypothèse $\# \Lambda_\Gamma \geq 2$, les théorèmes \ref{thm1} et \ref{thm0} sont faux en général.
	
	\begin{ex} \label{contre-ex}
		Pour $X = \hyp$ muni de sa métrique usuelle, le sous-semi-groupe de $SL(2,\R)$ engendré par les matrices
		$\begin{pmatrix}
			1 & 1†\\
			0 & 1
		\end{pmatrix}$
		et
		$\begin{pmatrix}
			2 & 0†\\
			0 & \frac{1}{2}
		\end{pmatrix}$
		a pour entropie $\frac{1}{2}$, mais ne contient que des sous-semi-groupes contractants d'entropie nulle, donc en particulier ne contient que des sous-semi-groupes de Schottky d'exposant critique nul.
		
		Même chose avec le groupe parabolique engendré par la matrice
		$\begin{pmatrix}
			1 & 1†\\
			0 & 1
		\end{pmatrix}$.
	\end{ex}
	
	\begin{figure}[H]
		\centering
		\caption{Orbite d'un point sous l'action du semi-groupe de l'exemple \ref{contre-ex}.}
		\includegraphics{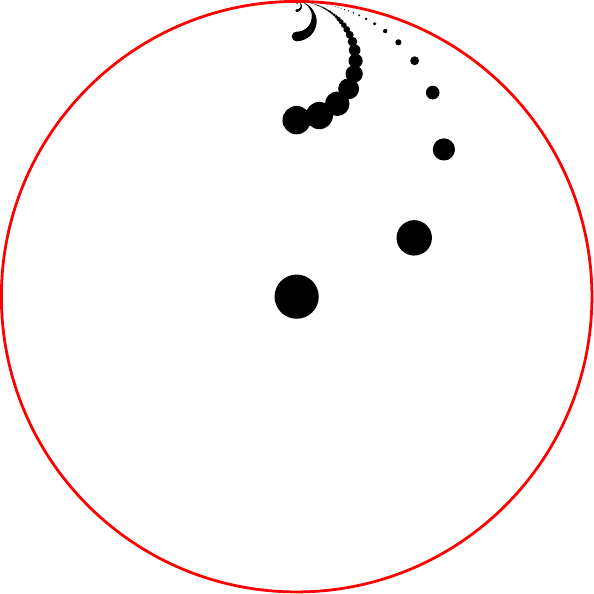}
%		\begin{dpoin}
%			p = [[1, 1], [0, 1]];
%			%p1 = [[1, -1], [0, 1]];
%			h = [[2, 0], [0, 1/2]];
%			
%			\orb[niter=-1, sep=.001,size=.3,maxnorm=13] \fill o p h; % p1;
%		\end{dpoin}
	\end{figure}
		
%%%%%%%%%%%%%%%%%%%%%%%%%%%%%%%%%%%%%%%%%%%%%%%%%%%%%%%%%%%%%%%%%%%%%%%%%%%%%%%%%%%%%%%%
%%%%%%%%%%%%%%%%%%%%%%%%%%%%%%%%%%%%%%%%%%%%%%%%%%%%%%%%%%%%%%%%%%%%%%%%%%%%%%%%%%%%%%%%
	\section{Semi-groupes de Schottky} \label{schottky}
	%Preuve du théorème \ref{thm2}}
	
	Les semi-groupes de Schottky sont les semi-groupes ayant la dynamique la plus simple, puisque par définition leurs générateurs jouent au \og ping-pong \fg.
	En particuliers ils sont libres et séparés.
	Le théorème principal de cette section, que nous démontrons ici (théorème~\ref{thm2}), affirme que l'entropie d'un semi-groupe est approchée aussi près que l'on veut par celle de ses sous-semi-groupes de Schottky. 
	
	\begin{define}
		Soient $X$ un espace métrique, et $\Gamma$ un semi-groupe d'isométries de $X$.
		On dit que le semi-groupe $\Gamma$ est \defi{de Schottky} pour une partie $X_+ \subseteq X$, %s'il est engendré par un ensemble fini de génarateurs
		s'il admet une partie génératrice finie
		$\{ g_1, g_2, ... , g_n \}$, telle que les parties $g_1 X_+$, $g_2 X_+$, ..., $g_n X_+$ et $X \backslash X_+$ soient deux à deux Gromov-disjointes, et que l'ensemble $X_+ \backslash (g_1 X_+ \cup ... \cup g_n X_+)$ soit d'intérieur non vide.
%		On dit qu'un semi-groupe $\Gamma$ est \defi{de Schottky} s'il est engendré par une partie finie $A \subseteq \Gamma$ telle qu'il existe une partie $X_+ \subseteq X$
%		contenant l'ensemble limite: $\Lambda_\Gamma \subseteq X_+$, et vérifiant que pour tous générateurs $g \neq g' \in A$, on a
%		\[ g(X_+) \cap g'(X_+) = \emptyset \quad \text{et} \quad g(X_+) \subseteq X_+. \]
	\end{define}
	
	\begin{figure}[H]
		\centering
		\caption{Un semi-groupe de Schottky engendré par deux isométries $g_1$ et $g_2$}
		\includegraphics{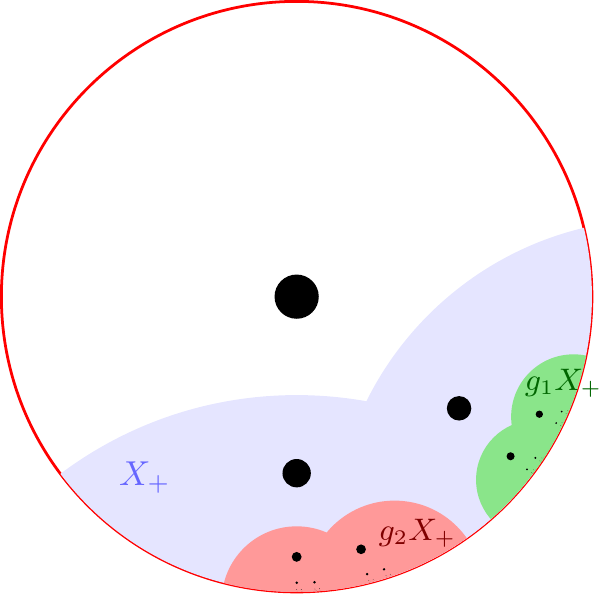}
%		\begin{dpoin}
%			a = [[1/2, 0],[0, 2]];
%			b = [[1/2, 1],[0, 2]];
%			
%			\med \fill[blue!10] o a.o;
%			\med \fill[blue!10] o b.o;
%			
%		%	\begin{scope}[opacity = .5] %, transparency group]
%				\med \fill[red!40] a.o a.a.o;
%				\med \fill[red!40] a.o a.b.o;
%				
%		%	\begin{scope}[opacity = .2, transparency group]
%				\med \fill[green!40!white!90!black] b.o b.a.o;
%				\med \fill[green!40!white!90!black] b.o b.b.o;
%		%	\end{scope}
%			
%			\orb[niter=-1, sep=.003,size=.3,maxnorm=15] \fill o a b;
%			
%%			\fill[red] a.(0,0) circle' (.01);
%%			\fill[green] b.(0,0) circle' (.01);
%%			
%%			
%%			\fill[red] a.a.(0,0) circle' (.01);
%%			\fill[red] a.b.(0,0) circle' (.01);
%%			\fill[green] b.a.(0,0) circle' (.01);
%%			\fill[green] b.b.(0,0) circle' (.01);
%			
%			\draw[blue!60] (.8:-130) node {$X_+$};
%			\end{scope}
%			\begin{scope}
%			\draw[color=black!60!green, font=\small] (.95:-18) node {$g_1 X_+$};
%			\draw[color=black!50!red, font=\small] (.9:-63) node {$g_2 X_+$};
%		\end{dpoin}
	\end{figure}
	
	\begin{props}
		Soit $X$ un espace métrique et soit $\Gamma$ un semi-groupe de Schottky d'isométries de $X$. On a alors :
		\begin{enumerate}
			\item Le semi-groupe $\Gamma$ est libre.
			\item Le semi-groupe $\Gamma$ est séparé (donc en particulier d'orbite discrète).
			\item Le semi-groupe $\Gamma$ est contractant.
			\item Si l'espace $X$ est propre, on a $\delta_\Gamma < \infty$.
		\end{enumerate}
	\end{props}
	
	\begin{proof}
		Soit $B$ une boule incluse dans l'ensemble $X_+ \backslash (g_1 X_+ \cup ... \cup g_n X_+)$, alors ses images par les isométries de $\Gamma$ sont toutes deux à deux disjointes. Donc le semi-groupe est libre et séparé. Le semi-groupe $\Gamma$ est contractant pour les domaines $X \backslash X_+$ et $g_1 X_+ \cup ... \cup g_n X_+$, en choisissant $o \in X_+ \backslash (\overline{g_1 X_+ \cup ... \cup g_n X_+})$.
		Pour obtenir la finitude de l'exposant critique, il suffit d'utiliser le lemme \ref{cdv} pour obtenir un entier $k$ tel que pour tous générateurs $\gamma_1$, ..., $\gamma_k$, on ait
		\[ d(o, \gamma_1 ... \gamma_k o) \geq 2M + 1, \]
		où $M$ est la constante de Gromov-disjonction du semi-groupe contractant $\Gamma$.
		Puis on utilise le lemme \ref{cet} pour obtenir la minoration
		\[ d(o, \gamma_1 ... \gamma_n o) \geq \floor{\frac{n}{k}}, \]
		pour tout $n$, et pour des générateurs $\gamma_1$, ..., $\gamma_n$.
		Et on obtient alors la majoration $\delta_\Gamma \leq k \log(N)$, où $N$ est le nombre de générateurs du semi-groupe $\Gamma$.
	\end{proof}
	
	\begin{thm} \label{thm2}
		Soit $X$ un espace Gromov-hyperbolique propre, et soit $\Gamma$ un semi-groupe d'isométries de $X$.
		Si le semi-groupe $\Gamma$ est contractant, alors pour tout $\epsilon > 0$, il existe un sous-semi-groupe Schottky $\Gamma' \subseteq \Gamma$ tel que $\delta_{\Gamma'} \geq h_\Gamma - \epsilon$.
	\end{thm}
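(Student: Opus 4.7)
The strategy is to extract from a full-entropy separated subset of $\Gamma$ a finite family of elements of comparable large norm $R$ whose base translates are $X$-separated by a large enough constant, and then verify that these elements generate a Schottky sub-semi-group whose critical exponent approaches $h_\Gamma$ as $R \to \infty$.

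Let $M_0 := \sup_{\gamma, \gamma' \in \Gamma}(\gamma^{-1} o \mid \gamma' o) < \infty$. Fix $\eta > 0$ and, by Remark \ref{epsc}, choose an $\epsilon_0$-separated subset $S \subseteq \Gamma$ with $\delta_S = h_\Gamma$. By the shell formulation of $\delta$ (Remark \ref{delta-ann}), there exist arbitrarily large $R$ with $|S_R| \geq e^{R(h_\Gamma - \eta)}$, where $S_R := \{\gamma \in S : R \leq d(o, \gamma o) < R+1\}$. Fix a constant $C > M_0 + 6\delta$; by propreté, any ball $B(x, 2C) \subset X$ contains at most $\kappa = \kappa(C, \epsilon_0) < \infty$ points of the orbit $S \cdot o$. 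A greedy extraction then yields $\{g_1, \ldots, g_n\} \subseteq S_R$ with $d(g_i o, g_j o) \geq 2C$ for $i \neq j$, and $n \geq |S_R|/\kappa$. From the separation one obtains $(g_i o \mid g_j o) \leq R+1-C$, while the contracting hypothesis gives $\sup_{i,j}(g_i^{-1} o \mid g_j o) \leq M_0 < R/2 - 3\delta$ for $R$ large. Proposition \ref{crit_cont} then applies to $\{g_1, \ldots, g_n\}$, making the family contracting; a $\delta$-hyperbolicity computation of the same flavor as in its proof, combined with Lemma \ref{cont1}, yields that the domains $X_{g_i}$ are pairwise Gromov-disjoint. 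Taking $X_+'$ to be a suitable domain containing these pieces (see the obstacle below), the sub-semi-group $\Gamma' := \langle g_1, \ldots, g_n\rangle$ is Schottky, hence in particular free, and Lemma \ref{cet} gives $d(o, g_{i_1} \cdots g_{i_k} o) \leq k(R+1)$, whence
\[
    \delta_{\Gamma'} \;\geq\; \frac{\log n}{R+1} \;\geq\; \frac{R(h_\Gamma - \eta) - \log \kappa}{R+1},
\]
which tends to $h_\Gamma - \eta$ as $R \to \infty$. Taking $\eta = \epsilon/2$ and $R$ large enough yields $\delta_{\Gamma'} \geq h_\Gamma - \epsilon$.

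The main technical obstacle is the external Gromov-disjointness condition $g_i X_+' \perp (X \setminus X_+')$ required by the Schottky definition: even though $\bigcup_i X_{g_i}$ and $\bigcup_i X_{g_i^{-1}}$ are Gromov-disjoint, the set $X \setminus \bigcup_i X_{g_i}$ may contain points accumulating on $\partial X$ near $\overline{X_+'}$. The fix will be to take $X_+'$ as a Gromov ball $\beta(\xi_0, \epsilon_1)$ centered at a point $\xi_0 \in \overline{X_+} \cap \partial X$, chosen as the attracting fixed point of a contracting element $h \in \Gamma$ furnished by Proposition \ref{econt}; one then replaces each $g_i$ by $h^k g_i h^k$ for $k$ large, which arranges $g_i^{\mathrm{new}} X_+' \subseteq X_+'$ together with pairwise and external Gromov-disjointness. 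The invariance of entropy under left/right translation by fixed isometries (point \ref{hm} of Properties \ref{ppteg}) guarantees that this modification preserves the critical exponent estimate above, so the conclusion is unaffected.
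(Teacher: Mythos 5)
Your counting scheme --- extracting from a shell $S_R$ a separated subfamily of size $n \geq \#S_R/\kappa$ and bounding $\delta_{\Gamma'} \geq \log n/(R+1)$ by freeness --- is exactly the paper's (Remark \ref{delta-ann} together with Lemma \ref{mnec}). The gap is in the verification of the Schottky property itself, which is the heart of the proof. You take as Schottky pieces the half-spaces $X_{g_i}$ and claim they become pairwise Gromov-disjoint under a separation $d(g_i o, g_j o) \geq 2C$ with $C$ a \emph{fixed} constant. This is false: by Remark \ref{remXg}, $g_j o \in X_{g_i}$ as soon as $d(g_i o, g_j o) \leq d(o, g_j o)$, which holds whenever $2C \leq R$, so the $X_{g_i}$ are not even disjoint as sets. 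Quantitatively, a point $x \in X_{g_i}$ only satisfies $(x \,|\, g_i o) \geq \frac{1}{2} d(o, g_i o) \approx R/2$, so pushing $X_{g_i}$ and $X_{g_j}$ apart by $\delta$-hyperbolicity would require $(g_i o \,|\, g_j o) < R/2 - O(\delta)$, i.e.\ a separation $d(g_i o, g_j o)$ of order $R$; but in $\hyp$ a sphere of radius $R$ carries at most roughly $e^{R/2}$ points that are pairwise $R$-separated, so the count collapses and the method would at best give $\delta_{\Gamma'} \geq \min(h_\Gamma, \tfrac12) - \epsilon$. The paper's Lemma \ref{sc} avoids this by taking as pieces the images $g_i X_+$ of the contraction domain $X_+$ supplied by the hypothesis: Lemma \ref{cont1} (with $C_0 = \sup_{X_+ \times X_-}(x|x')$) shows that every pair of points of $g_i X_+$ has Gromov product at least $R - 2C_0$, so these pieces are exponentially small seen from $o$ --- unlike half-spaces --- and a \emph{fixed} separation $r = 4C_0 + 4\delta + 2$ suffices.

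Your ``main technical obstacle'' is a symptom of the same wrong choice of domain, and it is resolved only by an unexecuted sketch. Replacing each $g_i$ by $h^k g_i h^k$ is a genuinely different construction whose properties you do not verify: the separation of the family $\{g_i h^k o\}_i$ does not follow from that of $\{g_i o\}_i$ without controlling $d(o, h^k o)$ against $C$, the norms and the disjointness statements all have to be redone, and Proposition \ref{econt}, which you invoke to produce $h$, assumes $\#\Lambda_\Gamma \geq 2$, which is not a hypothesis of Theorem \ref{thm2}. With the domain $X_+$ of the hypothesis none of this is needed: contraction gives $g_i X_+ \subseteq g_i(X \setminus X_-) \subseteq X_+$ directly, and the argument closes with the fixed separation above.
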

	
	Pour construire un \og gros \fg\ sous-semi-groupe de Schottky, nous %allons tout d'abord utiliser le théorème \ref{thm1} pour se ramener à un semi-groupe contractant.
	considèrerons une partie suffisamment séparée de ce semi-groupe contractant, et montrerons que les éléments de norme donnée assez grande, contractent à des endroits suffisamment écartés les uns des autres pour jouer au \og ping-pong \fg\  et donc engendrer un semi-groupe de Schottky. \\
	
	%On supposera dans la suite, en utilisant le théorème \ref{thm1}, que le semi-groupe $\Gamma$ est contractant pour des domaines $X_-$ et $X_+$.
	
	%Le lemme suivant permet d'obtenir que les domaines $\gamma X_+$ et $\gamma' X_+$ sont Gromov-disjoints si $\gamma o$ et $\gamma' o$ sont deux éléments de $A_n$ assez séparés.
	
	\begin{lemme} \label{sc}
		Soit $X$ un espace Gromov-hyperbolique, et soient $X_-$ et $X_+$ des parties de $X$. % Gromov-disjointes.
		Alors il existe un réel $r$ et un entier $n_0$ tels que pour tout semi-groupe contractant $\Gamma$ d'isométries de $X$ pour les parties $X_+$ et $X_-$, pour toute partie $S$ $r$-séparée de $\Gamma$, et pour tout entier $n \geq n_0$,
		l'ensemble $S \cap A_n$ engendre un semi-groupe de Schottky pour le domaine $X_+$,
		où l'on a posé
		\[ A_n := \{ \gamma \in \Isom(X) | d(o, \gamma o) \in [n, n+1[ \}. \]
		%Alors on a
		%\[ \sup_{(x, x') \in \gamma X_+ \times \gamma' X_+} (x | x') < \infty. \]
	\end{lemme}
	
	\begin{proof}
		Supposons qu'il existe un semi-groupe contractant pour les parties $X_-$ et $X_+$ (sinon il n'y a rien à démontrer).
		Montrons que le réel $r := 4C + 4\delta +2$ convient, où
		\[ C := \sup_{(x, x') \in X_+ \times X_-} (x | x'), \]
		et $\delta$ est un réel tel que l'espace $X$ soit $\delta$-hyperbolique.
		
		Soit $\Gamma$ un semi-groupe contractant pour les parties $X_-$ et $X_+$, et soient $\gamma$ et $\gamma'$ deux isométries de $A_n$ qui vérifient l'inégalité
		\[ d(\gamma o, \gamma' o) \geq r. \]
		Montrons qu'alors les domaines $\gamma X_+$ et $\gamma' X_+$ sont Gromov-disjoints.
		
		Soient $x \in \gamma X_+$ et $x' \in \gamma' X_+$.
		Par Gromov-hyperbolicité, on a alors
		\[ (\gamma o | \gamma' o) \geq \min\{ (\gamma o, x), (x | x'), (x', \gamma' o) \} - 2\delta. \]
		Or, on a l'inégalité
		\begin{align*}
			(\gamma o | \gamma' o)	&= \frac{1}{2} \left( d(\gamma o, o) + d(\gamma' o, o) - d(\gamma o, \gamma' o) \right) \\
								&< (n + 1) - \frac{r}{2} = n - 2C - 2\delta,
		\end{align*}
		et d'après le lemme \ref{cont1}, on a les inégalités
		\[ (\gamma o | x) \geq d(o, \gamma o) - 2C \geq n - 2C \]
		et de même $(x' | \gamma' o) \geq n - 2C$.
		On obtient donc l'inégalité
		\[ (x | x') < n - 2C. \]
		
		Les ensembles $\gamma X_+$ et $\gamma' X_+$ sont alors disjoints, puisque si l'on avait $y \in \gamma X_+ \cap \gamma' X_+$, on aurait l'absurdité
		\[ n - 2C - 2\delta > (\gamma o |  \gamma' o) \geq \min\{ (\gamma o | y), (y, \gamma' o) \} - \delta \geq n - 2C - \delta. \]
		
%		Par l'absurde, supposons qu'il existe $y \in \overline{\gamma X_+} \cap \overline{\gamma' X_+}$.
%		On a d'une part
%		\begin{align*}
%			(\gamma o | \gamma' o)	&= \frac{1}{2} \left( d(\gamma o, o) + d(\gamma' o, o) - d(\gamma o, \gamma' o) \right) \\
%								&< (n + 1) - \frac{r}{2} = n - 2C - \delta,
%		\end{align*}
%		et d'autre part par $\delta$-hyperbolicité
%		\[ (\gamma o | \gamma' o)	\geq \min\{ (\gamma o | y), (y | \gamma' o) \} - \delta. \]
%		Or, le lemme \ref{cont1} appliqué à la partie $X_+ \cup \{ o \}$ et l'isométrie $\gamma$ donne l'inégalité $(\gamma o | y) \geq d(o, \gamma o) - 2C \geq n - 2C$, et on a de même l'inégalité $(y | \gamma' o) \geq n - 2C$.
%		On obtient l'absurdité $n - 2C - \delta < n - 2C - \delta$.
		
		On a donc montré que les ensembles $\gamma X_+$ et $\gamma' X_+$ sont $(n-2C)$-disjoints.
		Pour finir, l'intersection $B(o, n-1) \cap X_+$ est d'interieur non vide si l'entier $n$ est assez grand, et elle est incluse dans $X_+ \backslash \left( \cup_{\gamma \in S \cap A_n} \gamma X_+ \right)$.
		
%		Soient $x \in \gamma X_+$ et $x' \in \gamma' X_+$. \\
%		On a d'une part
%		\begin{align*}
%			(\gamma o | \gamma' o)	&= \frac{1}{2} \left( d(\gamma o, o) + d(\gamma' o, o) - d(\gamma o, \gamma' o) \right) \\
%								&< (n + 1) - 2C - 2\delta - 1,
%		\end{align*}
%		et d'autre part %pour $(x, x') \in \gamma X_+ \times \gamma' X_+$,
%		%\begin{align}
%		\[ (\gamma o | \gamma' o)	\geq \min\{ (\gamma o | x), (x | x'), (x' | \gamma' o) \} - 2\delta. \]
%		%\end{align}
%		Or, d'après le lemme \ref{cont1}, on a les inégalités %$(\gamma o | x) $
%		\[ (\gamma o | x) \geq d(o, \gamma o) - 2C \geq n - 2C \]
%	%	\begin{align*}
%	%		(\gamma o | x)	&\geq d(o, \gamma o) - 2C \\
%	%					&\geq n - 2C,
%	%	\end{align*}
%		et de même $(x' | \gamma' o) \geq n - 2C$.
%		Ceci nous permet de conclure que l'on a
%		\[ (x | x') \leq n - 2C, \]
%		d'où le résultat en passant à la borne supérieure.
	\end{proof}
	
	Ainsi, pour $n$ assez grand et pour le réel $r$ donné par le lemme, si l'on considère une partie $S$ $r$-séparée et couvrante de $\Gamma$, alors
	l'ensemble $A_n \cap S$ engendre un semi-groupe de Schottky.
	Et par les points \ref{sep} et \ref{couv} des propriétés \ref{ppteg} et la remarque \ref{delta-ann} on a
	\[ h_\Gamma = \limsup_{n \to \infty} \frac{1}{n} \ln(\#(A_n \cap S)). \]
	%Pour un $\epsilon > 0$ fixé, on peut donc trouver un entier $n$ tel que
	Pour tout $\epsilon > 0$, on peut donc trouver un entier $n$ tel que
	\[ \ln( \# (A_n \cap S) ) \geq n(h_\Gamma - \epsilon). \]
	
	Montrons alors que le semi-groupe de Schottky $\Gamma'$ engendré par $A_n \cap S$ a un exposant critique supérieur ou égal à $\frac{n}{n+1} (h_\Gamma - \epsilon)$. Pour cela, on va utiliser le lemme suivant.
	
	\begin{lemme} \label{mnec}
		Soit $X$ un espace métrique. Si une partie $S$ du groupe d'isométries $\Isom(X)$ engendre un semi-groupe libre $\Gamma$, alors on a la minoration de l'exposant critique :
		\[ \delta_{\Gamma} \geq \frac{\log(\# S)}{r}, \]
		où $r = \sup_{\gamma \in S} d(o, \gamma o)$.
	\end{lemme}
	
	\begin{proof}
		Cela découle de l'inégalité triangulaire. Pour des générateurs $\gamma_1, ..., \gamma_n \in S$, on a
		\[ d(\gamma_1 ... \gamma_n o, o) \leq d(\gamma_1 o, o) + ... + d(\gamma_n o, o) \leq n r, \]
		donc, par liberté du semi-groupe $\Gamma$ pour la partie $S$, on obtient
		\[ \# \{ \gamma \in \Gamma | d(o, \gamma o) \leq nr \} \geq (\#S)^n. \]
		On a donc
		\begin{align*}
			\delta_{\Gamma}	&= \limsup_{n \to \infty} \frac{1}{n} \ln(\#\{ \gamma \in \Gamma | d(o, \gamma o) \leq n \}) \\
						&\geq \limsup_{n \to \infty} \frac{1}{nr} \ln((\#S)^n) \\
						&= \frac{\ln(\#S)}{r}.
		\end{align*}
	\end{proof}
	
	En appliquant le lemme \ref{mnec} à la partie $A_n \cap S$, qui engendre un semi-groupe qui est de Schottky et qui est donc libre, on obtient l'inégalité
	\[ \delta_{\Gamma'} \geq \frac{\log(\#(A_n \cap S))}{n+1} \geq \frac{n}{n+1} (h_\Gamma - \epsilon). \]	
	Or, l'entier $n$ pouvait être choisi arbitrairement grand, et le réel $\epsilon > 0$ est arbitraire.
	Ceci achève la preuve du théorème \ref{thm2}. \\
	
	On peut maintenant facilement démontrer le théorème \ref{thm0}.
	
	\begin{proof}[Preuve du théorème \ref{thm0}]
%		D'après le théorème \ref{thm1}, il existe un sous-semi-groupe $\Gamma'$ de notre semi-groupe $\Gamma$ d'isométrie de $X$ qui a pour entropie $h_\Gamma$ et qui est contractant.
%		Soit $\epsilon > 0$.
%		D'après le théorème \ref{thm2}, il existe un sous-semi-groupe de Schottky $\Gamma''$ de $\Gamma'$ dont l'exposant critique est minoré par $h_{\Gamma'} - \epsilon = h_\Gamma - \epsilon$.
%		Ainsi on a obtenu l'inégalité
		On déduit aisément des théorèmes \ref{thm1} et \ref{thm2} l'inégalité
		\[ \sup_{\Gamma' < \Gamma \atop \Gamma' \text{ sous-semi-groupe de Schottky }} \delta_{\Gamma'} \geq h_\Gamma. \]
		L'autre inégalité s'obtient en remarquant qu'un semi-groupe de Schottky est séparé.
	\end{proof}
	
%%%%%%%%%%%%%%%%%%%%%%%%%%%%%%%%%%%%%%%%%%%%%%%%%%%%%%%%%%%%%%%%%%%%%%%%%%%%%%%%%%%%%%%%
%%%%%%%%%%%%%%%%%%%%%%%%%%%%%%%%%%%%%%%%%%%%%%%%%%%%%%%%%%%%%%%%%%%%%%%%%%%%%%%%%%%%%%%%
	\section{Dimension visuelle} \label{dim-vis} %Le résultat de F. Paulin} \label{paulin}
	
	Dans cette section, nous voyons une application du théorème \ref{thm0} à \og l'étude au bord \fg\ d'un semi-groupe.
	Nous obtenons le corollaire~\ref{cpaulin_} ci-après qui est une généralisation d'un résultat de F. Paulin (voir \cite{paulin}) qui généralise lui-même un résultat de Coornaert (voir \cite{coornaert}). \\
	
	%Nous reprenons les notations de l'article de F. Paulin.
	Soit $X$ un espace Gromov-hyperbolique.
	Pour définir ce qu'est la dimension visuelle d'une partie $\Lambda$ du bord $\bX$, introduisons quelques notations.
	
	On définit la \defi{boule $\beta(\xi, r)$} de centre $\xi$ et de rayon $r$ sur le bord $\bX$ par
	\[ \beta(\xi, r) := \{ \eta \in \bX | (\xi | \eta) > -\log(r) \}. \]
	
	\begin{define}
		On appelle \defi{mesure visuelle} de dimension $s$ d'une partie $\Lambda \subseteq \bX$ du bord d'un espace $X$ Gromov-hyperbolique, le réel
		\[ H^s(\Lambda) := \lim_{\epsilon \to 0} H^s_\epsilon(\Lambda), \]
		où $H^s_\epsilon(\Lambda)$ est la borne inférieure des sommes
		\[ \sum_{i \in \N} r_i^{s} \]
		sur tous les recouvrements $(\beta( \xi_i, r_i ))_{i \in \N}$ de l'ensemble $\Lambda$ par des boules de rayons $r_i \leq \epsilon$.
		
		On appelle \defi{dimension visuelle} d'un ensemble $\Lambda \subseteq \bX$ le réel
		\[ \dv (\Lambda) := \inf\{ s \in \R_+ | H^s(\Lambda) = 0 \}. \]
	\end{define}
	
	\begin{rem}
		On a aussi
		\[ \dv (\Lambda) = \sup\{ s \in \R_+ | H^s(\Lambda) = \infty \}. \]
	\end{rem}
	
	\begin{rem}
		La mesure visuelle est une mesure.
	\end{rem}
	
	La notion de dimension visuelle généralise celle de dimension de Hausdorff.
	
	%%%%%%%%%%%%%%%%%%%%%%%%%%%%%%%%%%%%%%%%%%%%%%%%%%%%%%%%%
	\subsection{Lien entre dimension visuelle et entropie} \label{paulin} %Le résultat de F. Paulin} \label{paulin}
	
	On a le résultat suivant.
	
	\begin{cor} \label{cpaulin_}
		Soit $X$ un espace Gromov-hyperbolique propre à bord compact et soit $\Gamma$ un semi-groupe d'isométries de $X$ dont l'ensemble limite contient au moins deux points. Alors on a l'égalité
		%Si le semi-groupe $\Gamma$ ne fixe pas de point au bord $\bX$, alors on a l'égalité
		\[ \dv(\Lambda_{\Gamma}^c) = h_\Gamma. \]
	\end{cor}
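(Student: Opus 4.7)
Mon plan est de démontrer les deux inégalités séparément. Pour la minoration $\dv(\Lambda_\Gamma^c) \geq h_\Gamma$, j'utilise le théorème~\ref{thm0} : pour tout $\epsilon > 0$, il existe un sous-semi-groupe de Schottky $\Gamma' < \Gamma$ avec $\delta_{\Gamma'} \geq h_\Gamma - \epsilon$. Comme un semi-groupe de Schottky est contractant et de type fini, la proposition~\ref{pradial} donne $\Lambda_{\Gamma'} = \Lambda_{\Gamma'}^c \subseteq \Lambda_\Gamma^c$, et par monotonie de la dimension visuelle on a $\dv(\Lambda_\Gamma^c) \geq \dv(\Lambda_{\Gamma'})$. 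Il suffit donc de montrer que pour un semi-groupe de Schottky $\Gamma'$ on a $\dv(\Lambda_{\Gamma'}) \geq \delta_{\Gamma'}$. L'ensemble limite $\Lambda_{\Gamma'}$ étant un \og Cantor \fg\ auto-similaire attracteur d'un système de contractions vérifiant la condition de l'ensemble ouvert (grâce à la Gromov-disjonction des $g_i X_+$), je construis la mesure de Bernoulli $\mu$ sur $\Lambda_{\Gamma'}$ associée aux poids $(e^{-\delta_{\Gamma'} d(o, g_i o)})_i$ (qui forment une probabilité puisque $\delta_{\Gamma'}$ annule la pression). Les lemmes \ref{cont1} et \ref{cet} (égalité triangulaire à constante près dans un contractant) donnent l'estimée du type Frostman $\mu(\beta(\xi, r)) \leq C r^{\delta_{\Gamma'}}$, d'où $\dv(\Lambda_{\Gamma'}) \geq \delta_{\Gamma'}$ par le principe de distribution de masse. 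En faisant tendre $\epsilon$ vers $0$, on obtient la minoration souhaitée.

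Pour la majoration $\dv(\Lambda_\Gamma^c) \leq h_\Gamma$, je procède par recouvrement direct. Fixons $s > h_\Gamma$ et une partie $S \subseteq \Gamma o$ séparée telle que $\delta_S = h_\Gamma$ (remarque~\ref{epsc}, possible par propreté). L'idée est qu'à chaque orbite $\gamma o \in S$ avec $d(o, \gamma o) = n$ on associe son \og ombre \fg\ sur le bord, à savoir la boule $\beta(\gamma o, C e^{-n})$ pour une constante $C$ indépendante de $n$. La clef est alors le lemme suivant, que je démontrerai à la main en utilisant la définition de sous-quasi-géodésique : tout point $\xi \in \Lambda_\Gamma^c$, limite d'une sous-quasi-géodésique $(\gamma_k o)_k$ de l'orbite, est dans l'ombre de $\gamma_k o$ pour tout $k$ assez grand, puisque $(\xi | \gamma_k o) \geq d(o, \gamma_k o) - C'$ par la Gromov-hyperbolicité et la propriété de sous-quasi-géodésique. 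Ainsi, pour chaque entier $N$, l'ensemble $\Lambda_\Gamma^c$ est recouvert par les ombres des éléments de $S$ situés dans l'anneau $\{ \gamma o |\ d(o, \gamma o) \geq N \}$, et en particulier par les ombres indexées par les anneaux $A_n$ pour $n \geq N$.

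Il ne reste qu'à contrôler la somme des $r_i^s$. Par la remarque~\ref{delta-ann}, pour tout $\epsilon > 0$ et $n$ assez grand, on a $\# (S \cap A_n) \leq e^{(h_\Gamma + \epsilon) n}$. On obtient donc, pour $s > h_\Gamma + \epsilon$,
\[
H^s_{Ce^{-N}}(\Lambda_\Gamma^c) \leq \sum_{n \geq N} \# (S \cap A_n) \cdot (C e^{-n})^s \leq C^s \sum_{n \geq N} e^{((h_\Gamma + \epsilon) - s) n},
\]
qui tend vers $0$ quand $N \to \infty$. Ceci donne $H^s(\Lambda_\Gamma^c) = 0$ pour tout $s > h_\Gamma + \epsilon$, d'où $\dv(\Lambda_\Gamma^c) \leq h_\Gamma + \epsilon$, et on conclut en faisant tendre $\epsilon$ vers $0$.

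L'obstacle principal me semble être la minoration, plus précisément l'estimée de Frostman $\mu(\beta(\xi, r)) \leq C r^{\delta_{\Gamma'}}$ pour la mesure de Bernoulli sur l'ensemble limite d'un semi-groupe de Schottky : il faut relier précisément, dans un espace Gromov-hyperbolique général (non géodésique), le rayon de la boule $\beta(\xi, r)$ et la longueur des mots du semi-groupe qui contribuent à cette boule. C'est là que les lemmes \ref{cont1} et \ref{cet} jouent un rôle essentiel, puisqu'ils assurent que, dans un semi-groupe contractant, le produit de Gromov entre deux images $g_{i_1} \cdots g_{i_k} \xi$ et $g_{i_1} \cdots g_{i_k} \eta$ est comparable à $d(o, g_{i_1} \cdots g_{i_k} o)$, lui-même comparable (par $\ref{cet}$) à $\sum d(o, g_{i_j} o)$. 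La majoration s'obtient alors de façon combinatoire en identifiant, pour chaque $r > 0$, l'ensemble des mots d'image intersectant $\beta(\xi, r)$.
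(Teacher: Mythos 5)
Votre r\'eduction de la minoration (th\'eor\`eme \ref{thm0}, puis $\Lambda_{\Gamma'} = \Lambda_{\Gamma'}^c \subseteq \Lambda_\Gamma^c$ par la proposition \ref{pradial} et monotonie de $\dv$) est exactement celle de l'article, et votre majoration par recouvrement par des ombres est, \`a la r\'edaction pr\`es, la proposition \ref{if} : le seul d\'etail \`a ajouter est que la constante $C$ du rayon des ombres d\'epend de la constante de sous-quasi-g\'eod\'esique du point $\xi$ consid\'er\'e, ce que l'article r\`egle en \'ecrivant $\Lambda_\Gamma^c$ comme union d\'enombrable des $\Lambda_r$ et en montrant $H^s(\Lambda_r)=0$ pour chaque $r$ ; c'est une correction de routine.

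En revanche, le point crucial de la minoration --- l'in\'egalit\'e $\dv(\Lambda_{\Gamma'}) \geq \delta_{\Gamma'}$ pour un semi-groupe de Schottky (proposition \ref{pschottky} de l'article) --- pr\'esente un vrai trou dans votre proposition. Vous affirmez que les poids $e^{-\delta_{\Gamma'} d(o, g_i o)}$ forment une probabilit\'e \og puisque $\delta_{\Gamma'}$ annule la pression \fg. C'est faux en g\'en\'eral : dans un espace Gromov-hyperbolique on n'a que $d(o, g_{i_1}\cdots g_{i_k} o) = \sum_j d(o, g_{i_j} o) + O(Mk)$ (lemme \ref{cet}), de sorte que la racine $s_0$ de $\sum_i e^{-s d(o,g_i o)} = 1$ v\'erifie seulement $s_0 \leq \delta_{\Gamma'}$, l'\'ecart pouvant \^etre strict ; \`a l'exposant $\delta_{\Gamma'}$ les poids somment \`a un r\'eel $\lambda \leq 1$. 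M\^eme en les normalisant, la mesure de Bernoulli attribue au cylindre $[i_1\cdots i_k]$ la masse $\lambda^{-k} e^{-\delta_{\Gamma'}\sum_j d(o,g_{i_j}o)}$, et comme $k$ est comparable \`a $d(o, g_{i_1}\cdots g_{i_k} o) \asymp -\log r$, le facteur parasite $\lambda^{-k}$ est une puissance n\'egative de $r$ : l'estim\'ee de Frostman $\mu(\beta(\xi,r)) \leq C r^{\delta_{\Gamma'}}$ est perdue et l'on n'obtient qu'un exposant strictement inf\'erieur d\`es que $s_0 < \delta_{\Gamma'}$. Le d\'efaut additif \emph{par lettre} des lemmes \ref{cont1} et \ref{cet}, que vous invoquez comme rem\`ede, est pr\'ecis\'ement la source du probl\`eme puisqu'il s'accumule avec la longueur des mots. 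C'est pour cela que l'article utilise la mesure de Patterson-Sullivan, limite vague des mesures $P_s^{-1}\sum_\gamma e^{-sd(o,\gamma o)} D_{\gamma o}$ rendues divergentes en $\delta_{\Gamma'}$ par l'astuce de Patterson (lemme \ref{lpat}) : pour cette mesure, la majoration $\mu(\bord{(\gamma X_+)}) \leq C e^{-\delta_{\Gamma'} d(o,\gamma o)}$ (lemme \ref{mmpn}) ne fait intervenir la constante du lemme \ref{cet} qu'une seule fois, et non une fois par lettre. Votre strat\'egie pourrait sans doute \^etre sauv\'ee en recodant le semi-groupe sur les mots de longueur $n$ et en faisant tendre $n$ vers l'infini pour rendre le d\'efaut par lettre n\'egligeable, mais ce n'est pas ce que vous proposez et cela demanderait une v\'erification soign\'ee.
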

	
	F. Paulin a énoncé ce résultat pour les groupes discret, et sa preuve semble s'adapter aux semi-groupes.
	Cependant, il fait des hypothèses supplémentaires par rapport à notre preuve, qui sont le fait que l'espace $X$ soit géodésique, qu'il soit quasi-géodésique, que le semi-groupe soit séparé, et qu'il ne fixe pas de point au bord (voir \cite{paulin}). \\
	
	\noindent Voici l'inégalité facile entre entropie et dimension visuelle de l'ensemble limite radial.
	
	\begin{prop} \label{if}
		Soit $X$ un espace Gromov-hyperbolique propre, et soit $\Gamma$ un semi-groupe d'isométries de $X$.
		On a l'inégalité
		\[ \dv(\Lambda_\Gamma^c) \leq h_\Gamma. \]
	\end{prop}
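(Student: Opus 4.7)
Le plan est de recouvrir $\Lambda_\Gamma^c$ par des boules visuelles centrées en des points d'une partie séparée et couvrante de l'orbite $\Gamma o$, avec des rayons décroissant exponentiellement vite en fonction de la distance à $o$, puis d'exploiter la convergence de la série de Poincaré au-delà de l'entropie. Je commencerais par décomposer $\Lambda_\Gamma^c = \bigcup_{K \in \N} \Lambda_\Gamma^c(K)$, où $\Lambda_\Gamma^c(K)$ désigne l'ensemble des $\xi$ limites d'une sous-quasi-géodésique $A \subseteq \Gamma o$ de constante au plus $K$ et contenant un point à distance au plus $K$ de $o$. La mesure visuelle étant dénombrablement sous-additive, il suffira d'établir $\dv(\Lambda_\Gamma^c(K)) \leq h_\Gamma$ pour tout $K \in \N$ fixé.

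L'estimation-clé sera alors la suivante : il existe une constante $K'$ ne dépendant que de $K$ telle que, pour tout $\xi \in \Lambda_\Gamma^c(K)$ et tout $R > 0$ assez grand, il existe $\gamma \in \Gamma$ avec $d(o, \gamma o) \geq R$ et $(\xi | \gamma o) \geq d(o, \gamma o) - K'$. Pour la démontrer, je fixe une telle sous-quasi-géodésique $A$, un point $x_n \in A$ à distance au plus $K$ de $o$, un point $x_i \in A$ avec $d(o, x_i) \approx R$, et je fais tendre $x_m \in A$ vers $\xi$. La définition de sous-quasi-géodésique appliquée au triplet $(x_n, x_i, x_m)$, conjuguée à l'inégalité triangulaire pour comparer la $d$-distance vue depuis $x_n$ à celle vue depuis $o$, donne $(x_i | x_m)_o \geq d(o, x_i) - 3K/2$ uniformément, d'où l'estimation en passant à la liminf.

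Je choisis ensuite $S \subseteq \Gamma$ à la fois $\epsilon$-séparée et $\epsilon$-couvrante (ce qui existe par la remarque~\ref{epsc} grâce à la propreté de $X$) ; par les points~\ref{sep} et~\ref{couv} des propriétés~\ref{ppteg}, on a $\delta_S = h_S = h_\Gamma$, et donc la série de Poincaré $\sum_{\gamma \in S} e^{-sd(o, \gamma o)}$ converge pour tout $s > h_\Gamma$. En combinant l'estimation-clé avec le caractère $\epsilon$-couvrant de $S$ et la $\delta$-hyperbolicité, j'obtiens pour chaque $R > 0$ un recouvrement
\[
\Lambda_\Gamma^c(K) \subseteq \bigcup_{\gamma \in S,\, d(o,\gamma o) > R} \beta(\gamma o, e^{K'' - d(o, \gamma o)})
\]
pour une constante $K''$ absorbant $K'$, $\epsilon$ et $\delta$. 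Les rayons des boules sont majorés par $e^{K''-R}$, et la somme associée vaut $e^{sK''} \sum_{\gamma \in S,\, d(o,\gamma o)>R} e^{-sd(o,\gamma o)}$, qui est le reste d'une série convergente pour $s > h_\Gamma$ et tend donc vers $0$ lorsque $R \to \infty$. Il en résulte $H^s(\Lambda_\Gamma^c(K)) = 0$ pour tout $s > h_\Gamma$, d'où $\dv(\Lambda_\Gamma^c(K)) \leq h_\Gamma$, puis $\dv(\Lambda_\Gamma^c) \leq h_\Gamma$ par union dénombrable sur $K$.

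L'obstacle principal sera la manipulation de la définition de sous-quasi-géodésique pour obtenir l'estimation-clé de manière uniforme : il s'agit d'en extraire une information sur le produit de Gromov vu depuis le point base $o$, alors que la définition ne concerne que des triplets internes à $A \subseteq \Gamma o$ (ce qui oblige à introduire un paramètre supplémentaire contrôlant la distance minimale de $A$ à $o$, d'où l'indexation par $K$ décrite ci-dessus). Une fois cette estimation établie, le reste n'est qu'une transposition directe, dans le cadre de la dimension visuelle, de l'argument classique de Patterson-Sullivan donnant $\dim_H \Lambda_\Gamma \leq \delta_\Gamma$ pour un groupe discret d'isométries de $\hyp$.
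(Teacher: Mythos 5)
Votre preuve est correcte et suit essentiellement la même démarche que celle de l'article : recouvrir l'ensemble limite radial par des ombres (ou boules visuelles) centrées aux points d'une partie séparée et couvrante $S$, de rayons comparables à $e^{-d(o,\gamma o)}$, puis majorer la somme des $r_i^s$ par la queue de la série de Poincaré de $S$, convergente pour $s > \delta_S = h_\Gamma$. La seule différence est cosmétique : l'article indexe la décomposition dénombrable par la constante $r$ de sous-quasi-géodésie (via les ensembles $\Lambda_r$), tandis que vous ajoutez le contrôle de la distance du point initial à $o$ dans votre paramètre $K$, ce qui revient au même.
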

	
	\begin{proof}
		Soit $S$ une partie séparée et couvrante de $\Gamma$.
		Montrons que l'on a l'inégalité $\dim(\Lambda_S^c) \leq \delta_S$.
		Comme on a les égalités $\Lambda_S^c = \Lambda_\Gamma^c$ et $\delta_S = h_\Gamma$, ceci donnera bien l'inégalité souhaitée.
		
		Définissons l'\defi{ombre} d'une boule $B(x, r)$ par
		\[ O B(x, r) := \{ \xi \in \bX | (o | \xi)_{x} \leq r \}. \]
		%On peut alors écrire l'ensemble limite radial de la partie $S$ sous la forme
		On a alors l'inclusion
		\[ \Lambda_S^c \subseteq \bigcup_{r > 0} \bigcap_{n \geq 0} \bigcup_{\gamma \in S_{\geq n}} O B(\gamma o, r), \]
		où $S_{\geq n} := \{ \gamma \in S | d(o, \gamma o) \geq n \}$.
		En effet, si un élément $\xi$ est dans l'ensemble limite radial $\Lambda_S^c$, alors il existe un réel $r > 0$ et une partie $A$ de $S o$ qui est une $r$-sous-quasi-géodésique telle que $\xi \in \bord{ A }$. On a alors pour tout $x \in A$, $\xi \in O B(x, r)$, et pour tout $n \in \N$, $A_{\geq n} \neq \emptyset$. %y a une infinité d'éléments $\gamma$ de $S$ tels que $\xi \in O B(\gamma o, r)$ et tels que l'ensemble des distances $d(o, \gamma o)$ diverge.
		
		Posons alors
		\[ \Lambda_r := \bigcap_{n \geq 0} \bigcup_{\gamma \in S_{\geq n}} O B(\gamma o, r), \]
		pour un réel $r>0$ et montrons que pour $s >\delta_S$, on a $H^s (\Lambda_r) < \infty$.
		
		On peut recouvrir chaque ombre par une boule de rayon $e^{-d(o,x) + r + \delta}$.
		En effet, soient $\xi$ et $\xi'$ deux points de l'ombre $O B(x, r)$. On a alors
		\[ (\xi | \xi') \geq \min\{ (\xi | x), (x | \xi')\} - \delta \geq d(o, x) - r - \delta, \]
		par $\delta$-hyperbolicité, et par l'inégalité
		\[ (\xi | x) = - (o | \xi)_x + d(o, x) \geq d(o, x) - r \]
		et de même avec $\xi'$.
		
		Ainsi, pour $\epsilon > 0$, en considérant un recouvrement de l'ensemble $\Lambda_r$ par des boules de rayon $\leq \epsilon$ qui recouvrent les ombres $O B(\gamma o, r)$ pour $\gamma \in S$ assez grand, on obtient
		\[ H^s(\Lambda_r) = \lim_{\epsilon \to 0} H^s_\epsilon(\Lambda_r) \leq \sum_{\gamma \in S} e^{-s(d(o, \gamma o) - r - \delta)} = e^{s (r + \delta)} P_s, \]
		où $P_s =  \sum_{\gamma \in S} e^{-s d(o, \gamma o)}$ est la série de Poincaré de $S$.
		On a $P_s < \infty$ dès que $s > \delta_S$, d'où $H^s(\Lambda_r) < \infty.$
		
		On a ensuite $H^s(\Lambda_r) = 0$ pour tout $s > \delta_S$, puis
		\[ H^s(\Lambda_S^c) = H^s(\bigcup_{r > 0} \Lambda_r) = 0. \]
		Ainsi, on a
		\[ \dv (\Lambda_S^c) \leq s \]
		pour tout $s > \delta_S$, d'où l'inégalité $\dv (\Lambda_S^c) \leq \delta_S$.
		
%		\[ (\xi | \xi') = (\xi | \xi')_x - (o | \xi)_x - (o | \xi')_x + d(o, x) \]
%		= d(o, \xi) + d(o, \xi') - d(\xi, \xi') \]
%		\[		= -(o | \xi)_x + d(o,x) + d(\xi, x) - (o | \xi')_x + d(o, x) + d(\xi', x) - d(\xi, \xi') \]
%		\[		= (\xi | \xi') - (o | \xi)_x - (o | \xi')_x + d(
%		\[ (o | \xi)_x = d(o, x) + d(\xi, x) - d(o, \xi) \]
		
	\end{proof}
	
	Pour obtenir le corollaire \ref{cpaulin_} à partir du théorème \ref{thm0}, il suffit de démontrer le résultat dans le cas des semi-groupes de Schottky :
	
	\begin{prop} \label{pschottky}
		Soit $X$ un espace Gromov-hyperbolique propre à bord compact, %d'adhérence compacte,
		et soit $\Gamma$ un semi-groupe de Schottky d'isométries de $X$.
		%Si $\Gamma$ est un semi-groupe de Schottky, alors on a l'égalité
		Alors on a l'égalité
		\[ \dv(\Lambda_{\Gamma}) = \delta_\Gamma. \]
	\end{prop}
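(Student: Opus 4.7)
The upper bound $\dv(\Lambda_\Gamma) \leq \delta_\Gamma$ is essentially given: a Schottky semi-group is contractant and of type fini, so by Proposition~\ref{pradial} we have $\Lambda_\Gamma = \Lambda_\Gamma^c$; it is also separated, so $h_\Gamma = \delta_\Gamma$ by Proposition~\ref{ppteg}. Thus Proposition~\ref{if} yields $\dv(\Lambda_\Gamma) = \dv(\Lambda_\Gamma^c) \leq h_\Gamma = \delta_\Gamma$.

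For the reverse inequality $\dv(\Lambda_\Gamma) \geq \delta_\Gamma$, the plan is to produce a probability measure $\mu$ on $\Lambda_\Gamma$ whose ball estimate reads $\mu(\beta(\xi, r)) \leq C r^{\delta_\Gamma}$ for every $\xi \in \Lambda_\Gamma$ and every small $r > 0$; the mass distribution principle then forces $H^{\delta_\Gamma}(\Lambda_\Gamma) > 0$, hence $\dv(\Lambda_\Gamma) \geq \delta_\Gamma$. I will construct $\mu$ by a Patterson-Sullivan argument: let $g_1, \ldots, g_n$ be Schottky generators with images $g_i X_+$ pairwise Gromov-disjoint inside $X_+$. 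Since $\Gamma$ is separated and discrete with $\delta_\Gamma < \infty$, the measures
\[ \mu_s := \frac{1}{P(s)} \sum_{\gamma \in \Gamma} e^{-s d(o, \gamma o)} \delta_{\gamma o}, \quad s > \delta_\Gamma, \]
are probability measures on $\overline{X}$ (inserting a Patterson logarithmic correction if needed to force divergence at $s = \delta_\Gamma$); by compactness of $\overline{X}$, any weak-$*$ accumulation point $\mu$ of $(\mu_s)_{s \searrow \delta_\Gamma}$ is a probability measure supported on $\Lambda_\Gamma$ which is $\delta_\Gamma$-quasi-conformal for $\Gamma$, in the sense that $\gamma_* \mu$ has density comparable to $e^{-\delta_\Gamma \beta_\cdot(o, \gamma^{-1} o)}$ with respect to $\mu$.

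The required mass bound then follows from Sullivan's shadow lemma, which is especially transparent in the Schottky setting: the cylinder $C_{i_1 \cdots i_k} := g_{i_1} \cdots g_{i_k} \Lambda_\Gamma \subseteq g_{i_1}\cdots g_{i_k}X_+$ has visual diameter of order $e^{-d(o, g_{i_1} \cdots g_{i_k} o)}$ by Lemma~\ref{cont1}, and the depth-$k$ cylinders partition $\Lambda_\Gamma$ by ping-pong disjointness. Iterating the quasi-conformal relation yields $\mu(C_{i_1 \cdots i_k})$ of order $e^{-\delta_\Gamma d(o, g_{i_1} \cdots g_{i_k} o)}$, and any visual ball $\beta(\xi, r) \cap \Lambda_\Gamma$ at $\xi \in \Lambda_\Gamma$ is covered, up to bounded Gromov-product slack, by a single cylinder $C_{i_1 \cdots i_k}$ with $d(o, g_{i_1}\cdots g_{i_k} o)$ of order $-\log r$, giving $\mu(\beta(\xi, r)) \leq C r^{\delta_\Gamma}$ as required.

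The main obstacle will be verifying the shadow lemma for the semi-group $\Gamma$ rather than for a group, since Patterson-Sullivan theory is classically stated for discrete group actions. However the ping-pong and contracting properties developed in Section~\ref{contract} (in particular Lemmas~\ref{cont1} and~\ref{cet}) provide exactly the geometric input needed: the quasi-conformality of $\mu$ only relies on the Gromov-hyperbolic comparison of $d(\gamma o, \xi)$ with $d(o, \xi) - d(o, \gamma o)$ for $\xi$ in the region where $\gamma$ contracts, and this holds uniformly in $\gamma\in\Gamma$ in the Schottky setting because the contraction domains $g_i X_+$ and the dilation domains $X\setminus X_+$ are Gromov-disjoint by hypothesis.
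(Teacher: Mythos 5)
Your overall strategy coincides with the paper's: the easy inequality via Propositions \ref{pradial} and \ref{if} together with separatedness of Schottky semi-groups, and the reverse inequality via a Patterson--Sullivan measure, a cylinder (shadow) estimate, and the mass distribution principle (the paper's Lemme \ref{mnd}). One step, however, would fail as written. ``Iterating the quasi-conformal relation'' generator by generator to estimate $\mu(C_{i_1\cdots i_k})$ loses a multiplicative constant at each of the $k$ steps (in a Gromov-hyperbolic, non-CAT($-1$) space the conformal density relation only holds up to uniform multiplicative error); since $d(o, g_{i_1}\cdots g_{i_k}o)$ grows only linearly in $k$, the accumulated factor $e^{O(k)}$ degrades the exponent itself and yields at best $\mu(\beta(\xi,r)) \leq C r^{\delta_\Gamma - c}$ for some fixed $c>0$, which is not enough. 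The paper avoids any iteration: by the ping-pong combinatorics (Lemme \ref{pto}) one has $\mu_s(\gamma X_+) = \frac{1}{P_s}\sum_{\gamma'\in\gamma\Gamma}e^{-s d(o,\gamma'o)}$ for \emph{every} $\gamma\in\Gamma$, and the near-additivity $d(o,\gamma\gamma'o)\geq d(o,\gamma o)+d(o,\gamma'o)-2C'$ of Lemme \ref{cet}, valid over the whole contracting semi-group with a single constant $C'$, gives $\mu_s(\gamma X_+)\leq e^{2sC'}e^{-s d(o,\gamma o)}$ in one application to the full word. You should replace the iteration by this one-shot estimate.

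The other point to tighten is the claim that a visual ball is covered by a single cylinder with $d(o,\gamma o)$ of order $-\log r$: this is precisely what must be proved. The paper's argument (Lemme \ref{mmcg}) takes the first generation $n$ at which $\beta(\xi,r)$ meets at least two cylinders $\bord{(\gamma'X_+)}$ with $\gamma'\in\Gamma_{n+1}$; at generation $n$ the ball meets exactly one cylinder $\bord{(\gamma X_+)}$, which carries all of its mass since $\mu$ is supported on $\Lambda_\Gamma$, and the Gromov-disjointness of the two children $gX_+$, $g'X_+$ seen through $\gamma^{-1}$ forces $e^{-d(o,\gamma o)}\leq e^{C''+\delta}r$. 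With these two repairs your proof is the paper's.
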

	
	\begin{proof}
		Par les propositions \ref{if} et \ref{pradial}, on a déjà l'inégalité
		\[ \dv (\Lambda_\Gamma) \leq h_\Gamma \leq \delta_\Gamma. \]
		
		Montrons l'inégalité $\dv (\Lambda_\Gamma) \geq \delta_\Gamma$.
		Pour cela, on va utiliser le lemme suivant, dû à Frostman, qui ramène le problème à construire une mesure convenable sur l'ensemble limite $\Lambda_\Gamma$ du semi-groupe $\Gamma$.
		
		\begin{lemme} \label{mnd}
			Soit $X$ un espace Gromov-hyperbolique et soit $\mu$ une probabilité portée par une partie $\Lambda$ du bord $\bX$.
			S'il existe un réel $s$ et une constante $C > 0$ tels que l'on ait
			\[ \mu(\beta(\xi, r)) \leq C r^s, \]
			pour toute boule $\beta(\xi, r)$ du bord $\bX$, alors on a l'inégalité
			\[ \dv \Lambda \geq s. \]
		\end{lemme}
		
		\begin{proof}
			Soit $\epsilon > 0$, et soit $R$ un recouvrement de l'ensemble $\Lambda$ par des boules de tailles inférieures à $\epsilon$.
			On a alors les inégalités
			\[ \sum_{\beta(\xi, r) \in R} r^s \geq \sum_{\beta(\xi, r) \in R} \frac{1}{C}\mu(\beta(\xi, r)) \geq \frac{1}{C} \mu(\bX) = \frac{1}{C}. \]
			
			On en déduit, en passant à la borne inférieure sur tous ces recouvrements que l'on a l'inégalité $H^s_\epsilon(\Lambda) \geq \frac{1}{C}$, puis en passant à la limite quand $\epsilon$ tend vers 0, que l'on a $H^s(\Lambda) \geq \frac{1}{C}$.
			On obtient donc bien l'inégalité souhaitée.
%			Ceci donne alors bien l'inégalité souhaitée :
%			\[ \dv \Lambda = \inf \{ s' | H^{s'}(\Lambda) = 0 \} \geq s. \]
			
		\end{proof}
		
%		Construisons maintenant la mesure à laquelle nous appliquerons ce lemme pour conclure.
%		Posons, pour tout $s > \delta_\Gamma$ et pour toute isométrie $\gamma \in \Gamma$,
%		\[ \mu_s(\partial (\gamma X_+)) := \sum_{\gamma \in \Gamma} e^{-s d(o, \gamma o)}. \]
%		On peut alors 
		La mesure à laquelle nous appliquerons ce lemme pour conclure est la mesure $\mu$ de Patterson-Sullivan, que nous allons définir maintenant.
		
		\subsubsection{Mesure de Patterson-Sullivan} \label{patt-sull}
		
		Soit $X$ un espace Gromov-hyperbolique propre à bord compact %d'adhérence compacte
		et soit $\Gamma$ un semi-groupe discret d'isométries de $X$, avec $\delta_\Gamma < \infty$.
		Définissons des probabilités $\mu_s$ sur l'espace $X$, pour des réels $s > \delta_\Gamma$, par
			\[ \mu_s := \frac{1}{P_s} \sum_{\gamma \in \Gamma} e^{-s d(o, \gamma o)} D_{\gamma o}, \]
			où $ P_s := \sum_{\gamma \in \Gamma} e^{-s d(o, \gamma o)} $ est la série de Poincaré de $\Gamma$, et $D_x$ est le Dirac en $x$.
		
		\begin{rem}
			La série de Poincaré $P_s$ diverge pour $s < \delta_\Gamma$ et converge pour $s > \delta_\Gamma$.
		\end{rem}
		
		Pour définir la mesure $\mu$, nous aurons besoin que la série de Poincaré soit divergente en $\delta_\Gamma$ (i.e. $P_{\delta_\Gamma} = \infty$). On la rend divergente grâce au lemme suivant.
		
		\begin{lemme}[Astuce de Patterson] \label{lpat}
			Soit $s_0$ un réel et $(a_n)_{n \in \N}$ une suite de réels positifs.
			Si la série de Dirichlet $\sum_{n \in \N} a_n^{-s}$ est divergente pour $s < s_0$ et convergente pour $s > s_0$,
			alors il existe une fonction croissante $k : [0, \infty[ \to [0, \infty[$ telle que la série
			\[ \sum_{n \in \N}k(a_n) a_n^{-s} \]
			converge pour $s > s_0$ et diverge pour $s < s_0$ et pour $s = s_0$, et avec de plus la propriété :
			pour tout $\epsilon > 0$, il existe un réel $y_0$ tel que pour $y > y_0$ et $x > 1$, on ait
			\[ k(xy) \leq x^\epsilon k(y). \]
		\end{lemme}
		
		Voir \cite{pat} pour une preuve.
		
		Pour rendre la série de Poincaré divergente en $s = \delta_\Gamma$, il suffit de la remplacer par :
		%\[ P_s := \sum_{n \in \N} k(\#\{ \gamma \in \Gamma | d(o, \gamma o) \in [n, n+1[ \}) n^{-s}, \]
		\[ P_s := \sum_{\gamma \in \Gamma} k(e^{d(o, \gamma o)}) e^{-s d(o, \gamma o)}, \]
		où $k$ est la fonction fournie par ce lemme, et l'on fait de même pour la définition des mesures $\mu_s$.
		%La partie $S$ est bien dénombrable, puisque c'est une partie séparée et que l'espace $X$ est propre.
		
		Les mesures $\mu_s$ sont des mesures de probabilités.
		Or, par hypothèse, l'adhérence $\overline{X}$ est compacte. Et l'ensemble de probabilités $\Part(\overline{X})$ sur le compact $\overline{X}$, muni de la convergence vague, est alors compact (voir par exemple \cite{rudin}).
		Il existe donc une suite de réels $(s_k)_{k \in \N}$, avec pour tout $k$, $s_k > \delta_\Gamma$,
		telle que la suite de mesures $(\mu_{s_k})_{k \in \N}$ converge vaguement vers une mesure de probabilité $\mu$ :
			\[ s_k \xrightarrow[k \to \infty]{} \delta_\Gamma \quad \text{et} \quad \mu_{s_k} \xrightharpoonup[k \to \infty]{} \mu. \]
		
		La mesure $\mu$ est alors portée par le bord $\bX$, puisque $\Gamma$ est une partie discrète, que l'espace $X$ propre et que l'on a $\lim_{s \to \delta_\Gamma} P_s = \infty$.
		
%		\[ \mu(A) := m(A \times \bX), \]
%		où $m$ est la mesure construite dans la section \ref{cmm} pour la partie séparé $\Gamma$.
%		
%		\begin{rem}
%			La mesure $\mu$ n'est autre que la mesure de Patterson-Sullivan.
%			Quand le semi-groupe $\Gamma$ est divergent (i.e. si $P_\delta = \infty$), elle s'obtient comme une limite faible des mesures
%			\[ \mu_s := \frac{1}{P_s} \sum_{\gamma \in \Gamma} e^{-s d(o, \gamma o)} D_{\gamma o}, \]
%			où $D_{\gamma o}$ est le Dirac au point $\gamma o$, et $P_s$ est la série de Poincaré :
%			\[ P_s := \sum_{\gamma \in \Gamma} e^{-s d(o, \gamma o)}. \]
%			Si le semi-groupe n'est pas divergent, on utilise l'astuce de Patterson, de même que pour la construction de la mesure $m$.
%			Voir section \ref{cmm} ou \cite{pat}.
%		\end{rem}
		
		Avant de majorer la mesure $\mu$ sur toutes les boules, majorons là sur les ensembles $\gamma X_+$.
		
		\begin{lemme} \label{mmpn}
			Soit $X$ un espace Gromov-hyperbolique propre à bord compact, %d'adhérence compacte,
			et soit $\Gamma$ un semi-groupe d'isométries de $X$,
			de Schottky pour un domaine $X_+$. Alors il existe un point $o$ tel que si $\mu$ est la mesure de Patterson-Sullivan définie ci-dessus (pour ce point $o$), alors il existe une constante $C > 0$ telle que pour toute isométrie $\gamma \in \Gamma$, on ait l'inégalité
			\[ \mu(\bord{ (\gamma X_+)}) \leq C e^{-\delta_\Gamma d(o, \gamma o)}. \]
		\end{lemme}
		
		\begin{proof}
			On a le lemme suivant.
			%Soit $g \in \Gamma$ le mot le plus long possible en les générateurs tel que 
			
			\begin{lemme} \label{pto}
				Soit $X$ un espace métrique, et $\Gamma$ un sous-semi-groupe de Schottky de $\Isom(X)$, pour une partie $X_+ \subset X$.
				Pour un point $o \in X_+ \backslash \left( \cup_{g \text{ générateur}} g X_+ \right)$, on a les équivalences
				\[ \gamma X_+ \cap \gamma' X_+ \neq \emptyset \longeq \left( \gamma \in \gamma' \Gamma \text{ ou } \gamma' \in \gamma \Gamma \right), \]
				\[ \gamma' o \in \gamma X_+ \longeq \gamma' \in \gamma \Gamma, \]
				pour toutes isométries $\gamma$ et $\gamma' \in \Gamma$.
			\end{lemme}
			
			\begin{proof}
				Montrons la première équivalence.
				Soient $\gamma$ et $\gamma'$ deux isométries de $\Gamma$ telles que l'on ait $\gamma X_+ \cap \gamma' X_+ \neq \emptyset$. Soient $g$ et $g'$ les générateurs tels que $\gamma \in g \Gamma$ et $\gamma' \in g' \Gamma$.
				\'Etant donné que les ensembles $g X_+$ et $g' X_+$ sont Gromov-disjoints si $g \neq g'$ et que l'on a les inclusions
				$\gamma X_+ \subseteq g X_+$ et $\gamma' X_+ \subseteq g' X_+$, on a nécessairement $g = g'$.
				Par récurrence, on a bien obtenu que $\gamma \in \gamma' \Gamma \text{ ou } \gamma' \in \gamma \Gamma$.
				La réciproque est claire.
				
				Montrons la deuxième équivalence.
				Soient $\gamma$ et $\gamma'$ deux isométries de $\Gamma$ telles que l'on ait $\gamma' o \in \gamma X_+$.
				On a $\gamma' o \in \gamma' X_+ \cap \gamma X_+$ puisque $o \in X_+$.
				Par l'équivalence précédente, on a donc $\gamma' \in \gamma \Gamma$ ou $\gamma \in \gamma' \Gamma$.
				Supposons que l'on ait $\gamma' \not \in \gamma \Gamma$. On peut alors trouver un générateur $g$ tel que l'on ait $\gamma \in \gamma' g \Gamma$.
				On a ensuite l'inclusion $\gamma X_+ \subseteq \gamma' g X_+$, donc $\gamma' o \in \gamma' g X_+$, puis $o \in g X_+$, ce qui contredit l'hypothèse. Donc on a bien $\gamma' \in \gamma \Gamma$.
				La réciproque est claire.
%Si l'on suppose par exemple que l'isométrie $\gamma'$ est de longueur supérieure à $\gamma$ vus comme mots en les générateurs, alors en considérant le préfixe $p$ de $\gamma'$ de même longueur que $\gamma$, on a nécessairement $p = \gamma$. En effet, dans le cas contraire les parties $p X_+$ et $\gamma X_+$ seraient Gromov-disjointes, ce qui est impossible étant donné l'inclusion $\gamma' X_+ \subseteq p X_+$.
			\end{proof}
			
			Choisissons un point $o \in X_+ \backslash \left( \cup_{g \text{ générateur}} g X_+ \right)$ (i.e. comme dans le lemme \ref{pto}).
			
%			Il existe une isométrie $\gamma_0$ telle que $\gamma_0 o \in X_+ \backslash \left( \cup_{g \text{ générateur}} g X_+ \right)$.
%			En effet, quitte à appliquer au point $o$ une isométrie de $\Gamma$ assez grande, on a $o \in X_+$, et si l'on a $o \in X_+$ il suffit de considérer l'inverse de l'isométrie $\gamma \in \Gamma \cup \{id\}$ de longueur maximale en les générateurs telle que l'on ait $o \in \gamma X_+$.%, et si l'on a $o \not\in X_+$, même chose avec le semi-groupe inverse $\Gamma^{-1}$. %ou bien une puissance assez grande de n'importe quel générateur de $\Gamma$ si $o \not \in X_+$.
			
			Supposons que le semi-groupe $\Gamma$ soit divergent.
			Pour tout $s > \delta_\Gamma$ et pour toute isométrie $\gamma \in \Gamma$, on a alors
			\[ \mu_s( \gamma X_+) = \frac{1}{P_s} \sum_{\gamma' \in \gamma \Gamma} e^{-s d(o, \gamma' o)}. \]
			
			Or, le lemme \ref{cet} nous donne l'inégalité
			\[ d(o, \gamma' \gamma o) \geq d(o, \gamma' o) + d(o, \gamma o) - 2C', \]
			où $C'$ est la constante de contraction du semi-groupe $\Gamma$ pour le point $o$ :
			\[ C' := \sup_{\gamma, \gamma' \in \Gamma} (\gamma^{-1} o | \gamma' o) < \infty. \]
			On obtient alors
			\[ \mu_s( \gamma X_+) \leq \frac{1}{P_s} e^{2sC'} e^{-s d(o, \gamma o)} \sum_{\gamma' \in \Gamma} e^{-s d(o, \gamma' o)} = e^{2sC'} e^{-s d(o, \gamma o)}. \]
			D'où l'inégalité
			\[ \mu_s( \gamma X_+) \leq C_s e^{-s d(o, \gamma o)}. \]
			avec $C_s = e^{2s C'}$.
			En passant à la limite, on obtient l'inégalité voulue
			\[  \mu( \bord{ (\gamma X_+)}) \leq C e^{-\delta_\Gamma d(o, \gamma o)},  \]
			où $C = e^{2\delta_\Gamma C'}$, puisque l'ensemble $\bord{ (\gamma X_+) } \cap \Lambda_\Gamma$ est isolé (c'est-à-dire ouvert et fermé) dans l'ensemble limite $\Lambda_\Gamma$.
			
			Si le semi-groupe n'est pas divergent, on modifie le calcul précédent en conséquent en utilisant l'astuce de Patterson, et on conclut de la même façon.
		\end{proof}
		
		Montrons maintenant que la mesure $\mu$ est majorée pour toutes les boules.
		
		\begin{lemme} \label{mmcg}
			Soit $X$ un espace Gromov-hyperbolique propre à bord compact, %d'adhérence compacte,
			et soit $\Gamma$ un semi-groupe d'isométries de $X$,
			de Schottky pour un domaine $X_+$. Soit $o$ le point donné par le lemme \ref{pto}, et soit $\mu$ la mesure de Patterson-Sullivan correspondante.
			Alors il existe une constante $C > 0$ telle que pour toute boule $\beta(\xi, r)$ du bord $\bX$, on ait l'inégalité
			\[ \mu(\beta(\xi, r)) \leq C r^{\delta_\Gamma}. \]
		\end{lemme}
		
		\begin{proof}
			Soit $C'$ la constante donnée par le lemme \ref{mmpn}.
			Soit $\beta(\xi, r)$ une boule. Si la boule n'intersecte pas l'ensemble limite $\Lambda_\Gamma$, on a $\mu(\beta(\xi, r)) = 0$, et il n'y a rien à démontrer. Supposons donc que la boule rencontre l'ensemble limite. %l'on ait $\Lambda_\Gamma \cap \\beta(\xi, r) \neq \emptyset$.
			On peut alors supposer que l'on a $\xi \in \Lambda_\Gamma$, quitte à recouvrir la boule $\beta(\xi, r)$ par une boule de rayon $2r$ centrée en un point de l'ensemble limite $\Lambda_\Gamma$, et à multiplier la constante $C$ par $2^{\delta_\Gamma}$.
			
			Notons
			\[ \Gamma_k := \{ \gamma \in \Gamma | \gamma \text{ de longueur $k$ en les générateurs} \}, \]
			et $\Gamma_0 := \{id \}$.
			Soit $n$ un entier tel que l'on ait
			\[ \# \{ \gamma \in \Gamma_n | \bord{ (\gamma X_+) } \cap \beta(\xi, r) \neq \emptyset \} = 1, \text{ et} \]
			\[ \# \{ \gamma \in \Gamma_{n+1} | \bord{ (\gamma X_+) } \cap \beta(\xi, r) \neq \emptyset \} \geq 2. \]
			Cet entier existe bien, puisque pour $n = 0$ on a $\bord X_+ \cap \beta(\xi, r) \neq \emptyset$, et puisque l'on a
			\[ \lim_{n \to \infty} \# \{ \gamma \in \Gamma_n | \bord{ (\gamma X_+) } \cap \beta(\xi, r) \neq \emptyset \} = \infty. \]
			En effet, si l'on a $\gamma_n o \xrightarrow[n \to \infty]{} \xi$ pour une suite $(\gamma_n)_{n \in \N}$ d'éléments de $\Gamma$, alors on a $\bord{ (\gamma_n X_+) } \subseteq \beta(\xi, r)$ à partir d'un certain rang par le lemme \ref{cont1} et par Gromov-hyperbolicité. % et \ref{cdv}.
			
			Soit $\gamma \in \Gamma_{n}$ tel que $\bord{ (\gamma X_+) } \cap \beta(\xi, r) \neq \emptyset$.
			%On a alors
			Comme la mesure $\mu$ est portée l'ensemble limite $\Lambda_\Gamma$, on a
			\[ \mu(\beta(\xi, r)) \leq \mu(\bord{ (\gamma X_+)) } \leq C' e^{- \delta d(o, \gamma o)} \]
			par le lemme \ref{mmpn} et par le choix de $n$.
			Il reste donc à majorer la quantité $e^{ d(o, \gamma o)}$ en fonction de $r$.
			
			Pour tout $\gamma' \in \Gamma_{n+1}$ tel que $\bord{ (\gamma' X_+) } \cap \beta(\xi, r) \neq \emptyset$,
			on a $\gamma' \in \gamma \Gamma$. Soient alors $g \neq g'$ deux générateurs tels que l'on ait
			\[ \bord{ (\gamma g X_+) } \cap \beta(\xi, r) \neq \emptyset \quad \text{ et } \quad \bord{ (\gamma g' X_+) } \cap \beta(\xi, r) \neq \emptyset. \]
%			Posons
%			\[ C := \sup_{g \neq g' \text{ générateurs}} \sup_{(x, x') \in gX_+ \times g' X_+} (x | x') < \infty, \]
%			la constante de Gromov-disjonction des parties $X \backslash X_+$, et $g X_+$ quand $g$ parcours l'ensemble des générateurs.
			Soit $C'' > 0$ une constante telle que les parties $g X_+$ pour $g$ parcourant les générateurs, et $X_+$, soient deux à deux $C''$-Gromov disjointes.
			%et montrons que l'on a
			L'image $\gamma^{-1} \beta(\xi, r)$ de la boule $\beta(\xi, r)$ par l'isométrie $\gamma^{-1}$ rencontre les ensembles $\bord{ (g X_+) }$ et $\bord{ (g' X_+) }$, donc il existe des points $\eta$ et $\eta'$ de $\gamma^{-1} \beta(\xi, r)$ tels que l'on ait l'inégalité
			\[ (\eta | \eta') \leq C''. \]
			%Or, on a l'égalité
			On a alors
			\begin{align*}
				C''	&\geq (\eta | \eta') \\
					&= (\gamma \eta | \gamma \eta') + (\gamma^{-1} o | \eta) + (\gamma^{-1} o | \eta') - d(o, \gamma o) \\
					&\geq - \log(r) - \delta + 0 + 0 - d(o, \gamma o)
			\end{align*}
			d'où l'inégalité
			\[ e^{-d(o, \gamma o)} \leq e^{C'' + \delta} r. \]
			On obtient donc l'inégalité escomptée avec $C = C' e^{C'' + \delta}$.
		\end{proof}

		Les lemmes \ref{mmcg} et \ref{mnd} donnent l'inégalité
		\[ \dim_{vis} (\Lambda_\Gamma) \geq \delta_\Gamma, \]
		ce qui termine cette preuve de la proposition \ref{pschottky}.
	\end{proof}
	
	On peut maintenant facilement retrouver la généralisation du résultat de Paulin.
	
	\begin{proof}[Preuve du corollaire \ref{cpaulin_}]
		Soit $\epsilon > 0$.
		Par le théorème \ref{thm0}, il existe un sous-semi-groupe $\Gamma'$ de Schottky de $\Gamma$ tel que l'on ait
		$\delta_{\Gamma'} \geq h_\Gamma - \epsilon$.
		%La proposition \ref{pschottky} appliquée au semi-groupe $\Gamma'$ donne les inégalités
		Par les propositions \ref{pschottky} et \ref{pradial}, on a donc les inégalités
		\[ \dv(\Lambda_\Gamma^c) \geq \dv (\Lambda_{\Gamma'}^c) = \dv (\Lambda_{\Gamma'}) = \delta_{\Gamma'} \geq h_\Gamma - \epsilon. \]
		Ceci étant vrai pour tout $\epsilon > 0$, on en déduit l'inégalité
		\[ \dv(\Lambda_\Gamma^c) \geq h_\Gamma. \]
		L'autre inégalité est donnée par la proposition \ref{if}.
	\end{proof}
	
%%%%%%%%%%%%%%%%%%%%%%%%%%%%%%%%%%%%%%%%%%%%%%%%%%%%%%%%%%%%%%%
	\subsection{Semi-groupes de développement $\beta$-adique} \label{dev-beta}
	
	Dans cette sous-section, nous obtenons une application du corollaire \ref{cpaulin_} aux semi-groupes de développement en base $\beta$.
	
	%\subsection{Semi-groupes de déve}
	Le \defi{semi-groupe de développement en base $\beta \in \C$ avec ensemble de chiffres $A$} est le semi-groupe engendré par les applications affines :
	\[ x \mapsto x / \beta + t, \]
	où $t \in A$, pour une partie finie $A$ de $\C$.
	
	On peut voir ce semi-groupe comme un sous-semi-groupe de $SL_2(\C)$.
	En effet, à l'application $x \mapsto x / \beta + t$, on peut associer la matrice %est l'action par homographie de la matrice
	$\begin{pmatrix}
		\frac{1}{\sqrt{\beta}}	& t \sqrt{\beta} \\
		0				& \sqrt{\beta}
	\end{pmatrix}$, où $\sqrt{\beta}$ est une racine carrée de $\beta$.
	On a donc une action par isométrie du semi-groupe sur l'espace $X = \hypt := \{ z + \tau j | z \in \C, \tau > 0\}$ (vu comme partie de l'ensemble des quaternions), dont le bord $\bX$ s'identifie à $\C \cup \{ \infty\}$.
	L'action de l'application $x \mapsto x/\beta + t$ sur $\hypt$ est donnée par
	\[ (x \mapsto x/\beta + t) . (z + \tau j) = (z/\beta + t) + (\tau/\abs{\beta})j. \]
	L'ensemble limite du semi-groupe est alors exactement l'ensemble des nombres complexes qui admettent un développement $\beta$-adique n'ayant qu'un seul chiffre avant la virgule et avec ensemble de chiffres $A$.
	
	Tout ceci fonctionne également en remplaçant le corps $\C$ par $\R$.
	
	\begin{define}
		On appelle \defi{nombre de Salem} généralisé un entier algébrique $\beta \in \C$ de module strictement supérieur à $1$, dont tous les conjugués sont de modules inférieurs ou égaux à $1$, sauf éventuellement son conjugué complexe.
		On appelle \defi{nombre de Pisot} généralisé un entier algébrique $\beta \in \C$ de module strictement supérieur à $1$, dont tous les conjugués sont de modules \emph{strictement} inférieurs à $1$, sauf éventuellement son conjugué complexe.
	\end{define}
	
	\begin{rem}
		Dans la définition classique de nombres de Pisot et de Salem, on demande à ce que le nombre soit un réel $\beta > 1$, mais tout ce que l'on verra est valable pour cette définition plus générale.
	\end{rem}
	
	\begin{prop} \label{psalem}
		Soit $\Gamma$ le semi-groupe engendré par les applications
		\[ x \mapsto x / \beta + t \]
		où $t \in A$ pour une partie finie $A \subset \Q(\beta)$.
		Si $\beta$ est un nombre de Salem généralisé, alors on a l'égalité
		\[ \dim_{H}(\Lambda_\Gamma) = \delta_\Gamma. \]
	\end{prop}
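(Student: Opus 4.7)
Le plan est de déduire l'égalité du corollaire~\ref{cpaulin_}, qui donne $\dv(\Lambda_\Gamma^c) = h_\Gamma$. Trois écarts restent à combler : (i) identifier $\Lambda_\Gamma$ à $\Lambda_\Gamma^c$, (ii) identifier $\dv$ à la dimension de Hausdorff usuelle pour les parties de $\C \subset \partial \hypt$, et (iii) établir $h_\Gamma = \delta_\Gamma$. Les deux premiers points sont de nature générale ; le troisième est l'obstacle principal et utilise de façon essentielle la propriété arithmétique de $\beta$.

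Pour (i), je remarquerais que $\Gamma$ est engendré par un nombre fini de contractions strictes de $\C$, donc est un semi-groupe contractant au sens de la section~\ref{contract} (en prenant pour $X_-$ un voisinage convenable de $\infty$ dans $\hypt$, et pour $X_+$ un voisinage borné et de petite hauteur de l'attracteur) ; la proposition~\ref{pradial} s'applique alors. Pour (ii), la métrique visuelle sur $\partial \hypt$ basée en $j$ est bi-Hölder équivalente à la métrique euclidienne sur les parties bornées de $\C$, et $\Lambda_\Gamma$ est borné dans $\C$ puisque c'est l'attracteur du système itéré ; il en résulte $\dv(\Lambda_\Gamma) = \dim_H(\Lambda_\Gamma)$.

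Pour (iii), je soulignerais d'abord que l'exemple~\ref{exee} montre qu'on peut avoir $h_\Gamma < \delta_\Gamma$ pour un semi-groupe avec chevauchements. Comme l'inégalité $h_\Gamma \leq \delta_\Gamma$ est automatique (point~\ref{hmaj} des propriétés~\ref{ppteg}), tout l'enjeu est de prouver la réciproque $h_\Gamma \geq \delta_\Gamma$, et je compte l'obtenir via la minoration arithmétique suivante : pour tous $\gamma, \gamma' \in \Gamma$ de même longueur $n$ en les générateurs avec $\gamma o \neq \gamma' o$, on a
\[
|z_\gamma - z_{\gamma'}| \geq \frac{c}{n^{k}\, |\beta|^{n-1}},
\]
pour des constantes $c > 0$, $k \in \N$ ne dépendant que de $\beta$ et de $A$. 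L'élément $w := \beta^{n-1}(z_\gamma - z_{\gamma'}) = \sum_{j=0}^{n-1} c_j \beta^j$, avec $c_j \in A - A$, est en effet un élément non nul de $\Q(\beta)$ ; en multipliant par un dénominateur commun $D$, $Dw$ est un entier algébrique non nul dont la norme vérifie $|N(Dw)| \geq 1$. La propriété de Salem généralisée entre alors en jeu pour majorer les conjugués galoisiens $|\sigma(w)|$ : ceux avec $|\sigma(\beta)| < 1$ restent bornés en $n$, ceux avec $|\sigma(\beta)| = 1$ croissent au plus linéairement en $n$, et le conjugué correspondant à $\sigma(\beta) = \bar\beta$ (lorsque $\beta$ est complexe non réel) vaut $|\sigma(w)| = |\bar w| = |w|$ puisque les $c_j \in \Q(\beta)$ et que cette involution galoisienne coïncide avec la conjugaison complexe sur $\Q(\beta)$. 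L'inégalité $|N(Dw)| \geq 1$ en déduit alors la minoration $|w| \geq c/n^k$ cherchée.

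La conclusion en découlera : la minoration se traduit en une séparation hyperbolique $d(\gamma o, \gamma' o) \geq c'/n^k$ entre orbites distinctes à la même hauteur $\tau = |\beta|^{-n}$. Par conséquent, toute boule hyperbolique de rayon $\epsilon > 0$ fixé ne contient qu'un nombre polynomial $P(n)$ de points de $\Gamma o$ de longueur inférieure ou égale à $n$. Un argument de recouvrement standard donne alors, pour une sous-partie $\epsilon$-séparée maximale $S$ de $\Gamma o$, l'inégalité $\#(S \cap B(o, R)) \geq \#(\Gamma o \cap B(o, R))/P(R)$, d'où $\delta_S \geq \delta_\Gamma$ après passage au $\log/R$, puis $h_\Gamma \geq \delta_\Gamma$. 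Je m'attends à ce que la partie la plus délicate soit le contrôle quantitatif et uniforme des conjugués galoisiens, afin de traiter simultanément les cas où $\beta$ est réel et où $\beta$ est complexe, sous l'hypothèse généralisée de Salem.
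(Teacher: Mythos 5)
Your proposal is correct and follows essentially the same route as the paper: reduction via la proposition \ref{pradial} et le corollaire \ref{cpaulin_} to the equality $h_\Gamma=\delta_\Gamma$, then a polynomial bound on the overlaps of the orbit, whose proof uses the Salem hypothesis exactly where the conjugates of modulus one contribute only linear growth in the length $n$. The one genuine difference is the packaging of the arithmetic core. The paper (lemme \ref{poly}) embeds $\Z[\beta]$ discretely in the product $E$ of the archimedean completions of $\Q(\beta)$ (via the product formula) and counts lattice points in a box whose factor corresponding to the unit-modulus places has diameter $O(n)$; this bounds $\#\{\gamma\in\Gamma \mid d(\gamma j,x)\le 1\}$ by $O(d(j,x)^{p_0})$ directly. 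You instead extract from $\abs{N(Dw)}\ge 1$ a Liouville-type separation $\abs{z_\gamma-z_{\gamma'}}\ge c\, n^{-k}\abs{\beta}^{-(n-1)}$ between distinct orbit points of the same length, and deduce the polynomial overlap bound from it; these are dual formulations of the same inequality (your norm computation is essentially the proof of the paper's discreteness statement, proposition \ref{discret}), and your final covering argument matches the paper's last step. Two small points to tidy up: treat separately the degenerate case $\# A\le 1$ (le corollaire \ref{cpaulin_} requires at least two limit points, and the paper disposes of this case first), and phrase the conjugate-embedding step as \emph{the embedding $\sigma$ with $\sigma(\beta)=\bar\beta$ is the restriction to $\Q(\beta)\subset\C$ of complex conjugation, hence $\abs{\sigma(w)}=\abs{w}$}, rather than calling it a Galois involution of $\Q(\beta)$, since $\Q(\beta)$ need not be stable under complex conjugation.
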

	
	\begin{figure}[H]
		\centering
		\caption{Développement en base $\beta = \varphi$ (le nombre d'or, qui est un nombre de Pisot), avec ensemble de chiffres $A = \{0, 1\}$.}
		\includegraphics{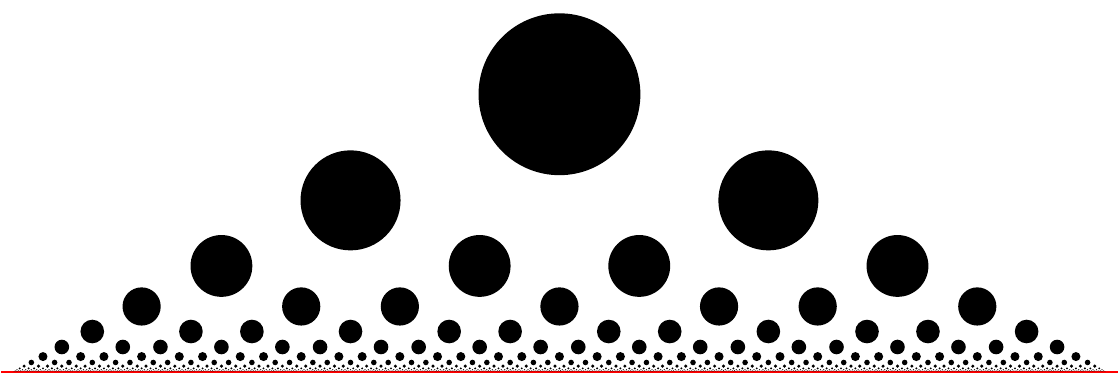}
%		\begin{dpoin}H(-2.1,2.1,1.4)[scale=2.7]
%			a = [[sqrt((sqrt(5)-1)/2), -1],[0, sqrt((sqrt(5)+1)/2)]];
%			b = [[sqrt((sqrt(5)-1)/2), 1],[0, sqrt((sqrt(5)+1)/2)]];
%			\orb[niter=10000, maxsep = .001, sep=.002, size=.3,maxnorm=7] \fill o a b;
%		\end{dpoin}
	\end{figure}
		
	%Pour cela, on a les lemmes suivants.
	Avant de démontrer la proposition, on a le lemme suivant. \\ %les deux lemmes suivants. \\
	
%	Le lemme suivant affirme que l'on peut calculer l'exposant critique en comptant les éléments du semi-groupe qui sont de longueur donnée.
%	
%	\begin{lemme} \label{car-ent-dev-beta}
%		Si $\A$ est une partie de $\Aff(\C)$, dont l'ensemble limite (vu comme partie de $\C$) est borné, alors on a
%		\[ \delta_\A = \limsup_{n \to \infty} \frac{1}{n} \log(\# \{ x \mapsto a x + b \in \A | \log(a) \leq n \}). \]
%	\end{lemme}
	
%		\begin{proof}
%			Comme l'ensemble limite est borné, il existe un réel $a_0$ tel que l'ensemble
%			\[ \{ b \in \C | \exists \abs{a} \geq a_0, x \mapsto a x + b \in A \}, \]
%			soit borné. Pour le point $o = (0, 1) \in \hypc = \{ (z, t) \in \C \times \R | t > 0 \}$, on a alors
%			\[ d(o, a o) - d(a o, a o + b o) \leq d(o, a o + b) \leq d(o, a o) + d(a o, ao +b), \]
%		\end{proof}

	Le lemme qui suit dit que si l'on regarde l'orbite d'une boule par le semi-groupe, alors le nombre de chevauchements en un point donné n'est pas trop grand par rapport à la distance au point base $o$.
	
	\begin{lemme} \label{poly}
		Sous les hypothèses de la proposition \ref{psalem},
		%Pour tout $x \in \hypc$ et tout réel $r > 0$
%		Il existe un polynôme $P$ tel que pour tout $x \in \hypc$, on ait
%		\[ \# (B(x, 1) \cap \Gamma o) \leq P(\log(d(o, x))). \]
		il existe un entier $r$ tel que l'on ait %pour tout $x \in \hypt$, on ait %polynôme $P$ tel que pour tout $n \in \N$ et tout $\gamma \in \Gamma_n$ on ait
		%\[ \# \{ \gamma' \in \Gamma_n | d(\gamma o, \gamma' o) \leq 1 \} \leq P(n), \]
		%où $\Gamma_n := \{ \text{ éléments de $\Gamma$ de longueur $n$ en les générateurs} \}$.
		\[ \# \{ \gamma \in \Gamma | d(\gamma j, x) \leq 1 \} = \underset{d(j, x) \to \infty}{O} (d(j, x)^r), \]
		pour $x \in \hypt$. %, où $o = j \in \hypt$.
	%	où $O : \R_+ \to \R_+$ est une fonction telle que $O(t)/t$ est borné pour $t \geq 1$.
	\end{lemme}
	
	\begin{rem} \label{non-sep}
		Si l'entier algébrique $\beta$ est de Pisot, alors le semi-groupe $\Gamma$ est même séparé (voir la condition de séparation de Lalley \cite{lalley}), et donc on peut prendre $r = 0$. %remplacer le polynôme donné par la proposition \ref{poly} par une constante.
		%Ceci est faux en général pour un nombre de Salem.
		%Par exemple, pour le nombre de Salem $\beta$ qui est racine du polynôme $X^4 - 2X^3 + X^2 - 2X + 1$, et pour l'ensemble de chiffres $A = \{ -1, 0, 1 \}$, le semi-groupe obtenu n'est pas séparé. %(voir \cite{mercat}).
	\end{rem}
	
%	La preuve de la proposition~\ref{c3_exs} permet de voir que le semi-groupe ci-dessus n'est pas séparé. Je démontre que tous les polynômes en $\beta$ dont les valeurs absolues $\abs{}_+$ et $\abs{}_-$ sont bornées%dans une tranche de l'espace $E = \R^2 \times \C$
%	peuvent s'écrire comme des polynômes à coefficients dans $\{ -1, 0, 1 \}$.
		
%	\begin{figure}[H]
%		\centering
%		\caption{Le semi-groupe de la remarque \ref{non-sep}.}
%		\begin{dpoin}H(-2.1,2.1,1.4)
%			a = [[sqrt(1/1.8832035059135258641689474654), -1],[0, sqrt(1.8832035059135258641689474654)]];
%			b = [[sqrt(1/1.8832035059135258641689474654), 0],[0, sqrt(1.8832035059135258641689474654)]];
%			c = [[sqrt(1/1.8832035059135258641689474654), 1],[0, sqrt(1.8832035059135258641689474654)]];
%			\orb[niter=10000, maxsep = .00025, sep=.0005, size=.1,maxnorm=5.5] \fill o a b c;
%		\end{dpoin}
%	\end{figure}
	
%	\begin{figure}[H]
%		\centering
%		\caption{Test.}
%		\begin{dpoin}H(-2.1,2.1,1.4)
%			a = [[sqrt(sqrt(1/3)), -1],[0, sqrt(sqrt(3))]];
%			%b = [[sqrt(sqrt(1/2)), 0],[0, sqrt(sqrt(2))]];
%			c = [[sqrt(sqrt(1/3)), 1],[0, sqrt(sqrt(3))]];
%			%d = [[sqrt(sqrt(1/2)), 2],[0, sqrt(sqrt(2))]];
%			\orb[niter=10000, maxsep = .0001, sep=.0002, size=.05,maxnorm=6] \fill o a c; %b c d;
%		\end{dpoin}
%	\end{figure}
	
	\begin{proof}[Preuve du lemme \ref{poly}]
		
		Le resultat suivant permet de majorer le paramètre de translation des isométries du semi-groupe $\Gamma$.
		
		\begin{souslemme} \label{tb}
			Si $\Gamma$ est un sous-semi-groupe de $\Aff(\C)$ engendré par des application $x \mapsto x/\beta + t$, pour $t \in A$, avec $A$ partie finie de $\C$ et $\beta \in \C$ tel que $\abs{\beta} > 1$, alors il existe une constante $C$ telle que pour toute application $x \mapsto \alpha x + t \in \Gamma$, on ait
			$\abs{t} \leq C$.
		\end{souslemme}
		
		\begin{proof}
			Un élément du semi-groupe $\Gamma$ s'écrit
			\[ x \mapsto x/\beta^n + \sum_{k = 0}^{n-1} \frac{t_k}{ \beta^k}, \]
			avec $t_k \in A$.
			On a alors la majoration
			\[ \abs{ \sum_{k = 0}^{n-1} \frac{t_k}{ \beta^k } } \leq  \max_{t \in A} \abs{t} \sum_{k = 0}^{n-1} \frac{1}{ \abs{\beta}^k } \leq \frac{\max_{t \in A} \abs{t}}{ 1 - \frac{1}{\abs{\beta}} }. \]
		\end{proof}
		
		Notons
		\[ \Gamma_x := \{ \gamma \in \Gamma | d(\gamma j, x) \leq 1 \}. \]
		On a alors les resultats suivants.
		
		\begin{souslemme} \label{lm}
			Il existe une constante $C$ telle que pour tout $x \in \hypt$, 
			toute isométrie de $\Gamma_x$ est de longueur au moins $\frac{d(j, x) - C}{\log(\beta)}$ et au plus $\frac{d(j, x)+C}{\log(\beta)}$ en les générateurs.
		\end{souslemme}
		
		\begin{proof}
			En effet, on a l'inégalité triangulaire
			\[ d(j, x) - d(\gamma j, x) \leq d(j, \gamma j) \leq d(\gamma j, x) + d(j, x). \]
			Par ailleurs, si l'on écrit $\gamma j = \abs{\beta}^{-n} j +t$, on a
			\[ d(j, \abs{\beta}^{-n} j) - d(j, j+t) \leq d(j, \gamma j) \leq d(j, \abs{\beta}^{-n} j) + d(j, j+t) \]
			où $n$ est la longueur de $\gamma$.
			Ensuite, d'après le sous-lemme \ref{tb}, la quantité $d(j, j + t)$ est bornée par une constante $C'$ indépendante de $\gamma$ et de $x$. Et on vérifie que l'on a $d(j, \abs{\beta}^{-n} j) = n\log(\beta)$ pour la métrique usuelle de $\hypt$.
			On obtient alors l'encadrement
			\[ d(j, x) - C' - 1 \leq n \log(\beta) \leq d(j, x) + C' + 1. \]
			D'où l'encadrement sur la longueur $n$ de $\gamma$ avec $C = C'+1$.
			
			%Le semi-groupe $\Gamma$ est contractant. Par le lemme \ref{cet} il existe donc une constante $C$ telle que l'on ait
			%L'inégali
			%\[ d(o, \gamma o) \geq , \]
		\end{proof}
				
		Posons
		\[ n_x := \floor{\frac{d(j, x)+C}{\log(\beta)}}, \]
		la plus grande longueur possible des éléments de $\Gamma_x$.
		On a alors le resultat suivant.
		
		\begin{souslemme} \label{bplus}
			Il existe une constante $C$ telle que pour tout $x \in \hypt$ on ait % et tout $\gamma \in \Gamma_x$, on ait
			\[ \diam \left( \beta^{n_x} \Gamma_x 0 \right) \leq C, \]
			où $0$ est le point du bord $0 = 0 + 0j \in \bord{ \hypt }$.
		\end{souslemme}
		
		\begin{proof}
			%Soit $\gamma \in \Gamma_x$.
			%Ecrivons $\gamma o = \abs{\beta}^{-n} o + t$.
			L'application $y \mapsto \beta^{n_x} y$ étant une isométrie, on a 
			\[ d(\beta^{n_x} \gamma j , \beta^{n_x} x) = d(\gamma j , x) \leq 1, \]
			pour tout $\gamma \in \Gamma_x$.
			D'autre part, si l'on écrit $\gamma j = \gamma 0 + \abs{\beta}^{-n} j$, alors on a %$\gamma o = \abs{\beta}^{-n} o + t$, alors on a
			\[ \beta^{n_x} \gamma j = \beta^{n_x} \gamma 0 + \abs{\beta}^{n_x - n} j, \]
			où $n$ est la longueur de $\gamma$.
			Or, par le sous-lemme \ref{lm}, il existe une constante $C$ telle que pour $\gamma \in \Gamma_x$,
			on ait $\abs{n - n_x} \leq C$, où $n$ est la longueur de $\gamma$.
			La distance
			\[ d( \beta^{n_x} \gamma 0, \beta^{n_x} x) \]
			est donc bornée indépendamment de $x$ et $\gamma \in \Gamma_x$, par inégalité triangulaire.
		\end{proof}
		
		Quitte à multiplier la partie $A$ par les dénominateurs (ce qui ne change pas la conclusion du sous-lemme), %(ce qui ne change ni la combinatoire du semi-groupe, ni la dimension de son ensemble limite),
		on peut supposer que l'on a $A \subseteq \Z[\beta]$.
		La quantité
		\[ \beta^{n_x} \gamma 0 \in \C \]
		est alors un polynôme en $\beta$ à coefficients entiers, pour tout élément $\gamma \in \Gamma_x$.
		
		Construisons alors un espace $E$ (indépendant du point $x$), dans lequel l'anneau $\Z[\beta]$ sera discret.
		
		%Soit $k := \Q(\beta)$, soit $\Part'$ l'ensemble des valeurs absolues ultramétriques (à équivalence près) $v$ du corps $k$ telles que $\abs{\beta}_v > 1$, soit $\Part_\infty$ l'ensemble des valeurs absolues archimédiennes (à équivalence près) du corps $k$, et soit $\Part := \Part' \cup \Part_\infty$ l'union.
		Soit $\Part$ l'ensemble des valeurs absolues archimédiennes du corps $k := \Q(\beta)$, à équivalence près.
		L'ensemble $\Part$ est fini (de cardinal majoré par le degré de $\beta$), et l'on peut poser
		\[ E := \prod_{v \in \Part} k_v, \]
		où $k_v$ est le complété du corps $k$ pour la valeur absolue $v$.
		%\[ \P := \{ \text{valeurs absolues ultramétriques } v \text{ du corps } \Q(\beta) | \abs{\beta}_v < 1 \} \cup \{ \text{valeurs absolues archimédiennes du corps }  \Q(\beta) \}. \]
		
		On a alors $E = \R^r \times \C^s$, où $r$ est le nombre conjugués réels de $\beta$, et $2s$ est son nombre de conjugués complexes.
		
		On a maintenant le résultat suivant,
		
		\begin{prop} \label{discret}
			L'anneau $\Z[\beta]$ est discret dans l'espace $E$.
		\end{prop}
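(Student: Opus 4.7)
Le plan est d'identifier $\iota(\Z[\beta])$ à un réseau (c'est-à-dire un sous-groupe discret de plein rang) dans l'espace réel $E \cong \R^r \times \C^s$ de dimension $d := r+2s$, où $\iota : \Z[\beta] \to E$ désigne le plongement diagonal envoyant $x \in \Z[\beta]$ sur la famille de ses images $(x_v)_{v \in \Part}$ dans les complétés archimédiens. Il s'agit essentiellement du plongement classique de Minkowski, appliqué à l'entier algébrique $\beta$.

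Je commencerais par remarquer que $d$ est exactement le degré $[k : \Q]$ de $\beta$. En effet, les plongements $k \hookrightarrow \C$ sont au nombre de $[k:\Q]$, et se répartissent en $r$ plongements réels et $s$ paires $\{\sigma, \overline{\sigma}\}$ de plongements complexes conjugués, chaque paire correspondant à une place complexe de $\Part$. Par ailleurs, $\beta$ étant un entier algébrique, l'anneau $\Z[\beta]$ est un $\Z$-module libre de rang $d$, de base $(1, \beta, \beta^2, \ldots, \beta^{d-1})$.

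L'étape-clé consiste à montrer que les images $\iota(\beta^k)$, pour $k = 0, \ldots, d-1$, sont $\R$-linéairement indépendantes dans $E$. Pour cela, je choisirais pour chaque place complexe $v$ un représentant $\sigma_v : k \hookrightarrow \C$, puis j'écrirais la matrice $d \times d$ réelle $M$ dont les colonnes sont les vecteurs $\iota(\beta^k)$ exprimés dans la base canonique de $\R^r \times \C^s \cong \R^d$ (en identifiant chaque facteur $\C$ à $\R^2$ via $(1,i)$). Après un changement de base élémentaire remplaçant chaque bloc correspondant à une place complexe par les combinaisons $\sigma_v \pm \overline{\sigma_v}$, cette matrice devient, à un facteur non nul près, la matrice de Vandermonde associée aux $d$ conjugués de $\beta$. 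Ces conjugués étant deux à deux distincts (car $\beta$ est séparable sur $\Q$), le déterminant de Vandermonde, et donc $\det(M)$, est non nul.

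Ainsi $\iota(\Z[\beta])$ est un $\Z$-module engendré par $d$ vecteurs $\R$-linéairement indépendants dans un espace réel de dimension $d$ : c'est un réseau, et en particulier un sous-ensemble discret de $E$. Le seul point technique à surveiller sera de vérifier soigneusement la proportionnalité entre $\det(M)$ et le déterminant de Vandermonde des conjugués de $\beta$, mais cela relève d'un calcul d'algèbre linéaire élémentaire qui n'utilise aucunement le fait que $\beta$ soit de Salem — la proposition vaut en réalité pour tout entier algébrique.
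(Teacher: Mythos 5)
Votre preuve est correcte, mais elle emprunte un chemin réellement différent de celui de l'article. L'article démontre la discrétude via la formule du produit : si $x \in \Z[\beta]$ est non nul et que toutes ses valeurs absolues archimédiennes sont $< 1/2$, alors, comme $x$ est un entier algébrique et que donc $\abs{x}_v \leq 1$ pour toute place ultramétrique, le produit de toutes les valeurs absolues de $x$ serait $< 1$, ce qui contredit la formule du produit ; le point $0$ est donc isolé. Vous, au contraire, restez entièrement du côté archimédien : vous identifiez $\iota(\Z[\beta])$ à un réseau de $E \cong \R^{r+2s}$ en montrant, via le déterminant de Vandermonde des conjugués (deux à deux distincts par séparabilité en caractéristique nulle), que les images de $1, \beta, \dots, \beta^{d-1}$ sont $\R$-linéairement indépendantes. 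Les deux arguments utilisent l'intégralité de $\beta$ à des endroits différents — l'article pour borner les places ultramétriques, vous pour garantir que $\Z[\beta]$ est un $\Z$-module libre de rang $d$ admettant la base des puissances. Votre approche a l'avantage d'être autonome (algèbre linéaire élémentaire, sans invoquer la théorie des valuations ni la formule du produit) et de donner gratuitement la co-compacité du réseau, que l'article mentionne en remarque sans la démontrer ; l'approche de l'article est plus courte une fois la formule du produit admise. Les deux valent pour tout entier algébrique, le caractère Salem de $\beta$ n'intervenant nulle part ici.
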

		
		qui découle de la formule du produit, qui est un résultat classique de théorie des nombre (voir par exemple \cite{lang}).
		
		\begin{rem}
			On peut même montrer que $\Z[\beta]$ est un réseau co-compact de l'espace $E$, mais nous n'en aurons pas besoin.
		\end{rem}
		
		\begin{prop}[Formule du produit]
			Soit $k$ un corps de nombres. Pour tout $x \in k \backslash \{ 0 \}$, on a
			\[ \prod_{v \in \Part_k} \abs{x}_v = 1, \]
			où $\Part_k$ est l'ensemble des valeurs absolues de $k$ à équivalence près, où l'on a choisis les valeurs absolues \og standards \fg\ dans chaque classe d'équivalence.
		\end{prop}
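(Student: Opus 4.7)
Le plan est de se ramener au cas du corps $\Q$ grâce à la formule de la norme. Tout d'abord, pour $k = \Q$, tout rationnel non nul $x$ admet une unique décomposition en facteurs premiers $x = \pm \prod_p p^{n_p}$, avec les exposants $n_p \in \Z$ presque tous nuls. Avec les normalisations standards ($\abs{x}_p = p^{-n_p}$ pour chaque nombre premier $p$, et $\abs{x}_\infty$ la valeur absolue archimédienne usuelle, égale à $\prod_p p^{n_p}$ au signe près), on obtient immédiatement
\[ \prod_{v \in \Part_\Q} \abs{x}_v = \abs{x}_\infty \cdot \prod_p p^{-n_p} = 1. \]

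Ensuite, pour un corps de nombres $k$ général, l'étape cruciale est la \emph{formule de la norme locale} : pour tout $x \in k^*$ et toute place $w$ de $\Q$,
\[ \prod_{v | w} \abs{x}_v = \abs{N_{k/\Q}(x)}_w, \]
où le produit porte sur les places $v$ de $k$ au-dessus de $w$. Cette formule repose sur l'isomorphisme d'algèbres $k \otimes_\Q \Q_w \simeq \prod_{v | w} k_v$, et sur le fait que le déterminant de la multiplication par $x$ sur $k$ (vu comme $\Q$-espace vectoriel) coïncide avec $N_{k/\Q}(x)$, tandis que celui sur $k_v$ (vu comme $\Q_w$-espace vectoriel) vaut $N_{k_v/\Q_w}(x)$. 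Les normalisations \og standards \fg\ des valeurs absolues sont précisément définies pour que l'on ait $\abs{y}_v = \abs{N_{k_v/\Q_w}(y)}_w$ pour tout $y \in k_v$.

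La formule de la norme étant admise, on conclut directement par
\[ \prod_{v \in \Part_k} \abs{x}_v = \prod_{w \in \Part_\Q} \prod_{v | w} \abs{x}_v = \prod_{w \in \Part_\Q} \abs{N_{k/\Q}(x)}_w = 1, \]
la dernière égalité provenant du cas $k = \Q$ appliqué à $N_{k/\Q}(x) \in \Q^*$.

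L'obstacle principal est l'établissement de la formule de la norme locale, qui nécessite un peu de théorie des corps locaux : complétion, décomposition de $k \otimes_\Q \Q_w$ en produit des complétés $k_v$ pour $v | w$, et lien entre déterminant et norme des extensions de corps locaux. Le reste de la preuve n'est qu'une réorganisation du produit ramenant tout au cas rationnel, lequel résulte essentiellement de l'unicité de la décomposition en facteurs premiers dans $\Z$.
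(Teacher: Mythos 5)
The paper gives no proof of this proposition: it is stated as a classical fact with a pointer to Lang's \emph{Algebraic Number Theory}, so there is nothing internal to compare against. Your sketch is exactly the standard argument from that reference — check the formula over $\Q$ by unique factorization into primes, then reduce the general case by grouping the places of $k$ above each place $w$ of $\Q$ and invoking $\prod_{v \mid w} \abs{x}_v = \abs{N_{k/\Q}(x)}_w$ — and it is correct in outline. Be aware, though, that the one substantive ingredient, the local norm formula (equivalently the decomposition $k \otimes_\Q \Q_w \simeq \prod_{v \mid w} k_v$ together with the compatibility of the standard normalizations with local norms), is admitted rather than proved; that is where all the real work of the product formula lives, so as written your text is a correct reduction to a standard lemma rather than a complete proof.
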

		
		%\begin{proof}
			%Résultat admis.
			%C'est un résultat classique de théorie des nombres. Voir par exemple \cite{lang} pour une preuve.
		%\end{proof}
		
		\begin{proof}[Preuve de la proposition \ref{discret}]
			%Soit $x \in \Z[\beta]$ un élément de l'anneau, et soit $B$ la boule de centre $x$ et de rayon $1$.
			%Quitte à soustraire par 
			%Quitte à soustraire
			Il suffit de montrer que le point $0 \in \Z[\beta]$ est isolé. On aura alors bien la discrétude puisque l'ensemble $\Z[\beta]$ est un groupe additif.
			Soit $B$ la boule de $E$ de centre $0$ et de rayon $1/2$.
			Si un point $x$ est dans $B \cap \Z[\beta]$, alors on a
			\[ \prod_{v \in \Part_k} \abs{x}_v \leq \prod_{v \in \Part} \abs{x}_v \leq (\frac{1}{2})^{r+s} < 1, \]
			puisque pour toute valeur absolue ultramétrique $v$, on a $\abs{\beta}_v \leq 1$ et donc $\abs{x}_v \leq 1$, étant donné que $\beta$ est un entier algébrique.
			D'après la formule du produit, on a donc $x = 0$. D'où la discrétude de l'anneau $\Z[\beta]$ dans l'espace $E$.
		\end{proof}
		
		L'ensemble de valeurs absolues $\Part$ peut s'écrire
		\[ \Part := \Part_- \cup \Part_0 \cup \Part_+, \]
		où
		\begin{itemize}
			\item $\Part_+$ est l'ensemble des valeurs absolues $v \in \Part$ telles que $\abs{\beta}_v > 1$,
			\item $\Part_0$ est l'ensemble des valeurs absolues $v \in \Part$ telles que $\abs{\beta}_v = 1$,
			\item $\Part_-$ est l'ensemble des valeurs absolues $v \in \Part$ telles que $\abs{\beta}_v < 1$.
		\end{itemize}
		
		On peut alors décomposer cet espace $E$ dans lequel l'anneau $\Z[\beta]$ est discret en 3 morceaux :
		\[ E = E_+ \times E_0 \times E-, \]
		où $E_- := \prod_{v \in \Part_-} k_v$, $E_0 := \prod_{v \in \Part_0} k_v$, et $E_+ := \prod_{v \in \Part_+} k_v$.
		
		Le nombre $\beta$ étant de Salem généralisé, il existe une unique valeur absolue $v$ telle que $\abs{\beta}_v > 1$.
		On a donc $E_+ = \R$ ou $\C$ selon que le nombre $\beta$ est réel ou complexe.
		
		Montrons maintenant que la partie $\beta^n \Gamma_x 0$ est suffisamment bornée dans l'espace $E$.
		
		\begin{souslemme}
			Il existe une constante $C > 0$ telle que pour tout point $x \in \hypt$, il existe des compacts $K_+$, $K_0$ et $K_-$ respectivement de $E_+$, $E_0$ et $E_-$, de diamètres majorés par $C$, tels que l'on ait l'inclusion
			%Il existe des compacts $K_+$, $K_0$ et $K_-$ respectivement de $E_+$, $E_0$ et $E_-$ tels que pour tout point $x \in \hypt$, on ait l'inclusion
			\[ \beta^{n_x} \Gamma_x 0 \subseteq \Z[\beta] \cap K_+ \times ((n_x + 1) K_0) \times K_-. \]
		\end{souslemme}
		
		\begin{proof}
			%D'après le lemme \ref{bplus}, il existe une constante $C$ telle que l'on puisse prendre pour $K_+$ une boule de rayon $C$.
			Soit un point $x \in \hypt$ et une isométrie $\gamma \in \Gamma_x$.		
			L'expression $\beta^{n_x} \gamma 0$ est un polynôme en $\beta$ à coefficients dans $\Z$, mais c'est aussi un polynôme en $\beta$, de degré au plus $n_x$, à coefficient dans $A$.
			
			Pour obtenir le compact $K_-$, il suffit alors de remarquer que si $\gamma$ est un nombre réel ou complexe avec $\abs{\gamma} < 1$, alors pour toute suite $(a_k)_{k \in \N} \in A^\N$, on a
			\[ \abs{\sum_{k=0}^{n_x} a_k \gamma^k} \leq \max_{a \in A} \abs{a} \sum_{k=0}^\infty \abs{\gamma}^k = \frac{\max_{a \in A} \abs{a}}{1 - \abs{\gamma}}. \]
			Si maintenant $\gamma$ est un nombre de module $1$, alors on a
			\[ \abs{\sum_{k=0}^{n_x} a_k \gamma^k}  \leq \max_{a \in A} \abs{a} \sum_{k=0}^{n_x} 1 = (n_x+1) \max_{a \in A} \abs{a}, \]
			pour toute suite $(a_k)_{k \in \N} \in A^\N$, ce qui nous donne le compact $K_0$.
			
			Pour finir, le lemme \ref{bplus} permet d'obtenir le compact $K_+$ dont le diamètre est indépendant de $x$, et les compacts $K_0$ et $K_-$ ne dépendent pas du point $x \in \hypt$.
		\end{proof}
		
		Finissons la preuve de la proposition \ref{poly}.
		Le groupe additif $\Z[\beta]$ étant discret dans l'espace $E$, il existe un réel $\epsilon > 0$ tel que les boules de $E$ centrées aux points de $\Z[\beta]$ et de rayons $\epsilon$ sont disjointes.
%		Le nombre d'éléments de $\Z[\beta]$ qui sont dans le compact
%		\[ K_+ \times ((1+n_x) K_0) \times K_- \]
%		est donc majoré par 
		La quantité
		\[ \# \Z[\beta] \cap \left( K_+ \times ((n_x + 1) K_0) \times K_- \right) \cdot \vol(B(j, \epsilon)) \]
		est donc majorée par le volume d'un $\epsilon$-voisinage du compact $K_+ \times ((1+n) K_0) \times K_-$.
		
		On obtient donc la majoration
		\begin{align*}
			 \# \Gamma_x 0 &= \# \beta^{n_x} \Gamma_x 0 \\
			 			&\leq \# \Z[\beta] \cap \left( K_+ \times ((n_x + 1) K_0) \times K_- \right) \\ %\cdot \vol(B(0, \epsilon)) \\
						&\leq \frac{C (n_x + 1)^{p_0}}{\vol(B(j, \epsilon))},
		\end{align*}
		pour une constante $C$, et pour $p_0$ le nombre de conjugués de $\beta$ de module $1$ (en comptant bien les conjugués complexes).
		%Avec le lemme \ref{lm}, on obtient donc
		D'autre part, on a $n_x = O(d(j, x))$. Par le sous-lemme \ref{lm}, on a alors, pour une constante $C$,
		\[ \# \Gamma_x \leq \sum_{n = n_x - C}^{n_x} \# \Gamma_x 0 = (C+1) \# \Gamma_x 0 = O(d(j, x)^{p_0}). \]
		
		Ceci termine la preuve du lemme \ref{poly}.
%		\[ \# \Gamma_x 0 = O(d(o, x)^{p_0}), \]
%		puis le lemme \ref{lm} permet d'obtenir une constante $C$ telle que la longueur des éléments de $\Gamma_x$ est encadrée entre $n_x - C$ et $n_x + C$, ce qui donne l'inégalité
%		\[ \# \Gamma_x \leq C \# \Gamma_x 0, \]
%		qui donne bien ce que l'on voulait
%		\[ \# \Gamma_x = O(d(o, x)^{p_0}). \]
	\end{proof}
		
%		Pour cela, considérons l'anneau des entiers $\O$ de $\beta$. C'est un réseau de l'espace $E = \C^s \times \R^r$ dont une base est donnée par les conjugués de $\beta$, où $r$ est le nombre de conjugués réel de $\beta$, et $2s$ est le nombre de ses conjugués complexes. Le réel $\beta$ agit alors par multiplication sur l'espace $E$, en multipliant chaque coordonnée par le conjugué correspondant. On peut alors décomposer l'espace $E$ en trois morceaux
%		\[ E = E_+ \times E_0 \times E_-, \]
%		où
%		\begin{itemize}
%			\item $E_+ \approx \R$ est la direction dans laquelle $\beta$ dilate,
%			\item $E_0$ est la direction dans laquelle $\beta$ conserve le module,
%			\item $E_-$ est la direction dans laquelle $\beta$ contracte.
%		\end{itemize}

	\begin{proof}[Preuve de la proposition \ref{psalem}]
		Le cas où $A$ est de cardinal inférieur ou égal à $1$ est clair : on a facilement $\delta_\Gamma = 0 = \dim_H(\Lambda_\Gamma)$.
		Supposons donc que l'ensemble $A$ est de cardinal au moins $2$. L'ensemble limite contient alors au moins deux points.
		D'après la proposition \ref{pradial}, l'ensemble limite du semi-groupe est radial : $\Lambda_\Gamma = \Lambda_\Gamma^c$.
		Or, d'après le corollaire \ref{cpaulin_}, l'ensemble limite radial a une dimension de Hausdorff égale à $h_\Gamma$ (puisque la dimension de Hausdorf coùncide avec la dimension visuelle, voir \cite{ghys} pour plus de détails).
		Il suffit donc de montrer l'égalité $h_\Gamma = \delta_\Gamma$ pour conclure.
		
		Soit $S$ une partie séparée et $1$-couvrante de $\Gamma$.
		D'après le lemme \ref{poly}, il existe alors un entier $r$ et une constante $C$ tels que pour tout réel $R$ assez grand on ait
		\[ \# \{ \gamma \in \Gamma | d(\gamma j, j) \leq R \} \leq C R^r \# \{ \gamma \in S | d(\gamma j, j) \leq R \} , \]
		%pour tout $n \in \N$, où $\Gamma_n := \{ \text{ éléments de $\Gamma$ de longueur $n$ en les générateurs} \}$, et $S_n = S \cap \Gamma_n$.
		%D'après le lemme \ref{car-ent-dev-beta}, on a alors
		On a alors
		\begin{align*}
			\delta_\Gamma	&= \limsup_{n \to \infty} \frac{1}{n} \log(\# \{ \gamma \in \Gamma | d(j, \gamma j) \leq n \} \\
						&\leq \limsup_{n \to \infty} \frac{1}{n} \log(\# \{ \gamma \in S | d(j, \gamma j) \leq n \} \\
						&= \delta_S,
		\end{align*}
		puisque $\limsup_{n \to \infty} \frac{1}{n} \log(Cn^r) = 0$.
		D'autre part, on a $\delta_S = h_\Gamma$ puisque $S$ est une partie séparée et couvrante de $\Gamma$.
		On a donc obtenu l'inégalité $h_\Gamma \geq \delta_\Gamma$, et l'autre inégalité $h_\Gamma \leq \delta_\Gamma$ est claire.
		Cela termine la preuve de la proposition \ref{psalem}.
	\end{proof}
	
%	\begin{figure}[H]
%		\centering
%		\caption{Ensemble limite du semi-groupe de développement en base le nombre de Pisot $\beta = 1+i$, avec ensemble de chiffres $A = \{0, 1\}$.}
%		\includegraphics[scale=.25]{Entropie/} %pdf} %width = 200px, height = 200px]{}
%	\end{figure}
	
	\begin{conj}[Conjecture de Furstenberg modifiée et généralisée]
		%Soit $X$ un espace Gromov-hyperbolique propre et soit $\Gamma$ un semi-groupe d'isométries de $X$.
%		Soit $\Gamma$ un sous-semi-groupe d'isométries d'un espace Gromov-hyperbolique propre. %de $SL(2, \R)$.
%		Alors on a l'égalité
%		\[ h_\Gamma  = \min( \delta_\Gamma, h_G), \]
%		où $G$ est le groupe engendré par le semi-groupe $\Gamma$. 
		Soit $\Gamma$ un sous-semi-groupe de type fini de $SL(2, \R)$ dont l'ensemble limite n'est pas réduit à un seul point.
		Alors on a l'égalité
		\[ h_\Gamma  = \min( \delta_\Gamma, 1). \]
	\end{conj}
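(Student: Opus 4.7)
\textbf{Plan de preuve} --- L'inégalité $h_\Gamma \leq \min(\delta_\Gamma, 1)$ est facile. Le point~\ref{hmaj} des propriétés~\ref{ppteg} donne $h_\Gamma \leq \delta_\Gamma$, et la croissance de l'entropie (point~\ref{hc}) donne $h_\Gamma \leq h_{\Isom(\hyp)} = 1$ puisque l'entropie de $\hyp$ vaut $1$. L'enjeu est donc l'inégalité réciproque $h_\Gamma \geq \min(\delta_\Gamma, 1)$.

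Par le théorème~\ref{thm0}, il suffit de construire, pour tout $\epsilon > 0$, un sous-semi-groupe de Schottky $\Gamma' < \Gamma$ d'exposant critique au moins $\min(\delta_\Gamma, 1) - \epsilon$. L'idée serait, fixant un grand entier $n$, de sélectionner une sous-famille $\Gamma_n' \subseteq \{ \gamma \in \Gamma \mid d(o, \gamma o) \in [n, n+1[ \}$ dont les éléments envoient un domaine $X_+$ bien choisi à des endroits deux à deux Gromov-disjoints. Par le lemme~\ref{sc}, il suffit pour cela que les points $\gamma o$ pour $\gamma \in \Gamma_n'$ soient $r$-séparés pour un certain rayon $r$ ne dépendant que de $X_+$. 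Le lemme~\ref{mnec} permet alors de conclure dès que l'on obtient la minoration $\log(\#\Gamma_n') \geq n (\min(\delta_\Gamma, 1) - \epsilon)$.

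Pour majorer le nombre de chevauchements et donc minorer le cardinal $\#\Gamma_n'$, on peut essayer d'adapter la stratégie du lemme~\ref{poly} utilisée pour les semi-groupes de développement $\beta$-adique : plonger l'anneau engendré par les coefficients des matrices de $\Gamma$ dans un espace de complétions archimédiennes et ultramétriques, et utiliser un comptage de type formule du produit. Dans le cas où $\delta_\Gamma \leq 1$, on espère que presque tous les éléments de $\Gamma_n$ sont essentiellement distincts à l'échelle $e^{-n}$ sur le bord, tandis que dans le cas $\delta_\Gamma > 1$, on espère qu'il y a suffisamment de chevauchements pour que leur moyennage donne une distribution à peu près équidistribuée sur un compact de $\bhyp$, garantissant un cardinal \og séparé \fg\ de l'ordre de $e^{n}$ via un argument de lemme des tiroirs sur un recouvrement par boules visuelles de rayon $e^{-n}$.

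L'obstacle principal est que, pour un sous-semi-groupe arbitraire de $SL(2,\R)$ sans structure arithmétique particulière, aucun outil analogue à la formule du produit n'est disponible, et l'on ne sait pas contrôler la façon dont les éléments peuvent se chevaucher : rien n'empêche l'orbite $\Gamma o$ de s'accumuler à une échelle donnée de façon très irrégulière sur le bord, ce qui ruinerait l'argument de tiroirs. La conjecture est d'ailleurs intimement liée à la conjecture originelle de Furstenberg sur les dimensions des ensembles $\times p$-$\times q$-invariants, dont les avancées récentes (Hochman, Shmerkin) reposent sur des arguments de décomposition entropique délicats qu'il faudrait adapter au cadre non commutatif considéré ici. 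C'est pourquoi la conjecture reste ouverte en toute généralité, et probablement ne sera établie qu'au prix d'un progrès substantiel sur la question de transversalité sous-jacente.
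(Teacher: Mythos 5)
L'énoncé en question est une \emph{conjecture} : l'article ne propose aucune preuve, et indique explicitement que la question reste ouverte (y compris dans le cas particulier des semi-groupes de Kenyon). Votre texte n'est donc pas comparable à une démonstration du papier, puisqu'il n'y en a pas. Cela dit, ce que vous établissez effectivement est correct et correspond à ce que l'article sous-entend : l'inégalité $h_\Gamma \leq \min(\delta_\Gamma, 1)$ découle bien du point~\ref{hmaj} des propriétés~\ref{ppteg} et de la croissance de l'entropie jointe au fait que $h_{\hyp} = 1$ ; et la réduction de l'inégalité réciproque à la construction de gros sous-semi-groupes de Schottky via le théorème~\ref{thm0} est exactement la reformulation que l'auteur met en avant (c'est l'intérêt affiché de remplacer $\dim_H(\Lambda_\Gamma)$ par $h_\Gamma$). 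Vous identifiez correctement l'obstacle : en l'absence de structure arithmétique, rien ne contrôle les chevauchements de l'orbite, et l'argument de type formule du produit du lemme~\ref{poly} ne s'adapte pas. Votre proposition est donc honnête et ne contient pas de faux pas ; elle ne démontre simplement pas la conjecture, ce qui est attendu puisque celle-ci est ouverte.
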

	
	La conjecture originale posait plutôt la question de l'égalité $\dim_H(\Lambda_\Gamma) = \min(\delta_\Gamma, 1)$. %parlait plutôt de dimension de Hausdorff.
	L'avantage de cette formulation, qui est équivalente grâce au corollaire \ref{cpaulin_} quand le semi-groupe est contractant, est qu'elle ne fait plus intervenir le bord. %On peut donc se poser la question dans n'importe quel espace métrique, non nécessairement hyperbolique.
	
	Kenyon attribue cette conjecture à Furstenberg, dans le cas particulier du semi-groupe engendré par les trois applications
	\[
		\left\{\begin{array}{cl}
			x &\mapsto x/3 \\
			x &\mapsto x/3 + t \\
			x &\mapsto x/3 + 1
		\end{array}\right.
	\]
	où $t$ est un réel. Cette question, sur cet exemple particulier, est toujours ouverte à ma connaissance, bien que l'on sache dire pas mal de choses (voir \cite{ken}).
	Sur cet exemple, la conjecture de Furstenberg se résume à déterminer si l'on a l'égalité $\dim_H(\Lambda_\Gamma) = 1$ quand $t$ est irrationnel, puisque dans ce cas le semi-groupe est libre, ce qui donne $\delta = 1$. Le cas où $t$ est rationnel a été résolu par Kenyon (et est aussi conséquence de mes résultats puisque dans ce cas le semi-groupe est séparé). Avec mes travaux, la conjecture de Furstenberg se ramène à déterminer si l'on a l'égalité $h_\Gamma = 1$ pour tout $t$ irrationnel.
	
%	\begin{figure}[H]
%		\centering
%		\caption{Développement en base $\beta = 3$, avec ensemble de chiffres $A = \{0, \frac{3}{7}, 1\}$.}
%		\begin{dpoin}H(-2.1,2.1,1.4)
%			a = [[sqrt(1/3), -3],[0, sqrt(3)]];
%			b = [[sqrt(1/3), 0],[0, sqrt(3)]];
%			c = [[sqrt(1/3), 4],[0, sqrt(3)]];
%			\orb[niter=10000, maxsep = .001, sep=.002, size=.3,maxnorm=7] \fill o a b c;
%		\end{dpoin}
%	\end{figure}

	\begin{figure}[H]
		\centering
		\caption{Développement en base $\beta = 3$, avec ensemble de chiffres $A = \{0, \frac{2}{3}, 1\}$.} \label{2/3}
		\includegraphics{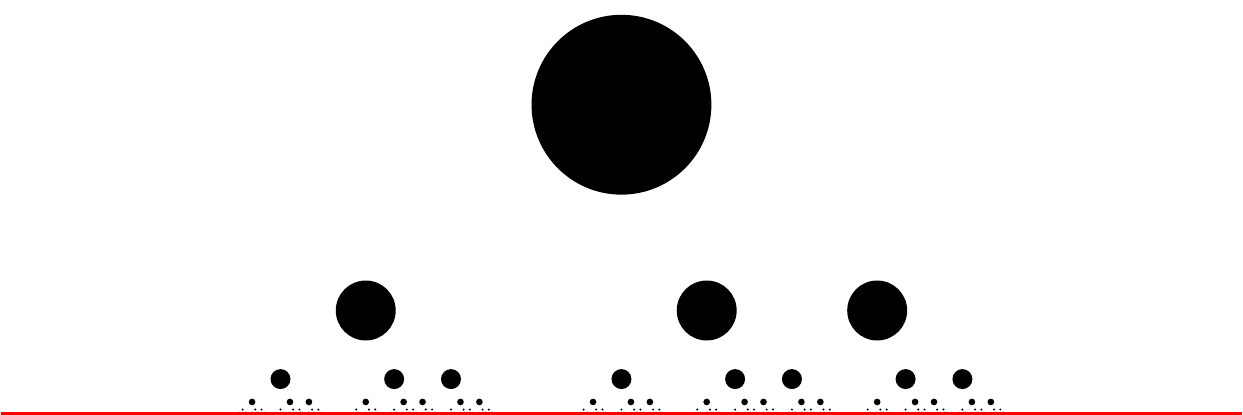}
%		\begin{dpoin}H(-2.1,2.1,1.4)
%			a = [[sqrt(1/3), -1.5],[0, sqrt(3)]];
%			b = [[sqrt(1/3), .5],[0, sqrt(3)]];
%			c = [[sqrt(1/3), 1.5],[0, sqrt(3)]];
%			\orb[niter=10000, maxsep = .001, sep=.002, size=.3,maxnorm=7] \fill o a b c;
%		\end{dpoin}
	\end{figure}
	
%	\begin{figure}[H]
%		\centering
%		\caption{Développement en base $\beta = 2$, avec ensemble de chiffres $A = \{0, 1\}$.}
%		\begin{dpoin}H(-2.1,2.1,1.4)
%			a = [[sqrt(.5), -1],[0, sqrt(2)]];
%			b = [[sqrt(.5), 1],[0, sqrt(2)]];
%			
%			\orb[niter=10000, maxsep = .001, sep=.002, size=.3,maxnorm=7] \fill o a b;
%		\end{dpoin}
%	\end{figure}

%%%%%%%%%%%%%%%%%%%%%%%%%%%%%%%%%%%%%%%%%%%%%%%%%%%%%%%%%%%%%%%%%%%%
%%%%%%%%%%%%%%%%%%%%%%%%%%%%%%%%%%%%%%%%%%%%%%%%%%%%%%%%%%%%%%%%%%%%
	%\section{Autres corollaires} \label{autres}
	
	\section{Sous-groupes de Schottky} \label{groupes}
	
	Les résultats sur les semi-groupes permettent d'obtenir un résultat sur les groupes : voir corollaire~\ref{ss-groupes} ci-dessous.
	Plus précisément, nous parvenons à construire des sous-groupes de Schottky à partir de sous-semi-groupes de Schottky, et ceci nous donne des groupes de Schottky ayant un \og gros \fg exposant critique.
	
	Les groupes de Schottky sont définit de façon similaire aux semi-groupes de Schottky, il s'agit des groupes de type fini dont les générateurs jouent au \og ping-pong \fg :
	
	\begin{define}
		Soit $X$ un espace métrique.
		On dit qu'un ensemble $G$ d'isométries de $X$ engendre un \defi{groupe de Schottky} si c'est un ensemble fini et qu'il existe des parties
		$X_\gamma^+$ et $X_\gamma^-$ pour tout $\gamma \in G$ qui sont toutes deux-à-deux Gromov-disjointes, et telles que
		pour tout $\gamma \in G$ on ait $\gamma (X \backslash X_\gamma^-) \subseteq X_\gamma^+$, et telles que l'ensemble $X \backslash ( \bigcup_{\gamma \in G} X_\gamma^+ \cup X_\gamma^- )$ soit d'intérieur non vide.
	\end{define}
	
	\begin{figure}[H]
		\centering
		\caption{Un groupe de Schottky engendré par deux isométries $g$ et $h$.}
		\includegraphics{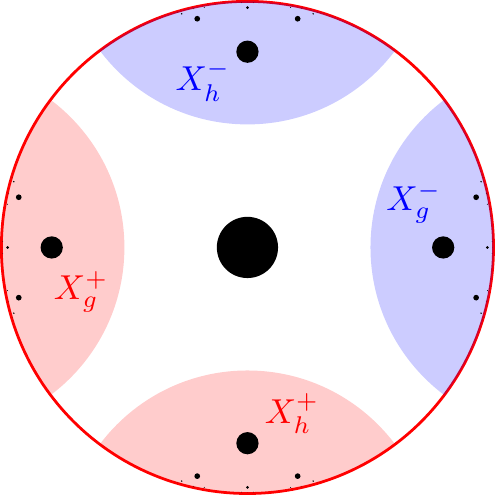}
%		\begin{dpoin}[scale=2.5]
%			
%			g = fix(0, 180, 3);
%			h = fix(90, -90, 3);
%			g1 = inv(g);
%			h1 = inv(h);
%			
%			\med \fill[red, opacity=.20] o g.o;
%			\med \fill[blue, opacity=.20] o g1.o;
%			
%			\med \fill[red, opacity=.20] o h.o;
%			\med \fill[blue, opacity = .20] o h1.o;
%					
%			\orb[niter=10000, maxsep = .002, sep=.003, size=.5,maxnorm=16] \fill o g h g1 h1;
%			
%			\draw[red] (.7:195) node {$X_g^+$};
%			\draw[red] (.7:-75) node {$X_h^+$};
%			\draw[blue] (.7:15) node {$X_g^-$};
%			\draw[blue] (.7:105) node {$X_h^-$};
%			
%		\end{dpoin}
	\end{figure}
	
	\begin{cor} \label{ss-groupes}
		Soit $X$ un espace Gromov-hyperbolique propre et soit $\Gamma$ un groupe discret et sans torsion d'isométries de $X$ ne fixant pas de point au bord, alors on a
		\[ \sup_{\Gamma' < \Gamma \atop \Gamma' \text{ groupe de Schottky}} \delta_{\Gamma'} \geq \frac{1}{2} \delta_\Gamma, \]
		où la borne supérieure est prise sur l'ensemble des sous-groupes de Schottky du groupe $\Gamma$.
	\end{cor}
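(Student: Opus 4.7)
The plan is to combine Théorème~\ref{thm0} with a doubling procedure that transforms a Schottky sub-semi-group into a Schottky subgroup, losing only a factor~$1/2$ in the critical exponent.

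First, since $\Gamma$ is discret, point~\ref{sep} of Propriétés~\ref{ppteg} gives $h_\Gamma = \delta_\Gamma$. The torsion-free hypothesis together with the fact that $\Gamma$ does not fix a point at infinity force $\Gamma$ to be non-elementary, so $\#\Lambda_\Gamma \geq 2$ and Théorème~\ref{thm0} applies: for every $\epsilon > 0$ there exists a Schottky sub-semi-group $\Gamma^+ \subseteq \Gamma$, of generators $g_1,\ldots,g_n$ and ping-pong domain $X_+$, such that $\delta_{\Gamma^+} > \delta_\Gamma - \epsilon$.

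To produce a Schottky subgroup out of $\Gamma^+$, the key observation is that the attractors $X_{g_i}$ of the Schottky semi-group generators already lie in the pairwise Gromov-disjoint sets $g_i X_+$, but the repellors $X_{g_i^{-1}}$ all sit in $X \setminus X_+$ and need not be pairwise disjoint. The natural way to separate the repellors too is to pass to words of length~$2$ in the alphabet $\{g_1,\ldots,g_n\}$: for pairs $(i,j)$ appropriately chosen one carves out Gromov-disjoint attractor and repellor regions for the length-$2$ products $g_i g_j^{-1}$ (or other length-$2$ words), thereby producing a Schottky subgroup $\Gamma'' \subseteq \Gamma$. The hypothesis that $\Gamma$ does not fix a point at infinity enters crucially here in order to guarantee the existence of sufficiently many non-coincident attractor/repellor pairs.

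For the counting, any element of $\Gamma''$ written as a word of length $k$ in the enlarged alphabet is represented by a word of length at most $2k$ in $\{g_1^{\pm 1},\ldots, g_n^{\pm 1}\}$; since $\Gamma^+$ is libre and $\Gamma$ is sans torsion, distinct reduced representatives remain distinct elements of $\Gamma$. Hence the counting function of $\Gamma''$ at radius $2kD$, with $D := \max_i d(o, g_i o)$, is bounded below (up to a constant) by $\#\{ \gamma \in \Gamma^+ : d(o, \gamma o) \leq kD \}$, which grows like $e^{\delta_{\Gamma^+} k D}$. This yields
\[
\delta_{\Gamma''}\ \geq\ \frac{\delta_{\Gamma^+}}{2}\ >\ \frac{\delta_\Gamma - \epsilon}{2},
\]
and letting $\epsilon \to 0$ concludes. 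The main obstacle is the intermediate construction: exhibiting explicitly, inside any given Schottky semi-group $\Gamma^+$ of $\Gamma$, a rich enough family of length-$2$ products whose attractors and repellors are all pairwise Gromov-disjoint, and showing that the pruning required to enforce the Schottky group property still leaves enough elements for the counting above to produce a Schottky subgroup with critical exponent at least $\delta_{\Gamma^+}/2$.
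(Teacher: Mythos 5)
Your overall strategy (double the word length to turn a sub-semi-group into a subgroup, losing a factor $1/2$ in the exponent) is the same as the paper's, and your final counting step is essentially the paper's application of Lemme~\ref{mnec}. But the heart of the proof is precisely the ``intermediate construction'' that you leave open, and the route you sketch for it does not work as stated. Starting from an arbitrary Schottky sub-semi-group $\Gamma^+$ furnished by Théorème~\ref{thm0}, nothing prevents all the generators $g_1,\dots,g_n$ from having essentially the same repelling region; in that case the length-$2$ products $g_i g_j^{-1}$ have no controlled dynamics (they may have small norm or fail to be contracting at all), and there is no reason that ``appropriately chosen'' pairs in exponentially large number survive the pruning. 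Since beating this obstruction is the entire content of the corollary, acknowledging it as ``the main obstacle'' without resolving it leaves a genuine gap.

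The paper resolves it differently. It does not start from a Schottky semi-group but from a \emph{contracting} sub-semi-group $\Gamma^c$ of full exponent (Théorème~\ref{thm1}), and applies the separation lemma~\ref{sc} simultaneously to $\Gamma^c$ and to its inverse $(\Gamma^c)^{-1}$: using the pigeonhole principle and the fact that $\Gamma$ is a discrete, torsion-free group (so that inverses of separated sets are again separated), it extracts in a single annulus $A_n$ a set $S_n$ with $\# S_n \geq e^{n(\delta_\Gamma - \epsilon)}$ such that $S_n$ generates a Schottky semi-group for $X_+$ \emph{and} $S_n^{-1}$ generates a Schottky semi-group for $X_-$ (Lemme~\ref{scic}); this double separation is exactly what makes the repellers pairwise Gromov-disjoint. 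The Schottky subgroup is then generated not by length-$2$ words in the $g_i^{\pm 1}$ but by the sandwiched elements $\gamma \gamma_0 \gamma$, $\gamma \in S_n$, for a fixed contracting $\gamma_0$: one checks $\gamma\gamma_0\gamma(X \setminus \gamma^{-1}X_-) \subseteq \gamma X_+$ and that the sets $\gamma^{-1}X_-$ and $\gamma' X_+$ are pairwise Gromov-disjoint (Lemme~\ref{scgc}). The factor $1/2$ then comes from $d(o,\gamma\gamma_0\gamma o) \leq 2(n+1) + d(o,\gamma_0 o)$, as in your counting. To salvage your version you would need to prove an analogue of Lemme~\ref{scic}: the simultaneous control of $S_n$ and $S_n^{-1}$ is the missing idea.
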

	
	Ce résultat est à relier à une question dont parle M. Kapovich dans son article \cite{kap} (voir Problem 10.27, The gap problem).
	Avec ses notations, mon résultat donne l'inégalité $d_n \geq n/2$. La question est de savoir si l'on peut atteindre $d_n = n$ ou non.
	
	\begin{proof}[Preuve du corollaire \ref{ss-groupes}]
		Le théorème \ref{thm1} permet de trouver un sous-semi-groupe $\Gamma^c$ du groupe $\Gamma$ qui soit contractant pour des parties $X_+$ et $X_- \subset X$, et d'exposant critique $\delta_{\Gamma^c} = \delta_\Gamma$.
		%Le semi-groupe $\Gamma^c$ est aussi séparé, puisque faisant partie du groupe discret $\Gamma$.
		On a alors le lemme suivant.
		
		\begin{lemme} \label{scic}
			Soit $X$ un espace métrique de point base $o$, et soit $\Gamma^c$ un sous-semi-groupe d'un groupe discret et sans torsion $\Gamma$ d'isométries de $X$, qui soit contractant pour des parties $X_+$ et $X_-$ de $X$.
			Pour tout $\epsilon > 0$, et pour un entier $n$ arbitrairement grand, il existe une partie $S_n$ de $\Gamma^c$ telle que
			\begin{itemize}
				\item $S_n$ engendre un semi-groupe de Schottky pour la partie $X_+$,
				\item $S_n^{-1}$ engendre un semi-groupe de Schottky pour la partie $X_-$,
				\item $\# S_n \geq e^{n (\delta_{\Gamma^c} - \epsilon)}$,
				\item $S_n \subseteq A_n$, où
			\end{itemize}
			\[ A_n := \{ \gamma \in \Isom(X) | d(o, \gamma o) \in [n, n+1[ \}. \]
		\end{lemme}
		
		\begin{proof}
			%Soit $o$ un point de $X$.
			D'après le lemme \ref{sc}, il existe un réel $r$ et un entier $n_0$, tels que pour toute partie $r$-séparée $S$ de $\Gamma^c$ et pour tout $n \geq n_0$, le semi-groupe engendré par $S \cap A_n$ soit de Schottky pour la partie $X_+$.
			En faisant de même pour le semi-groupe inverse $\left( \Gamma^c \right)^{-1}$, on obtient un réel $r'$ et un entier $n_0'$.
			%On peut supposer, en appliquant le lemme \ref{sc} au semi-groupe inverse $\left( \Gamma^c \right)^{-1}$, que pour le même réel $r$ (quitte à prendre le plus grands des deux), on a aussi que pour toute partie $r$-séparée $S'$ de $\left( \Gamma^c \right)^{-1}$, le semi-groupe engendré par $S' \cap A_n$ est de Schottky pour la partie $X_-$.
			
			Soit $S$ une partie $r$-séparée et couvrante du semi-groupe contractant $\Gamma^c$.
			La partie $(S \cap A_n)^{-1}$ est séparée, puisque faisant partie du groupe discret et sans torsion $\Gamma$.
			%D'après le lemme \ref{isc}, il existe alors une constante $C_r > 0$ telle que
			%pour tout $n \geq n_0$ il existe une partie $S_n \subseteq S \cap A_n$ avec $S_n^{-1}$ $r$-séparée, et
			Par le lemme des tiroirs, il existe donc une constante $C > 0$ (dépendant de la séparation du groupe $\Gamma$ et du réel $r'$), et une partie $r'$-séparée $S_n^{-1}$ de $(S \cap A_n)^{-1}$, telles que l'on ait
			\[ \#S_n \geq C \#(S \cap A_n). \]
			Pour $\epsilon > 0$ fixé, on peut alors trouver un entier $n$ arbitrairement grand, pour lequel on a l'inégalité
			\[ \# S_n \geq e^{n (\delta_{\Gamma^c} - \epsilon)}, \]
			puisque l'on a $h_{\Gamma^c} = \delta_{\Gamma^c}$, par séparation du sous-semi-groupe $\Gamma^c$ du groupe discret $\Gamma$. Et par le lemme \ref{sc} les parties $S_n$ et $S_n^{-1}$ engendrent chacune un semi-groupe de Schottky respectivement pour les parties $X_+$ et $X_-$, puisqu'elles sont respectivement $r$-séparées et $r'$-séparée.
		\end{proof}
		
		Construisons alors un groupe de Schottky de la façon suivante : 
		
		\begin{figure}[H]
			\centering
			\caption{Construction d'un groupe de Schottky à partir d'un semi-groupe de Schottky dont l'inverse est aussi un semi-groupe de Schottky.}
			\includegraphics{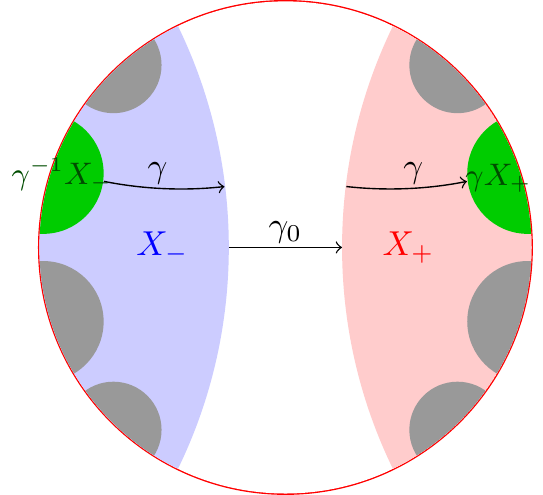}
%			\begin{dpoin}[scale=2.5]
%				
%				\fill[white] (0,0) circle' 1;
%				
%				g0 = fix(0, 180, 1.598); %gamma0
%				g1 = fix(20, 160, 2.2);
%				g2 = fix(-20, 200, 2.2);
%				g3 = fix(-55, 235, 1.9);
%				g4 = fix(55, 125, 1.9);
%				g01 = inv(g0);
%				g11 = inv(g1);
%				g21 = inv(g2);
%				g31 = inv(g3);
%				g41 = inv(g4);
%				\med \fill[blue!20] o g0.o;
%				\med \fill[red!20] o g01.o;
%				\med \fill[green!80!black] g1.o g1.g0.o;
%				\med \fill[green!80!black] g11.o g11.g01.o;
%				
%				\med \fill[gray!80] g2.o g2.g0.o;
%				\med \fill[gray!80] g21.o g21.g01.o;
%				
%				\med \fill[gray!80] g3.o g3.g0.o;
%				\med \fill[gray!80] g31.o g31.g01.o;
%				
%				\med \fill[gray!80] g4.o g4.g0.o;
%				\med \fill[gray!80] g41.o g41.g01.o;
%				
%				\draw[<-] (.785:20) -- (.35:45);
%				\draw[->] (-.23,0) -- (.23,0);
%				\draw[->] (.785:160) -- (.35:135);
%				
%				\draw (.6:30) node {$\gamma$};
%				\draw (.6:150) node {$\gamma$};
%				\draw (0,.06) node {$\gamma_0$};
%				
%				\draw[blue] (-.5, 0) node {$X_-$};
%				\draw[red] (.5, 0) node {$X_+$};
%				
%				\end{scope}
%				\draw[green!30!black, font=\small] (.91:18) node {$\gamma X_+$};
%				\draw[green!30!black, font=\small] (.955:162) node {$\gamma^{-1} X_-$};
%				
%				\begin{scope}
%				%\draw[green] 
%			\end{dpoin}
		\end{figure}
		
		\begin{lemme} \label{scgc}
			Soit $X$ un espace métrique, et soit $\gamma_0$ une isométrie de $X$ telle que l'ensemble $\{ \gamma_0 \}$ soit contractant pour des domaines $X_-$ et $X_+$.
%			Soit $X$ un espace métrique, et soient $X_-$ et $X_+$ deux parties de $X$ qui sont Gromov-disjointes et avec $X \backslash (\overline{X_-} \cup \overline{X_+} )$ non vide.
			Soit $S$ les générateurs d'un semi-groupe de Schottky pour la partie $X_+$, tel que l'inverse $S^{-1}$ engendre un semi-groupe de Schottky pour la partie $X_- $. %, et soit $\gamma_0$ est une isométrie de $X$ telle que $\gamma_0 (X \backslash X_-) \subseteq X_+$.
			Posons
			\[ G := \{ \gamma \gamma_0 \gamma | \gamma \in S \}. \]
			%où $S$ est la partie donnée par le lemme \ref{scic}.
			%Alors le groupe engendré par $G$ est de Schottky.
			Alors $G$ engendre un groupe de Schottky.
		\end{lemme}
		
		\begin{proof}
			Pour toute isométrie $\gamma \in S$, on a l'inclusion
			\[ \gamma \gamma_0 \gamma (X \backslash \gamma^{-1} X_-) = \gamma \gamma_0 (X \backslash X_-) \subseteq \gamma (X_+) = \gamma X_+, \]
			et les parties $\gamma^{-1} X_-$ pour $\gamma$ décrivant $S$, et $\gamma' X_+$ pour $\gamma'$ décrivant $S$, sont toutes deux à deux Gromov-disjointes.
		\end{proof}
		
		%Soit $\gamma_0$ une isométrie de $\Gamma^c$.
		Ainsi, en appliquant le lemme \ref{scgc} avec la partie $S_n$ donnée par le lemme \ref{scic} et avec un élément $\gamma_0 \in \Gamma^c$ quelconque, on obtient une partie $G_n := \{ \gamma \gamma_0 \gamma | \gamma \in S_n \}$ qui engendre un groupe $\Gamma_n$ de Schottky. Il reste maintenant à minorer l'exposant critique du groupe obtenu.
		
		L'inégalité triangulaire donne
		$d(o, \gamma \gamma_0 \gamma o) \leq 2d(o, \gamma o) + d(o, \gamma_0 o) \leq 2(n+1) + d(o, \gamma_0 o)$.
		Par le lemme \ref{mnec}, on a donc la minoration
		\[ \delta_{\Gamma_n} \geq \frac{1}{2(n+1)+ d(o, \gamma_0 o)} \log (\# \{ G_n \}) \geq \frac{n(\delta_\Gamma - \epsilon)}{2(n+1)+ d(o, \gamma_0 o)}, \]
		puisque l'ensemble $G_n$ engendre un semi-groupe de Schottky (donc libre) qui est un sous-semi-groupe du groupe $\Gamma_n$.
		
		L'entier $n$ pouvait être choisi arbitrairement grand, et le réel $\epsilon > 0$ était arbitraire, donc on obtient bien l'inégalité annoncée, ce qui termine la preuve du corollaire \ref{ss-groupes}.
	\end{proof}
	
	\begin{rem}
		La minoration de la borne supérieure des exposants critiques des sous-groupes de Schottky par $\frac{1}{2} \delta_\Gamma$ n'est pas optimale. En pratique, les groupes de Schottky construits dans cette preuve ont des exposants critiques qui se rapprochent mieux que cela de l'exposant critique total $\delta_\Gamma$.
		%(et qui sont peut-être même arbitrairement proches)
	\end{rem}
	
%%%%%%%%%%%%%%%%%%%%%%%%%%%%%%%%%%%%%%%%%%%%%%%%%%%%%%%%%%%%%%%%%%%%%%%%%%%%%%%%%%%
%%%%%%%%%%%%%%%%%%%%%%%%%%%%%%%%%%%%%%%%%%%%%%%%%%%%%%%%%%%%%%%%%%%%%%%%%%%%%%%%%%%
	\section{Caractérisation de l'entropie} \label{car-ent}
	
	Le corollaire suivant du théorème \ref{thm0} donne en particulier que l'exposant critique d'un semi-groupe séparé, qui était définit comme une limite supérieure d'une certaine quantité, est en fait une vraie limite. Ceci généralise un résultat que Roblin a établit pour un groupe discret d'isométries d'un espace CAT(-1) (mais avec une conclusion plus forte), voir \cite{robl}.
	
	\begin{cor} \label{lim_}
		%Soit $X$ un espace Gromov-hyperbolique propre, et soit $\Gamma$ un semi-groupe séparé d'isométries de $X$ dont l'ensemble limite contient au moins deux points.
		Soit $X$ un espace Gromov-hyperbolique propre, et soit $\Gamma$ un semi-groupe d'isométries de $X$ dont l'ensemble limite contient au moins deux points.
		%et soit $S$ une partie séparée et couvrante de $\Gamma$.
		Alors on a
		\[ h_\Gamma = \lim_{n \to \infty} \frac{1}{n} \log( \#\{ \gamma \in S | d(o, \gamma o) \leq n \}), \]
		pour toute partie $S$ séparée et couvrante de $\Gamma$.
	\end{cor}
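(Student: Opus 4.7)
Les points~\ref{sep} et~\ref{couv} des propriétés~\ref{ppteg} donnent $\delta_S = h_S = h_\Gamma$, puisque $S$ est séparée et couvrante de $\Gamma$. La limite supérieure qui définit $\delta_S$ vaut donc $h_\Gamma$, et il suffit d'établir la minoration $\liminf_{n \to \infty} \frac{1}{n} \log f_S(n) \geq h_\Gamma$, où $f_S(n) := \#\{\gamma \in S \mid d(o, \gamma o) \leq n\}$. Je propose de l'obtenir en exploitant le théorème~\ref{thm0} pour approcher $h_\Gamma$ par l'exposant critique d'un sous-semi-groupe de Schottky $\Gamma'$, puis de démontrer que la vraie limite existe au sein d'un tel $\Gamma'$, et enfin de transférer l'information à $S$ via le lemme~\ref{isc}.

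Fixons $\epsilon > 0$ et, par le théorème~\ref{thm0}, choisissons un sous-semi-groupe de Schottky $\Gamma' \subseteq \Gamma$ avec $\delta_{\Gamma'} \geq h_\Gamma - \epsilon$. Posons $f(n) := \#\{\gamma \in \Gamma' \mid d(o, \gamma o) \leq n\}$. L'étape clé consiste à montrer que $\lim_{n \to \infty} \frac{\log f(n)}{n}$ existe et vaut $\delta_{\Gamma'}$. L'inégalité triangulaire donne $d(o, \gamma_1 \gamma_2 o) \leq d(o, \gamma_1 o) + d(o, \gamma_2 o)$, donc la multiplication envoie $\{\gamma \in \Gamma' : d \leq n\} \times \{\gamma \in \Gamma' : d \leq m\}$ dans $\{\gamma \in \Gamma' : d \leq n+m\}$. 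Comme $\Gamma'$ est libre, chaque $\gamma$ de l'image a au plus $L(\gamma)-1$ antécédents, où $L(\gamma)$ est la longueur en les générateurs ; et la propriété de contraction (via le lemme~\ref{cet}, comme dans la preuve de la finitude de $\delta_\Gamma$ pour les Schottky) fournit une constante $K > 0$ telle que $L(\gamma) \leq K(d(o, \gamma o) + 1)$. On en tirera
\[
f(n+m) \geq \frac{f(n)\,f(m)}{K(n+m+1)},
\]
ce qui, en passant au logarithme, exprime une sur-additivité à un terme $O(\log(n+m))$ près. Une variante classique du lemme de Fekete tolérant une telle erreur sous-linéaire fournira alors l'existence de $\lim_n \frac{\log f(n)}{n} = \sup_n \frac{\log f(n) - O(\log n)}{n} = \delta_{\Gamma'}$.

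Il reste à transférer cette limite à $S$. Le lemme~\ref{isc}, appliqué avec $Y = \Gamma$, $\Gamma'$ pour partie séparée et $S$ pour partie couvrante, fournit des constantes $C > 0$ et $r > 0$ telles que $f(n) \leq C \cdot f_S(n + r)$ pour tout $n$. Il s'ensuit
\[
\liminf_{n \to \infty} \frac{\log f_S(n)}{n} \geq \lim_{n \to \infty} \frac{\log f(n)}{n} = \delta_{\Gamma'} \geq h_\Gamma - \epsilon,
\]
puis, en faisant tendre $\epsilon$ vers $0$, la minoration annoncée. L'obstacle principal sera la vérification rigoureuse de la vraie limite au sein d'un semi-groupe de Schottky : la sur-multiplicativité exacte $f(n+m) \geq f(n)f(m)$ échoue à cause des multiples factorisations dans un semi-groupe libre, mais l'affaiblissement polynomial ci-dessus suffira pour appliquer la version robuste du lemme de Fekete.
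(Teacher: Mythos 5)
Votre preuve est correcte, mais l'étape clé emprunte un chemin réellement différent de celui de l'article. L'article n'utilise aucun lemme de type Fekete : il prend un sous-semi-groupe contractant $\Gamma^c$ d'entropie totale (théorème \ref{thm1}), remplace $S$ par une sous-partie suffisamment séparée et encore couvrante de $\Gamma^c$, puis applique le lemme \ref{sc} pour que $S \cap A_k$ engendre un semi-groupe de Schottky libre dont tous les générateurs ont une norme dans $[k, k+1[$ ; les $(\#(S\cap A_k))^{\lfloor n/(k+1)\rfloor}$ mots de longueur $\lfloor n/(k+1)\rfloor$ sont alors deux à deux distincts et tous dans $B(o,n)$, ce qui donne directement, pour tout $n$ assez grand et uniformément, la minoration voulue après avoir choisi $k$ tel que $\frac{1}{k+1}\log\#(S\cap A_k) \geq \delta_S - \epsilon/2$ (possible par la remarque \ref{delta-ann}). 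Vous partez au contraire d'un sous-semi-groupe de Schottky quelconque $\Gamma'$ fourni par le théorème \ref{thm0} et établissez une sur-multiplicativité approchée $f(n+m) \geq f(n)f(m)/K(n+m+1)$ --- correcte : la liberté donne au plus $L(\gamma)-1$ factorisations, et la majoration $L(\gamma) = O(d(o,\gamma o)+1)$ découle bien des lemmes \ref{cdv} et \ref{cet} comme dans la preuve de la finitude de $\delta_\Gamma$ pour les Schottky --- puis invoquez la variante de de Bruijn--Erd\H{o}s du lemme de Fekete, dont la condition $\sum_n E(n)/n^2 < \infty$ est satisfaite pour $E(n)=\log(K(n+1))$. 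Les deux arguments aboutissent ; le vôtre requiert un outil externe mais isole un énoncé structurel plus fort (l'existence de la vraie limite à l'intérieur de tout semi-groupe de Schottky), tandis que celui de l'article reste autonome grâce au fait que les générateurs vivent dans un seul anneau. Le transfert final à $S$ via le lemme \ref{isc} est le même dans les deux preuves (l'article le laisse d'ailleurs implicite). Veillez seulement à restreindre votre argument de Fekete aux $n$ pour lesquels $f(n) \geq 1$.
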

	
	\begin{proof}
		Soit $\epsilon > 0$.
		Par définition de la limite supérieure, il existe un entier $n_0$ tel que pour tout $n \geq n_0$, on ait
		\[ \frac{1}{n} \log(\#\{ \gamma \in S | d(o, \gamma o) \leq n \}) \leq \delta_S + \epsilon = h_\Gamma + \epsilon. \]
		
		Montrons l'autre sens.
		D'après le théorème \ref{thm1}, il existe un sous-semi-groupe $\Gamma^c$ de $\Gamma$ qui est contractant et d'exposant critique $\delta_{\Gamma^c} = \delta_\Gamma$, puisque les semi-groupes $\Gamma$ et $\Gamma^c$ sont séparés.
		
		\begin{souslemme}
			Soit $X$ un espace métrique et $A \subseteq X$ une partie.
			Soit $S$ une partie séparée et couvrante de $A$ et $r > 0$ un réel. Alors il existe une partie $S' \subseteq S$ telle que $S'$ est une partie $r$-séparée et couvrante de $A$.
		\end{souslemme}
		
		\begin{proof}
			Par récurrence ordinale, on construit une suite $(x_i)$ d'éléments de $S$ indexée par les ordinaux, en choisissant un élément
			\[ x_i \in S \backslash \bigcup_{j < i} B(x_j, r) \]
			tant que l'ensemble est non vide.
			Cela termine nécessairement puisque la suite ainsi construite ne peut pas avoir un cardinal strictement supérieur à celui de $S$.
			La partie $S'$ constituée des éléments de la suite est alors $r$-séparée, et elle est $(C+r)$-couvrante de $A$, où $C$ est telle que $S$ est $C$-couvrante de $A$. En effet, si $x \in A$, il existe un élément $y \in S$ tel que $d(x, y) \leq C$. Et par construction, il existe un élément $z \in S'$ tel que $d(y, z) \leq r$. On a alors bien trouvé $z \in S'$ tel que $d(x, z) \leq C + r$.
		\end{proof}
		
		D'après le lemme \ref{sc}, quitte à remplacer la partie $S$ par une sous-partie suffisamment séparée et encore couvrante de $\Gamma^c$, il existe un entier $k_0$ tel que pour tout $k \geq k_0$, l'ensemble $S \cap A_k$ engendre un semi-groupe de Schottky $\Gamma_k$, où
		\[ A_k := \{ \gamma \in \Isom(X) | d(o, \gamma o) \in [k, k+1[ \}. \]
		
		Pour tout $n \geq k+1 \geq k_0+1$, en utilisant l'inégalité triangulaire et le fait que le semi-groupe $\Gamma_k$ soit libre, on obtient l'inégalité
		\[ \#\{ \gamma \in \Gamma_k | d(o, \gamma o) \leq n \} \geq \# \{ \gamma \in \Gamma_k | \gamma \text{ de longueur } \floor{\frac{n}{k+1}} \} = \left( \# (S \cap A_k) \right)^{\floor{\frac{n}{k+1}}}. \]
		On a donc l'inégalité
		\[ \frac{1}{n} \log(\#\{ \gamma \in \Gamma | d(o, \gamma o) \leq n \}) \geq \frac{1}{n} \floor{\frac{n}{k+1}} \log(\#(S \cap A_k)), \]
		pour tout $n \geq k$.
		
		Or, il existe un entier $k$ tel que %pour des entiers $k$ arbitrairement grand, on a
		\[ \frac{1}{k+1} \log(\#(S \cap A_k)) \geq \delta_S - \epsilon/2. \]
		Et comme $S$ est une partie couvrante de $\Gamma^c$, on a $\delta_S = h_{\Gamma^c} = h_\Gamma$.
		
		De plus, on a $\frac{1}{n} \floor{\frac{n}{k+1}} (k+1) \geq 1 - \frac{k+1}{n}$, ce qui nous donne l'inégalité
		\[ \frac{1}{n} \log(\#\{ \gamma \in \Gamma | d(o, \gamma o) \leq n \}) \geq \left(1 - \frac{k+1}{n} \right) \left(h_\Gamma - \epsilon/2 \right). \]
		On peut alors trouver un entier $n_k \geq n_0$ tel que pour tout $n \geq n_k$ on ait
		\[ h_\Gamma + \epsilon \geq \frac{1}{n} \log(\#\{ \gamma \in \Gamma | d(o, \gamma o) \leq n \}) \geq h_\Gamma - \epsilon. \]
		
		Ainsi, on a bien montré que l'on a
		\[ \lim_{n \to \infty} \frac{1}{n} \log(\#\{ \gamma \in \Gamma | d(o, \gamma o) \leq n \}) = h_\Gamma. \]
		
%		D'après le théorème \ref{thm2}, il existe un sous-semi-groupe de Schottky de $\Gamma^c$ d'exposant critique $\delta_{\Gamma'} \geq \delta_{\Gamma^c} - \epsilon/2 = \delta_\Gamma - \epsilon/2$. %suffit de démontrer ce résultat pour un semi-groupe de Schottky.
%		De plus, dans la preuve du théorème, les semi-groupes de Schottky que l'on a construit sont engendrés par des parties de $A_n$, avec $n$ arbitrairement grand,
%		où
%		\[ A_n := \{ \gamma \in \Isom(X) | d(o, \gamma o) \in [n, n+1[ \}. \]
%		Supposons donc que $\Gamma'_n$ est un sous-semi-groupe de Schottky de $\Gamma^c$, et qu'il est engendré par une partie $P_n \subseteq \Gamma \cap A_n$, avec $\delta_{\Gamma_n} \geq \delta_\Gamma - \epsilon/2$.
%		
%		Par le lemme \ref{cet}, il existe une constante $C$ (indépendante de $n$) telle que l'on ait
%		\[ d(o, \gamma \gamma' o) \geq d(o, \gamma o) + d(o, \gamma' o) - C \]

%		Supposons donc que $\Gamma$ est un semi-groupe de Schottky. En particulier, il est contractant.
%		Il existe donc une constante $C$ telle que pour toutes isométries $\gamma$ et $\gamma' \in \Gamma$, on ait l'inégalité
%		\[ d(o, \gamma o) + d(o, \gamma o) - C \leq d(o, \gamma \gamma' o) \leq d(o, \gamma o) + d(o, \gamma o). \]
%		On a donc l'encadrement
%		\[  \]
%		........................
		
	\end{proof}

%%%%%%%%%%%%%%%%%%%%%%%%%%%%%%%%%%%%%%%%%%%%%%%%%%%%%%%%%%%%%%%%%%%%%%%%%%%%%%%%%%%%%
%%%%%%%%%%%%%%%%%%%%%%%%%%%%%%%%%%%%%%%%%%%%%%%%%%%%%%%%%%%%%%%%%%%%%%%%%%%%%%%%%%%%%
	\section{Semi-continuité inférieure de l'entropie} \label{semi-continuite}
	Nous allons voir que l'entropie est semi-continue inférieurement en un semi-groupe.
	Pour cela, commençons par donner une notion de convergence sur l'ensemble des semi-groupes d'isométries d'un espace métrique $X$.
	
	Rappelons la définition de la topologie usuelle compacte-ouverte sur $\Isom(X)$.
	\begin{define}
		Soit $X$ un espace métrique.
		On dit qu'une suite d'isométries $(\gamma_n)_{n \in \N} \in (\Isom(X))^\N$ \defi{converge}
		vers une isométrie $\gamma \in \Isom(X)$ si pour tout compact $K$ de $X$, la suite
		$(\restriction{\gamma_n}{K})_{n \in \N}$ des isométries restreintes à $K$ converge uniformément vers $\restriction{\gamma}{K}$, et si de même la suite
		$(\restriction{\gamma_n^{-1}}{K})_{n \in \N}$ converge uniformément vers $\restriction{\gamma^{-1}}{K}$.
	\end{define}
	
	\begin{rem}
		%Quand l'espace $X$ est propre,
		Ici, la convergence uniforme des inverses $(\restriction{\gamma_n^{-1}}{K})_{n \in \N}$ sur tout compact $K$ est automatique à partir de la convergence uniforme de la suite $(\restriction{\gamma_n}{K})_{n \in \N}$ pour tout compact $K$, puisque ce sont des isométries.
		%Quand l'espace $X$ est propre, si une suite $(\gamma_n)_{n \in \N}$ d'isométries converge vers une isométrie $\gamma$, alors la suite des inverses $(\gamma_n^{-1})_{n \in \N}$ converge vers l'inverse $\gamma^{-1}$.
	\end{rem}
	
	\begin{define}
		Soit $X$ un espace métrique.
		On dit qu'une suite $(\Gamma_n)_{n \in \N}$ de semi-groupes d'isométries de $X$ \defi{converge géométriquement} vers un semi-groupe $\Gamma$,
		si l'on a les deux propriétés :
		\begin{itemize}
			\item pour toute isométrie $\gamma \in \Gamma$, il existe une suite d'isométries $(\gamma_n)_{n \in \N}$, avec pour tout $n$, $\gamma_n \in \Gamma_n$ et telle que $\gamma_n$ converge vers $\gamma$.
			\item pour toute partie infinie $P \subseteq \N$ et toute suite d'isométries $(\gamma_n)_{n \in P}$ qui converge vers une isométrie $\gamma \in \Isom(X)$, avec $\gamma_n \in \Gamma_n$ pour tout $n \in P$, on a $\gamma \in \Gamma$.
		\end{itemize}
	\end{define}
	
	Voir~\cite{haettel} pour plus de détails sur la convergence géométrique.
	
	\begin{rem}
		Dans notre résultat de semi-continuité, nous avons besoin seulement de la première de ces deux propriétés.
	\end{rem}
	
	%\noindent Le théorème \ref{thm0} nous donne un résultat de semi-continuité inférieure pour l'entropie :
	Voici le résultat de semi-continuité :
	
	\begin{cor} \label{semi-cont}
		Soit $(\Gamma_n)_{n \in \N}$ une suite de semi-groupes d'isométries d'un espace Gromov-hyperbolique propre à bord compact $X$ %d'adhérence compacte $X$
		qui converge vers un semi-groupe $\Gamma$ dont l'ensemble limite contient au moins deux points. Alors on a l'inégalité
		\[ h_\Gamma \leq \liminf_{n \to \infty} h_{\Gamma_n}. \]
		Autrement dit, l'entropie est semi-continue inférieurement en les semi-groupes dont l'ensemble limite contient au moins deux points.
	\end{cor}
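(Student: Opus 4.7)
Le plan est de combiner le théorème~\ref{thm0} avec un argument de préservation du caractère de Schottky par convergence géométrique. Soit $\epsilon > 0$. D'après le théorème~\ref{thm0}, il existe un sous-semi-groupe de Schottky $\Gamma' \subseteq \Gamma$, de partie génératrice finie $\{g_1, \dots, g_k\}$ pour un certain domaine $X_+ \subseteq X$, tel que $\delta_{\Gamma'} \geq h_\Gamma - \epsilon$.

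La première étape utilise l'hypothèse de convergence géométrique : pour chaque $i \in \{1, \dots, k\}$, on relève $g_i$ en une suite d'isométries $g_i^{(n)} \in \Gamma_n$ convergeant vers $g_i$ pour la topologie compacte-ouverte. La deuxième étape, qui sera l'obstacle principal, consiste à vérifier que pour $n$ assez grand, les éléments $g_1^{(n)}, \dots, g_k^{(n)}$ engendrent encore un sous-semi-groupe de Schottky $\Gamma'_n \subseteq \Gamma_n$ pour le même domaine $X_+$. Les conditions de Gromov-disjonction entre les parties $g_i X_+$ et entre chaque $g_i X_+$ et $X \backslash X_+$ sont des conditions ouvertes : comme l'espace $\cX$ est compact, la majoration uniforme du produit de Gromov impliquée par la Gromov-disjonction ne dépend que de l'action de $g_i$ sur une partie compacte bien choisie. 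La convergence $g_i^{(n)} \to g_i$ uniforme sur les compacts entraîne alors que ces conditions restent satisfaites pour $n$ assez grand, et l'intérieur de $X_+ \backslash (g_1^{(n)} X_+ \cup \dots \cup g_k^{(n)} X_+)$ reste non vide.

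La troisième étape consiste à minorer $\liminf_n \delta_{\Gamma'_n}$ par $\delta_{\Gamma'}$. Posons $\ell_i := d(o, g_i o)$ et $\ell_i^{(n)} := d(o, g_i^{(n)} o)$, de sorte que $\ell_i^{(n)} \to \ell_i$. Les deux semi-groupes $\Gamma'$ et $\Gamma'_n$ étant contractants pour le même couple de domaines Gromov-disjoints, on peut choisir une constante $M$ de contraction au sens du lemme~\ref{cet} qui soit uniforme en $n$ (pour $n$ assez grand). Pour tout mot $(i_1, \dots, i_m)$ en les générateurs, le lemme~\ref{cet} itéré donne
\[ \sum_{j=1}^m \ell_{i_j}^{(n)} - 2(m-1) M \leq d(o, g_{i_1}^{(n)} \cdots g_{i_m}^{(n)} o) \leq \sum_{j=1}^m \ell_{i_j}^{(n)}. \]
Par liberté du semi-groupe de Schottky, cela fournit un contrôle combinatoire du nombre d'éléments de $\Gamma'_n$ dans une boule, en fonction uniquement des longueurs $\ell_i^{(n)}$ et de $M$. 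Comme les $\ell_i^{(n)}$ convergent vers les $\ell_i$, une analyse élémentaire de la série de Poincaré associée (dans l'esprit du lemme~\ref{mnec}) donne la continuité de l'exposant critique par rapport à ces paramètres, ce qui entraîne $\liminf_n \delta_{\Gamma'_n} \geq \delta_{\Gamma'}$.

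En conclusion, on obtient
\[ \liminf_n h_{\Gamma_n} \geq \liminf_n \delta_{\Gamma'_n} \geq \delta_{\Gamma'} \geq h_\Gamma - \epsilon, \]
et en faisant tendre $\epsilon$ vers $0$, on conclut. Le point vraiment délicat est la préservation de la propriété de Schottky lors du passage à la limite : il faut s'assurer que la Gromov-disjonction, qui est à priori une condition à l'infini, peut être testée sur un compact, ce qui nécessite d'utiliser de manière cruciale la compacité du bord $\bX$ (hypothèse du corollaire).
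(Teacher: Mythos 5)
Votre schéma --- approcher un sous-semi-groupe de Schottky \emph{fixé} donné par le théorème~\ref{thm0}, puis passer à la limite sur son exposant critique --- diffère de celui de l'article, et c'est dans votre troisième étape que se trouve la lacune sérieuse. Le lemme~\ref{cet} ne fournit que l'encadrement
\[ \sum_{j=1}^m \ell_{i_j}^{(n)} - 2(m-1)M \leq d(o, g_{i_1}^{(n)} \cdots g_{i_m}^{(n)} o) \leq \sum_{j=1}^m \ell_{i_j}^{(n)}, \]
dont l'erreur $2(m-1)M$ croît linéairement en la longueur $m$ des mots. Ce contrôle combinatoire ne coince donc l'exposant critique qu'entre la racine $s^*$ de $\sum_i e^{-s\ell_i}=1$ et celle, strictement plus grande dès que $M>0$, de $\sum_i e^{-s(\ell_i-2M)}=1$ : il n'y a pas de \og continuité de l'exposant critique par rapport à ces paramètres \fg, car l'exposant critique du semi-groupe réel n'est pas une fonction des seules longueurs $\ell_i$ et de $M$ (les déplacements $d(o,wo)$ dépendent des produits de Gromov emboîtés $((g_{i_1}\cdots g_{i_j})^{-1}o \mid g_{i_{j+1}}\cdots g_{i_m}o)$, qui ne convergent pas vers des quantités que vous contrôlez). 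Votre argument donne $\liminf_n \delta_{\Gamma'_n} \geq s^*$, mais $\delta_{\Gamma'}$ peut être strictement supérieur à $s^*$ ; la conclusion $\liminf_n \delta_{\Gamma'_n} \geq \delta_{\Gamma'}$ n'est donc pas établie. Votre deuxième étape est en outre sous-justifiée : la Gromov-disjonction de $g_i^{(n)} X_+$ et $g_j^{(n)} X_+$ porte sur l'image de l'ensemble \emph{non borné} $X_+$, et la convergence uniforme sur les compacts ne contrôle pas $d(g_i^{(n)}x, g_i x)$ pour $x$ loin de $o$ ; il faudrait démontrer que la convergence compacte-ouverte entraîne la convergence des actions sur $\cX$ tout entier au sens du produit de Gromov, ce qui est vrai sous les hypothèses du corollaire mais demande un argument que vous ne faites qu'annoncer.

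La preuve de l'article contourne simultanément ces deux difficultés en n'approchant pas un semi-groupe de Schottky fixé. Elle part d'un sous-semi-groupe contractant $\Gamma^c$ de même entropie (théorème~\ref{thm1}), prend une partie $S$ $r$-séparée et couvrante, et approche l'ensemble fini $S \cap \interieur{A_n}$ des éléments situés dans une couronne ouverte. Le caractère Schottky des parties approchantes découle alors du lemme~\ref{sc}, dont les hypothèses ($\gamma o \in X_+$, $\gamma^{-1}o \in X_-$, $d(o,\gamma o) \in \left]n, n+1\right[$, $r$-séparation) ne portent que sur les deux points d'orbite $\gamma o$ et $\gamma^{-1}o$ de chaque générateur et sont donc de véritables conditions ouvertes pour la convergence compacte-ouverte. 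Quant à la minoration de l'entropie, elle se réduit au comptage $h_{\Gamma_k} \geq \log(\#(S\cap\interieur{A_n}))/(n+1)$ du lemme~\ref{mnec}, qui ne dépend de la partie approchante que par son cardinal et qui tend vers $h_\Gamma$ quand $n\to\infty$ parce qu'une seule couronne porte presque toute l'entropie. Je vous conseille de reprendre votre preuve sur ce modèle.
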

	
	\begin{rem}
		%Si le bord $\bX$ de l'espace $X$ est compact (ce qui est par exemple le cas si $X$ est géodésique), on peut montrer aussi que l'entropie est continue en les semi-groupes de Schottky.
		On pourrait aussi montrer que l'entropie est continue en les semi-groupes de Schottky.
	\end{rem}

	L'idée de la preuve du corollaire \ref{semi-cont}, est de montrer que si l'on a une suite de semi-groupes qui converge, alors on peut approcher un sous-semi-groupe de Schottky du semi-groupe limite par des sous-semi-groupes des semi-groupes de la suite, en trouvant des éléments qui s'approchent des générateurs. Ces semi-groupes seront alors des semi-groupes de Schottky dont les exposants critiques seront proches de celui du semi-groupe de Schottky de départ, et ainsi on obtiendra l'inégalité voulue.
	
	\begin{proof}[Preuve du corollaire \ref{semi-cont}]
		D'après le théorème \ref{thm1}, il existe un sous-semi-groupe con\-tra\-ctant $\Gamma^c$ de $\Gamma$ pour des parties $X_-$ et $X_+ \subset X$, avec $h_{\Gamma^c} = h_\Gamma$.
		%On peut supposer la partie $X_-$ ouverte, quitte à la remplacer par son intérieur et remplacer $X_+$ par son adhérence.
		De plus, on peut supposer que les parties $X_-$ et $X_+$ de $X$ sont ouvertes, quitte à les remplacer chacune par un $\epsilon$-voisinage ouvert, pour $\epsilon > 0$ assez petit. %quitte à la remplacer par son intérieur et remplacer $X_+$ par son adhérence.
		
		D'après le critère de contraction (proposition \ref{crit_cont}), il existe un réel $n_0$ tel que l'ensemble d'isométries
		\[ \Isom(X)^{X_- \times X_+}_{>n_0} := \{ \gamma \in \Isom(X) | \gamma^{-1} o \in X_-, \gamma o \in X_+ \text{ et } d(o, \gamma o) > n_0 \}, \]
		soit contractant, pour des domaines $X_-'$ et $X_+'$.
		
		D'après le lemme \ref{sc}, il existe alors un réel $r$ et un entier $n_1$, tels que pour toute partie $r$-séparée $S$ de $\Isom(X)^{X_- \times X_+}_{>n_0}$ et pour tout $n \geq n_1$, le semi-groupe engendré par $S \cap \interieur{A_n}$ soit de Schottky pour la partie $X_+'$, où
		\[ \interieur{A_n} := \{ \gamma \in \Isom(X) | d(o, \gamma o) \in ]n, n+1[ \}. \]
		
		Soit $S$ une partie $r$-séparée et couvrante du semi-groupe contractant $\Gamma^c$. %, et soit $\epsilon > 0$.
		%Il existe alors un entier $n \geq n_0$ assez grand tel que l'on ait l'inégalité
		%\[ \log( \# \{ \gamma \in S \cap \interieur{A_n} | d(o, \gamma o) \leq n \} ) \geq n (\delta_S - \epsilon) = n (h_\Gamma - \epsilon). \]
		
		L'ensemble $S \cap \interieur{A_n}$ étant fini, il existe une suite d'ensembles d'isométries $S_{k,n}$ de $\Gamma_k$, avec $\#S \cap \interieur{A_n} = \# S_{k, n}$, qui converge uniformément vers $S \cap \interieur{A_n}$.
		
		Les ensembles $\interieur{A_n} \cap X_-$ et $\interieur{A_n} \cap X_+$ étant ouverts, il existe un entier $k_0$ tel que pour tout $k \geq k_0$, on ait $S_{k,n} o \subseteq \interieur{A_n} \cap X_+$ et $S_{k,n}^{-1} o \subseteq \interieur{A_n} \cap X_-$. Pour $n > n_0$ et $k \geq k_0$, on a donc l'inclusion $S_{k,n} \subseteq \Isom(X)^{X_- \times X_+}_{>n_0}$.
		La condition pour une partie $S$ d'être $r$-séparée est également une condition ouverte, donc il existe un entier $k_1 \geq k_0$ tel que pour tout $k \geq k_1$ la partie $S_{k,n}$ soit r-séparée.
		
		Ainsi, pour $n$ assez grand et pour tout entier $k$ assez grand en fonction de $n$, la partie $S_{k,n}$ est incluse dans le semi-groupe contractant $\Isom(X)^{X_- \times X_+}_{>n_0}$, et est $r$-séparée. Elle engendre donc un semi-groupe de Schottky.
		
		Par le lemme \ref{mnec}, on a donc l'inégalité
		\[ h_{\Gamma_k} \geq \frac{\log(\# S_{k,n})}{n+1} = \frac{\log(\# S \cap \interieur{A_n})}{n+1}, \]
		pour tout $n$ assez grand, et pour tout $k$ assez grand en fonction de $n$.
		
		On obtient donc ce que l'on voulait
		\[ \liminf_{k \to \infty} h_{\Gamma_k} \geq \limsup_{n \to \infty} \frac{\log(\# S \cap \interieur{A_n})}{n+1} = h_\Gamma. \]

	\end{proof}

%%%%%%%%%%%%%%%%%%%%%%%%%%%%%%%%%%%%%%%%%%%%%%%%%%%%%%%%%%%%%%%%%%%%%%%%%%%%%%%%%%%%%
	\subsubsection{Exemple d'application : les semi-groupes de Kenyon} \label{semi-ken}
	
	Les semi-groupes de Kenyon (voir \cite{ken}) sont les semi-groupes $\Gamma_t$ engendrés par les $3$ transformations affines
	\[
		\left\{\begin{array}{cl}
			x &\mapsto x/3 \\
			x &\mapsto x/3 + t \\
			x &\mapsto x/3 + 1
		\end{array}\right.
	\]
	pour des réels $t$.
	
	D'après le corollaire \ref{semi-cont}, l'application $t \mapsto h_{\Gamma_t} = \dim(\Lambda_{\Gamma_t})$ est semi-continue inférieurement.
	%pour toute suite de réels $(t_n)_{n \in \R}$ qui converge vers un réel $t_0$
	En particulier, pour trouver un contre exemple à la conjecture de Furstenberg (i.e. un réel $t$ pour lequel on a $\dim_H(\Lambda_{\Gamma_t}) < \delta_{\Gamma_t}$), il suffit de trouver une suite de rationnels $(t_n)_{n \in \N}$ qui converge vers un irrationnel et avec pour tout $n$, $\delta_{\Gamma_{t_n}} \leq C < 1$.
	Mais bien que l'on sache calculer l'exposant critique $\delta_{\Gamma_t}$ du semi-groupe $\Gamma_t$ pour tout rationnel $t$, on ne sait pas s'il existe de telles suites. Voir \cite{ken} pour plus de détails. %. et la section~\ref{smgdb}
	
	\begin{figure}[H]
		\centering
		\caption{Développement en base $\beta = 3$, avec ensemble de chiffres $A = \{0, \frac{\pi}{4}, 1\}$.} \label{pi/4}
		\includegraphics{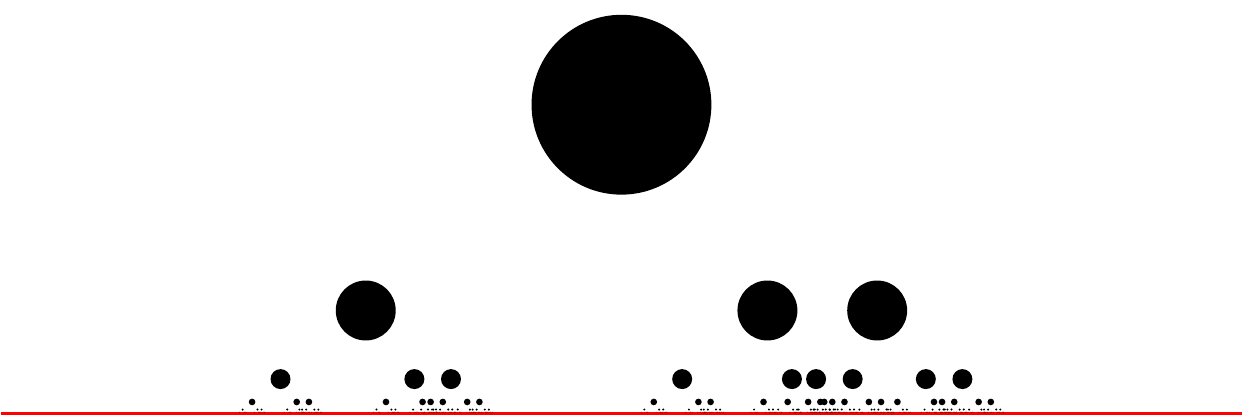}
%		\begin{dpoin}H(-2.1,2.1,1.4)
%			a = [[sqrt(1/3), -1.5],[0, sqrt(3)]];
%			b = [[sqrt(1/3), 3*(Pi/4-.5)],[0, sqrt(3)]];
%			c = [[sqrt(1/3), 1.5],[0, sqrt(3)]];
%			\orb[niter=10000, maxsep = .00025, sep=.0005, size=.3,maxnorm=7] \fill o a b c;
%		\end{dpoin}
	\end{figure}
	
	\begin{rem}
		L'application $t \mapsto h_{\Gamma_t} = \dim_H(\Lambda_{\Gamma_t})$ n'est pas continue.
		En effet, on sait que l'on a $\dim_H(\Lambda_{\Gamma_t}) = 1$ pour $t$ dans une partie dense de $\R$, et on a par exemple $\dim_H(\Lambda_{\Gamma_{2/3}}) = \delta_{\Gamma_{2/3}} < 1$ (voir figure \ref{2/3}).
	\end{rem}

	\begin{figure}[H]
		\centering
		\caption{Ensemble limite d'un sous-semi-groupe de $SL(2,\C)$}
		\includegraphics[scale=.13]{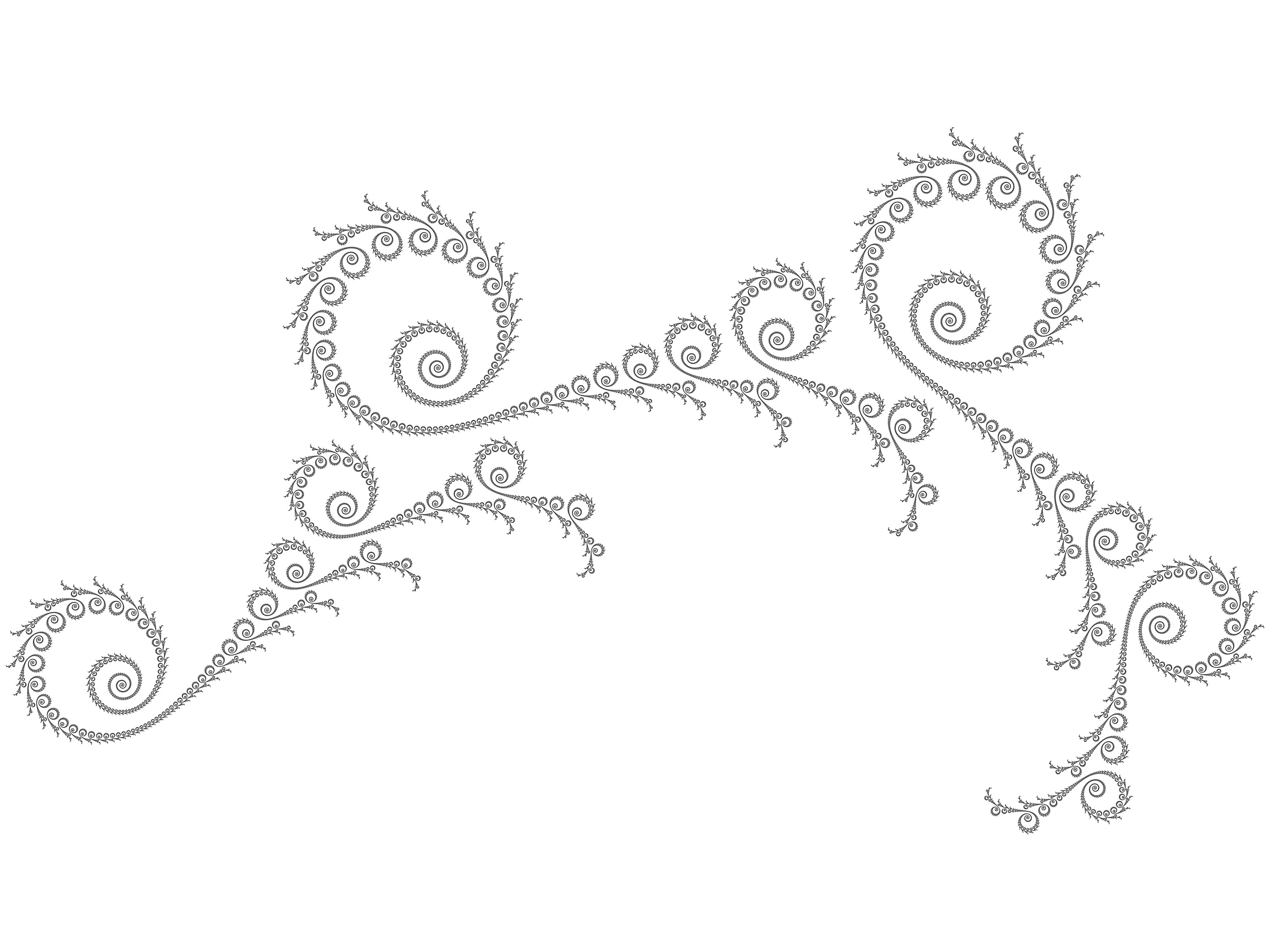}
	\end{figure}

	\begin{figure}[H]
		\centering
		\caption{Un groupe quasi-Schottky}
		\includegraphics{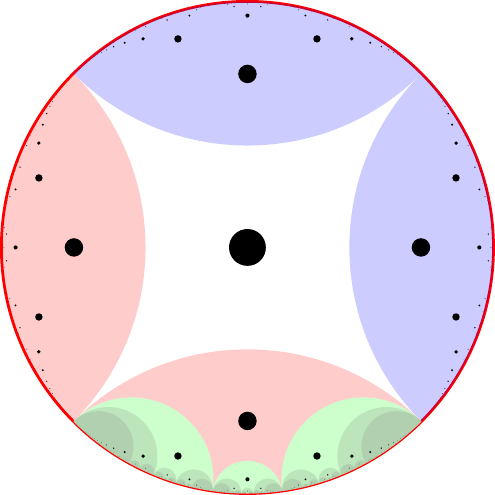}
	\end{figure}
	
	\immediate\write18{./dp -save}
	%\immediate\write18{make -f Semi-groupes.makefile}
\end{document}